\numberwithin{equation}{section}
\newtheorem{thm}{Theorem}[section]
\newtheorem{prop}[thm]{Proposition}
\newtheorem{lemma}[thm]{Lemma}
\newtheorem{cor}[thm]{Corollary}
\newtheorem*{thm*}{Theorem}
\newtheorem*{prop*}{Proposition}
\newtheorem*{cor*}{Corollary}
\newtheorem*{conj*}{Conjecture}
\theoremstyle{definition}
\newtheorem{definition}[thm]{Definition}
\theoremstyle{remark}
\newtheorem{rmk}[thm]{Remark}
\newcommand{\Ran}{\operatorname{Ran}}
\newcommand{\Ker}{\operatorname{Ker}}
\newcommand{\ecup}{\overline\cup}
\newcommand{\ms}{\mathscr}
\newcommand{\sB}{\ms B}
\newcommand{\sE}{\ms E}
\newcommand{\sF}{\ms F}
\newcommand{\sP}{\ms P}
\newcommand{\la}{\langle}
\newcommand{\ra}{\rangle}
\newcommand{\pa}{\partial}
\newcommand{\codim}{\operatorname{codim}}
\newcommand{\tn}{\textnormal}
\newcommand{\ff}{\tn{ff}}
\newcommand{\mc}{\mathcal}
\newcommand{\R}{\mathbb R}
\newcommand{\RR}{\mathbb R}
\newcommand{\C}{\mathbb C}
\newcommand{\Cx}{\mathbb C}
\newcommand{\N}{\mathbb N}
\newcommand{\NN}{\mathbb N}
\newcommand{\ZZ}{\mathbb Z}
\newcommand{\Sphere}{\mathbb S}
\newcommand{\eps}{\epsilon}
\newcommand{\ep}{\epsilon}
\newcommand{\im}{\operatorname{Im}}
\newcommand{\re}{\operatorname{Re}}
\newcommand{\supp}{\operatorname{supp}}
\newcommand{\sgn}{\operatorname{sgn}}
\newcommand{\psdo}{$\Psi$DO}
\newcommand{\xra}{\xrightarrow}
\renewcommand{\Im}{\operatorname{Im}}
\renewcommand{\Re}{\operatorname{Re}}
\newcommand{\Id}{\operatorname{Id}}
\newcommand{\CI}{C^\infty}
\newcommand{\dCI}{\dot C^\infty}
\newcommand\cO{\mathcal{O}}
\newcommand{\cL}{\mathcal L}
\newcommand{\cR}{\mathcal R}
\newcommand{\cP}{\mathcal P}
\newcommand{\cQ}{\mathcal Q}
\newcommand{\cX}{\mathcal X}
\newcommand{\cY}{\mathcal Y}
\newcommand{\cI}{\mathcal I}
\newcommand{\cS}{\mathcal S}
\newcommand{\Diff}{\mathrm{Diff}}
\newcommand{\Diffb}{\Diff_\bl}
\newcommand{\Vf}{\mathcal V}
\newcommand{\Vb}{\Vf_\bl}
\newcommand{\Vsc}{\Vf_\scl}
\newcommand{\Psib}{\Psi_\bl}
\newcommand{\bl}{{\mathrm{b}}}
\newcommand{\scl}{{\mathrm{sc}}}
\newcommand{\Hb}{H_{\bl}}
\newcommand{\Hbloc}{H_{\bl,\loc}}
\newcommand{\WFb}{\WF_{\bl}}
\newcommand{\WF}{\mathrm{WF}}
\newcommand{\Tb}{{}^{\bl}T}
\newcommand{\rcTb}{{}^{\bl}\overline{T}}
\newcommand{\Tsc}{{}^{\scl}T}
\newcommand{\Sb}{{}^{\bl}S}
\newcommand{\Nb}{{}^{\bl}N}
\newcommand{\SNb}{{}^{\bl}SN}
\newcommand{\loc}{{\mathrm{loc}}}
\newcommand{\sH}{\mathsf{H}}
\newcommand{\Op}{\operatorname{Op}}
\newcommand{\frakt}{\mathfrak{t}}
\newcommand{\Ell}{\mathrm{Ell}}
\newcommand{\bdiff}{{}^{\bl}d}
\newcommand{\scdiff}{{}^{\scl}d}
\newcommand{\bnormiso}{\mathcal{H}_{\bl,\Gamma}}
\newcommand{\region}{\Omega}
\newcommand{\KG}{\mathrm{KG}}
\newcommand{\I}{\mathrm{I}}
\newcommand{\ol}{\overline}
\newcommand{\wt}{\widetilde}
\newcommand{\wh}{\widehat}
\newcommand{\ftrans}{\;\!\widehat{\ }\;\!}
\begin{document}
\title[Semilinear wave equations]{Semilinear wave equations on asymptotically de Sitter, Kerr-de Sitter and Minkowski spacetimes}

\author{Peter Hintz and Andras Vasy}
\address{Department of Mathematics, Stanford University, CA
  94305-2125, USA}
\email{phintz@math.stanford.edu}
\email{andras@math.stanford.edu}
\date{June 19, 2013. Revised: April 9, 2015.}
\thanks{The authors were supported in part by A.V.'s National Science
  Foundation grants DMS-0801226 and DMS-1068742 and P. H.\ was
  supported in part by a Gerhard Casper Stanford Graduate Fellowship and the German National Academic Foundation.}
\subjclass{35L71, 35L05, 35P25}
\keywords{Semilinear waves, asymptotically de Sitter spaces,
  Kerr-de Sitter space, Lorentzian scattering metrics,
  b-pseudodifferential operators, resonances, asymptotic expansion}

\begin{abstract}
  In this paper we show the small data solvability of suitable semilinear wave and Klein-Gordon equations on geometric classes of spaces, which include so-called asymptotically de Sitter and Kerr-de Sitter spaces, as well as asymptotically Minkowski spaces. These spaces allow general infinities, called conformal infinity in the asymptotically de Sitter setting; the Minkowski type setting is that of non-trapping Lorentzian scattering metrics introduced by Baskin, Vasy and Wunsch.  Our results are obtained by showing the {\em global} Fredholm property, and indeed invertibility, of the underlying linear operator on suitable $L^2$-based function spaces, which also possess appropriate algebra or more complicated multiplicative properties. The linear framework is based on the b-analysis, in the sense of Melrose, introduced in this context by Vasy to describe the asymptotic behavior of solutions of linear equations. An interesting feature of the analysis is that {\em resonances}, namely poles of the inverse of the Mellin transformed b-normal operator, which are `quantum' (not purely symbolic) objects, play an important role.
\end{abstract}

\maketitle

\section{Introduction}

In this paper we consider semilinear wave equations in contexts such as asymptotically de Sitter and Kerr-de Sitter spaces, as well as asymptotically Minkowski spaces. The word `asymptotically' here does {\em not} mean that the asymptotic behavior has to be that of exact de Sitter, etc., spaces, or even a perturbation of these at infinity; much more general infinities, that nonetheless possess a similar structure as far as the underlying analysis is concerned, are allowed.  Recent progress \cite{Va12,Ba13} allows one to set up the analysis of the associated linear problem {\em globally} as a Fredholm problem, concretely using the framework of Melrose's b-pseudodifferential operators \cite{Me93} on appropriate compactifications $M$ of these spaces. (The b-analysis itself originates in Melrose's work on the propagation of singularities for the wave equation on manifolds with smooth boundary, and Melrose described a systematic framework for elliptic b-equations in \cite{Me93}. Here `b' refers to analysis based on vector fields tangent to the boundary of the space; we give some details later in the introduction and further details in \S\ref{SecStaticDeSitter-Fredholm}, where we recall the setting of \cite{Va12}.) This allows one to use the contraction mapping theorem to solve semilinear equations with small data in many cases since typically the semilinear terms can be considered perturbations of the linear problem. That is, as opposed to solving an evolution equation on time intervals of some length, possibly controlling this length in some manner, and iterating the solution using (almost) conservation laws, we solve the equation globally in one step.

As Fredholm analysis means that one has to control the linear operator
$L$ modulo compact errors, which in these settings means modulo terms
which are {\em both} smoother and more decaying, the underlying linear
analysis involves both arguments based on the principal symbol of the
wave operator and on its so-called (b-)normal operator family, which
is a holomorphic family $\widehat N(L)(\sigma)$ of operators  on $\pa
M$. In settings in which there is a $\RR^+$-action in the normal
variable, and the operator is dilation invariant, this simply means
Mellin transforming in the normal variable. Replacing the normal
variable by its logarithm, this is equivalent to a Fourier transform.

At the principal symbol level one encounters real principal type
phenomena as well as radial points of the Hamilton flow at the
boundary of the compactified underlying space $M$; these allow for the
usual (for wave equations) loss of one (b-)derivative relative to
elliptic problems. Physically, in the de Sitter and Kerr-de Sitter type settings radial points correspond to a red shift effect. In Kerr-de Sitter spaces there is an additional loss of derivatives due to trapping. On the other hand, the b-normal operator family enters via the poles $\sigma_j$ of the meromorphic inverse $\widehat N(L)(\sigma)^{-1}$; these poles, called {\em resonances}, determine the decay/growth rates of solutions of the linear problem at $\pa M$, namely $\im\sigma_j>0$ means growing while $\im\sigma_j<0$ means decaying solutions.  Translated into the nonlinear setting, taking powers of solutions of the linear equation means that growing linear solutions become even more growing, thus the non-linear problem is uncontrollable, while decaying linear solutions become even more decaying, thus the non-linear effects become negligible at infinity. Correspondingly, the location of these resonances becomes crucial for non-linear problems. We note that in addition to providing solvability of semilinear problems, our results can also be used to obtain the {\em asymptotic expansion} of the solution.

In short, we present a \emph{systematic approach} to the analysis of
semilinear wave and Klein-Gordon equations: Given an appropriate
structure of the space at infinity and given that the location of the
resonances fits well with the non-linear terms, see the discussion
below, one can solve (suitable) semilinear equations. Thus, the main
purpose of this paper is to present the first step towards a general
theory for the global study of linear and nonlinear wave-type
equations; the semilinear applications we give are meant to show how
far we can get in the nonlinear regime using relatively simple means,
and lend themselves to meaningful comparisons with existing
literature, see the discussion below. In particular, our approach
readily generalizes to the analysis of \emph{quasilinear} equations,
provided one understands the necessary (b-)analysis for non-smooth
metrics. Since the first version of the present paper, the authors described such generalizations in detail in the context of asymptotically de Sitter \cite{HintzQuasilinearDS} and asymptotically Kerr-de Sitter spaces \cite{HintzVasyQuasilinearKdS}.

We now describe our setting in more detail. We consider semilinear wave equations of the form
$$
(\Box_g-\lambda)u=f+q(u,du)
$$
on a manifold $M$ where $q$ is (typically; more general functions are also considered) a polynomial vanishing at least quadratically at $(0,0)$ (so contains no constant or linear terms, which should be included either in $f$ or in the operator on the left hand side). The derivative $du$ is measured relative to the metric structure (e.g.\ when constructing polynomials in it).  Here $g$ and $\lambda$ fit in one of the following scenarios, which we state slightly informally, with references to the precise theorems. We discuss the terminology afterwards in more detail, but the reader unfamiliar with the terms could drop the word `asymptotically' and `even' to obtain specific examples.

\begin{enumerate}
\item
A neighborhood of the backward light cone from future infinity in an
asymptotically de Sitter space. (This may be called a static region/patch
of an asymptotically de Sitter space, even when there is no time like Killing
vector field.) In order to solve the semilinear equation,
if $\lambda>0$, one can allow $q$ an arbitrary polynomial with quadratic vanishing at
the origin, or indeed a more general function. If $\lambda=0$ and $q$
depends on $du$ only, the same conclusion holds. Further, in either
case, one obtains an
expansion of the solution at infinity. See
Theorems~\ref{ThmDSQu} and \ref{ThmDSPoly}, and Corollary~\ref{cor:ndS}.
\item
Kerr-de Sitter space, including a neighborhood of the event
horizon, or more general spaces with normally hyperbolic trapping, discussed below.
In the main part of the section we assume $\lambda>0$, and allow $q=q(u)$ with
quadratic vanishing at the origin. We also obtain an expansion at
infinity. See Theorems~\ref{ThmKdSQu} and \ref{ThmKdSPoly}, and
Corollary~\ref{cor:nKdS}. However, in \S\ref{sec:KdS-derivs} we
briefly discuss
non-linearities involving
derivatives which are appropriately behaved at the trapped
set.
\item
Global {\em even} asymptotically de Sitter spaces. These are in some sense the
easiest examples as they correspond, via extension across the
conformal boundary, to working on a manifold without
boundary.
Here $\lambda=(n-1)^2/4+\sigma^2$. While the equation is unchanged if one replaces $\sigma$ by
  $-\sigma$, the process of extending across the boundary
  breaks this symmetry, and in \S\ref{SecDeSitter} we mostly consider $\im\sigma\leq 0$.
If $\Im\sigma<0$ is sufficiently small and the dimension satisfies $n\geq 6$, quadratic
vanishing of $q$ suffices; if $n\geq 4$ then cubic vanishing is
sufficient. If $q$ does not involve derivatives, $\Im\sigma\geq 0$ small
also works, and if $\Im\sigma>0,n\geq 5$, or $\im\sigma=0$, $n\geq 6$, then quadratic vanishing of $q$ is sufficient.
See Theorems~\ref{ThmGlobalDSQu}, \ref{ThmGlobalDS} and
\ref{ThmGlobalDSLowReg}. Using the results from `static'
asymptotically de Sitter spaces, quadratic vanishing of $q$ in fact
suffices for all $\lambda>0$, and indeed $\lambda\geq 0$ if $q=q(du)$,
but the decay estimates for solutions are lossy relative to the {\em decay} of the forcing. See Theorem~\ref{ThmGSdSQu}.
\item
Non-trapping Lorentzian scattering (generalized asymptotically
Minkowski) spaces, $\lambda=0$.
If $q=q(du)$, we allow $q$ with quadratic
vanishing at $0$ if $n\geq 5$; cubic if $n\geq 4$. If $q=q(u)$, we
allow $q$ with quadratic vanishing if $n\geq 6$; cubic if $n\geq
4$. Further, for $q=q(du)$ quadratic satisfying a null condition,
$n=4$ also works. See Theorems~\ref{ThmMinkQu}, \ref{ThmMink} and \ref{ThmNullMink}.
\end{enumerate}

We now recall these settings in more detail.
First, see \cite{Va07}, an asymptotically de Sitter space
is an appropriate generalization of the Riemannian conformally compact
spaces of Mazzeo and Melrose \cite{Ma87}, namely a
smooth manifold with boundary, $\wt M$, with interior
$\wt M^\circ$ equipped with a Lorentzian metric $\wt g$, which we take to
be of signature $(1,n-1)$ for the sake of definiteness, and with a boundary
defining function $\rho$, such that $\wh g=\rho^2 \wt g$ is a
smooth symmetric 2-cotensor of signature $(1,n-1)$ up to
the boundary of $\wt M$ and $\wh g(d\rho,d\rho)=1$ (thus, the
boundary defining function is timelike, and thus the boundary is
spacelike; the $=1$ statement makes the curvature asymptotically
constant),
and in addition $\pa\wt M$ has two components (each of
which may be a union of connected components) $\wt X_\pm$, with all
null-geodesics $c=c(s)$ of $\wt g$ tending to $\wt X_+$ as $s\to +\infty$ and to
$\wt X_-$ as $s\to-\infty$, or vice versa. Notice that in
  the interior of $\wt M$, the conformal factor $\rho^{-2}$ simply
  reparameterizes the null-geodesics, so equivalently one can require
  that null-geodesics of $\wh g$ reach $\wt X_\pm$ at finite
  parameter values. Analogously to
asymptotically hyperbolic spaces, where this was shown by Graham and
Lee \cite{Gr91}, on such a space one can
always introduce a product decomposition $(\pa\wt M)_z\times
[0,\delta)_\rho$ near $\pa\wt M$ (possibly changing $\rho$) such that the metric has a
warped product structure $\wh g=d\rho^2-h(\rho,z,dz)$, $\wt
g=\rho^{-2}\wh g$; the metric is called even if $h$ can be taken even
in $\rho$, i.e.\ a smooth function of $\rho^2$. We refer to Guillarmou
\cite{Gu05} for the introduction of even metrics in
the asymptotically hyperbolic context, and to \cite{Va07},
\cite{Va12} and \cite{Va13} for further discussion.

Blowing up a point $p$ at $\wt X_+$, which essentially means
introducing spherical coordinates around it, we obtain a manifold with
corners $[\wt M;p]$, with a blow-down map $\beta:[\wt M;p]\to\wt M$, which is a diffeomorphism away from the {\em front face}, which
gets mapped to $p$ by $\beta$. Just like blowing up the origin in
Minkowski space desingularizes the future (or past) light cone, this
blow-up desingularizes the backward light cone from $p$ on $\wt M$, which lifts
to a smooth submanifold transversal to the front face on $[\wt
M;p]$ which intersects the front face in a sphere $Y$. The interior of this lifted backward light cone, at least near
the front face, is a generalization of the static patch in de Sitter
space, and we refer to a neighborhood $M_\delta$, $\delta>0$, of the closure of the
interior $M_+$
of the lifted backward light cone in $[\wt M;p]$ which only intersects
the boundary of $[\wt M;p]$ in the interior of the front face (so
$M_\delta$ is a non-compact manifold with boundary, with boundary $X_\delta$, and with say boundary defining function $\tau$) as
the `static' asymptotically de Sitter problem. See
Figure~\ref{FigBlowUp}. Via a doubling process, $X_\delta$ can be
replaced by a compact manifold without boundary, $X$, and $M_\delta$
by $M=X\times[0,\tau_0)_\tau$, an approach taken in \cite{Va12} where
complex absorption was used, or indeed one can instead work in a compact
region $\Omega\subset M_\delta$ by adding artificial, spacelike,
boundaries, as we do here in
\S\ref{SecStaticDeSitter-Fredholm}. With such an $\Omega$,
the distinction between $M$ and $M_\delta$ is irrelevant, and we
simply write $M$ below.

\begin{figure}[!ht]
  \centering
  \includegraphics{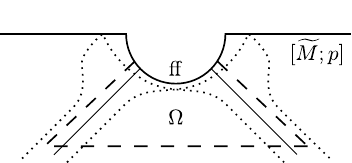}
  \caption{Setup of the `static' asymptotically de Sitter problem. Indicated are the blow-up of $\wt M$ at $p$ and the front face, the lift of the backward light cone to $[\wt M;p]$ (solid), and lifts of backward light cones from points nearby $p$ (dotted); moreover, $\Omega\subset M$ (dashed boundary) is a submanifold with corners within $M$ (which is not drawn here; see \cite{Va12} for a description of $M$ using a doubling procedure in a similar context). The role of $\Omega$ is explained in \S\ref{SecStaticDeSitter-Fredholm}.}
  \label{FigBlowUp}
\end{figure}

See \cite{Va07,Va12} for relating the `global' and the `static' problems. We note that the lift of $\wt g$ to $M$ in the static region is a Lorentzian b-metric, i.e.\ is a smooth symmetric section of signature $(1,n-1)$ of the second tensor power of the b-cotangent bundle, $\Tb^*M$. The latter is the dual of $\Tb M$, whose smooth sections are smooth vector fields on $M$ tangent to $\pa M$; sections of $\Tb^*M$ are smooth combinations of $\frac{d\tau}{\tau}$ and smooth one forms on $X$, relative to a product decomposition $X\times[0,\delta)_\tau$ near $X=\pa M$. See also \S\ref{SecStaticDeSitter-Fredholm}.

As mentioned earlier, the methods of \cite{Va12} work in a rather general b-setting,
including generalizations of `static' asymptotically de Sitter
spaces. Kerr-de Sitter space, described from this perspective in
\cite[\S6]{Va12}, can be thought of as such a
generalization. In particular, it still carries a Lorentzian b-metric, but
with a somewhat more complicated structure, of which the only
important part for us is that it has trapped rays. More concretely, it is
best to consider the bicharacteristic flow in the b-cosphere bundle
(projections of null-bicharacteristics being just the null-geodesics),
$\Sb^*M$, quotienting out by the $\RR^+$-action on the fibers of
$\Tb^*M\setminus o$. On the `static' asymptotically de Sitter space
each half of the spherical
b-conormal bundle $\SNb^*Y$ consists of (a family of) saddle points of the
null-bicharacteristic flow (these are called {\em radial sets}, the
stable/unstable directions are normal to $\SNb^*Y$ itself), with
one of the stable and unstable manifolds being the conormal bundle of
the lifted light cone (which plays the role of the {\em event horizon}
in black hole settings), and the other being the characteristic set
within the boundary $X$ (so within the boundary, the radial sets
$\SNb^*Y$, are actually sources or sinks). Then on asymptotically de Sitter spaces all
null-bicharacteristics over $\overline{M_+}\setminus X$ either leave $\Omega$ in finite time or
(if they lie on the conormal bundle of the event horizon)
tend to $\SNb^*Y$ as the parameter goes to $\pm\infty$, with each
bicharacteristic tending to $\SNb^*Y$ in at most
one direction. The main
difference for Kerr-de Sitter space is that there are
null-bicharacteristics which do not leave $\overline{M_+}\setminus X$ and do not tend to
$\SNb^*Y$. On de Sitter-Schwarzschild space (non-rotating black holes)
these future
trapped rays project to a sphere, called the photon sphere, times $[0,\delta)_\tau$; on general
Kerr-de Sitter space the trapped set deforms, but is still {\em normally
  hyperbolic}, a setting studied by Wunsch and Zworski in \cite{Wu11}
and by Dyatlov in \cite{Dy13}.

We refer to \cite[\S3]{Ba13} and to
\S\ref{SecMinkowski-Fredholm} here for a definition of asymptotically Minkowski
spaces, but roughly they are manifolds with boundary $M$ with
Lorentzian metrics $g$ on the interior $M^\circ$ conformal to a
b-metric $\wh g$ as $g=\tau^{-2}\wh g$, with $\tau$ a boundary
defining function\footnote{In \S\ref{SecMinkowski} we switch to
  $\rho$ as the boundary defining function for consistency with \cite{Ba13}.} (so these are Lorentzian {\em scattering} metrics in the sense
of Melrose \cite{Me94}, i.e.\ symmetric cotensors in the second power
of the scattering cotangent bundle, and of signature $(1,n-1)$), with a
real $\CI$ function $v$ defined on $M$ with $dv$, $d\tau$ linearly
independent at $S=\{v=0,\ \tau=0\}$, and with a specific
behavior of the metric at $S$ which reflects that of Minkowski space
on its radial compactification near the boundary of the light cone at
infinity (so $S$ is the light cone at infinity in this greater
generality). Concretely, the specific form is\footnote{More general,
  `long-range' scattering metrics also work for the purposes of this
  paper without any significant changes; the analysis of these is currently being completed by Baskin,
  Vasy and Wunsch. The difference is the presence of smooth multiples
  of $\tau\frac{d\tau^2}{\tau^2}$ in the metric near $\tau=0$, $v=0$. These do not affect
  the normal operator, but slightly change the dynamics in
  $\Sb^*M$. This, however, does not affect the function spaces to be
  used for our semilinear problem.}
$$
\tau^2 g=\wh
g=v\frac{d\tau^2}{\tau^2}-\Bigl(\frac{d\tau}{\tau}\otimes\alpha+\alpha\otimes\frac{d\tau}{\tau}\Bigr)-\wt h,
$$
where $\alpha$ is a smooth one form on $M$, equal to $\frac{1}{2}\,dv$
at $S$, $\wt h$ is a smooth
2-cotensor on $M$, which is positive definite on the annihilator of
$d\tau$ and $dv$ (which is a codimension 2 space).
The difference between the de Sitter-type and Minkowski
settings is in part this conformal factor, $\tau^{-2}$, but more
importantly, as this conformal factor again does not affect the
behavior of the null-bicharacteristics so one can consider those of
$\wh g$ on $\Sb^*M$, at the spherical conormal bundle $\SNb^*S$ of $S$ (see \S\ref{SecStaticDeSitter})
the nature of the radial points is source/sink rather than a saddle
point of the flow. (One also makes a non-trapping assumption in the
asymptotically Minkowski setting.)

Now we comment on the specific way these settings fit into the
b-framework, and the way the various restrictions described above arise.

\begin{enumerate}
\item
Asymptotically `static' de Sitter.
Due to a zero resonance for the linear problem when
$\lambda=0$, which moves to the lower half plane for $\lambda>0$, in
this setting $\lambda>0$ works in general; $\lambda=0$ works if
$q$ depends on $du$ but not on $u$. The relevant function spaces are
$L^2$-based b-Sobolev spaces (see \S\ref{SecStaticDeSitter}) on the bordification (partial
compactification) of the space, or analogous spaces plus a finite
expansion. Further, the semilinear terms involving $du$ have
coefficients corresponding to the b-structure, i.e.\ b-objects are
used to create functions from the differential forms, or equivalently
b-derivatives of $u$ are used.
\item
Kerr-de Sitter space. This is an extension of (1), i.e.\ the framework is
essentially the same, with the difference being
that there is now trapping corresponding to the `photon sphere'. This
makes first order terms in the non-linearity non-perturbative, unless
they are well-adapted to the trapping. Thus,
we assume $\lambda>0$. The
relevant function spaces are as in the asymptotically de Sitter setting.
\item
Global {\em even} asymptotically de Sitter spaces. In order to get reasonable results, one needs to
measure regularity relatively finely, using the module of vector fields
tangent to what used to be the conformal boundary in the
extension. The
relevant function spaces are thus Sobolev spaces with additional
(finite) conormal regularity. Further, $du$ has coefficients
corresponding to the 0-structure of Mazzeo and Melrose, in the same
sense the b-structure was used in (1).
The range of $\lambda$ here is limited
by the process of extension across the boundary; for non-linearities
involving $u$ only, the restriction amounts to (at least very slowly) decaying solutions for
the linear problem (without extension across the conformal boundary).

Another possibility is to view global de Sitter space as a union of static patches. Here, the b-Sobolev spaces on the static parts translate into 0-Sobolev spaces on the global space, which have weights that are shifted by a dimension-dependent amount relative to the weights of the b-spaces. This approach allows many of the non-linearities that we can deal with on static parts; however, the resulting decay estimates on $u$ are quite lossy relative to the decay of the forcing term $f$.
\item
Non-trapping Lorentzian scattering spaces (generalized asymptotically Min\-kowski spaces), $\lambda=0$. Note that if
$\lambda>0$, the type of the equation changes drastically; it
naturally fits into Melrose's scattering algebra\footnote{In many
  ways the scattering algebra is actually much better behaved than the
  b-algebra, in particular it is symbolic in the sense of
  weights/decay. Thus, with numerical modifications, our methods
  should extend directly.} rather than the
b-algebra which can be used for $\lambda=0$. While the results here are quite robust
and there are no issues with trapping, they are more involved as one
needs to keep track of regularity relative to the module of vector
fields tangent to the light cone at infinity. The relevant function
spaces are b-Sobolev spaces with additional b-conormal regularity
corresponding to the aforementioned module. Further, $du$ has coefficients
corresponding to Melrose's scattering structure. These spaces, in the
special case of Minkowski space, are related to the spaces used by
Klainerman \cite{Klainerman:Uniform}, using the infinitesimal
generators of the Lorentz group, but while Klainerman works in an
$L^\infty L^2$ setting, we remain purely in a (weighted) $L^2$ based
setting, as the latter is more amenable to the tools of microlocal analysis.
\end{enumerate}

We reiterate that while the way de Sitter, Minkowski, etc., type
spaces fit into it
differs somewhat, the underlying linear framework is that of $L^2$-based
b-analysis, on manifolds with boundary, except
that in the global view of asymptotically de Sitter spaces one can
eliminate the boundary altogether.

In order to underline the generality of the method, we emphasize that,
corresponding to cases (1) and (2), in b-settings in which one can
work on standard b-Sobolev spaces the restrictions on the
solvability of the semilinear equations are simply given by the presence of
resonances for the Mellin-transformed normal operator in
$\im\sigma\geq 0$, which would allow growing solutions to the
equation (with
  the exception of $\im\sigma=0$, in which case the non-linear iterative arguments
  produce growth unless the non-linearity has a special structure), making the non-linearity non-perturbative, and the losses at high energy estimates for this
Mellin-transformed operator and the closely related b-principal symbol
estimates when one has trapping. (It is these losses that cause the
  difference in the trapping setting between non-linearities with or
  without derivatives.) In particular, the results are necessarily
optimal in the non-trapping setting of (1), as shown even by an ODE,
see Remark~\ref{rmk:ODE}. In the trapping setting it is not clear
precisely what improvements are possible for non-linearities with
derivatives, though when there are no derivatives in the
non-linearity, we already have no restrictions on the non-linearity
and to this extent the result is optimal.

On Lorentzian scattering spaces more general function spaces are used, and it is not in principle clear whether the results are optimal, but at least comparison with the work of Klainerman and Christodoulou for perturbations of Minkowski space \cite{Ch86,Klainerman:Uniform, Klainerman:Null} gives consistent results; see the comments below. On global asymptotically de Sitter spaces, the framework of \cite{Va12} and \cite{Vasy:Microlocal-AH} is very convenient for the linear analysis, but it is not clear to what extent it gives optimal results in the non-linear setting. The reason why more precise function spaces become necessary is the following: There are two basic properties of spaces of functions on manifolds with boundaries, namely differentiability and decay. Whether one can have both at the same time for the linear analysis depends on the (Hamiltonian) dynamical nature of radial points: when defining functions of the corresponding boundaries of the compactified cotangent bundle have opposite character (stable vs.\ unstable) one can have both at the same time, otherwise not; see Propositions~\ref{prop:b-saddle} and \ref{prop:b-source-sink} for details.  For non-linear purposes, the most convenient setting, in which we are in (1), is if one can work with spaces of arbitrarily high regularity and fast decay, and corresponds to saddle points of the flow in the above sense. In (4) however, working in higher regularity spaces, which is necessary in order to be able to make sense of the non-linearity, requires using faster growing (or at least less decaying) weights, which is problematic when dealing with non-linearities (e.g., polynomials) since multiplication gives even worse growth properties then. Thus, to make the non-linear analysis work, the function spaces we use need to have more structure; it is a module regularity that is used to capture some weaker regularity in order to enable work in spaces with acceptable weights.

While all results are stated for the scalar equation, analogous
results hold in many cases for operators on natural vector bundles,
such as the d'Alembertian (or Klein-Gordon operator) on differential
forms, since the linear arguments work in general for operators with
scalar principal symbol whose subprincipal symbol satisfies
appropriate estimates at radial sets, see \cite[Remark~2.1]{Va12},
though of course for semilinear applications the presence of
resonances in the closed upper half plane has to be checked. This
already suffices to obtain the well-posedness of the semilinear
equations on asymptotically de Sitter spaces that we consider in this
paper; for this purpose one needs to know the poles of
  the resolvent of the Laplacian on forms on {\em exact} hyperbolic
  space only. On asymptotically Minkowski spaces, the absence of poles of an
asymptotically hyperbolic resolvent in a region has to be checked
in addition, see Theorem~\ref{thm:asymp-Mink-lin},
and the numerology depends crucially on the delicate balance of
weights and regularity, as alluded to above. Note that on {\em
  perturbations} of Minkowski space, this absence of poles
  follows from the appropriate behavior of the poles of the resolvent of
  the Laplacian on forms on
  {\em exact} hyperbolic space.

The degree to which these non-linear problems have been studied
differ, with the Minkowski problem (on perturbations of Minkowski
space, as opposed to our more general setting) being the most
studied. There semilinear and indeed even quasilinear equations are
well understood due to the work of Christodoulou \cite{Ch86} and
Klainerman \cite{Klainerman:Uniform, Klainerman:Null}, with their
book on the global stability of Einstein's equation
\cite{Christodoulou-Klainerman:Global} being one of the main
achievements. (We also refer to the work of Lindblad and Rodnianski
\cite{Lindblad-Rodnianski:Global-existence,
  Lindblad-Rodnianski:Global-Stability} simplifying some of the
arguments, of Bieri \cite{Bieri:Extensions, Bieri-Zipser:Extensions}
relaxing some of the decay conditions, of Wang
\cite{Wang:Thesis} obtaining asymptotic expansions, and of Lindblad \cite{Lindblad:Global} for results on a
class of quasilinear equations. H\"ormander's book
\cite{Hormander:Nonlinear} provides further references in the general
area. There are numerous works on the {\em linear} problem, and
estimates this yields for the non-linear problems, such as Strichartz
estimates; here we refer to the recent work of Metcalfe and Tataru
\cite{Metcalfe-Tataru:Global} for a parametrix construction in low
regularity, and references therein.) Here we obtain results comparable to these (when
restricted to the semilinear setting), on a larger class of manifolds,
see Remark~\ref{rmk:Christodoulou}. For non-linearities which do not
involve derivatives, slightly stronger results have been obtained, in
a slightly different setting, in \cite{Ch06}; see
Remark~\ref{rmk:Chrusciel}.

On the other hand, there is little (non-linear) work on the
asymptotically de Sitter and Kerr-de Sitter settings; indeed the only
paper the authors are aware of is that of Baskin \cite{Baskin:Strichartz} in
roughly comparable generality in terms of the setting,
though in {\em exact} de Sitter space
Yagdjian \cite{Yagdjian:Global, Yagdjian:Semilinear} has studied
a large class of semilinear equations with no derivatives. Baskin's
result is for a semilinear equation with no derivatives and a single
exponent, using his parametrix construction \cite{Baskin:Parametrix},
namely $u^p$ with\footnote{The dimension of the spacetime in Baskin's
  paper is $n+1$; we continue using our notation above.}
$p=1+\frac{4}{n-2}$, and for $\lambda>(n-1)^2/4$. In the same setting,
$p>1+\frac{4}{n-1}$ works for us, and thus Baskin's setting is in
particular included. Yagdjian works with the explicit solution
operator (derived using special functions) in exact de Sitter space,
again with no derivatives in the non-linearity. While there are some
exponents that his results cover (for $\lambda>(n-1)^2/4$, all $p>1$
work for him) that ours do not directly (but indirectly, via the
static model, we in fact obtain such results), the range
$(\frac{(n-1)^2}{4}-\frac{1}{4},\frac{(n-1)^2}{4})$ is excluded by him while
covered by our work for sufficiently large $p$. In the (asymptotically) Kerr-de Sitter
setting, to our knowledge, there has been no similar semilinear work,
however Luk \cite{LukKerrNonlinear} and Tohaneanu
\cite{TohaneanuKerrStrichartz} studied semilinear waves on Kerr
spacetimes. We recall finally that there
  is more work on the linear problem in de Sitter, de
  Sitter-Schwarzschild and Kerr-de Sitter spaces. We refer to
  \cite{Va12} for more detail; some references are Polarski
  \cite{Polarski:Hawking}, Yagdjian
and Galstian \cite{Yagdjian-Galstian:De-Sitter}, S\'a Barreto and Zworski
\cite{Sa-Barreto-Zworski:Distribution}, Bony and H\"afner
\cite{Bony-Haefner:Decay},
Vasy \cite{Va07}, Baskin \cite{Baskin:Parametrix}, Dafermos and
Rodnianski \cite{Dafermos-Rodnianski:Sch-dS}, 
Melrose, S\'a Barreto and Vasy
\cite{Melrose-SaBarreto-Vasy:Asymptotics},
Dyatlov \cite{Dy11a, Dy11b}. Also, while it received more attention,
the linear problem on Kerr space does not fit directly
into our setting; see the introduction of \cite{Va12} for an
explanantion and for further references,
\cite{Dafermos-Rodnianski:Lectures} for more background and additional
references.

While the basic ingredients of the necessary linear b-analysis were
analyzed in \cite{Va12}, the solvability framework was only discussed
in the dilation invariant setting, and in general the asymptotic
expansion results were slightly lossy in terms of derivatives in the
non-dilation invariant case. We remedy these issues in this paper,
providing a full Fredholm framework. The key technical tools are the
propagation of b-singularities at b-radial points which are
saddle points of the flow in $\Sb^*M$, see
Proposition~\ref{prop:b-saddle}, as well as the b-normally hyperbolic
versions, proved in \cite{HintzVasyNormHyp}, of the semiclassical
normally hyperbolic trapping estimates of Wunsch and Zworski \cite{Wu11}; the
rest of the Fredholm setup is discussed in
\S\ref{SecStaticDeSitter-Fredholm} in the non-trapping and
\S\ref{SecKdS-Fredholm} in the normally hyperbolic trapping setting. The analogue of
Proposition~\ref{prop:b-saddle} for sources/sinks was already proved
in \cite[\S4]{Ba13}; our Lorentzian scattering metric Fredholm
discussion, which relies on this, is in
\S\ref{SecMinkowski-Fredholm}.

We emphasize that our analysis
would be significantly less cumbersome in terms of technicalities if we were not including Cauchy
hypersurfaces and solved a globally well-behaved problem by imposing
sufficiently rapid decay at past infinity instead (it is standard to
convert a Cauchy problem into a forward solution problem). Cauchy
hypersurfaces are only necessary for us if we deal with a problem
ill-behaved in the past because complex absorption does not force
appropriate forward supports even though it does so at the level of singularities; otherwise we can work with appropriate
(weighted) Sobolev spaces. The latter is the
case with Lorentzian scattering spaces, which thus provide an ideal
example for our setting. It can also be done in the global setting of
asymptotically de Sitter spaces, as in setting (3) above, essentially by realizing
these as the boundary of the appropriate compactification of a
Lorentzian scattering space, see \cite{Va13}. In the case of Kerr-de
Sitter black holes, in the presence of dilation invariance, one has
access to a similar luxury; complex absorption does the job as in
\cite{Va12}; the key aspect is that it needs to be imposed {\em
  outside} the static region we consider. For a general Lorentzian b-metric with a normally
hyperbolic trapped set, this may not be easy to arrange, and we do
work by adding Cauchy hypersurfaces, even at the cost of the
resulting, rather
artificial in terms of PDE theory, technical complications. For
perturbations of Kerr-de Sitter space, however, it is possible to
forego the latter for well-posedness by an appropriate gluing to
complete the space with actual Kerr-de Sitter space in the past for
the purposes of functional analysis. We remark that Cauchy
hypersurfaces are somewhat ill-behaved for $L^2$ based estimates,
which we use, but
match $L^\infty L^2$ estimates quite well, which explains the large
role they play in existing hyperbolic theory, such as
\cite{Klainerman:Uniform} or \cite[Chapter~23.2]{Ho83}. We hope that
adopting this more commonly used form of `truncation' of hyperbolic
problems will aid the readability of the paper.

We also explain the role that the energy estimates (as opposed to
microlocal energy estimates) play. These mostly enter to deal with the
artificially introduced boundaries; if other methods were used to
truncate the flow, their role reduces to checking that in certain
cases, when the microlocal machinery only guarantees Fredholm
properties of the underlying linear operators, the potential finite
dimensional kernel and cokernel are indeed trivial. Asymptotically
Minkowski spaces illustrate this best, as the Hamilton flow is globally
well-behaved there; see \S\ref{SecMinkowski-Fredholm}.

The other key technical tool is the algebra property of b-Sobolev
spaces and other spaces with additional conormal regularity. These are
stated in the respective sections; the case of the standard b-Sobolev
spaces reduces to the algebra property of the standard Sobolev spaces
on $\RR^n$. Given the algebra properties, the
results are proved by applying the contraction mapping theorem to the linear operator.

In summary,
the plan of this paper is the following. In each of the sections below
we consider one of these settings, and first describe the Sobolev spaces on
which one has invertibility for the linear problems of interest, then
analyze the algebra properties of these Sobolev spaces, finally
proving the solvability of the semilinear equations by checking that
the hypotheses of the contraction mapping theorem are satisfied.

The authors are grateful to Dean Baskin, Rafe Mazzeo, Richard Melrose,
Gunther
Uhlmann, Jared Wunsch and Maciej Zworski for their interest and
support. In particular, the overall strategy reflects Melrose's vision
for solving non-linear PDE globally. The authors are also very grateful to an anonymous referee for many comments which improved the
exposition in the paper.

%%%%%%%%%%%%%%%%%%%%%%%%%%%%%%%%%%%%%%%%%%%%%%%%%%%%%%%%%%%%%%%%%%%%%
\section{Asymptotically de Sitter spaces: generalized static model}
\label{SecStaticDeSitter}

In this section we discuss solving semilinear wave equations on asymptotically de Sitter spaces from the `static perspective', i.e.\ in neighborhoods (in a blown-up space) of the backward light cone from a fixed point at future conformal infinity; see Figure~\ref{FigBlowUp}. The main ingredient is extending the linear theory from that of \cite{Va12} in various ways, which is the subject of \S\ref{SecStaticDeSitter-Fredholm}. In the following parts of this section we use this extension to solve semilinear equations, and to obtain their asymptotic behavior.

First, however, we recall some of the basics of b-analysis. As a general reference, we refer the reader to \cite{Me93}. Thus, let $M$ be an $n$-dimensional manifold with boundary $X$, and denote by $\Vb(M)$ the space of \emph{b-vector fields}, which consists of all vector fields on $M$ which are tangent to $X$. Elements of $\Vb(M)$ are sections of a natural vector bundle over $M$, the \emph{b-tangent bundle} $\Tb M$. Its dual, the \emph{b-cotangent bundle}, is denoted $\Tb^*M$. In local coordinates $(\tau,z)\in[0,\infty)\times\R^{n-1}$ near the boundary, the fibers of $\Tb M$ are spanned by $\tau\pa_\tau,\pa_{z_1},\ldots,\pa_{z_{n-1}}$, with $\tau\pa_\tau$ being a non-trivial b-vector field up to and including $\tau=0$ (even though it degenerates as an ordinary vector field), while the fibers of $\Tb^*M$ are spanned by $\frac{d\tau}{\tau},dz_1,\ldots,dz_{n-1}$. A \emph{b-metric} $g$ on $M$ is then simply a non-degenerate section of the second symmetric tensor power of $\Tb^*M$, i.e.\ of the form
\[
  g = g_{00}(\tau,z)\frac{d\tau^2}{\tau^2} + \sum_{i=1}^{n-1} g_{0i}(\tau,z)\Bigl(\frac{d\tau}{\tau}\otimes dz_i + dz_i\otimes\frac{d\tau}{\tau}\Bigr) + \sum_{i,j=1}^{n-1} g_{ij}(\tau,z)dz_i\otimes dz_j,
\]
$g_{ij}=g_{ji}$, with smooth coefficients $g_{k\ell}$. In terms of the coordinate $t=-\log\tau\in\R$, thus $\frac{d\tau}{\tau}=-dt$, the b-metric $g$ therefore approaches a stationary ($t$-independent in the local coordinate system) metric exponentially fast, as $\tau=e^{-t}$.

The \emph{b-conormal bundle} $\Nb^*Y$ of a boundary submanifold $Y\subset X$ of $M$ is the subbundle of $\Tb^*_Y M$ whose fiber over $p\in Y$ is the annihilator of vector fields on $M$ tangent to $Y$ and $X$. In local coordinates $(\tau,z',z'')$, where $Y$ is defined by $z'=0$ in $X$, these vector fields are smooth linear combinations of $\tau\pa_\tau$, $\pa_{z''_j}$, $z'_i\pa_{z'_j}$, $\tau\pa_{z'_k}$, whose span in $\Tb_p M$ is that of $\tau\pa_\tau$ and $\pa_{z''_j}$, and thus the fiber of the b-conormal bundle is spanned by the $dz'_j$, i.e.\ has the same dimension as the codimension of $Y$ in $X$ (and {\em not} that in $M$, corresponding to $\frac{d\tau}{\tau}$ not annihilating $\tau\pa_\tau$).

We define the \emph{b-cosphere bundle} $\Sb^*M$ to be the quotient of
$\Tb^*M\setminus o$ by the $\R^+$-action; here $o$ is the zero
section. Likewise, we define the spherical b-conormal bundle of a
boundary submanifold $Y\subset X$ as the quotient of $\Nb^*Y\setminus
o$ by the $\R^+$-action; it is a submanifold of $\Sb^*M$. A better way
to view $\Sb^*M$ is as the boundary at fiber infinity of the
fiber-radial compactification $\rcTb^*M$ of $\Tb^*M$, where the fibers
are replaced by their radial compactification, see \cite[\S2]{Va12}
and also \S\ref{SecMinkowski-Fredholm}. The b-cosphere bundle
$\Sb^*M\subset\rcTb^*M$ still contains the boundary of the compactification of the `old' boundary $\rcTb^*_XM$, see Figure~\ref{fig:Tb*M}.

\begin{figure}[!ht]
  \centering
  \includegraphics{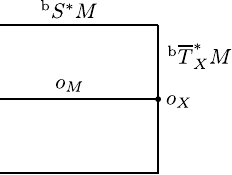}
  \caption{The radially compactified cotangent bundle $\rcTb^*M$ near $\rcTb^*_XM$; the cosphere bundle $\Sb^*M$, viewed as the boundary at fiber infinity of $\rcTb^*M$, is also shown, as well as the zero section $o_M\subset\rcTb^*M$ and the zero section over the boundary $o_X\subset\rcTb^*_XM$.}
  \label{fig:Tb*M}
\end{figure}

Next, the algebra $\Diffb(M)$ of \emph{b-differential operators} generated by $\Vb(M)$ consists of operators of the form
\[
  \cP = \sum_{|\alpha|+j\leq m} a_\alpha(\tau,z)(\tau D_\tau)^j D_z^\alpha,
\]
with $a_\alpha\in\CI(M)$, writing $D=\frac{1}{i}\pa$ as usual. (With
$t=-\log\tau$ as above, the coefficients of $\cP$ are thus constant up
to exponentially decaying remainders as $t\to\infty$.) Writing elements of $\Tb^*M$ as
\begin{equation}
\label{EqBDualCoords}
  \sigma\,\frac{d\tau}{\tau}+\sum_j\zeta_j\,dz_j,
\end{equation}
we have the principal symbol
\[
  \sigma_{\bl,m}(\cP) = \sum_{|\alpha|+j=m} a_\alpha(\tau,z) \sigma^j\zeta^\alpha,
\]
which is a homogeneous degree $m$ function in $\Tb^*M\setminus o$. Principal symbols are multiplicative, i.e.\ $\sigma_{\bl,m+m'}(\cP\circ\cP')=\sigma_{\bl,m}(\cP)\sigma_{\bl,m'}(\cP')$, and one has a connection between operator commutators and Poisson brackets, to wit
\[
  \sigma_{\bl,m+m'-1}(i[\cP,\cP']) = \sH_p p', \quad p=\sigma_{\bl,m}(\cP), p'=\sigma_{\bl,m'}(\cP'),
\]
where $\sH_p$ is the extension of the Hamilton vector field from $T^*M^\circ\setminus o$ to $\Tb^*M\setminus o$, which is thus a homogeneous degree $m-1$ vector field on $\Tb^*M\setminus o$ tangent to the boundary $\Tb^*_X M$. In local coordinates $(\tau,z)$ on $M$ near $X$, with b-dual coordinates $(\sigma,\zeta)$ as in \eqref{EqBDualCoords}, this has the form
\begin{equation}
\label{eq:b-Ham-vf}
  \sH_p=(\pa_\sigma p)(\tau\pa_\tau)-(\tau\pa_\tau p)\pa_\sigma+\sum_j \big((\pa_{\zeta_j}p)\pa_{z_j}-(\pa_{z_j}p)\pa_{\zeta_j}\big),
\end{equation}
see \cite[Equation~(3.20)]{Ba13}, where a somewhat different notation is used, given by \cite[Equation~(3.19)]{Ba13}. 

While elements of $\Diffb(M)$ commute to leading order in the symbolic sense, they do not commute in the sense of the order of decay of their coefficients. (This is in contrast to the scattering algebra, see \cite{Me94}.) The \emph{normal operator} captures the leading order part of $\cP\in\Diffb^m(M)$ in the latter sense, namely
\[
  N(\cP) = \sum_{j+|\alpha|\leq m} a_\alpha(0,z)(\tau D_\tau)^j D_z^\alpha.
\]
One can define $N(\cP)$ invariantly as an operator on the model space $M_I:=[0,\infty)_\tau\times X$ by fixing a boundary defining function of $M$, see \cite[\S3]{Va12}. Identifying a collar neighborhood of $X\subset M$ with a neighborhood of $\{0\}\times X$ in $M_I$, we then have $\cP-N(\cP)\in\tau\Diffb^m(M)$ (near $\pa M$). Since $N(\cP)$ is dilation-invariant (equivalently: translation-invariant in $t=-\log\tau$), it is naturally studied via the Mellin transform in $\tau$ (equivalently: Fourier transform in $-t$), which leads to the \emph{(Mellin transformed) normal operator family}
\[
  \wh N(\cP)(\sigma) \equiv \wh\cP(\sigma) = \sum_{j+|\alpha|\leq m} a_\alpha(0,z)\sigma^j D_z^\alpha,
\]
which is a holomorphic family of operators $\wh\cP(\sigma)\in\Diff^m(X)$.

Passing from $\Diffb(M)$ to the algebra of \emph{b-pseudodifferential
  operators} $\Psib(M)$ amounts to allowing symbols to be more general
functions than polynomials; apart from symbols being smooth functions
on $\Tb^*M$ rather than on $T^*M$ if $M$ was boundaryless, this is
entirely analogous to the way one passes from differential to
pseudodifferential operators, with the technical details being a bit
more involved. One can have a rather accurate picture of
b-pseudodifferential operators, however, by considering the following:
For $a\in\CI(\Tb^* M)$, we say $a\in S^m(\Tb^* M)$ if $a$ satisfies
  \[
    |\pa_w^\alpha\pa_\xi^\beta a(w,\xi)|\leq C_{\alpha\beta}\la\xi\ra^{m-|\beta|}\tn{ for all multiindices }\alpha,\beta
  \]
  in any coordinate chart, where $w$ are coordinates in the base and
  $\xi$ coordinates in the fiber; more precisely, in local coordinates
  $(\tau,z)$ near $X$, we take $\xi=(\sigma,\zeta)$ as above.
  We define the quantization $\Op(a)$ of $a$, acting on smooth functions $u$ supported in a coordinate chart, by
  \begin{align*}
    \Op(a)u(\tau,z)=(2\pi)^{-n}\int &e^{i(\tau-\tau')\wt\sigma+i(z-z')\zeta}\phi\left(\frac{\tau-\tau'}{\tau}\right) \\
	  &\hspace{1cm}\times a(\tau,z,\tau\wt\sigma,\zeta)u(\tau',z')\,d\tau'\,dz'\,d\wt\sigma\,d\zeta,
  \end{align*}
  where the $\tau'$-integral is over $[0,\infty)$, and
  $\phi\in\CI_c((-1/2,1/2))$ is identically $1$ near $0$. The
    cutoff $\phi$ ensures that these operators lie in the `small
    b-calculus' of Melrose, in particular that such quantizations act
    on weighted b-Sobolev spaces, defined below. For general $u$,
    define $\Op(a)u$ using a partition of unity. We write
    $\Op(a)\in\Psib^m(M)$; every element of $\Psib^m(M)$ is of the
    form $\Op(a)$ for some $a\in S^m(\Tb^*M)$ modulo the set $\Psib^{-\infty}(M)$ of
    smoothing operators. We say that $a$ is a \emph{symbol} of
    $\Op(a)$. The equivalence class of $a$ in
    $S^m(\Tb^*M)/S^{m-1}(\Tb^*M)$ is invariantly defined on $\Tb^*M$
    and is called the \emph{principal symbol} of $\Op(a)$.

If $A\in\Psib^{m_1}(M)$ and $B\in\Psib^{m_2}(M)$, then
$AB,BA\in\Psib^{m_1+m_2}(M)$, while
$[A,B]\in\Psib^{m_1+m_2-1}(M)$, and its principal symbol is
$\frac{1}{i}\sH_a b\equiv\frac{1}{i}\{a,b\}$, with $\sH_a$ as above.

Lastly, we recall the notion of \emph{b-Sobolev spaces}: Fixing a volume b-density $\nu$ on $M$, which locally is a positive multiple of $|\frac{d\tau}{\tau}\,dz|$, we define for, $s\in\N$,
\[
  \Hb^s(M) = \bigl\{ u\in L^2(M,\nu) \colon V_1\cdots V_j u\in L^2(M,\nu), V_i\in\Vb(M), 1\leq i\leq j\leq s \bigr\},
\]
which one can extend to $s\in\R$ by duality and
interpolation. \emph{Weighted b-Sobolev spaces} are denoted
$\Hb^{s,\alpha}(M)=\tau^\alpha\Hb^s(M)$, i.e.\ its elements are of the
form $\tau^\alpha u$ with $u\in\Hb^s(M)$. Any b-pseudodifferential
operator $\cP\in\Psib^m(M)$ defines a bounded linear map
$\cP\colon\Hb^{s,\alpha}(M)\to\Hb^{s-m,\alpha}(M)$ for all
$s,\alpha\in\R$. Correspondingly, there is a notion of wave front set
$\WFb^{s,\alpha}(u)\subset\Sb^*M$ for a distribution
$u\in\Hb^{-\infty,\alpha}(M)$, defined analogously to the wave front
set of distributions on $\R^n$ or closed manifolds. A point
$\varpi\in\Sb^*M$ is \emph{not} in $\WFb^{s,\alpha}(u)$ if and only if
there exists $\cP\in\Psib^0(M)$, elliptic at $\varpi$ (i.e.\ with
principal symbol non-vanishing on the ray corresponding to $\varpi$),
such that $\cP u\in\Hb^{s,\alpha}(M)$. Notice however that we
\emph{do} need to have a priori control on the weight $\alpha$ (we are
{\em assuming} $u\in\Hb^{-\infty,\alpha}(M)$), which again
reflects the lack of commutativity of $\Psib(M)$ even to leading order
in the sense of decay of coefficients at $\pa M$.

%%%%%%%%%%%%%%%%%%%%%%%%%%%%%%%%%%%%%%%%%%%%%%%%%
\subsection{The linear Fredholm framework}
\label{SecStaticDeSitter-Fredholm}

The goal of this section is to fully extend the results of \cite{Va12} on linear estimates for wave equations for b-metrics to non-dilation invariant settings, and to explicitly discuss Cauchy hypersurfaces since \cite{Va12} concentrated on complex absorption.  Namely, while the results of \cite{Va12} on linear estimates for wave equations for b-metrics are optimally stated when the metrics and thus the corresponding operators are dilation-invariant, i.e.\ when near $\tau=0$ the normal operator can be identified with the operator itself, see \cite[Lemma~3.1]{Va12}, the estimates for Sobolev derivatives are lossy for general b-metrics in \cite[Proposition~3.5]{Va12}, essentially because one should not treat the difference between the normal operator and the actual operator purely as a perturbation. Therefore, we first strengthen the linear results in \cite{Va12} in the non-dilation invariant setting by analyzing b-radial points which are saddle points of the Hamilton flow. This is similar to \cite[\S4]{Ba13}, where the analogous result was proved when the b-radial points are sources/sinks. This is then used to set up a Fredholm framework for the linear problem. If one is mainly interested in the dilation invariant case, one can use \cite[Lemma~3.1]{Va12} in place of Theorems~\ref{ThmPFredholm}-\ref{thm:b-main} below, either adding the boundary corresponding to $H_2$ below, or still using complex absorption as was done in \cite{Va12}.

So suppose $\cP\in\Psib^m(M)$, $M$ a manifold with boundary. (The dilation-invariant analysis of \cite[\S2]{Va12} applies to the Mellin transformed normal operator $\wh\cP(\sigma)$.) Let $p$ be the principal symbol of $\cP$, which we assume to be {\em real-valued}, and let $\sH_p$ be the Hamilton vector field of $p$. Let $\wt\rho$ denote a homogeneous degree $-1$ defining function of $\Sb^*M$. Then the rescaled Hamilton vector field
\[
  V=\wt\rho^{m-1}\sH_p
\]
is a $\CI$ vector field on $\rcTb^*M$ away from the 0-section, and it is tangent to all boundary faces. The characteristic set $\Sigma$ is the zero-set of the smooth function $\wt\rho^m p$ in $\Sb^*M$.  We refer to the flow of $V$ in $\Sigma\subset\Sb^*M$ as the Hamilton, or (null)bicharacteristic flow; its integral curves, the (null)bicharacteristics, are reparameterizations of those of the Hamilton vector field $\sH_p$, projected by the quotient map $\Tb^*M\setminus o\to\Sb^*M$.

%%%%%%%%%%%%%%%%%%%%%%%%%%%%%%
\subsubsection{Generalized b-radial sets}

The standard propagation of singularities theorem in the characteristic set $\Sigma$ in the b-setting is that for $u\in\Hb^{-\infty,r}(M)$, within $\Sigma$, $\WFb^{s,r}(u)\setminus\WFb^{s-m+1,r}(\cP u)$ is a union of maximally extended integral curves (i.e.\ null-bicharacteristics) of $\cP$.  This is vacuous at points where $V$ vanishes (as a smooth vector field); these points are called {\em radial points}, since at such a point $\sH_p$ itself (on $\Tb^*M\setminus o$) is radial, i.e.\ is a multiple of the generator of the dilations of the fiber of the b-cotangent bundle. At a radial point $\alpha$, $V$ acts on the ideal $\cI$ of $\CI$ functions vanishing at $\alpha$, and thus on $T^*_\alpha\rcTb^*M$, which can be identified with $\cI/\cI^2$.  Since $V$ is tangent to both boundary hypersurfaces, given by $\tau=0$ and $\wt\rho=0$, $d\tau$ and $d\wt\rho$ are automatically eigenvectors of the linearization of $V$. We are interested in a generalization of the situation in which we have a smooth submanifold $L$ of $\Sb^*_X M$ consisting of radial points which is a source/sink for $V$ within $\Tb^*_X M$ but if it is a source, so in particular $d\wt\rho$ is in an unstable eigenspace, then $d\tau$ is in the (necessarily one-dimensional) stable eigenspace, and vice versa. Thus, $L$ is a saddle point of the Hamilton flow.

In view of
the bicharacteristic flow on Kerr-de Sitter space (which, unlike the
non-rotating de Sitter-Schwarzschild black holes, does not have this
precise radial point structure), it is
important to be slightly more general, as in
\cite[\S2.2]{Va12}. Thus, we assume that $dp$ does not vanish
where $p$ does, i.e.\ at $\Sigma$, and is linearly independent of
$d\tau$ at $\{\tau=0,p=0\}=\Sigma\cap\Sb^*_XM$, so $\Sigma$ is
a smooth submanifold of $\Sb^*M$ transversal to $\Sb^*_XM$. For
$L$ assume simply that $L=L_+\cup L_-$,
$L_\pm$ smooth disjoint submanifolds of $\Sb^*_X M$, given by
$\cL_\pm\cap\Sb^*_XM$ where $\cL_\pm$ are smooth disjoint
submanifolds 
of $\Sigma$ transversal to $\Sb^*_X M$ (these play the role of the two halves of the conormal bundles of
  event horizons), defined locally near $\Sb^*_X M$, with $\wt\rho^{m-1}\sH_p$ tangent to $\cL_\pm$, with a homogeneous degree zero
quadratic defining function $\rho_0$ (explained below) of $\cL$ within $\Sigma$ such
that
\begin{equation}\begin{split}\label{eq:tilde-rho-tau-pos}
\wt\rho^{m-2}\sH_p\wt\rho|_{L\pm}=&\mp\beta_0,\qquad
-\wt\rho^{m-1}\tau^{-1}\sH_p\tau|_{L_\pm}=\mp\wt\beta\beta_0,\\
&\beta_0,\wt\beta\in\CI(L_\pm),\ \beta_0,\wt\beta>0,
\end{split}\end{equation}
and, with $\beta_1>0$,
\begin{equation}\label{eq:rho_0-pos}
\mp\wt\rho^{m-1}\sH_p\rho_0-\beta_1\rho_0
\end{equation}
is $\geq 0$ modulo cubic vanishing terms at $L_\pm$. Here the phrase
`quadratic defining function $\rho_0$' means that $\rho_0$ vanishes
  quadratically at $\cL$ (and vanishes only at $\cL$), with the
  vanishing non-degenerate, in the sense that the Hessian is positive
  definite, corresponding to $\rho_0$ being a sum of squares of linear
  defining functions whose differentials span the conormal bundle of
  $\cL$ within $\Sigma$.

Under these assumptions $L_-$ is a
source and $L_+$ is a sink within $\Sb^*_XM$ in the sense that nearby
bicharacteristics {\em within $\Sb^*_XM$} all tend to $L_\pm$ as the parameter along them goes
to $\pm\infty$, but at $L_-$ there is also a stable, and at $L_+$ an
unstable, manifold, namely $\cL_-$, resp.\
$\cL_+$. Indeed, bicharacteristics in $\cL_\pm$ remain there by the
  tangency of $\wt\rho^{m-1}\sH_p$ to $\cL_\pm$; further $\tau\to
  0$ along them as the parameter goes to $\mp\infty$ by
  \eqref{eq:tilde-rho-tau-pos}, at least sufficiently close to
  $\tau=0$, since $L_\pm$ are defined in $\cL_\pm$ by $\tau=0$.

In order to simplify the statements, we assume that
$$
\wt\beta\ \text{is constant on}\ L_\pm;\ \wt\beta=\beta>0;
$$
we refer the reader to \cite[Equation~(2.5)-(2.6)]{Va12}, and the
discussion throughout that paper, where a general $\wt\beta$ is
allowed, at the cost of either $\sup\wt\beta$ or $\inf\wt\beta$
playing a role in various statements depending on signs.
Finally, we assume that $\cP-\cP^*\in\Psib^{m-2}(M)$ for
convenience (with respect to some b-metric), as this is the case for
the Klein-Gordon equation.\footnote{The natural assumption is that the principal symbol of
$\frac{1}{2i}(\cP-\cP^*)\in\Psib^{m-1}(M)$ at $L_\pm$ is
$$
\pm\wh\beta\beta_0\wt\rho^{-m+1},\qquad\wh\beta\in\CI(L_\pm).
$$
If $\wh\beta$ vanishes, Proposition~\ref{prop:b-saddle} is valid
without a change; otherwise it shifts the threshold quantity
$s-(m-1)/2-\beta r$ below in Proposition~\ref{prop:b-saddle} to
$s-(m-1)/2-\beta r+\wh\beta$ if $\wh\beta$ is constant, with
modifications as in \cite[Proof of Propositions~2.3-2.4]{Va12} otherwise.}

\begin{prop}\label{prop:b-saddle}
Suppose $\cP$ is as above.

If  $s\geq s'$, $s'-(m-1)/2>\beta r$, and if $u\in\Hb^{-\infty,r}(M)$
then $L_\pm$ (and thus a neighborhood of $L_\pm$) is disjoint from
$\WFb^{s,r}(u)$ provided $L_\pm\cap\WFb^{s-m+1,r}(\cP u)=\emptyset$, $L_\pm\cap\WFb^{s',r}(u)=\emptyset$,
and in a neighborhood of $L_\pm$, $\cL_\pm\cap\{\tau>0\}$
are disjoint from $\WFb^{s,r}(u)$.

On the other hand, if $s-(m-1)/2<\beta r$, and if $u\in\Hb^{-\infty,r}(M)$
then $L_\pm$ (and thus a neighborhood of $L_\pm$) is disjoint from
$\WFb^{s,r}(u)$ provided $L_\pm\cap\WFb^{s-m+1,r}(\cP u)=\emptyset$
and a punctured neighborhood of $L_\pm$, with $L_\pm$ removed, in
$\Sigma\cap\Sb^*_XM$ is disjoint from $\WFb^{s,r}(u)$.
\end{prop}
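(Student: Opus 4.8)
The plan is to prove this by a positive commutator argument localized near $L_\pm$, following the standard template for radial point propagation (as in \cite[Section~2]{Va12} and \cite[Section~4]{Ba13}), with the key new feature being that the weight in the $\tau$-direction behaves oppositely to the weight at fiber infinity, so the two estimates (regularity and decay) decouple and one gets a genuine saddle-point statement. First I would set up a pseudodifferential commutant $A\in\Psib^{s-(m-1)/2,r}(M)$ whose symbol is microsupported in a small neighborhood of $L_\pm$ and which is elliptic at $L_\pm$; the symbol should be of the form $a = \tilde\rho^{-(s-(m-1)/2)}\tau^{-r}\chi(\rho_0)\chi_1(\tilde\rho)\chi_2(\tau)\psi$ where $\psi$ cuts off to a conic neighborhood of $L_\pm$ in $\Sigma$, and the $\chi$'s localize in the defining function $\rho_0$ of $\cL$ within $\Sigma$, in $\tilde\rho$ (fiber infinity), and in $\tau$ (the boundary). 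The crucial computation is $\tilde\rho^{m-1}\sH_p a$: using \eqref{eq:tilde-rho-tau-pos} one gets that differentiating the factor $\tilde\rho^{-(s-(m-1)/2)}$ produces $\pm(s-(m-1)/2)\beta_0$ times $a$, while differentiating $\tau^{-r}$ produces $\mp r\beta\beta_0$ times $a$ (using $\tilde\beta=\beta$), so the total coefficient of $a^2$ in the principal symbol of $i(\cP^*A^*A - A^*A\cP)$ is, up to positive factors, $\mp\beta_0\big(s-(m-1)/2-\beta r\big)$ modulo terms supported away from $L_\pm$ (controlled by the $\WFb$ hypotheses) or of lower order.

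For the first assertion ($s'-(m-1)/2>\beta r$, hence $s-(m-1)/2>\beta r$ since $s\ge s'$): here the sign of the commutator is favorable in the sense that one can propagate \emph{into} $L_\pm$ along the incoming directions. The term from differentiating $\chi(\rho_0)$ has a sign controlled by \eqref{eq:rho_0-pos} — it has the right (definite) sign along the unstable manifold $\cL_\pm$ — but the error it produces is microsupported in $\cL_\pm\cap\{\tau>0\}$, which is exactly where we assume $\WFb^{s,r}(u)=\emptyset$, so it is an a priori controlled term. The terms from differentiating $\chi_1(\tilde\rho)$ and $\chi_2(\tau)$ are microsupported away from $L_\pm$ in $\Sigma\cap\{\tau>0\}$ and at points already excluded (or can be arranged to be, by choosing the cutoffs so that $\sH_p$ points outward on their supports), again controlled by the hypotheses or handled by a standard iteration/regularization. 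One also needs the a priori regularity $u\in\Hb^{s',r}$ near $L_\pm$ to make sense of the pairing $\langle A^*A\cP u,u\rangle$ and to run the usual regularization (inserting $\Lambda_\epsilon\in\Psib^{-\delta,0}$ and letting $\epsilon\to0$) with uniform bounds; the condition $s'-(m-1)/2>\beta r$ is precisely what guarantees the threshold is already crossed at the a priori level. The conclusion is the estimate $\|Au\|^2 \lesssim \|u\|_{\Hb^{s',r}}^2 + \|\cP u\|_{\Hb^{s-m+1,r}}^2 + (\text{controlled WF terms})$, giving $L_\pm\cap\WFb^{s,r}(u)=\emptyset$.

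For the second assertion ($s-(m-1)/2<\beta r$): now the commutator has the opposite sign, and one is in the situation where one can only propagate \emph{out of} $L_\pm$ — i.e., one must feed in control on a punctured neighborhood of $L_\pm$ within $\Sigma\cap\Sb^*_XM$ and deduce control at $L_\pm$ itself. The commutant construction is similar, but now the main term $\mp\beta_0(s-(m-1)/2-\beta r)a^2$ has the \emph{wrong} sign to be absorbed, so instead one moves it to the other side: it becomes a term one must dominate, and it is dominated precisely by the a priori control of $u$ on the punctured neighborhood (after using that the flow, restricted to $\Sb^*_XM$, pushes the relevant microlocal region off $L_\pm$ into the punctured neighborhood where $\WFb^{s,r}(u)=\emptyset$). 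No a priori regularity at $L_\pm$ is needed here because the sign works in our favor for the regularization — this is why the hypothesis is merely $u\in\Hb^{-\infty,r}$. I expect the main obstacle to be the careful bookkeeping of the error terms from the localizers $\chi_1(\tilde\rho)$ and $\chi(\rho_0)$: one must choose the neighborhoods and cutoff functions so that on the supports of $\chi_1'$ and $\chi'$ the vector field $\tilde\rho^{m-1}\sH_p$ has a definite sign (pointing out of the support region), so that these contribute terms of a fixed sign that can either be dropped or are microsupported in the a priori controlled sets $\cL_\pm\cap\{\tau>0\}$ (first case) or the punctured neighborhood (second case); this is exactly the delicate geometric input packaged in \eqref{eq:tilde-rho-tau-pos}--\eqref{eq:rho_0-pos}, and getting all the signs consistent with the saddle structure is the heart of the argument.
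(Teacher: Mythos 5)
Your overall strategy is the paper's: a positive commutator argument near $L_\pm$ with a commutant built from the weight $\tilde\rho^{-s+(m-1)/2}\tau^{-r}$ and cutoffs in $\rho_0$, $\tau$ and the characteristic set, the main contribution coming from the weight factors (producing $\mp\beta_0(s-(m-1)/2-\beta r)$ times the symbol squared), and the a priori $s'$-regularity being needed because the regularizer contributes a term of sign opposite to the main weight term, limiting the admissible regularization. All of this matches the proof in the paper.

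The genuine gap is in the sign bookkeeping for the cutoff-differentiation terms, which is precisely where the two cases diverge. In the first case ($s'-(m-1)/2>\beta r$), you assert that the error from differentiating $\chi(\rho_0)$ is microsupported in $\cL_\pm\cap\{\tau>0\}$ and needs the a priori $\WFb^{s,r}$ control. But $\chi'(\rho_0)$ is supported where $\rho_0$ is bounded away from $0$, i.e.\ \emph{away} from $\cL_\pm$, and in this case that term in fact has the \emph{same} sign as the main weight contribution by \eqref{eq:rho_0-pos}, so it can simply be dropped; the term that has the opposite sign and requires a priori control is the $\tau$-cutoff differentiation term ($\phi_1'$ in the paper, your $\chi_2'$), whose support lies at $\tau>0$ on the support of the $\rho_0$-cutoff --- this is the set $\cL_\pm\cap\{\tau>0\}$. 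In the second case ($s-(m-1)/2<\beta r$), you claim the main weight term itself has the wrong sign and must be moved to the other side and dominated by the a priori bounds. If that were literally so, the positive commutator argument would collapse: there would be no dominant positive quantity to give the estimate. What actually happens is that the overall sign $\mp$ in $\sigma_{2s}(i[\cP,C^*C])$ is chosen to match the sign of $s-(m-1)/2-\beta r$ (this is the change of direction of propagation), so the main weight term always has the favorable sign; it is the $\phi'(\rho_0)\,\tilde\rho^{m-1}\sH_p\rho_0$ term that now has the opposite sign, and its support --- $\tau$ small, $\rho_0$ bounded away from $0$ --- is precisely the punctured neighborhood of $L_\pm$ in $\Sb^*_XM$. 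You land on the correct control sets, but the calculus of signs that justifies them is scrambled in both cases and, taken literally in the second case, would break the argument.
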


\begin{rmk}\label{rmk:beta-comp}
The decay order $r$ plays the role of $-\im\sigma$ in \cite{Va12} in
view of the Mellin transform in the dilation invariant setting
identifying weighted b-Sobolev spaces with weight $r$ with
semiclassical Sobolev spaces on the boundary on the line
$\im\sigma=-r$, see \cite[Equation~(3.8)-(3.9)]{Va12}. Thus,
the numerology in this proposition is a direct translation of
that in \cite[Propositions~2.3-2.4]{Va12}.
\end{rmk}

\begin{proof}
We remark first that $\wt\rho^{m-1}\sH_p\rho_0$ vanishes
quadratically on $\cL_\pm$ since $\wt\rho^{m-1}\sH_p$ is tangent to
$\cL_\pm$ and $\rho_0$ itself vanishes there
quadratically. Further, this quadratic expression is positive definite
near $\tau=0$ since it is such at $\tau=0$. Correspondingly, we can strengthen \eqref{eq:rho_0-pos}
to
\begin{equation}\label{eq:rho_0-ext-pos}
\mp\wt\rho^{m-1}\sH_p\rho_0-\frac{\beta_1}{2}\rho_0
\end{equation}
being non-negative modulo cubic terms vanishing at $\cL_\pm$ in a
neighborhood of $\tau=0$.

Notice next that, using \eqref{eq:rho_0-ext-pos} in the first case and
\eqref{eq:tilde-rho-tau-pos} in the second, and that $L_\pm$ is
defined in $\Sigma$ by $\tau=0$, $\rho_0=0$,
there exist $\delta_0>0$ and $\delta_1>0$ such that
$$
\alpha\in\Sigma,\ \rho_0(\alpha)<\delta_0,\ \tau(\alpha)<\delta_1,\ \rho_0(\alpha)\neq 0\Rightarrow (\mp\wt\rho^{m-1}\sH_p\rho_0)(\alpha)>0
$$ 
and
$$
\alpha\in\Sigma,\ \rho_0(\alpha)<\delta_0,\ \tau(\alpha)<\delta_1\Rightarrow (\pm\wt\rho^{m-1}\tau^{-1}\sH_p\tau)(\alpha)>0.
$$
Similarly to \cite[Proof of Propositions~2.3-2.4]{Va12}, which is not
in the b-setting, and \cite[Proof of Proposition~4.4]{Ba13}, which is
but concerns only sources/sinks (corresponding to Minkowski type spaces),
we consider commutants
$$
C\in\tau^{-r}\Psib^{s-(m-1)/2}(M)=\Psib^{s-(m-1)/2,-r}(M)
$$
with
principal symbol
$$
c=\phi(\rho_0)\phi_0(p_0)\phi_1(\tau)\wt\rho^{-s+(m-1)/2}\tau^{-r},\ p_0=\wt\rho^m p,
$$
where $\phi_0\in\CI_c(\RR)$ is identically $1$ near $0$,
$\phi\in\CI_c(\RR)$ is identically $1$ near $0$ with $\phi'\leq 0$ in $[0,\infty)$
and $\phi$ supported in $(-\delta_0,\delta_0)$,
while $\phi_1\in\CI_c(\RR)$ is identically $1$ near $0$ with
$\phi_1'\leq 0$ in $[0,\infty)$ and $\phi_1$ supported
in $(-\delta_1,\delta_1)$, so that
$$
\alpha\in\supp d(\phi\circ\rho_0)\cap\supp(\phi_1\circ\tau)\cap\Sigma\Rightarrow\mp(\wt\rho^{m-1}\sH_p\rho_0)(\alpha)>0,
$$
and
$$
\pm\wt\rho^{m-1}\tau^{-1}\sH_p\tau
$$
remains positive on
$\supp(\phi_1\circ\tau)\cap\supp(\phi\circ\rho_0)$.

The main contribution then comes from the
weights, which give
$$
\wt\rho^{m-1}\sH_p(\wt\rho^{-s+(m-1)/2}\tau^{-r})=\mp(-s+(m-1)/2+\beta r)\beta_0 \wt\rho^{-s+(m-1)/2}\tau^{-r},
$$
where the sign of the factor in parentheses on the right hand side
being negative, resp.\ positive, gives the first, resp.\ the second,
case of the statement of the proposition. Further, the sign of the
term in which $\phi_1(\tau)$, resp.\ $\phi(\rho_0)$, gets
differentiated, yielding $\pm\tau\wt\beta\beta_0\phi_1'(\tau)$, resp.\
$\phi'(\rho_0)\wt\rho^{m-1}\sH_p\rho_0$, is, when $s-(m-1)/2-\beta
r>0$,
the opposite, resp.\ the same, of these terms, while when
$s-(m-1)/2-\beta r<0$, it is the same, resp.\ the opposite, of these
terms. Correspondingly,
\begin{equation*}\begin{split}
\sigma_{2s}(i[\cP,C^*C])=\mp
2\Big(-\beta_0&\Big(s-\frac{m-1}{2}-\beta r\Big)\phi\phi_0\phi_1-\beta_0\wt\beta\tau \phi\phi_0\phi_1'\\
&\mp(\wt\rho^{m-1}\sH_p\rho_0)\phi'\phi_0\phi_1+m\beta_0 p_0\phi\phi'_0\phi_1\Big)\phi\phi_0\phi_1\wt\rho^{-2s}\tau^{-2r}.
\end{split}\end{equation*}
We can regularize using using $S_\ep\in \Psib^{-\delta}(M)$ for
$\ep>0$, uniformly bounded in $\Psib^0(M)$, converging to $\Id$ in
$\Psib^{\delta'}(M)$ for $\delta'>0$, with principal symbol
$(1+\ep\wt\rho^{-1})^{-\delta}$, as in \cite[Proof of
Propositions~2.3-2.4]{Va12}, where the only difference was that
the calculation was on $X=\pa M$, and thus the pseudodifferential
operators were standard ones, rather than b-pseudodifferential
operators. The a priori regularity assumption on $\WFb^{s',r}(u)$
arises as the regularizer has the opposite sign as compared to the
contribution of the weights, thus the amount of regularization one can
do is limited. The positive commutator argument then proceeds
completely analogously to \cite[Proof of
Propositions~2.3-2.4]{Va12}, except
that, as in \cite{Va12}, one has to assume a priori bounds on the term with the sign
opposite to that of $s-(m-1)/2-\beta r$, of which there is exactly one for
either sign (unlike in \cite{Va12}, in which only $s-(m-1)/2+\beta\im\sigma<0$ has
such a term), thus on
$\Sigma\cap\supp(\phi_1'\circ\tau)\cap\supp(\phi\circ\rho_0)$ when
$s-(m-1)/2-\beta r>0$ and on
$\Sigma\cap\supp(\phi_1\circ\tau)\cap\supp(\phi'\circ\rho_0)$ when
$s-(m-1)/2-\beta r<0$.

Using the openness of the complement of the wave front set we can finally choose
$\phi$ and $\phi_1$ (satisfying the support conditions, among others)
so that the a priori assumptions are satisfied, choosing $\phi_1$
first and then shrinking the support of $\phi$ in the first case, with
the choice being made in the opposite order in the second case, completing the proof
of the proposition.
\end{proof}

\subsubsection{Complex absorption}

In order to have good Fredholm properties
we either need a complete Hamilton flow, or need to `stop it' in a
manner that gives suitable estimates; one may want to do the latter to
avoid global assumptions on the flow on the ambient space. The microlocally best behaved
version is given by complex absorption; it is microlocal, works
easily with Sobolev spaces of arbitrary order, and makes the operator
elliptic in the absorbing region, giving rise to very convenient
analysis. The main downside of complex absorption is
that it does not automatically give forward mapping properties for the
support of solutions in wave equation-like settings, even though at
the level of singularities, it does have the desired forward
property. It was used extensively
in \cite{Va12} -- in the dilation invariant setting, the
bicharacteristics on $X\times(0,\infty)_\tau$ are controlled (by the
invariance) as $\tau\to\infty$ as well as when $\tau\to 0$, and thus one need not use
complex absorption there, instead decay as $\tau\to\infty$
(corresponding to growth as $\tau\to 0$ on these dilation invariant
spaces) gives the desired forward property; complex absorption was
only used to cut off the flow within $X$. Here we want to
localize in $\tau$ as well, and while complex absorption can achieve
this, it loses the forward {\em support} character of the problem. Thus, complex absorption will not be of use for us when solving semilinear forward problems later on; however, as it is
conceptually much cleaner, we discuss Fredholm properties using it first before turning to
adding artificial (spacelike) boundary hypersurfaces in the next section, which allow for the solution of forward problems but require additional technicalities.

Thus, we now consider $\cP-i\cQ\in\Psib^m(M)$, $\cQ\in\Psib^m(M)$, with real
principal symbol $q$, being the complex absorption
similarly to \cite[\S\S2.2 and 2.8]{Va12}; we assume that
$\WFb'(\cQ)\cap L=\emptyset$. Here the semiclassical
version, discussed in \cite{Va12} with further references there, is a close parallel to our b-setting; it is essentially equivalent to
the b-setting in the special case that $\cP$, $\cQ$ are
dilation-invariant, for then the Mellin transform gives rise exactly
to the semiclassical problem as the Mellin-dual parameter goes to infinity. Thus, we assume that the characteristic
set $\Sigma$ of $\cP$ has the form
$$
\Sigma=\Sigma_+\cup\Sigma_-,
$$
with each of $\Sigma_\pm$ being a union of connected components, and
$$
\mp q\geq 0\ \text{near}\ \Sigma_\pm.
$$
Recall from \cite[\S2.5]{Va12}, which in turn is a simple modification
of the semiclassical results of Nonnenmacher and Zworski \cite{Nonnenmacher-Zworski:Quantum},
and Datchev and Vasy \cite{Datchev-Vasy:Gluing-prop}, that under these sign conditions on $q$, estimates can be propagated in the backward direction
along the Hamilton flow on $\Sigma_+$ and in the forward direction for
$\Sigma_-$, or, phrased as a wave front set statement (the property of
being singular propagates in the opposite direction as the property of
being regular!),
$\WF^s(u)$ is invariant in
$(\Sigma_+\setminus\Sb^*_XM)\setminus\WF^{s-m+1}((\cP-i\cQ)u)$ under the forward Hamilton
flow, and in $(\Sigma_-\setminus\Sb^*_XM)\setminus\WF^{s-m+1}((\cP-i\cQ)u)$ under the
backward flow. (That is, the invariance is away
  from the boundary $X$; we address the behavior at the boundary in
  the rest of the paragraph.) Since this is a principal symbol argument, given in
\cite[\S2.5]{Va12} and
\cite[Lemma~5.1]{Datchev-Vasy:Gluing-prop}, its extension to the
b-setting only requires minimal changes. Namely, assuming one is away
from radial points as one may (since at these the statement is vacuous), one constructs the principal
symbol $c$ of the commutant on $\Tb^*M\setminus o$ as a $\CI$ function $c_0$
on $\Sb^*M$ with derivative of a fixed sign along the Hamilton flow in the
region where one wants to obtain the estimate (exactly the same way as
for real principal type proofs) multiplied by weights in $\tau$ and
$\wt\rho$, making the Hamilton derivative of $c_0$ large relative
to $c_0$ to control the error terms from the weights, and computes $\la u,-i[C^*C,\wt\cP]u\ra$, where $\wt\cP$ is the symmetric part
of $\cP-i\cQ$ (so has principal symbol $p$) and $\wt\cQ$ is the
antisymmetric part. This gives
$$
-2\re\la u,iC^*C(\cP-i\cQ)u\ra-2\re\la u,C^*C\wt\cQ u\ra.
$$
The issue here is that the second term on the right hand side involves
$C^*C\wt\cQ$, which is one order higher that $[C^*C,\wt\cP]$, so
while it itself has a desirable sign, one needs to be concerned about
subprincipal terms.\footnote{In fact, as the principal symbol of
  $C^*C\wt\cQ$ is real, the real part of its subprincipal symbol is
  well-defined, and is the real part of $c^2q$ where $c$ and $q$ include the real parts of their subprincipal terms, and is all that matters for this argument, so one
  could proceed symbolically.} However,
one rewrites
$$
2\re\la u,C^*C\wt\cQ u\ra=2\re\la u,C^*\wt\cQ
Cu\ra+2\re\la u,C^*[C,\wt\cQ] u\ra.
$$
Now the first term is positive modulo a controllable
error by the sharp
G{\aa}rding inequality, or if one arranges that $q$ is the square of a
symbol. This controllability claim uses the
derivative of $c$, arising in the symbol of the commutator with
$\wt\cP$, to provide the control: since $\wt\cQ$ is positive modulo an operator one order
lower, and in the term involving this operator, the principal symbol
$c$ of $C$ is not differentiated, writing $c$ as $c_0$ times a weight,
where $c_0$ is homogeneous of degree zero, taking the derivative of $c_0$ large
relative to $c_0$, as is already used to control weights, etc., controls
this error term (modulo which we have positivity) as well. On the other hand, the second can be rewritten in terms of
$[C,[C,\wt\cQ]]$, $(C^*-C)[C,\wt\cQ]$, etc., which are all
controllable as they drop {\em two} orders relative to the product
$C^*C\wt\cQ$. This gives rise to the result, namely that for $u\in\Hb^{-\infty,r}$,
$\WFb^{s,r}(u)$ is invariant in
$\Sigma_+\setminus\WF^{s-m+1,r}((\cP-i\cQ)u)$ under the forward Hamilton
flow, and in $\Sigma_-\setminus\WF^{s-m+1,r}((\cP-i\cQ)u)$ under the
backward flow. 

In analogy with \cite[Definition~2.12]{Va12}, we say that $\cP-i\cQ$ is {\em non-trapping} if all
bicharacteristics in $\Sigma$ from any point in
$\Sigma\setminus(L_+\cup L_-)$ flow to $\Ell(q)\cup L_+\cup L_-$ in both the
forward and backward directions (i.e.\ either enter $\Ell(q)$ in finite time or tend
to $L_+\cup L_-$). Notice that as $\Sigma_\pm$ are closed under the
Hamilton flow,
bicharacteristics in $\cL_\pm\setminus(L_+\cup L_-)$ necessarily enter the
elliptic set of $\cQ$ in the forward (in $\Sigma_+$), resp.\ backward  (in $\Sigma_-$)
direction. Indeed, by the non-trapping hypothesis, these
  bicharacteristics have to reach the elliptic set of $\cQ$
  as they cannot tend to $L_+$, resp.\ $L_-$: for $\cL_+$ and $\cL_-$ are
  unstable, resp.\ stable manifolds, and these bicharacteristics
  cannot enter the boundary (which is preserved by the flow), so cannot lie in the stable, resp.\
  unstable, manifolds of $L_+\cup L_-$, which are within $\Sb^*_XM$. Similarly,
  bicharacteristics in $(\Sigma\cap\Sb^*_XM)\setminus(L_+\cup L_-)$
  necessarily reach the elliptic set of $\cQ$ in the backward (in
  $\Sigma_+$), resp.\ forward (in $\Sigma_-$) direction.
Then
for $s,r$ satisfying
$$  
s-(m-1)/2>\beta r
$$  
one has an estimate
\begin{equation}\label{eq:P-b-symb-est}
\|u\|_{\Hb^{s,r}}\leq C\|(\cP- i\cQ)u\|_{\Hb^{s-m+1,r}}+C\|u\|_{\Hb^{s',r}},
\end{equation}
provided one assumes $s'<s$,
$$ 
s'-(m-1)/2>\beta r,\ u\in \Hb^{s',r}.
$$ 
Indeed, this is a simple consequence of $u\in\Hb^{s',r}$, $(\cP-
i\cQ)u\in\Hb^{s-m+1,r}$ implying $u\in\Hb^{s,r}$ via the closed graph
theorem, see \cite[Proof of Theorem~26.1.7]{Ho83} and
\cite[\S4.3]{Vasy:Microlocal-AH}. This implication in turn holds as on the
elliptic set of $\cQ$ one has the stronger statement $u\in\Hb^{s+1,r}$ under these
conditions, and then using real-principal type propagation of regularity in the {\em backward}
direction on $\Sigma_+$ and the {\em forward} direction on
$\Sigma_-$, one can propagate the microlocal membership of
$\Hb^{s,r}$ (i.e.\ the absence of the corresponding wave front set) in the
backward, resp.\ forward, direction on $\Sigma_+$, resp.\
$\Sigma_-$. Since bicharacteristics in $\cL_\pm\setminus(L_+\cup L_-)$ necessarily enter the
elliptic set of $\cQ$ in the forward, resp.\ backward
direction, and
thus one has $\Hb^{s,r}$ membership along them by what we have shown,
Proposition~\ref{prop:b-saddle} extends this membership to $L_\pm$,
and hence to a neighborhood of these,
and by our non-trapping assumption every bicharacteristic enters
either this neighborhood of $L_\pm$ or the elliptic set of $\cQ$ in
finite time in the backward, resp.\ forward, direction, so by the real
principal type propagation of singularities we have the claimed
microlocal membership everywhere.

Reversing the direction in which one propagates estimates, one also has a similar estimate for the adjoint $\cP^*+i\cQ^*$, except now one needs to have
$$  
s-(m-1)/2<\beta r
$$
in order to propagate through the saddle points in the opposite
direction, i.e.\ from within $\Sb^*_XM$ to $\cL_\pm$.
Then for $s'<s$,
\begin{equation}\label{eq:P*-b-symb-est}
\|u\|_{\Hb^{s,r}}\leq C\|(\cP^*+i\cQ^*)u\|_{\Hb^{s-m+1,r}}+C\|u\|_{\Hb^{s',r}}.
\end{equation}

The issue with these estimates is that $\Hb^{s,r}$ does not include
compactly into the error term $\Hb^{s',r}$ on
the right hand side due to the lack of additional decay. Thus, these
estimates are insufficient to show Fredholm properties, which in fact
do not hold in general.

We thus
further assume that there are no poles of the inverse of the Mellin
conjugate $(\cP-i\cQ)\ftrans(\sigma)$ of the normal operator, $N(\cP-i\cQ)$,
on the line $\Im\sigma=-r$.  Here we refer to \cite[\S3.1]{Va12} for a brief
discussion of the normal operator and the Mellin transform; this cited
section also
contains more detailed references to \cite{Me93}. Then using the
Mellin transform, which is an isomorphism between weighted b-Sobolev
spaces and semiclassical Sobolev spaces (see Equations~(3.8)-(3.9) in
\cite{Va12}), and the
estimates for $(\cP-i\cQ)\ftrans(\sigma)$ (including the
high energy, i.e.\ semiclassical, estimates,\footnote{The high energy estimates are actually\label{footnote:b-to-normal}
  implied by b-principal symbol based estimates on the normal operator
  space, $M_\infty=X\times\overline{\RR^+}$, $X=\pa M$, on spaces
  $\tau^r\Hb^s(M_\infty)$ corresponding to $\im\sigma=-r$, but we do not explicitly discuss
  this here.} all of which is discussed in detail in
\cite[\S2]{Va12} --- the high energy assumptions of
\cite[\S2]{Va12} hold by our assumptions on the b-flow at $\Sb^*_XM$, and which imply that for all but a discrete set of
$r$ the aforementioned lines do not contain such poles), we obtain that on $\RR^+_\rho\times\pa M$
\begin{equation}\label{eq:N-P-est}
\|v\|_{\Hb^{s,r}}\leq C\|N(\cP-i\cQ)v\|_{\Hb^{s-m+1,r}}
\end{equation}
when
$$  
s-(m-1)/2>\beta r.
$$  
Again, we have an analogous estimate for $N(\cP^*+i\cQ^*)$:
\begin{equation}\label{eq:N-P*-est}
\|v\|_{\Hb^{s,r}}\leq C\|N(\cP^*+i\cQ^*)v\|_{\Hb^{s-m+1,r}},
\end{equation}
provided $-r$ is not the imaginary part of a pole of the inverse of $(\cP^*+i\cQ^*)\ftrans$, and provided
$$  
s-(m-1)/2<\beta r.
$$  
As
$(\cP^*+i\cQ^*)\ftrans(\sigma)=(\wh\cP-i\wh\cQ)^*(\ol\sigma)$,
see the discussion in \cite{Va12} preceding Equation~(3.25), the requirement on $-r$ is the same as $r$ not
being the imaginary part of a pole of the inverse of $\wh\cP-i\wh\cQ$.

We apply these results by first letting $\chi\in\CI_c(M)$ be identically $1$
near $\pa M$ supported in a collar neighborhood of $\pa M$, which we
identify with $(0,\ep)_\tau\times\pa M$ of the normal operator
space. Then, {\em assuming} $s'-(m-1)/2>\beta r$,
\begin{equation}\label{eq:P-normal-op-break-up}
\begin{split}
\|u\|_{\Hb^{s',r}} &\leq \|\chi u\|_{\Hb^{s',r}}+\|(1-\chi)u\|_{\Hb^{s',r}}  \\
  &\leq C\|N(\cP-i\cQ)\chi u\|_{\Hb^{s'-m+1,r}}+\|(1-\chi)u\|_{\Hb^{s',r}}.
\end{split}
\end{equation}
Now, if $K=\supp (1-\chi)$, then$$
\|(1-\chi)u\|_{\Hb^{s',r}}\leq
C\|u\|_{H^{s'}(K)}\leq C'\|u\|_{\Hb^{s',\wt r}}\leq C''\|u\|_{\Hb^{s'+1,\wt r}}
$$
for any $\wt r$. On the other hand, $N(\cP-i\cQ)-(\cP-i\cQ)\in\tau\Psib^m([0,\ep)\times\pa M)$, so
\begin{equation*}\begin{split}
N(\cP-i\cQ)\chi u&=(\cP-i\cQ)\chi u+(N(\cP-i\cQ)-(\cP-i\cQ))\chi u\\
&=\chi (\cP-i\cQ)u+[\cP-i\cQ,\chi]u+(N(\cP-i\cQ)-(\cP-i\cQ))\chi u
\end{split}\end{equation*}
plus the fact that $[\cP-i\cQ,\chi]$ is supported in $K$ and $\|\chi
(\cP-i\cQ)u\|_{\Hb^{s'-m+1,r}}\leq\|(\cP-i\cQ)u\|_{\Hb^{s'-m+1,r}}$
show that for all $\wt r$
\begin{equation}\label{eq:P-normal-op-reduce}
\|N(\cP-i\cQ)\chi u\|_{\Hb^{s'-m+1,r}}\leq \| (\cP-i\cQ)u\|_{\Hb^{s'-m+1, r}}+C\|u\|_{\Hb^{s'+1,\wt r}}+C\|u\|_{\Hb^{s'+1,r-1}}.
\end{equation}
Combining \eqref{eq:P-b-symb-est}, \eqref{eq:P-normal-op-break-up} and
\eqref{eq:P-normal-op-reduce} we deduce that (with new constants, and
taking $s'$ sufficiently small and $\wt r=r-1$)
\begin{equation}\label{eq:P-Fred-est}
\|u\|_{\Hb^{s,r}}\leq C\|(\cP-i\cQ)u\|_{\Hb^{s-m+1,r}}+C\|u\|_{\Hb^{s'+1,r-1}},
\end{equation}
where now the inclusion $\Hb^{s,r}\to \Hb^{s'+1,r-1}$ is
compact when we choose, as we may, $s'<s-1$, requiring, however,
$s'-(m-1)/2>\beta r$.
Recall that this argument required that $s,r,s'$ satisfied the
requirements preceding \eqref{eq:P-b-symb-est}, and
that $-r$ is
not the imaginary part of any pole of $(\cP-i\cQ)\ftrans$.

Analogous estimates hold for $(\cP-i\cQ)^*$ where now we write $\wt s$,
$\wt r$ and $\wt s'$ for the Sobolev orders for the eventual application:
\begin{equation}\label{eq:P*-Fred-est}
\|u\|_{\Hb^{\wt s,\wt r}}\leq
C\|(\cP-i\cQ)^*u\|_{\Hb^{\wt s-m+1,\wt r}}+C\|u\|_{\Hb^{\wt s'+1,\wt r-1}},
\end{equation}
provided $\wt s$, $\wt r$ in place of $s$ and $r$
satisfy the requirements stated before \eqref{eq:P*-b-symb-est}, and provided
$-\wt r$ is not the imaginary part of a pole of $(\cP^*+i\cQ^*)\ftrans$
(i.e.\ $\wt r$ of $\wh\cP-i\wh\cQ$). Note that we
{\em do not} have a stronger requirement for $\wt s'$, unlike for
$s'$ above, since upper bounds for $s$ imply those for $s'\leq s$.

Via a standard functional analytic argument, see \cite[Proof of
Theorem~26.1.7]{Ho83} and also \cite[\S2.6]{Va12} in the present context, we thus obtain Fredholm
properties of $\cP-i\cQ$, in particular solvability, modulo a (possible) finite
dimensional obstruction, in $\Hb^{s,r}$ if
\begin{equation}\label{eq:error-term-req-dS}
s-(m-1)/2-1>\beta r.
\end{equation}
Concretely, we take $\wt
s=m-1-s$, $\wt r=-r$, $s'<s-1$ sufficiently close to $s-1$ so that
$s'-(m-1)/2>\beta r$ (which is possible by
\eqref{eq:error-term-req-dS}).
Thus, $s-(m-1)/2>\beta r$ means $\wt
s-(m-1)/2=(m-1)/2-s<-\beta r=\beta \wt r$,
so the space on the left hand side of
\eqref{eq:P-Fred-est} is dual to that in the first term on the right
hand side of \eqref{eq:P*-Fred-est}, and the same for the equations
interchanged, and notice that the condition on the poles of the
inverse of the Mellin transformed normal operators is the same for
both $\cP-i\cQ$ and $\cP^*+i\cQ^*$: $-r$
is not the imaginary part of a pole of $(\cP-i\cQ)\ftrans$. Let
$$
\cY^{s,r}=\Hb^{s,r}(M),\
\cX^{s,r}=\{u\in\Hb^{s,r}(M):\ (\cP-i\cQ) u\in \Hb^{s-1,r}(M)\},
$$
and note that
  $\cY^{s,r}$, $\cX^{s,r}$ are complete, in the case of $\cX^{s,r}$
  with the natural norm being
  $\|u\|^2_{\cX^{s,r}}=\|u\|^2_{\Hb^{s,r}(M)}+\|(\cP-i\cQ)
  u\|^2_{\Hb^{s-1,r}(M)}$; see Remark~\ref{RmkCompleteSpace}.
Our discussion thus far
yields:

\begin{prop}\label{prop:absorb-Fredholm}
Suppose that $\cP$ is non-trapping.
Suppose $s,r\in\RR$, $s-(m-1)/2-1>\beta r$, $-r$ is not the imaginary
part of a pole of $(\cP-i\cQ)\ftrans$
where $\cP-i\cQ$ is a priori a map
$$
\cP-i\cQ:\Hb^{s,r}(M)\to \Hb^{s-2,r}(M).
$$
Then
$$
\cP-i\cQ:\cX^{s,r}\to\cY^{s-1,r}
$$
is Fredholm.
\end{prop}

\subsubsection{Initial value problems}
As already mentioned, the main issue with this argument using complex absorption that it
does not guarantee the forward nature (in terms of supports) of the solution for a wave-like
equation, although it does guarantee the correct microlocal
structure. So now we assume that $\cP\in\Diffb^2(M)$ and that there is a Lorentzian b-metric $g$ such that
\begin{equation}\label{eq:cP-almost-Box}
\cP-\Box_g\in\Diffb^1(M),\ \cP-\cP^*\in\Diffb^0(M).
\end{equation}
Then one can run a completely analogous argument using energy type estimates by restricting the
domain we consider to be a manifold with corners, where the new
boundary hypersurfaces are spacelike with respect to $g$, i.e.\ given by level sets of
timelike functions. Such a possibility was mentioned in \cite[Remark~2.6]{Va12},
though it was not described in detail as it was not needed there,
essentially because the existence/uniqueness argument for forward
solutions was given only for dilation invariant operators. The main
difference between using complex absorption and adding boundary
hypersurfaces is that the latter limit the Sobolev regularity one can use, with the
most natural choice coming from energy estimates. However, a
posteriori one can improve the result to better Sobolev spaces using
propagation of singularities type results.

So assume now that $U\subset M$ is open, and
we have two functions $\frakt_1$ and $\frakt_2$ in $\CI(M)$,
both of which, restricted to $U$, are
timelike (in particular have non-zero differential) near their
respective 0-level sets $H_1$ and $H_2$, and
$$
\Omega=\frakt_1^{-1}([0,\infty))\cap\frakt_2^{-1}([0,\infty))\subset U.
$$
Notice that the timelike assumption forces
$d\frakt_j$ to not lie in $N^*X=N^*\pa M$ (for its image in the
b-cosphere bundle would be zero), and thus if the $H_j$ intersect $X$,
they do so transversally. We assume that the $H_j$ intersect only away
from $X$, and that they do so transversally, i.e.\ the differentials
of $\frakt_j$ are independent at the intersection. Then $\Omega$
is a manifold with corners with boundary hypersurfaces $H_1$, $H_2$
and $X$ (all intersected with $\Omega$). We however keep thinking of
$\Omega$ as a domain in $M$.
The role of the elliptic set of $\cQ$ is now
played by $\Sb^*_{H_j}M$, $j=1,2$.
The {\em non-trapping} assumption becomes
that
\begin{enumerate}
\item
all
bicharacteristics in $\Sigma_\Omega=\Sigma\cap \Sb^*_{\Omega} M$ from any point in
$\Sigma_\Omega\cap(\Sigma_+\setminus L_+)$ flow (within $\Sigma_\Omega$) to $\Sb^*_{H_1}M\cup L_+$ in the
forward direction (i.e.\ either enter $\Sb^*_{H_1}M$ in finite time or tend
to $L_+$) and to $\Sb^*_{H_2} M\cup L_+$ in the backward direction,
\item
and from any point in
$\Sigma_\Omega\cap(\Sigma_-\setminus L_-)$ the bicharacteristics flow
to $\Sb^*_{H_2} M\cup L_-$ in the forward direction and to
$\Sb^*_{H_1}M\cup L_-$ in the
backward direction;
\end{enumerate}
see Figure~\ref{FigDS}. In particular, orienting the characteristic
set by letting $\Sigma_-$ be the {\em future oriented} and $\Sigma_+$
the {\em past oriented} part, $d\frakt_1$ is future oriented, while
$d\frakt_2$ is past oriented.

\begin{figure}[!ht]
  \centering
  \includegraphics{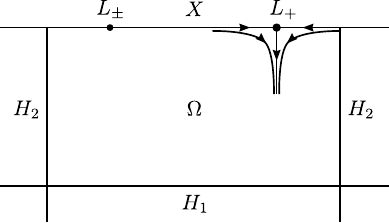}
  \caption{Setup for the discussion of the forward problem. Near the spacelike hypersurfaces $H_1$ and $H_2$, which are the replacement for the complex absorbing operator $\cQ$, we use standard (non-microlocal) energy estimates, and away from them, we use b-microlocal propagation results, including at the radial sets $L_\pm$. The bicharacteristic flow, in fact its projection to the base, is only indicated near $L_+$; near $L_-$, the directions of the flowlines are reversed.}
\label{FigDS}
\end{figure}

On a manifold with corners, such as $\Omega$, one can consider
supported and extendible distributions; see \cite[Appendix~B.2]{Ho83}
for the smooth boundary setting, with simple changes needed only for
the corners setting, which is discussed e.g.\ in \cite[\S3]{Vasy:Corners}. Here we consider $\Omega$ as a
domain in $M$, and thus its boundary face $X\cap\Omega$ is regarded as
having a different character from the $H_j\cap\Omega$, i.e.\ the
support/extendibility considerations do not arise at $X$ -- all
distributions are regarded as acting on a subspace of
$\CI$ functions on $\Omega$ vanishing at $X$ to infinite order, i.e.\ they are
automatically extendible distributions at $X$. On the other hand, at
$H_j$ we consider both extendible distributions, acting on
$\CI$ functions vanishing to infinite order at $H_j$, and supported
distributions, which act on all $\CI$ functions (as far as conditions
at $H_j$ are concerned). For example, the space of supported
distributions at $H_1$ extendible at $H_2$ (and at $X$, as we always
tacitly assume) is the dual space of the subspace of $\CI(\Omega)$
consisting of functions vanishing to infinite order at $H_2$ and $X$
(but not necessarily at $H_1$). An equivalent way of characterizing
this space of distributions is that they are restrictions of
elements of the dual of $\dCI(M)$ (consisting of $\CI$ functions on
$M$ vanishing to infinite order at $X$) with support in $\frakt_1\geq
0$ to $\CI$ functions on $\Omega$ which vanish to infinite order at $X$ and $H_2$,
thus in the terminology of \cite{Ho83}, restriction to
$\Omega\setminus (H_2\cup X)$.

The main interest is in spaces induced by the Sobolev
spaces
$\Hb^{s,r}(M)$. Notice that the Sobolev
  norm is of completely different nature at $X$ than at the $H_j$,
  namely the derivatives are based on complete, rather than
  incomplete, vector fields: $\Vb(M)$ is being restricted to $\Omega$,
  so one obtains vector fields tangent to $X$ but not to the $H_j$. As
  for supported and extendible distributions corresponding to $\Hb^{s,r}(M)$, we have, for instance,
$$
\Hb^{s,r}(M)^{\bullet,-},
$$
with the first
superscript on the right denoting whether supported ($\bullet$) or
extendible ($-$) distributions are discussed at
$H_1$, and the second the analogous property at $H_2$, consists of
restrictions of elements of $\Hb^{s,r}(M)$ with support in
$\frakt_1\geq 0$ to $\Omega\setminus (H_2\cup X)$.
Then elements of $\CI(\Omega)$ with the analogous vanishing
conditions, so in the example vanishing to infinite order at $H_1$ and
$X$, are dense in $\Hb^{s,r}(M)^{\bullet,-}$; further the dual of
$\Hb^{s,r}(M)^{\bullet,-}$ is $\Hb^{-s,-r}(M)^{-,\bullet}$ with
respect to the $L^2$ (sesquilinear) pairing.

First we work locally. For this purpose it is convenient to introduce
another timelike function $\wt\frakt_j$, not necessarily timelike, and consider 
$$
\region_{[t_0,t_1]}=\frakt_j^{-1}([t_0,\infty))\cap\wt\frakt_j^{-1}((-\infty,t_1]),
\
\region_{(t_0,t_1)}=\frakt_j^{-1}((t_0,\infty))\cap\wt\frakt_j^{-1}((-\infty,t_1)),
$$
and similarly on half-open, half-closed intervals. Thus, $\region_{[t_0,t_1]}$ becomes smaller as $t_0$ becomes larger or $t_1$ becomes smaller.

We then consider energy estimates on $\region_{[T_0,T_1]}$. In order to set up the following arguments, choose
$$
T_-<T_-'<T_0,\ T_1<T_+'<T_+,
$$
and assume that $\region_{[T_-,T_+]}$ is compact, $\region_{[T_0,T_1]}$ is non-empty, and $\frakt_j$ is timelike on $\region_{[T_-,T_+]}$. The energy estimates propagate estimates in the direction of either increasing or decreasing $\frakt_j$. With the extendible/supported character of distributions at $\wt\frakt_j=T_+$ being irrelevant for this matter in the case being considered and thus dropped from the notation (so ($-$) refers to extendibility at $\frakt_j=T_0$), consider
$$
\cP\colon\Hb^{s,r}(\region_{[T_0,T_+]})^-\to \Hb^{s-2,r}(\region_{[T_0,T_+]})^-,\qquad s,r\in\RR.
$$
The energy estimate, with backward propagation in $\frakt_j$, from $\wt\frakt_j^{-1}([T'_+,T_+])$, in this setting takes the form:

\begin{lemma}\label{lemma:energy-est-back}
  Let $r\in\RR$. There is $C>0$ such that for $u\in \Hb^{2,r}(\region_{[T_0,T_+]})^-$,
  \begin{equation}
  \label{eq:P-back-energy-est}
    \|u\|_{\Hb^{1,r}(\region_{[T_0,T_1]})^-}\leq C\Big(\|\cP u\|_{\Hb^{0,r}(\region_{[T_0,T_+]})^-}+\|u\|_{\Hb^{1,r}(\region_{[T_0,T_+]}\cap\wt\frakt_j^{-1}([T'_+,T_+]))^-}\Big)
  \end{equation}
  This also holds with $\cP$ replaced by $\cP^*$, acting on the same spaces.
\end{lemma}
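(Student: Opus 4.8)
The plan is the classical multiplier (energy) argument, carried out in the b‑category; it splits into a weight reduction, a divergence identity, and a Stokes/Gronwall step.

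\emph{Reduction to $r=0$.} Put $v=\tau^{-r}u$, so $u\in\Hb^{k,r}$ if and only if $v\in\Hb^{k,0}$ (with $\|u\|_{\Hb^{k,r}}=\|v\|_{\Hb^{k,0}}$ by definition of the weighted spaces, respecting the extendible/supported structure), and $\cP u=\tau^{r}\cP_{r}v$ with $\cP_{r}=\tau^{-r}\cP\tau^{r}\in\Diffb^{2}(M)$. Conjugation by the positive function $\tau^{r}$ leaves the b‑principal symbol unchanged and produces only first‑order b‑errors, so $\cP_{r}-\Box_g\in\Diffb^{1}(M)$ still holds; likewise $\cP_{r}^{*}=\tau^{r}\cP^{*}\tau^{-r}$ satisfies $\cP_{r}^{*}-\Box_g\in\Diffb^{1}(M)$, since \eqref{eq:cP-almost-Box} gives $\cP^{*}-\Box_g=-(\cP-\cP^{*})+(\cP-\Box_g)\in\Diffb^{1}(M)$. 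Hence it suffices to prove the estimate at $r=0$ for any $Q\in\Diffb^{2}(M)$ with $Q-\Box_g\in\Diffb^{1}(M)$, and then apply it to $Q=\cP_{r}$ and to $Q=\cP_{r}^{*}$; this last choice gives the final sentence.

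\emph{The divergence identity.} Since $g$ is a Lorentzian b‑metric, $g^{-1}$ is an isomorphism $\Tb^{*}M\to\Tb M$, so $\nabla_{g}\phi=g^{-1}(\bdiff\phi)\in\Vb(M)$ for $\phi\in\CI(M)$; in particular the multiplier is automatically a b‑vector field. I take $W$ on $\region_{[T_-,T_+]}$ timelike, proportional to $\nabla_{g}\tilde\frakt_{j}$, with the time‑orientation chosen so that the flux through $\{\frakt_{j}=T_{0}\}$ computed below has the droppable sign; because $\frakt_{j},\tilde\frakt_{j}$ are timelike of a common orientation on the compact set $\region_{[T_-,T_+]}$, this is possible and all constants below are uniform. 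Contracting the principal‑part stress–energy tensor $T_{\mu\nu}[u]$ of $\Box_g$ with $W$ yields a b‑vector field $J=J[u]$ with
\[
\operatorname{div} J[u]=2\,\re\!\big(\overline{Wu}\,Qu\big)+R[u]
\]
(b‑divergence with respect to the $g$‑b‑density), where $R[u]$ collects the deformation tensor of $W$, the $\Diffb^{1}$‑part of $Q$, and the error from replacing $\Box_g$ by $Q$; it is quadratic in $u$ of b‑order $\le1$, hence $|R[u]|\le C(|\bdiff u|_{\bl}^{2}+|u|^{2})$ pointwise for a fixed fiber b‑metric.

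\emph{Stokes and Gronwall.} Apply the b‑divergence theorem on the compact manifold‑with‑corners $\region_{[T_0,T_+]}\cap\tilde\frakt_{j}^{-1}([t,t'])$ for $T_0\le t\le t'\le T_+$; the hypothesis $u\in\Hb^{2,r}(\region_{[T_0,T_+]})^{-}$ makes $\operatorname{div} J[u]$ integrable against the b‑density, so no regularization is needed. The boundary face lying in $X$ is a b‑face — $W$ is tangent to it and its b‑conormal bundle is trivial — so it contributes no flux; this is exactly where the b‑structure confines the energy near $\partial M$. The spacelike faces $\{\tilde\frakt_{j}=t\}$, $\{\tilde\frakt_{j}=t'\}$, $\{\frakt_{j}=T_{0}\}$ each contribute a flux $T[u](W,\nu)$; timelikeness of $W$ and of the conormals makes the $\tilde\frakt_{j}$‑fluxes coercive over $\Hb^{1}$ of those slices — write $E(t)$ for the energy on $\{\tilde\frakt_{j}=t\}\cap\region$ — and the orientation of $W$ puts the $\{\frakt_{j}=T_{0}\}$‑flux on the favorable side, where it is dropped. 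Bounding the bulk via Cauchy–Schwarz ($2\int|\overline{Wu}\,Qu|+\int|R[u]|\le C\int_{t}^{t'}E(s)\,ds+C\|Qu\|_{\Hb^{0,0}(\region_{[T_0,T_+]})}^{2}$, with $Qu\in\Hb^{0,0}$ since $u\in\Hb^{2,0}$) gives $E(t)\le E(t')+C\int_{t}^{t'}E(s)\,ds+C\|Qu\|_{\Hb^{0,0}(\region_{[T_0,T_+]})}^{2}$. Gronwall's inequality then gives $E(t)\le C(E(t')+\|Qu\|_{\Hb^{0,0}(\region_{[T_0,T_+]})}^{2})$ for all $t'\in[t,T_+]$; averaging over $t'\in[T_+',T_+]$ replaces $E(t')$ by $C\|u\|_{\Hb^{1,0}(\region_{[T_0,T_+]}\cap\tilde\frakt_{j}^{-1}([T_+',T_+]))}^{2}$. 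Finally, since $T_1<T_+'$, integrating the resulting bound for $E(t)$ in $t$ over $[T_0,T_1]$ — with the $L^{2}$ part of the $\Hb^{1,0}$‑norm on $\region_{[T_0,T_1]}$ absorbed by a one‑dimensional Poincaré inequality in $t$ (or by carrying a $\tfrac12|u|^{2}$ term in the energy) — yields \eqref{eq:P-back-energy-est} after undoing the conjugation.

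The step I expect to be the main obstacle is the causal and b‑geometric bookkeeping near $X$: verifying that on the spacelike slices $\{\tilde\frakt_{j}=t\}$, which meet $X$ transversally, the flux $T[u](W,\nu)$ is a coercive $\Hb^{1}$‑norm \emph{uniformly} in $t$ — which is precisely why $\frakt_{j},\tilde\frakt_{j}$ are required timelike on all of the compact $\region_{[T_-,T_+]}$ — and that the b‑Stokes theorem on $\region$ genuinely produces no contribution from the face contained in $X$. Everything else (the conjugation, the pointwise bound on $R[u]$, the Gronwall/averaging step, and the supported/extendible bookkeeping at $\frakt_{j}=T_{0}$ and $\tilde\frakt_{j}=T_+$) is routine.
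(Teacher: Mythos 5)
Your proposal is correct, and it is in substance the same argument as the paper's: both are energy (multiplier) estimates for the b‑wave operator with a timelike b‑vector field multiplier, differing only in presentation. The paper works with the commutant $V=-i\chi(\frakt_j)\tilde\chi(\tilde\frakt_j)\tau^\alpha W$, $W=G(d\frakt_j,\cdot)$, builds the weight $\tau^\alpha=\tau^{-2r}$ directly into the multiplier, and implements the Gronwall mechanism by taking $\chi'\geq 0$ large relative to $\chi$ on $\supp(\chi\tilde\chi)$; the cutoff $\tilde\chi$, with $d\tilde\chi$ supported in $(T_+',T_+)$, produces exactly the a priori control term. You conjugate by $\tau^{r}$ to reduce to $r=0$, use the stress--energy tensor form of the same commutator identity with $W\propto \nabla_g\tilde\frakt_j$, and run the Gronwall step explicitly by slicing in $\tilde\frakt_j$ and averaging over $t'\in[T_+',T_+]$. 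The two mechanisms are the integrated and differential faces of the same inequality, and all the bookkeeping you flag as delicate (vanishing flux through $X$ because the multiplier is a b‑vector field, uniform coercivity on slices from timelikeness of $\frakt_j,\tilde\frakt_j$ on the compact $\region_{[T_-,T_+]}$) is exactly what the paper's positive‑commutator version is built to encode.

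Two small points worth tightening. First, to obtain the $\cP^*$ case at weight $r$ you should apply the $r=0$ result to $(\cP^*)_r=\tau^{-r}\cP^*\tau^{r}$ rather than to $\cP_r^*=\tau^{r}\cP^*\tau^{-r}$; since $r$ is arbitrary this is immaterial to the conclusion, but as written the conjugation produces the estimate for $\cP^*$ at weight $-r$. Second, the choice $W\propto\nabla_g\tilde\frakt_j$ rather than the paper's $W=G(d\frakt_j,\cdot)$ is fine (both are timelike of the common orientation on $\region_{[T_-,T_+]}$, so the flux at $\{\frakt_j=T_0\}$ and at the $\tilde\frakt_j$‑slices retain the same signs), but it is worth saying explicitly that the sign of the $\{\frakt_j=T_0\}$ flux comes from $g^{-1}(\bdiff\frakt_j,\bdiff\tilde\frakt_j)$ having a definite sign, which is where the common timelike orientation hypothesis enters.
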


\begin{rmk}\label{rmk:corner-energy-est}
  The lemma is also valid if one has several boundary hypersurfaces, i.e.\ if one replaces $\frakt_j^{-1}([t_0,\infty))$ by $\frakt_j^{-1}([t_{j,0},\infty))\cap\frakt_k^{-1}([t_{k,0},\infty))$ in the definition of $\Omega_{[t_0,t_1]}$, and/or $\wt\frakt_j^{-1}((-\infty,t_1])$ by $\wt\frakt_j^{-1}((-\infty,t_{j,1}])\cap \wt\frakt_k^{-1}((-\infty,t_{k,1}])$, i.e.\ regarding $\frakt_j$ and/or $\wt\frakt_j$ as vector valued, and propagating backwards in $\frakt_{j_0}$ for some fixed $j_0$, under the additional hypothesis that $\frakt_{j_0}$ is timelike in $\Omega_{[t_0,t_1]}$, and all $\frakt_j$, $j\neq j_0$, are timelike near their respective zero sets, with the same timelike character at $\frakt_{j_0}$. (One can also have more than two such functions.) To see this, replace $\chi(\frakt_j)$ by $\chi_{j_0}(\frakt_{j_0})\chi_k(\frakt_k)$, and analogously with $\wt\chi$ in the definition of $V$ in \eqref{eq:b-energy-est-V}, where $\chi_k$ is the characteristic function of $[t_{k,0},\infty)$, while letting $W=G(\bdiff\frakt_{j_0},\cdot)$. Then $\chi'\wt\chi\tau^\alpha A^\sharp$ is replaced by $\chi'_j\chi_k\wt\chi_j\wt\chi_k\tau^\alpha A^\sharp+\chi_j\chi'_k\wt\chi_j\wt\chi_k\tau^\alpha A^\sharp$, etc., and our additional hypothesis guarantees that the matrix $A^\sharp$ is indeed positive definite: The contribution from differentiating $\chi_{j_0}$ is positive definite by the timelike nature of $d\frakt_{j_0}$, while the contribution from differentiating $\chi_j$, $j\neq j_0$, giving $\delta$-distributions at the hypersurfaces $\frakt_j^{-1}(t_{j,0})$, is positive definite by the second part of the above additional hypothesis and can therefore be dropped as in the proof of Lemma~\ref{lemma:energy-est-back} below. Thus $\chi_{j_0}'$ can still be used to dominate $\chi_{j_0}$; and the terms in which $\wt\chi_j$ is differentiated have support where $\wt\frakt_j$ is in $(T_{+,j}',T_{+,j})$, so the control region on the right hand side of \eqref{eq:P-back-energy-est} is the union of these sets.

In our application this situation arises as we need the estimates on $\frakt_1^{-1}([T_0,T_1])\cap\frakt_2^{-1}([0,\infty))$ and $\frakt_1^{-1}([0,\infty))\cap\frakt_2^{-1}([T_0,T_1])$, with $T_0=0$, $T_1>0$ small. For instance, in the latter case $\frakt_2$ plays the role of $\frakt_j$ above, while $-\frakt_1$ and $\frakt_2$ play the role of $\wt\frakt_j$ and $\wt\frakt_k$; see Figure~\ref{FigEnergyCorner}.
\end{rmk}

\begin{figure}[!ht]
  \centering
  \includegraphics{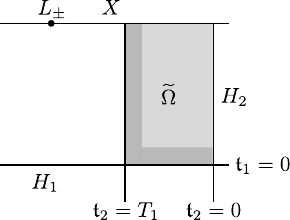}
  \caption{A domain $\wt\Omega=\frakt_2^{-1}([0,\infty))\cap\bigl((-\frakt_1)^{-1}((-\infty,0])\cap\frakt_2^{-1}((-\infty,T_1])\bigr)$ on which we will apply the energy estimate~\eqref{eq:P-back-energy-est}. The a priori control region is indicated in dark gray.}
  \label{FigEnergyCorner}
\end{figure}

\begin{proof}[Proof of Lemma~\ref{lemma:energy-est-back}.]
To see \eqref{eq:P-back-energy-est}, one proceeds as in \cite[\S3.3]{Va12} and considers
\begin{equation}
\label{eq:b-energy-est-V}
  V=-i\chi(\frakt_j)\wt\chi(\wt\frakt_j)\tau^\alpha W
\end{equation}
with $W=G(d\frakt_j,.)$ a timelike vector field and with $\chi,\wt\chi\in\CI(\RR)$, both non-negative, to be specified.  Then choosing a \emph{Riemannian} b-metric $\wt g$,
$$
-i(V^*\Box_g-\Box_g^*V)=\bdiff^*_{\wt g}C^\flat\,\bdiff,
$$
with the subscript on the adjoint on the right hand side denoting the metric with respect to which it is taken, $\bdiff:\CI(M)\to\CI(M;\Tb^*M)$ being the b-differential, and with
$$
C^\flat=\chi'\wt\chi\tau^\alpha A^\sharp+\chi\wt\chi'\tau^\alpha \wt A^\sharp+\chi\wt\chi \tau^\alpha R^\flat
$$
where $A^\sharp,\wt A^\sharp$ and $R^\flat$ are bundle endomorphisms of ${}^{\Cx}\Tb^*M$ and $A^\sharp,\wt A^\sharp$ are positive definite. Proceeding further, replacing $\Box_g$ by $\cP$, one has
\begin{equation}\begin{split}\label{eq:b-energy-commutator}
-i(V^*\cP-\cP^*V)&=\bdiff^*_{\wt g} C^\sharp\,\bdiff+(\wt E_1)^*_{\wt g}\tau^\alpha\chi\wt\chi \bdiff+\bdiff_{\wt g}^*\tau^\alpha\chi\wt\chi\wt E_2,\\
C^\sharp&=\chi'\wt\chi \tau^\alpha A^\sharp+\chi\wt\chi' \tau^\alpha \wt A^\sharp+\chi\wt\chi \tau^\alpha\wt R^\sharp
\end{split}\end{equation}
with $\wt E_j$ bundle maps from the trivial bundle over $M$ to ${}^{\Cx}\Tb^*M$, $A^\sharp,\wt A^\sharp$ as before, and $\wt R^\sharp$ a bundle endomorphism of ${}^{\Cx}\Tb^*M$, as follows by expanding
$$
-i(V^*(\cP-\Box_g)-(\cP-\Box_g)^*V)
$$
using that $\cP-\Box_g\in\Diffb^1(M)$. We regard the second term on
the right hand side of \eqref{eq:b-energy-commutator} as the one
requiring a priori control by
$\|u\|_{\Hb^{1,r}(\region_{[T_0,T_+]}\cap\wt\frakt_j^{-1}([T'_+,T_+]))^-}$;
we achieve this by making $\wt\chi$ supported in $(-\infty, T_+)$,
identically $1$ near $(-\infty,T_+']$, so $d\wt\chi$ is supported in
$(T_+',T_+)$. Now making $\chi'\geq 0$ large relative to $\chi$ on
$\supp(\chi\wt\chi)$, as in\footnote{In \cite[Equation~(3.27)]{Va12}
  the sign of $\chi'$ is opposite, as the estimate is propagated in
  the opposite direction.} \cite[Equation~(3.27)]{Va12}, allows one to
dominate all terms without derivatives of $\chi$. In order to obtain a
non-degenerate estimate up to $\frakt_j=T_0$, one cuts off $\chi$ at
$\frakt_j=T_0$ using the Heaviside function, so $\chi'$ gives a
(positive!) $\delta$-distribution there. Applying
\eqref{eq:b-energy-commutator} to $v$, pairing with $v$ and
integrating by parts, the $\delta$-distributions have the same sign as
$\chi' A^\sharp$ and can thus be dropped. Put differently, without the
sharp cutoff, one again computes the same pairing, but this time on
the domain $\Omega_{[T_0,T_+]}$, thus picking up boundary terms with
the correct sign in the integration by parts, so these terms can be
dropped. This proves the energy estimate \eqref{eq:P-back-energy-est} when one takes $\alpha=-2r$.
\end{proof}

Propagating in the forward direction,
from $\frakt_j^{-1}([T_-,T'_-])$, where now ($-$) denotes the character
of the space at $T_1$  (so ($-$) refers to extendibility at $\frakt_j=T_1$)
\begin{equation}
\|u\|_{\Hb^{1,r}(\region_{[T_0,T_1]})^-}\leq C\Big(\|\cP u\|_{\Hb^{0,r}(\region_{[T_-,T_1]})^-}+\|u\|_{\Hb^{1,r}(\region_{[T_-,T_1]}\cap\frakt_j^{-1}([T_-,T_-']))^-}\Big)
\end{equation}
In particular, for $u$ supported in
$\frakt_j\geq T_0$, the last estimate becomes,
with the first
superscript on the right denoting whether supported ($\bullet$) or
extendible ($-$) distributions are discussed at
$\frakt=T_0$, the second superscript the same at $\frakt=T_1$,
\begin{equation}\label{eq:P-forw-energy-est-bullet-orig}
\|u\|_{\Hb^{1,r}(\region_{[T_0,T_1]})^{\bullet,-}}\leq C\|\cP u\|_{\Hb^{0,r}(\region_{[T_0,T_1]})^{\bullet,-}},
\end{equation}
when
$$
\cP\colon\Hb^{s,r}(\region_{[T_0,T_1]})^{\bullet,-}\to \Hb^{s-2,r}(\region_{[T_0,T_1]})^{\bullet,-}
$$
and $u\in \Hb^{2,r}(\region_{[T_0,T_1]})^{\bullet,-}$. To
summarize, we state both this and \eqref{eq:P-back-energy-est} in terms
of these supported spaces:

\begin{cor}\label{cor:P-energy-est}
Let $r,\wt r\in\RR$.
For $u\in \Hb^{2,r}(\region_{[T_0,T_1]})^{\bullet,-}$, one has
\begin{equation}\label{eq:P-forw-energy-est-bullet}
\|u\|_{\Hb^{1,r}(\region_{[T_0,T_1]})^{\bullet,-}}\leq C\|\cP u\|_{\Hb^{0,r}(\region_{[T_0,T_1]})^{\bullet,-}},
\end{equation}
while for $v\in \Hb^{2,\wt r}(\region_{[T_0,T_1]})^{-,\bullet}$, the estimate
\begin{equation}\label{eq:P*-back-energy-est-bullet}
\|v\|_{\Hb^{1,\wt r}(\region_{[T_0,T_1]})^{-,\bullet}}\leq C\|\cP^* v\|_{\Hb^{0,\wt r}(\region_{[T_0,T_1]})^{-,\bullet}}
\end{equation}
holds.
\end{cor}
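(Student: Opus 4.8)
The plan is to read off both estimates from the energy inequalities already in hand: the first is essentially a restatement of \eqref{eq:P-forw-energy-est-bullet-orig}, and the second is the same argument applied to the backward energy estimate of Lemma~\ref{lemma:energy-est-back} with $\cP$ replaced by $\cP^*$. In each case the mechanism is identical: the support condition encoded in the $\bullet$ superscript makes the a priori control region of the relevant (forward or backward) energy estimate disjoint from the support of the function, so that error term disappears, and locality of $\cP$ (resp.\ $\cP^*$) lets one trim the forcing region down to $\region_{[T_0,T_1]}$.

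For \eqref{eq:P-forw-energy-est-bullet} there is nothing genuinely new: it is exactly \eqref{eq:P-forw-energy-est-bullet-orig}. I would recall how the latter was obtained --- one takes the forward-propagated variant of Lemma~\ref{lemma:energy-est-back}, with the sign of $\chi'$ reversed (cf.\ the footnote referring to \cite[Equation~(3.27)]{Va12}), whose a priori term is supported where $\frakt_j\in[T_-,T_-']$; for $u\in\Hb^{2,r}(\region_{[T_0,T_1]})^{\bullet,-}$, which is supported in $\frakt_j\geq T_0>T_-'$, that term vanishes, and since $\cP$ is a differential operator $\cP u$ is likewise supported in $\frakt_j\geq T_0$, so its $\Hb^{0,r}$-norm over the larger region $\region_{[T_-,T_1]}$ coincides with its norm over $\region_{[T_0,T_1]}$.

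For \eqref{eq:P*-back-energy-est-bullet} I would apply Lemma~\ref{lemma:energy-est-back} in its $\cP^*$ form (permitted by the last sentence of that lemma): for $v\in\Hb^{2,\tilde r}(\region_{[T_0,T_+]})^-$ it bounds $\|v\|_{\Hb^{1,\tilde r}(\region_{[T_0,T_1]})^-}$ by $\|\cP^* v\|_{\Hb^{0,\tilde r}(\region_{[T_0,T_+]})^-}$ plus an a priori term supported where $\tilde\frakt_j\in[T_+',T_+]$. If $v$ is in addition a supported distribution at $\tilde\frakt_j=T_1$, i.e.\ $v\in\Hb^{2,\tilde r}(\region_{[T_0,T_1]})^{-,\bullet}$, then after extension by zero across that face $v$ is supported in $\tilde\frakt_j\leq T_1$; since $T_1<T_+'$ the a priori term drops out, locality of $\cP^*$ lets one shrink the forcing region to $\region_{[T_0,T_1]}$, and the $\Hb$-norms involved do not distinguish supported from extendible distributions. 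By Remark~\ref{rmk:corner-energy-est} the same reasoning goes through when $\frakt_j$ and $\tilde\frakt_j$ are taken vector-valued, which is the form actually needed in the applications.

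The step that needs the most attention --- though it is bookkeeping rather than analysis --- is aligning the direction of propagation with the face carrying the support hypothesis: for the $\cP$-estimate one propagates forward in $\frakt_j$, so the a priori region lies below $\{\frakt_j=T_0\}$, while for the $\cP^*$-estimate one propagates backward, so the a priori region lies above $\{\tilde\frakt_j=T_1\}$; one must check in each case that the $\bullet$ is placed on precisely the face that makes that region irrelevant and that the extendible ($-$) character at the other face is compatible with the propagation direction. All of the substantive work --- the positive-commutator/integration-by-parts identity, the handling of the lower-order terms from $\cP-\Box_g\in\Diffb^1(M)$, and the choice $\alpha=-2r$ matching the weight --- has already been carried out in Lemma~\ref{lemma:energy-est-back}, so the corollary is in the end a repackaging into the supported/extendible language.
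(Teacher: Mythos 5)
Your proposal is correct and follows the same route the paper (implicitly) takes: the corollary is introduced with "To summarize, we state both this and \eqref{eq:P-back-energy-est} in terms of these supported spaces," so it is by design a repackaging of \eqref{eq:P-forw-energy-est-bullet-orig} and of Lemma~\ref{lemma:energy-est-back} in its $\cP^*$ form. Your bookkeeping — that the $\bullet$ at $\frakt_j=T_0$ kills the a priori term in the forward estimate, the $\bullet$ at $\tilde\frakt_j=T_1$ kills the a priori term in the backward $\cP^*$ estimate, and locality of the operator lets one shrink the region of the forcing norm — is exactly the content the paper leaves to the reader.
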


A duality argument, combined with propagation of singularities, thus gives:

\begin{lemma}\label{lemma:loc-solve-energy}
Let $s\geq 0$, $r\in\RR$. Then there is $C>0$ with the following property.

If $f\in\Hb^{s-1,r}(\region_{[T_0,T_1]})^{\bullet,-}$,
then there exists
$u\in\Hb^{s,r}(\region_{[T_0,T_1]})^{\bullet,-}$ such that $\cP
u=f$ and
$$
\|u\|_{\Hb^{s,r}(\region_{[T_0,T_1]})^{\bullet,-}}\leq C\|f\|_{\Hb^{s-1,r}(\region_{[T_0,T_1]})^{\bullet,-}}.
$$
\end{lemma}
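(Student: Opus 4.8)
The plan is the standard one for forward problems: first produce a solution at the regularity level of the energy estimate by a Hahn--Banach duality argument against the adjoint estimate of Corollary~\ref{cor:P-energy-est}, and then bootstrap its regularity by b-microlocal propagation, using Proposition~\ref{prop:b-saddle} where the slab reaches $X$.

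For the existence step I would apply the adjoint energy estimate \eqref{eq:P*-back-energy-est-bullet} with weight $\tilde r=-r$: for $v$ in the space $\Hb^{2,-r}(\region_{[T_0,T_1]})^{-,\bullet}$ — for instance $v$ smooth and vanishing to infinite order at $X$ and at the spacelike face $\frakt_j=T_0$ —
$$
\|v\|_{\Hb^{1,-r}(\region_{[T_0,T_1]})^{-,\bullet}}\le C\,\|\cP^*v\|_{\Hb^{0,-r}(\region_{[T_0,T_1]})^{-,\bullet}} .
$$
In particular $\cP^*$ is injective on such $v$, so the antilinear functional $\cP^*v\mapsto\langle f,v\rangle$ is well-defined on the image of $\cP^*$ inside $\Hb^{0,-r}(\region_{[T_0,T_1]})^{-,\bullet}$; since $s\ge 0$ gives $f\in\Hb^{s-1,r}(\region)^{\bullet,-}\subset\Hb^{-1,r}(\region)^{\bullet,-}$, which is dual to $\Hb^{1,-r}(\region)^{-,\bullet}$, the displayed estimate bounds this functional by $C\|f\|_{\Hb^{s-1,r}}\,\|\cP^*v\|_{\Hb^{0,-r}}$. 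Extending by Hahn--Banach to all of $\Hb^{0,-r}(\region)^{-,\bullet}$ and representing via the identification $(\Hb^{0,-r}(\region)^{-,\bullet})^*=\Hb^{0,r}(\region_{[T_0,T_1]})^{\bullet,-}$ yields $u\in\Hb^{0,r}(\region_{[T_0,T_1]})^{\bullet,-}$ with $\langle u,\cP^*v\rangle=\langle f,v\rangle$ for all admissible $v$, i.e.\ $\cP u=f$, and $\|u\|_{\Hb^{0,r}}\le C\|f\|_{\Hb^{s-1,r}}$. Membership of $u$ in the $\bullet$-space at $\frakt_j=T_0$ is precisely the forward support condition $\supp u\subset\{\frakt_j\ge T_0\}$. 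Uniqueness of such a forward solution follows from \eqref{eq:P-forw-energy-est-bullet} after the usual regularization removing its a priori $\Hb^{2,r}$ hypothesis.

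It remains to upgrade $u\in\Hb^{0,r}$ to $u\in\Hb^{s,r}$ when $s>0$. Since $u$ vanishes to infinite order on $\{\frakt_j<T_0\}$ it is trivially in $\Hb^{s,r}$ microlocally there; microlocal elliptic b-regularity gives $\WFb^{s,r}(u)\subset\Sigma$, and real principal type propagation of b-singularities shows $\WFb^{s,r}(u)\cap\Sigma$ is a union of bicharacteristics, as $\WFb^{s-1,r}(\cP u)=\WFb^{s-1,r}(f)=\emptyset$. Because $\region_{[T_-,T_+]}$ is compact with $\frakt_j,\tilde\frakt_j$ timelike on it, the bicharacteristic through any point of $\Sigma$ over $\region_{[T_0,T_1]}\cap\{\tau>0\}$, traced backward, leaves the slab through $\{\frakt_j=T_0\}$ into $\{\frakt_j<T_0\}$ in finite parameter, where $u$ vanishes; propagating forward then puts that point outside $\WFb^{s,r}(u)$. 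The remaining bicharacteristics — those limiting onto the b-radial sets $L_\pm$ over $X$, present when the slab meets $X$ — are dealt with by Proposition~\ref{prop:b-saddle}, fed with the a priori $\Hb^{0,r}$ control and the $\Hb^{s,r}$-regularity already propagated onto $\cL_\pm\cap\{\tau>0\}$ and onto the punctured neighborhoods of $L_\pm$ in $\Sigma\cap\Sb^*_XM$. One thus gets $\WFb^{s,r}(u)=\emptyset$, i.e.\ $u\in\Hb^{s,r}(\region_{[T_0,T_1]})^{\bullet,-}$, and the accompanying quantitative b-estimates, combined with \eqref{eq:P-forw-energy-est-bullet} to absorb error terms (or a closed graph argument using the uniqueness above), give $\|u\|_{\Hb^{s,r}}\le C\|f\|_{\Hb^{s-1,r}}$.

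The main obstacle is organizational rather than conceptual: carrying the supported-versus-extendible bookkeeping at the two spacelike faces (with the automatic-extendibility convention at $X$) cleanly through both the duality and the propagation, and — where the slab reaches $X$ — matching the b-microlocal propagation, including the radial point analysis of Proposition~\ref{prop:b-saddle} (and, if needed, commutation with b-vector fields in the energy estimates near $X$), with the energy estimates so that the regularity bootstrap goes through for every $s\ge 0$ and the final constant is uniform over the compact slab.
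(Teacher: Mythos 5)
Your overall strategy — Hahn--Banach duality against the adjoint energy estimate to get an $\Hb^{0,r}$ forward solution, followed by a microlocal regularity bootstrap — is the same as the paper's. However, there is a genuine gap in how you pass from local $\Hb^{s,r}$ wavefront control to the asserted membership $u\in\Hb^{s,r}(\region_{[T_0,T_1]})^{\bullet,-}$, and the missing ingredient is a domain-enlargement step.

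You apply the duality directly on $\region_{[T_0,T_1]}$ and then try to bootstrap by propagating b-singularities over the same slab. Propagation gives \emph{local} $\Hb^{s,r}$ regularity of $u$ over the open set $\{\frakt_j>T_0,\ \tilde\frakt_j<T_1\}$, but the conclusion requires $u$ to be the restriction of an $\Hb^{s,r}$ distribution \emph{uniformly up to the extendible face} $\tilde\frakt_j=T_1$. Emptiness of $\WFb^{s,r}(u)$ in the open slab does not give this, and the uniform-up-to-$T_1$ statement you need is precisely the content of Proposition~\ref{prop:prop-sing-bdy} — which in the paper is proved \emph{from} Corollary~\ref{cor:loc-WP-energy} and hence from this very lemma. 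As written, your bootstrap therefore either has a hole at the $T_1$ face or is circular. The paper avoids this by first extending $f$ to $\tilde f\in\Hb^{s-1,r}(\region_{[T'_-,T'_+]})^{\bullet,-}$ on a \emph{strictly larger} slab (by zero on the supported side, by Sobolev extension on the extendible side), solving $\cP\tilde u=\tilde f$ there by duality, extending $\tilde u$ by zero to $\region_{[T_-,T'_+]}$, and only then propagating singularities on the \emph{open} set $\region_{(T_-,T'_+)}$. Since $T_1<T'_+$, the slice $\tilde\frakt_j=T_1$ lies in the interior of that open set, so the local regularity obtained there is automatically uniform, and restriction to $\region_{(T_-,T_1]}$ gives the required element of $\Hb^{s,r}(\region_{[T_0,T_1]})^{\bullet,-}$. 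This enlargement is not cosmetic; it is what converts a local wavefront statement into membership in the correct supported/extendible space.

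A secondary point: you invoke Proposition~\ref{prop:b-saddle} at the b-radial sets $L_\pm$ over $X$. If $\region_{[T_0,T_1]}$ genuinely contained $L_\pm$, the lemma could not hold for unrestricted $s\geq 0$, $r\in\RR$, because Proposition~\ref{prop:b-saddle} imposes threshold inequalities between $s$ and $\beta r$. In the paper's usage (see Remark~\ref{rmk:corner-energy-est} and Figure~\ref{FigDS}) the slabs $\region_{[T_0,T_1]}$ sit near the spacelike faces $H_j$, away from $L_\pm$, so elliptic regularity and real-principal-type propagation suffice — this is why the paper's proof carries no threshold hypotheses. You should flag this implicit non-degeneracy rather than appeal to the radial-point estimate.
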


\begin{rmk}\label{rmk:corner-solve}
As in Remark~\ref{rmk:corner-energy-est}, the lemma remains valid in more generality, namely if one replaces $\frakt_j^{-1}([t_0,\infty))$
by $\frakt_j^{-1}([t_{j,0},\infty))\cap\frakt_k^{-1}([t_{k,0},\infty))$,
and/or $\wt\frakt_j^{-1}((-\infty,t_1])$ by
$\wt\frakt_j^{-1}((-\infty,t_{j,1}])\cap
\wt\frakt_j^{-1}((-\infty,t_{k,1}])$ in the definition of
$\region_{[t_0,t_1]}$, provided that the $\frakt_j$ have linearly
independent differentials on their joint zero set, and similarly for
the $\wt\frakt_j$. The place where this linear independence is used (the 
energy estimate above does not need this) is for the continuous
Sobolev extension map, valid on manifolds with corners, see \cite[\S3]{Vasy:Corners}.
\end{rmk}

\begin{proof}
We work on the slightly bigger region $\region_{[T_-',T_+']}$, applying the energy estimates with $T_0$ replaced by $T_-'$, $T_1$ replaced by $T_+'$. First, by the supported property at $\frakt_j=T_0$, one can regard $f$ as an element of $\Hb^{s-1,r}(\region_{[T_-',T_1]})^{\bullet,-}$ with support in $\region_{[T_0,T_1]}$. Let
$$
\wt
f\in\Hb^{s-1,r}(\region_{[T_-',T_+']})^{\bullet,-}\subset
\Hb^{-1,r}(\region_{[T_-',T_+']})^{\bullet,-}
$$
be an extension
of $f$, so $\wt f$ is supported in $\region_{[T_0,T_+']}$, and
restricts to $f$; by the definition of spaces of extendible distributions as quotients of spaces of distributions on a larger space, see \cite[Appendix~B.2]{Ho83}, we can assume
\begin{equation}
\label{EqExtQuotientNorm}
  \|\wt f\|_{\Hb^{s-1,r}(\region_{[T_-',T_+']})^{\bullet,-}} \leq 2\|f\|_{\Hb^{s-1,r}(\region_{[T_-',T_1]})^{\bullet,-}}.
\end{equation}
By \eqref{eq:P-back-energy-est} applied with $\cP$
replaced by $\cP^*$, $\wt r=-r$,
\begin{equation}\label{eq:P*-back-energy-est-appl}
\|\phi\|_{\Hb^{1,\wt r}(\region_{[T_-',T_+']})^{-,\bullet}}\leq
C\|\cP^* \phi\|_{\Hb^{0,\wt r}(\region_{[T_-',T_+']})^{-,\bullet}},
\end{equation}
for $\phi\in \Hb^{2,\wt r}(\region_{[T_-',T_+']})^{-,\bullet}$.
Correspondingly, by the Hahn-Banach theorem, there exists
$$
\wt u\in (\Hb^{0,\wt
  r}(\region_{[T_-',T_+']})^{-,\bullet})^*=\Hb^{0,r}(\region_{[T_-',T_+']})^{\bullet,-}
$$
such that
$$
\la \cP\wt u,\phi\ra=\la \wt u,\cP^*\phi\ra=\la\wt
f,\phi\ra,\qquad \phi\in \Hb^{2,\wt
  r}(\region_{[T_-',T_+']})^{-,\bullet},
$$
and
\begin{equation}\label{eq:HB-est}
\|\wt u\|_{\Hb^{0,r}(\region_{[T_-',T_+']})^{\bullet,-}}\leq C
\|\wt f\|_{\Hb^{-1,r}(\region_{[T_-',T_+']})^{\bullet,-}}.
\end{equation}
One can regard $\wt
u$ as an element of
$\Hb^{0,r}(\region_{[T_-,T_+']})^{\bullet,-}$
with support in $\region_{[T_-',T_+']}$, with $\wt f$ similarly extended; then $\la\cP \wt
u,\phi\ra=\la\wt f,\phi\ra$ for $\phi\in
\dCI_c(\region_{(T_-,T_+')})$ (here the dot over $\CI$ refers to
infinite order vanishing at $X=\pa M$!), so $\cP\wt u=\wt f$ in
distributions. Since $\wt u$ vanishes on $\region_{(T_-,T_-')}$, and
$$
\wt
f\in \Hb^{s-1,r}(\region_{[T_-,T_+']})^{\bullet,-},
$$
propagation
of singularities applied on $\region_{(T_-,T_+')}$ (which has only
the boundary $\pa M=X$) gives that $\wt
u\in\Hbloc^{s,r}(\region_{(T_-,T_+')})$ (i.e.\ here we are ignoring
the two boundaries, $\frakt_j=T_-,T_+'$, not making a uniform
statement there, but we are not ignoring $\pa M=X$). In addition,
for $\chi,\wt\chi\in\CI_c (\region_{(T_-,T_+')})$,
$\wt\chi\equiv 1$ on $\supp\chi$, we have the estimate
\begin{equation}\label{eq:prop-sing-est}
\|\chi \wt u\|_{\Hb^{s,r}(\region_{[T_-,T_+']})}\leq
C\Big(\|\wt\chi\cP\wt
u\|_{\Hb^{s-1,r}(\region_{[T_-,T_+']})}+\|\wt\chi\wt u\|_{\Hb^{0,r}(\region_{[T_-,T_+']})}\Big).
\end{equation}
 In view
of the support property of $\wt u$, this gives that restricting to
$\region_{(T_-,T_1]}$, we obtain an element of
$\Hb^{s,r}(\region_{(T_-,T_1]})^-$, with support in $\region_{[T_0,T_1]}$,
i.e.\ an element of
$\Hb^{s,r}(\region_{[T_0,T_1]})^{\bullet,-}$.
The desired estimate follows from \eqref{eq:HB-est}, controlling the
second term of the right hand side of \eqref{eq:prop-sing-est}, and
\eqref{EqExtQuotientNorm} as well as using $\cP\wt u=\wt f$.
\end{proof}

At this point, $u$ given by Lemma~\ref{lemma:loc-solve-energy} is not
necessarily unique. However:

\begin{lemma}\label{lemma:unique}
Let $s,r\in\RR$. If
$u\in\Hb^{s,r}(\region_{[T_0,T_1]})^{\bullet,-}$ is such that $\cP u=0$, then $u=0$.
\end{lemma}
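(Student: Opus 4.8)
The plan is to prove the uniqueness by the standard duality argument, reducing it to \emph{backward} solvability of the adjoint equation. First I would note that $\cP^*$ satisfies the same structural hypotheses~\eqref{eq:cP-almost-Box} as $\cP$: indeed $\cP^*-\Box_g=(\cP^*-\cP)+(\cP-\Box_g)\in\Diffb^1(M)$ and $\cP^*-(\cP^*)^*=\cP^*-\cP\in\Diffb^0(M)$. Running the Hahn--Banach argument of the proof of Lemma~\ref{lemma:loc-solve-energy} with $\cP^*$ in place of $\cP$ and the roles of $H_1$ and $H_2$ interchanged --- i.e.\ feeding in the energy estimate~\eqref{eq:P-forw-energy-est-bullet} of Corollary~\ref{cor:P-energy-est} for $\cP$ rather than the one for $\cP^*$ --- then gives: for every $\tilde s\geq 0$, every $\tilde r\in\RR$, and every $g\in\Hb^{\tilde s-1,\tilde r}(\region_{[T_0,T_1]})^{-,\bullet}$, there is $v\in\Hb^{\tilde s,\tilde r}(\region_{[T_0,T_1]})^{-,\bullet}$ with $\cP^* v=g$.

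Now let $u\in\Hb^{s,r}(\region_{[T_0,T_1]})^{\bullet,-}$ satisfy $\cP u=0$, where $\cP u\in\Hb^{s-2,r}(\region_{[T_0,T_1]})^{\bullet,-}$, whose dual space is $\Hb^{2-s,-r}(\region_{[T_0,T_1]})^{-,\bullet}$. I would test $u$ against an arbitrary smooth function $g$ on $\region_{[T_0,T_1]}$ that vanishes to infinite order at $H_1$ and at $X$ (but is otherwise unconstrained, in particular need not vanish at $H_2$); such $g$ represent elements of $\Hb^{N,\tilde r}(\region_{[T_0,T_1]})^{-,\bullet}$ for all $N\in\RR$ and $\tilde r\in\RR$, and their span is dense in the predual $\Hb^{-s,-r}(\region_{[T_0,T_1]})^{-,\bullet}$ of the space in which $u$ lies. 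Fixing such a $g$, choosing $\tilde r=-r$ and $\tilde s\geq\max(0,2-s)$, and taking $v\in\Hb^{\tilde s,-r}(\region_{[T_0,T_1]})^{-,\bullet}$ with $\cP^* v=g$ as above, all pairings below are defined (since $\tilde s\geq 2-s$) and
$$
\la u,g\ra=\la u,\cP^* v\ra=\la\cP u,v\ra=0.
$$
Here the first equality is just $\cP^* v=g$; the second is the formal adjoint identity $\la u,\cP^* v\ra=\la\cP u,v\ra$, which carries no boundary contribution because the supported/extendible conventions of $u$ and $v$ at $H_1$ and at $H_2$ are complementary and all quantities vanish to infinite order at $X$ (this is the identity already used in the proof of Lemma~\ref{lemma:loc-solve-energy}; it is checked first for $u$ in the dense subspace of smooth functions vanishing to infinite order at $H_2$ and $X$, then extended by continuity); and the third equality is $\cP u=0$ paired with $v\in\Hb^{\tilde s,-r}(\region_{[T_0,T_1]})^{-,\bullet}\subset\Hb^{2-s,-r}(\region_{[T_0,T_1]})^{-,\bullet}$. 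Letting $g$ run over a dense subspace of $\Hb^{-s,-r}(\region_{[T_0,T_1]})^{-,\bullet}$ forces $u=0$.

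The argument is light on content --- it is exactly the uniqueness half of the duality between the forward problem for $\cP$ and the backward problem for $\cP^*$ that is encoded in Corollary~\ref{cor:P-energy-est} --- so the step I would expect to be fussiest is the functional-analytic bookkeeping: verifying that $\cP^*$ genuinely falls under Lemma~\ref{lemma:loc-solve-energy}, keeping the supported-versus-extendible labels at $H_1$ and $H_2$ consistent throughout, and confirming that the pairing identity $\la\cP u,v\ra=\la u,\cP^* v\ra$ has no boundary term. One could alternatively first boost the regularity of $u$ via Proposition~\ref{prop:b-saddle} together with propagation of singularities and then apply~\eqref{eq:P-forw-energy-est-bullet} directly, but the duality route avoids re-examining the propagation through the radial sets $L_\pm$. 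In contrast to Proposition~\ref{prop:absorb-Fredholm} and the microlocal estimates, no threshold inequality between $s$ and $r$ enters here, consistent with the statement holding for all $s,r\in\RR$.
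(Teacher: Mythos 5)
Your argument is correct in spirit and takes a genuinely different route from the paper's, which is considerably shorter: the paper regards $u$ as a distribution on the slightly larger strip $\region_{(T_-,T_1)}$ supported in $\frakt_j\geq T_0$, invokes propagation of singularities (exactly as in the proof of Lemma~\ref{lemma:loc-solve-energy}) to conclude $u\in\Hbloc^{\infty,r}(\region_{(T_-,T_1)})$, restricts to $\region_{[T_0,T_1']}$ with $T_1'<T_1$ so that $u$ falls into the hypotheses of~\eqref{eq:P-forw-energy-est-bullet}, which gives $\|u\|_{\Hb^{1,r}}\leq C\|\cP u\|_{\Hb^{0,r}}=0$ directly, and lets $T_1'\to T_1$. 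What the paper's route buys is brevity and the fact that, once the regularity boost is done, the conclusion is immediate from an estimate whose domain of validity is already explicitly recorded; what your duality route buys is the conceptual clarity of ``existence for the adjoint $\Rightarrow$ uniqueness''. One small caveat on your framing: the claim that duality ``avoids re-examining the propagation through the radial sets $L_\pm$'' is not really accurate, since the dual Lemma~\ref{lemma:loc-solve-energy} you invoke itself runs propagation of singularities on the strip, which contains the boundary $X$ and hence potentially the $L_\pm$; neither route escapes this.

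There is one genuine imprecision to fix. You assert that smooth $g$ vanishing to infinite order at $H_1$ and $X$ but \emph{not} at $H_2$ ``represent elements of $\Hb^{N,\tilde r}(\region_{[T_0,T_1]})^{-,\bullet}$ for all $N\in\RR$.'' That is false for $N\geq 1/2$: the superscript $\bullet$ at $H_2$ encodes supported distributions, i.e.\ restrictions of elements of $\Hb^{N,\tilde r}(M)$ supported in $\{\frakt_2\geq 0\}$, and if $g(p)\neq 0$ for some $p\in H_2$ then the extension of $g$ by zero across $H_2$ has a jump there and fails to be in $\Hb^{N}$ once $N\geq 1/2$. The remedy is to drop the smoothness of $g$ altogether: take an arbitrary $g\in\Hb^{\tilde s-1,-r}(\region_{[T_0,T_1]})^{-,\bullet}$ with $\tilde s\geq\max(0,2-s)$. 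The dual Lemma~\ref{lemma:loc-solve-energy} gives $v\in\Hb^{\tilde s,-r}(\region_{[T_0,T_1]})^{-,\bullet}$ with $\cP^*v=g$; the pairing identity $\la u,\cP^*v\ra=\la\cP u,v\ra$ holds (check it first on $u,v$ ranging over dense sets of smooth functions with matching vanishing conditions and extend by continuity, using $\tilde s\geq 2-s$); and since $\tilde s-1\geq -s$ and higher b-regularity spaces are dense in lower ones, $\Hb^{\tilde s-1,-r}(\region_{[T_0,T_1]})^{-,\bullet}$ is dense in the predual $\Hb^{-s,-r}(\region_{[T_0,T_1]})^{-,\bullet}$, so $\la u,g\ra=0$ for a dense set of $g$ and $u=0$ as claimed.
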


\begin{proof}
Propagation of singularities, as in the proof of
Lemma~\ref{lemma:loc-solve-energy}, regarding $u$ as a distribution on
$(T_-,T_1)$ with support in $[T_0,T_1)$ gives that $u\in
\Hbloc^{\infty,r}(\region_{(T_-,T_1)})$. Taking $T_0<T_1'<T_1$,
letting $u'=u|_{[T_0,T_1']}$,
\eqref{eq:P-forw-energy-est-bullet} shows that $u'=0$. Since $T'_1$ is
arbitrary, this shows $u=0$.
\end{proof}

\begin{cor}\label{cor:loc-WP-energy}
Let $s\geq 0$, $r\in\RR$. Then there is $C>0$ with the following property.

If $f\in\Hb^{s-1,r}(\region_{[T_0,T_1]})^{\bullet,-}$,
then there exists a unique
$u\in\Hb^{s,r}(\region_{[T_0,T_1]})^{\bullet,-}$ such that $\cP
u=f$.

Further, this unique $u$ satisfies
$$
\|u\|_{\Hb^{s,r}(\region_{[T_0,T_1]})^{\bullet,-}}\leq C\|f\|_{\Hb^{s-1,r}(\region_{[T_0,T_1]})^{\bullet,-}}.
$$
\end{cor}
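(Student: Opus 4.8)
The plan is to obtain the corollary by simply combining the existence-with-estimate statement of Lemma~\ref{lemma:loc-solve-energy} with the uniqueness statement of Lemma~\ref{lemma:unique}, both of which are already available in the ranges of $s$ and $r$ we need. Concretely, existence of some $u\in\Hb^{s,r}(\region_{[T_0,T_1]})^{\bullet,-}$ with $\cP u=f$, together with the bound $\|u\|_{\Hb^{s,r}(\region_{[T_0,T_1]})^{\bullet,-}}\leq C\|f\|_{\Hb^{s-1,r}(\region_{[T_0,T_1]})^{\bullet,-}}$ for $s\geq 0$, $r\in\RR$, is exactly the content of Lemma~\ref{lemma:loc-solve-energy}. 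So the only thing left to establish is uniqueness and then to observe that the estimate applies to this (now unique) solution.

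For uniqueness I would take two solutions $u_1,u_2\in\Hb^{s,r}(\region_{[T_0,T_1]})^{\bullet,-}$ of $\cP u=f$ and set $w=u_1-u_2$. Then $w\in\Hb^{s,r}(\region_{[T_0,T_1]})^{\bullet,-}$ and $\cP w=0$, so Lemma~\ref{lemma:unique}, which holds for arbitrary $s,r\in\RR$, gives $w=0$, i.e.\ $u_1=u_2$. Hence the $u$ produced by Lemma~\ref{lemma:loc-solve-energy} is the unique solution in $\Hb^{s,r}(\region_{[T_0,T_1]})^{\bullet,-}$, and the estimate from that lemma is an estimate for this unique $u$, with the same constant $C$.

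There is essentially no obstacle here: the real work has already been carried out in the two preceding lemmas, whose proofs rest on the energy estimates of Corollary~\ref{cor:P-energy-est} (via Hahn--Banach duality for existence, via \eqref{eq:P-forw-energy-est-bullet} for uniqueness) together with b-propagation of singularities to upgrade regularity. The only minor bookkeeping point is that the two hypothetical solutions must lie in a regularity class to which Lemma~\ref{lemma:unique} applies; since both are assumed in $\Hb^{s,r}(\region_{[T_0,T_1]})^{\bullet,-}$ and that lemma imposes no restriction on $s,r$, this is immediate, and the corollary follows.
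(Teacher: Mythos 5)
Your proposal matches the paper's own proof exactly: existence and the estimate come from Lemma~\ref{lemma:loc-solve-energy}, and uniqueness follows by linearity from Lemma~\ref{lemma:unique}. Nothing to add.
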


\begin{proof}
Existence is Lemma~\ref{lemma:loc-solve-energy}, uniqueness is
linearity plus Lemma~\ref{lemma:unique}, which together with the
estimate in Lemma~\ref{lemma:loc-solve-energy} prove the corollary.
\end{proof}

\begin{cor}\label{cor:energy-est-strong}
Let $s\geq 0$, $r,\wt r\in\RR$.

For $u\in \Hb^{s,r}(\region_{[T_0,T_1]})^{\bullet,-}$ with $\cP u\in \Hb^{s-1,r}(\region_{[T_0,T_1]})^{\bullet,-}$,
\begin{equation}\label{eq:P-forw-energy-est-bullet-strong}
\|u\|_{\Hb^{s,r}(\region_{[T_0,T_1]})^{\bullet,-}}\leq C\|\cP u\|_{\Hb^{s-1,r}(\region_{[T_0,T_1]})^{\bullet,-}},
\end{equation}
while for $v\in \Hb^{s,\wt
  r}(\region_{[T_0,T_1]})^{-,\bullet}$ with $\cP^*v\in \Hb^{s-1,\wt r}(\region_{[T_0,T_1]})^{-,\bullet}$,
\begin{equation}\label{eq:P*-back-energy-est-bullet-strong}
\|v\|_{\Hb^{s,\wt r}(\region_{[T_0,T_1]})^{-,\bullet}}\leq
C\|\cP^* v\|_{\Hb^{s-1,\wt r}(\region_{[T_0,T_1]})^{-,\bullet}}.
\end{equation}
\end{cor}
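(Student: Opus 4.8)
\emph{Proof strategy.} The plan is to read off both inequalities from the local well-posedness already established, rather than from a fresh energy or positive-commutator computation. For \eqref{eq:P-forw-energy-est-bullet-strong}: let $u\in\Hb^{s,r}(\region_{[T_0,T_1]})^{\bullet,-}$ with $f:=\cP u\in\Hb^{s-1,r}(\region_{[T_0,T_1]})^{\bullet,-}$ and $s\geq 0$, $r\in\RR$. Since this is exactly the hypothesis of Corollary~\ref{cor:loc-WP-energy}, there is a \emph{unique} $u'\in\Hb^{s,r}(\region_{[T_0,T_1]})^{\bullet,-}$ with $\cP u'=f$, and it obeys $\|u'\|_{\Hb^{s,r}(\region_{[T_0,T_1]})^{\bullet,-}}\leq C\|f\|_{\Hb^{s-1,r}(\region_{[T_0,T_1]})^{\bullet,-}}$. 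Now $\cP(u-u')=0$ and $u-u'\in\Hb^{s,r}(\region_{[T_0,T_1]})^{\bullet,-}$, so Lemma~\ref{lemma:unique} forces $u=u'$; the bound for $u'$ is then precisely \eqref{eq:P-forw-energy-est-bullet-strong}.

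For \eqref{eq:P*-back-energy-est-bullet-strong} I would first record the versions of Lemma~\ref{lemma:loc-solve-energy}, Lemma~\ref{lemma:unique} and Corollary~\ref{cor:loc-WP-energy} with $\cP$ replaced by $\cP^*$ and the supported/extendible roles at $\{\frakt_j=T_0\}$ and $\{\tilde\frakt_j=T_1\}$ interchanged, i.e.\ on $\Hb^{s,\tilde r}(\region_{[T_0,T_1]})^{-,\bullet}$ in place of $\Hb^{s,r}(\region_{[T_0,T_1]})^{\bullet,-}$. These require no new input: Lemma~\ref{lemma:energy-est-back} and the forward estimate following it are explicitly stated to hold with $\cP$ replaced by $\cP^*$, so \eqref{eq:P*-back-energy-est-bullet} plays the role that \eqref{eq:P-forw-energy-est-bullet} played above; the Hahn--Banach/duality step of the proof of Lemma~\ref{lemma:loc-solve-energy} is then run against the forward energy estimate for $\cP$; and the propagation-of-singularities argument used there to upgrade the regularity of the solution on $\region_{(T_-,T_+')}$ applies equally to $\cP^*$ (same real principal symbol, with the radial-set thresholds of Proposition~\ref{prop:b-saddle} taken in their reversed-inequality form). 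Granting these analogues, the same two steps as above --- produce $v'$ solving $\cP^*v'=\cP^* v$, subtract, invoke the $\cP^*$-version of Lemma~\ref{lemma:unique} --- yield \eqref{eq:P*-back-energy-est-bullet-strong}.

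\emph{Main obstacle.} The argument is essentially bookkeeping on top of Corollaries~\ref{cor:P-energy-est} and~\ref{cor:loc-WP-energy}; the only step that demands a little care is checking that the full solvability-and-uniqueness package carries over to $\cP^*$ on the reversed spaces. That, however, is just the observation that reversing the time orientation --- exchanging the level sets $\{\frakt_j=T_0\}$ and $\{\tilde\frakt_j=T_1\}$ and the components $\Sigma_+$, $\Sigma_-$ --- turns the hypotheses imposed on $\cP$ (the timelike character of the $\frakt_j,\tilde\frakt_j$, the non-trapping condition of Section~\ref{SecStaticDeSitter-Fredholm}, and $\cP-\cP^*\in\Diffb^1(M)$) into the corresponding hypotheses for $\cP^*$, so no analysis beyond what is already in the excerpt is needed.
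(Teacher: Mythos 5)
Your proof is correct and takes essentially the same route as the paper's: reduce \eqref{eq:P-forw-energy-est-bullet-strong} to the well-posedness statement of Corollary~\ref{cor:loc-WP-energy} and use uniqueness (Lemma~\ref{lemma:unique}) to identify the given $u$ with the solution produced there, whose estimate is precisely the claim; then appeal to the time-reversed/$\cP^*$ analogues for \eqref{eq:P*-back-energy-est-bullet-strong}. The paper is slightly terser (it cites the uniqueness clause of Corollary~\ref{cor:loc-WP-energy} directly rather than re-invoking Lemma~\ref{lemma:unique}, and simply states the $\cP^*$ case is analogous), but the content is identical; your remark about radial-set thresholds in the $\cP^*$ case is harmless but not needed here, since the propagation-of-singularities step in the local lemmas is away from the radial sets.
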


\begin{rmk}
Again, this estimate remains valid for vector valued $\frakt_j$ and
$\wt\frakt_j$, see Remarks~\ref{rmk:corner-energy-est} and
\ref{rmk:corner-solve}, under the linear independence condition of the latter.
\end{rmk}

\begin{proof}
It suffices to consider \eqref{eq:P-forw-energy-est-bullet-strong}. Let $f=\cP u\in \Hb^{s-1,r}(\region_{[T_0,T_1]})^{\bullet,-}$, and let $u'\in\Hb^{s,r}(\region_{[T_0,T_1]})^{\bullet,-}$ be given by Corollary~\ref{cor:loc-WP-energy}. In view of the uniqueness statement of Corollary~\ref{cor:loc-WP-energy}, $u=u'$. Then the estimate of Corollary~\ref{cor:loc-WP-energy} proves the claim.
\end{proof}

This yields the following propagation of singularities type result:

\begin{prop}\label{prop:prop-sing-bdy}
Let $s\geq 0$, $r\in\RR$.
If $u\in \Hb^{-\infty,-\infty}(\region_{[T_0,T_1]})^{\bullet,-}$ with $\cP u\in
\Hb^{s-1,r}(\region_{[T_0,T_1]})^{\bullet,-}$, then $u\in
\Hb^{s,r}(\region_{[T_0,T_1]})^{\bullet,-}$.

If instead $u\in \Hb^{-\infty,-\infty}(\region_{[T_0,T_1]})^{-,-}$ with $\cP u\in \Hb^{s-1,r}(\region_{[T_0,T_1]})^{-,-}$ and for some $\wt T_0>T_0$, $u\in\Hb^{s,r}(\region_{[T_0,T_1]}\setminus\region_{(\wt T_0,T_1]})^{-,-}$, then $u\in\Hb^{s,r}(\region_{[T_0,T_1]})^{-,-}$.
\end{prop}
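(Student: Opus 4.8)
The plan is to reduce both assertions to the solvability and uniqueness already extracted from the energy estimates, namely Corollary~\ref{cor:loc-WP-energy} and Lemma~\ref{lemma:unique}, and then to reduce the extendible case to the supported case by cutting off in $\frakt_j$.

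For the first assertion I would argue as follows. Since $u\in\Hb^{-\infty,-\infty}(\region_{[T_0,T_1]})^{\bullet,-}$, it lies in $\Hb^{s_0,r_0}(\region_{[T_0,T_1]})^{\bullet,-}$ for some finite $s_0,r_0$, which we may take $\leq s$ and $\leq r$. Put $f=\cP u\in\Hb^{s-1,r}(\region_{[T_0,T_1]})^{\bullet,-}$; since $s\geq 0$, Corollary~\ref{cor:loc-WP-energy} supplies $u'\in\Hb^{s,r}(\region_{[T_0,T_1]})^{\bullet,-}$ with $\cP u'=f$. Then $u-u'\in\Hb^{s_0,r_0}(\region_{[T_0,T_1]})^{\bullet,-}$ satisfies $\cP(u-u')=0$, so Lemma~\ref{lemma:unique} gives $u=u'$, hence $u\in\Hb^{s,r}(\region_{[T_0,T_1]})^{\bullet,-}$. (One could instead bypass Corollary~\ref{cor:loc-WP-energy} and argue directly: the energy estimate of Corollary~\ref{cor:P-energy-est} controls $u$ in a collar of $\{\frakt_j=T_0\}$, and one then propagates this regularity forward through the interior by real principal type propagation together with the radial point propagation result of Proposition~\ref{prop:b-saddle}, exactly as in the proof of Lemma~\ref{lemma:loc-solve-energy}; this is essentially the content repackaged into Corollary~\ref{cor:loc-WP-energy}.)

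For the second assertion I would pick $T_0<T'<T''<\tilde T_0$ and a cutoff $\chi=\chi(\frakt_j)\in\CI(M)$ with $\chi\equiv 0$ on $\{\frakt_j\leq T'\}$ and $\chi\equiv 1$ on $\{\frakt_j\geq T''\}$, and set $v=\chi u$. Then $v$ is supported in $\{\frakt_j\geq T'\}$, so it defines an element of $\Hb^{-\infty,-\infty}(\region_{[T',T_1]})^{\bullet,-}$. Writing $\cP v=\chi\,\cP u+[\cP,\chi]u$, the first term lies in $\Hb^{s-1,r}(\region_{[T',T_1]})^{\bullet,-}$ by the hypothesis on $\cP u$; and since $\cP\in\Diffb^2(M)$, the operator $[\cP,\chi]\in\Diffb^1(M)$ has coefficients supported in $\{T'\leq\frakt_j\leq T''\}\subset\region_{[T_0,T_1]}\setminus\region_{(\tilde T_0,T_1]}$, which is precisely the set on which $u$ is assumed to lie in $\Hb^{s,r}$, whence $[\cP,\chi]u\in\Hb^{s-1,r}(\region_{[T',T_1]})^{\bullet,-}$. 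Applying the first assertion on $\region_{[T',T_1]}$ (with $T'$ in place of $T_0$) gives $v\in\Hb^{s,r}(\region_{[T',T_1]})^{\bullet,-}$; restricting to $\{\frakt_j\geq T''\}$, where $v=u$, shows $u\in\Hb^{s,r}$ there. Combined with the assumed $\Hb^{s,r}$ membership of $u$ on $\region_{[T_0,T_1]}\setminus\region_{(\tilde T_0,T_1]}$ and a partition of unity in $\frakt_j$ subordinate to the cover of $\region_{[T_0,T_1]}$ by $\{\frakt_j<\tilde T_0\}$ and $\{\frakt_j>T''\}$ (possible since $T''<\tilde T_0$), this yields $u\in\Hb^{s,r}(\region_{[T_0,T_1]})^{-,-}$.

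I do not anticipate a serious obstacle: both parts are short given the machinery already in place. The one point that needs care is the commutator term $[\cP,\chi]u$, which must land in $\Hb^{s-1,r}$; this is exactly why the hypothesis of the second assertion supplies the $\Hb^{s,r}$ regularity of $u$ on $\region_{[T_0,T_1]}\setminus\region_{(\tilde T_0,T_1]}$, and why $\chi$ is chosen so that $d\chi$ is supported there. One also has to track the supported/extendible conventions when passing between $\region_{[T_0,T_1]}$ and the smaller region $\region_{[T',T_1]}$ and when gluing at the end, and to observe that all the structural hypotheses (non-trapping, the radial sets $L_\pm\subset\Sb^*_XM$, timelikeness of $\frakt_j$ and $\tilde\frakt_j$) are inherited by $\region_{[T',T_1]}$, so the first assertion indeed applies there.
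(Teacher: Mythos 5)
Your proof is correct and follows essentially the same strategy as the paper: for the first part, use Corollary~\ref{cor:loc-WP-energy} to produce $u'\in\Hb^{s,r}$ solving $\cP u'=\cP u$ and then Lemma~\ref{lemma:unique} to conclude $u=u'$; for the second part, cut off with $\chi\circ\frakt_j$ so that $d\chi$ is supported where $u$ is already known to lie in $\Hb^{s,r}$, then invoke the first part. The one minor deviation is that you pass to the smaller domain $\region_{[T',T_1]}$ in the second part, which forces you to re-verify the structural hypotheses there; the paper avoids this by noting that $\chi\circ\frakt_j$ already makes $\chi u$ a supported distribution at $H_1=\{\frakt_j=T_0\}$ on the original domain, so the first assertion applies to $\region_{[T_0,T_1]}$ directly.
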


\begin{rmk}
One can `mix and match' the two parts of the proposition in the setting of
Remark~\ref{rmk:corner-energy-est}, with say a supportedness condition
at $\wt\frakt_j$, and only an extendibility assumption at
$\wt\frakt_k$, but with $\Hb^{s,r}$ membership assumption on $u$ in
$\region_{[T_0,T_1]}\setminus\wt\frakt_k^{-1}((-\infty,\wt T_1))$, $\wt T_1<T_1$, with a completely analogous argument. For instance, in the
setting of Figure~\ref{FigEnergyCorner}, one gets the regularity under
supportedness assumptions at $H_1$, just extendibility at
$\frakt_2=T_1$, but a priori regularity for $\frakt_2\in(\wt T_1,T_1)$.
\end{rmk}

\begin{proof}
Let $u'\in \Hb^{s,r}(\region_{[T_0,T_1]})^{\bullet,-}$ be the unique solution in $\Hb^{s,r}(\region_{[T_0,T_1]})^{\bullet,-}$ of $\cP u'=f$ where $f=\cP u\in \Hb^{s-1,r}(\region_{[T_0,T_1]})^{\bullet,-}$; we obtain $u'$ by applying the existence part of Corollary~\ref{cor:loc-WP-energy}. Then $u,u'\in \Hb^{-\infty,-\infty}(\region_{[T_0,T_1]})^{\bullet,-}$ and $\cP(u-u')=0$. Applying Lemma~\ref{lemma:unique}, we conclude that $u=u'$, which completes the proof of the first part.

For the second part, let $\chi\in\CI(\RR)$ be supported in $(T_0,\infty)$, identically $1$ near $[\wt T_0,\infty)$,
and consider $u'=(\chi\circ\frakt_j) u\in\Hb^{1,r}(\region_{[T_0,T_1]})^{\bullet,-}$, with the support
property arising from the vanishing of $\chi$ near $T_0$. Then $\cP u'=[\cP,(\chi\circ\frakt_j)]u+(\chi\circ\frakt_j)\cP u$. Now the first term on the right hand side is in $\Hb^{s-1,r}(\region_{[T_0,T_1]})^{\bullet,-}$ as on the support of
$d\chi$, which is in $\region_{[T_0,T_1]}\setminus\region_{(\wt T_0,T_1]}$,
$u$ is in $\Hb^{s,r}$,
and the commutator is first order, while the second term is in the
desired space since $\cP u\in \Hb^{s-1,r}(\region_{[T_0,T_1]})^{-,-}$,
and as for $u$ itself, the cutoff improves the support property. Thus,
the first part of the lemma is applicable, giving that $\chi
u\in\Hb^{s,r}(\region_{[T_0,T_1]})^{\bullet,-}$. Since
$(1-\chi)u\in\Hb^{s,r}(\region_{[T_0,T_1]})^{-,-}$ by the a priori
assumption, the conclusion follows.
\end{proof}

We take $T_0=0$ and thus consider, for $s\geq 0$,
\begin{equation}\label{eq:cP-spaces}
\cP\colon\Hb^{s,r}(\Omega)^{\bullet,-}\to\Hb^{s-2,r}(\Omega)^{\bullet,-}
\end{equation}
and
\begin{equation}\label{eq:cP*-spaces}
\cP^*\colon\Hb^{s,r}(\Omega)^{-,\bullet}\to\Hb^{s-2,r}(\Omega)^{-,\bullet}.
\end{equation}
In combination with the real
principal type propagation results and Proposition~\ref{prop:b-saddle} this yields under the non-trapping
assumptions, much as in the complex absorbing case, that\footnote{In fact, the error term on the right hand
  side can be taken to be supported in a smaller region, since at
  $H_1$ in the first case and at $H_2$ in the second, there are no
  error terms due to the energy estimates
  \eqref{eq:P-forw-energy-est-bullet}, applied with $\cP^*$ in place of $\cP$
  in the second case.}
\begin{equation}\label{eq:P-b-loc-symb-est}
\|u\|_{\Hb^{s,r}(\Omega)^{\bullet,-}}\leq C\|\cP u\|_{\Hb^{s-1,r}(\Omega)^{\bullet,-}}+C\|u\|_{\Hb^{0,r}(\Omega)^{\bullet,-}},\ \beta r<-1/2,\ s>0,
\end{equation} 
and
\begin{equation}\label{eq:P*-b-loc-symb-est}
\|u\|_{\Hb^{s,\wt r}(\Omega)^{-,\bullet}}\leq C\|\cP^* u\|_{\Hb^{s-1,\wt r}(\Omega)^{-,\bullet}}+C\|u\|_{\Hb^{0,\wt r}(\Omega)^{-,\bullet}},\ \beta \wt r>s-1/2,\ s>0.
\end{equation} 

We could proceed as in the complex absorption case to make the space on the
left hand side include compactly into the `error
term' on the right hand using the normal operators. As this imposes some
constraints, cf.\ \eqref{eq:error-term-req-dS}, which together with the requirements of the
energy estimates, namely that the Sobolev order is $\geq 0$, mean that
we would get slightly too strong restrictions on $s$, see Remark~\ref{rmk:mod-Fredholm},
we
proceed instead with a direct energy estimate. We thus assume that $\Omega$
is sufficiently small so that there is a boundary defining function
$\tau$ of $M$ with $\frac{d\tau}{\tau}$ timelike on $\Omega$, of the
same timelike character as $\frakt_2$, opposite to $\frakt_1$. As explained in
\cite[\S7]{Va12}, in this case there is $C>0$ such that for
$\im\sigma>C$, $\wh P(\sigma)$ is necessarily invertible.

The energy estimate is:

\begin{lemma}
There exists $r_0<0$ such that for $r\leq r_0$, $-\wt r\leq r_0$,
there is $C>0$ such that for
$u\in
\Hb^{2,r}(\Omega)^{\bullet,-}$, $v\in \Hb^{2,\wt
  r}(\Omega)^{-,\bullet}$, one has
\begin{equation}\begin{split}\label{eq:P-b-energy-est}
&\|u\|_{\Hb^{1,r}(\Omega)^{\bullet,-}}\leq C\|\cP 
u\|_{\Hb^{0,r}(\Omega)^{\bullet,-}},\\
&\|v\|_{\Hb^{1,\wt r}(\Omega)^{-,\bullet}}\leq C\|\cP^* 
v\|_{\Hb^{0,\wt r}(\Omega)^{-,\bullet}}.
\end{split}\end{equation} 
\end{lemma}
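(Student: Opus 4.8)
The plan is to prove \eqref{eq:P-b-energy-est} by a weighted b-energy (positive commutator) estimate on $\Omega$, following the scheme used in the proof of Lemma~\ref{lemma:energy-est-back} and in \cite[Sections~3.3 and 7]{Va12}; the new point is that the coercivity of the estimate, and hence the \emph{absence} of any error term on the right-hand side, comes from the weight $\tau^{-2r}$ once $r$ is negative enough.

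Concretely, for the first inequality I would take a multiplier of the form $V=-i\tau^{-2r}W$ (possibly multiplied by a cutoff localizing near the boundary hypersurfaces), where $W$ is a future-directed timelike b-vector field proportional to the metric dual of $\tfrac{d\tau}{\tau}$; this is possible since $\tfrac{d\tau}{\tau}$ is timelike on all of $\Omega$ by the standing hypothesis on $\Omega$, and its orientation is that of $\frakt_2$, opposite to $\frakt_1$. As in \eqref{eq:b-energy-commutator} one expands
$$
-i(V^*\cP-\cP^*V)=\bdiff^*_{\tilde g}C^\sharp\,\bdiff+(\tilde E_1)^*_{\tilde g}\tau^{-2r}\,\bdiff+\bdiff^*_{\tilde g}\tau^{-2r}\,\tilde E_2,
$$
with $C^\sharp$ a bundle endomorphism supported as in \eqref{eq:b-energy-commutator}, the decisive new contribution being that commuting $W$ through $\tau^{-2r}$ produces the term $(\tau\pa_\tau)(\tau^{-2r})=-2r\,\tau^{-2r}$, i.e.\ an addition to $C^\sharp$ of the form $(-2r)\tau^{-2r}$ times a positive-definite endomorphism (positivity being a consequence of the timelike character of $W$), whose size is therefore proportional to $|r|$. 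Pairing with $u$ and integrating by parts --- legitimate since $u\in\Hb^{2,r}(\Omega)^{\bullet,-}$ --- the boundary term at $H_1$ vanishes by supportedness, the one at $H_2$ has a favorable sign because $H_2$ is a future hypersurface for the flow of $W$ (this is where the orientation conventions recorded after the non-trapping assumptions enter), and the boundary term at $X=\{\tau=0\}$ is removed by running the estimate on $\{\tau\geq\tau_k\}$ and letting $\tau_k\downarrow0$, using that for $u\in\Hb^{1,r}(\Omega)^{\bullet,-}$ the weighted slice quantities tend to $0$ along a subsequence $\tau_k$.

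It then remains to balance the identity. The weight term contributes a coercive quadratic form bounded below by $c\,(-2r)\,\|u\|^2_{\Hb^{1,r}(\Omega)^{\bullet,-}}$ --- the undifferentiated part of the norm being recovered, as in any energy estimate, from the fundamental-theorem-of-calculus step in the $\tau$-direction, which again uses $-2r$ as a friction --- while every remaining term (those without a differentiated cutoff, the contributions of $\cP-\Box_g\in\Diffb^1(M)$ and $\cP-\cP^*\in\Diffb^0(M)$, the subprincipal pieces of $C^\sharp$, and the cross terms from commuting $W$ past the weight and the coefficients) is, after Cauchy--Schwarz and the sharp G\aa rding inequality as in \cite[Section~3.3]{Va12}, bounded by $C_0\,\|u\|^2_{\Hb^{1,r}(\Omega)^{\bullet,-}}$ with a constant $C_0$ depending only on $g$, $\cP$ and $\Omega$ and, crucially, \emph{not} on $r$. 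Choosing $r_0<0$ with $-2r_0>C_0$ --- this being the quantitative counterpart of the invertibility of $\hat P(\sigma)$ for $\Im\sigma>C$ recalled just before the lemma --- the weight term strictly dominates the errors for every $r\leq r_0$, and one is left with
$$
\|u\|^2_{\Hb^{1,r}(\Omega)^{\bullet,-}}\leq C\,\bigl|\la\cP u,Vu\ra\bigr|\leq C\,\|\cP u\|_{\Hb^{0,r}(\Omega)^{\bullet,-}}\,\|u\|_{\Hb^{1,r}(\Omega)^{\bullet,-}},
$$
which is the first inequality of \eqref{eq:P-b-energy-est}. The second, for $\cP^*$, follows from the identical computation with the roles of $H_1$ and $H_2$ interchanged (equivalently, the timelike orientation and direction of propagation reversed) and with $r$ replaced by $-\tilde r$, so that the domination of the errors now requires precisely $-\tilde r\leq r_0$, and the orientations again give the surviving boundary term, now at $H_1$, the right sign.

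The step I expect to be the main obstacle is the uniformity in $r$ of the error estimate: one must check that none of the lower-order terms --- in particular the cross terms produced when $W$ is commuted through $\tau^{-2r}$ and through the bundle endomorphisms, and the zeroth-order contributions coming from $\cP-\Box_g$, $\cP-\cP^*$ and from $C^\sharp$ itself --- carries a factor growing with $|r|$, so that the coercive weight term of size $-2r$ genuinely wins for $|r|$ large; and, in tandem, that the $\tau\to0$ boundary term really disappears in the limit and that the full $\Hb^{1,r}$ norm (not merely $\|\bdiff u\|_{\Hb^{0,r}}$) is controlled. Once this bookkeeping is under control the rest is the standard energy-estimate packaging, and the choice $r_0=-C_0/2$ makes everything explicit.
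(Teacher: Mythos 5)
Your proposal is correct and follows essentially the same strategy as the paper: a global weighted b-energy estimate on $\Omega$ with multiplier $V=-i\tau^{\alpha}W$, $\alpha=-2r$, $W$ the metric dual of $\tfrac{d\tau}{\tau}$, where the term with $\tau^\alpha$ differentiated carries a factor of $\alpha$ and is made to dominate the remaining, $r$-independent, error contributions in \eqref{eq:b-energy-commutator}; the choice of $r_0$ is exactly the requirement that $-2r_0$ exceed the resulting error constant. Your added bookkeeping (support of $u$ killing the $H_1$ flux, the favorable sign at $H_2$, regularization in $\tau$, and recovery of the zeroth-order part of the $\Hb^{1,r}$ norm) spells out what the paper compresses into ``completing the proof as in Lemma~\ref{lemma:energy-est-back}''.
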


\begin{proof}
We run the argument of Lemma~\ref{lemma:energy-est-back} globally on $\Omega$
using a timelike
vector field (e.g.\ starting with $W=G(\frac{d\tau}{\tau},.)$) that has, as a multiplier, a sufficiently large positive
power $\alpha=-2r$ of $\tau$,
i.e.\ replacing \eqref{eq:b-energy-est-V} by
$$
V=-i\tau^\alpha W.
$$
Then the term with $\tau^\alpha$ differentiated (which in
\eqref{eq:b-energy-commutator} is included in the $\wt R^\sharp$ term), and
thus possessing a factor of $\alpha$, is used to dominate
the other, `error', terms in \eqref{eq:b-energy-commutator},
completing the proof of the lemma as in
Lemma~\ref{lemma:energy-est-back}.
\end{proof}

This can be used as in Lemma~\ref{lemma:loc-solve-energy} to provide
solvability, and using the propagation of singularities, which in this
case includes the use of Proposition~\ref{prop:b-saddle}, noting that
$s-1/2>\beta r$ is automatically satisfied, improved regularity. In particular, we obtain the following analogues of
Corollaries~\ref{cor:loc-WP-energy}-\ref{cor:energy-est-strong}.

\begin{cor}\label{cor:global-WP-energy}
There is $r_0<0$ such that for $r\leq r_0$ and for $s\geq 0$ there is $C>0$ with the following property.

If $f\in\Hb^{s-1,r}(\Omega)^{\bullet,-}$,
then there exists a unique
$u\in\Hb^{s,r}(\Omega)^{\bullet,-}$ such that $\cP
u=f$.

Further, this unique $u$ satisfies
$$
\|u\|_{\Hb^{s,r}(\Omega)^{\bullet,-}}\leq C\|f\|_{\Hb^{s-1,r}(\Omega)^{\bullet,-}}.
$$
\end{cor}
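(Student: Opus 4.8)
The plan is to reprise, at the global level of $\Omega$, the two-step structure of Corollary~\ref{cor:loc-WP-energy}: first produce a solution in $\Hb^{s,r}(\Omega)^{\bullet,-}$ with the stated estimate (the global analogue of Lemma~\ref{lemma:loc-solve-energy}), then prove uniqueness (the analogue of Lemma~\ref{lemma:unique}), and combine. Since the global energy estimate \eqref{eq:P-b-energy-est} holds on all of $\Omega$ \emph{with no error term}, this is in fact cleaner than the local argument: there is no need to pass to an enlarged region to make room for the propagation of singularities, because the microlocal estimates \eqref{eq:P-b-loc-symb-est}--\eqref{eq:P*-b-loc-symb-est} are already formulated directly on $\Omega$ in the supported/extendible spaces. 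I would take $r_0<0$ small enough, beyond the requirement of the energy estimate lemma, that $\beta r_0<-1/2$; then for every $r\leq r_0$ and $s\geq 0$ one has $s-1/2>\beta r$, which is exactly the first-case threshold in Proposition~\ref{prop:b-saddle} and the hypothesis of \eqref{eq:P-b-loc-symb-est}.

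For existence, fix $f\in\Hb^{s-1,r}(\Omega)^{\bullet,-}\subset\Hb^{-1,r}(\Omega)^{\bullet,-}$ (using $s\geq 0$), with $\tilde r=-r\geq -r_0>0$. On the subspace $\{\cP^*\phi:\phi\in\Hb^{2,\tilde r}(\Omega)^{-,\bullet}\}$ of $\Hb^{0,\tilde r}(\Omega)^{-,\bullet}$ I would define the linear functional $\cP^*\phi\mapsto\langle f,\phi\rangle$; the second inequality of \eqref{eq:P-b-energy-est} shows it is well-defined (if $\cP^*\phi=0$ then $\phi=0$) and satisfies $|\langle f,\phi\rangle|\leq\|f\|_{\Hb^{-1,r}}\|\phi\|_{\Hb^{1,\tilde r}}\leq C\|f\|_{\Hb^{-1,r}}\|\cP^*\phi\|_{\Hb^{0,\tilde r}}$. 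Extending by Hahn--Banach (cf.\ \cite[Proof of Theorem~26.1.7]{Ho83}) and using $(\Hb^{0,\tilde r}(\Omega)^{-,\bullet})^*=\Hb^{0,r}(\Omega)^{\bullet,-}$ produces $u\in\Hb^{0,r}(\Omega)^{\bullet,-}$ with $\langle u,\cP^*\phi\rangle=\langle f,\phi\rangle$ for all $\phi\in\Hb^{2,\tilde r}(\Omega)^{-,\bullet}$ and $\|u\|_{\Hb^{0,r}(\Omega)^{\bullet,-}}\leq C\|f\|_{\Hb^{-1,r}}\leq C\|f\|_{\Hb^{s-1,r}(\Omega)^{\bullet,-}}$; the duality of supported and extendible spaces then turns $\langle u,\cP^*\phi\rangle=\langle f,\phi\rangle$ into $\cP u=f$ in $\Hb^{-2,r}(\Omega)^{\bullet,-}$. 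It remains to upgrade $u$ from $\Hb^{0,r}(\Omega)^{\bullet,-}$ to $\Hb^{s,r}(\Omega)^{\bullet,-}$: since $\cP u=f\in\Hb^{s-1,r}(\Omega)^{\bullet,-}$, this follows from real-principal-type propagation of b-singularities, Proposition~\ref{prop:b-saddle} at the radial sets $L_\pm$, and the energy estimates of Lemma~\ref{lemma:energy-est-back} near $H_1,H_2$ — the very ingredients behind \eqref{eq:P-b-loc-symb-est} — applied with the regularizers $S_\ep\in\Psib^{-\delta}(M)$ as in the proof of Proposition~\ref{prop:b-saddle}; this also yields $\|u\|_{\Hb^{s,r}(\Omega)^{\bullet,-}}\leq C\|\cP u\|_{\Hb^{s-1,r}(\Omega)^{\bullet,-}}+C\|u\|_{\Hb^{0,r}(\Omega)^{\bullet,-}}\leq C\|f\|_{\Hb^{s-1,r}(\Omega)^{\bullet,-}}$.

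For uniqueness, if $u\in\Hb^{s,r}(\Omega)^{\bullet,-}$ and $\cP u=0$, the same regularity argument (with $f=0$ and $s$ arbitrary) gives $u\in\Hb^{\infty,r}(\Omega)^{\bullet,-}\subset\Hb^{2,r}(\Omega)^{\bullet,-}$, so the first inequality of \eqref{eq:P-b-energy-est} forces $\|u\|_{\Hb^{1,r}(\Omega)^{\bullet,-}}\leq C\|\cP u\|_{\Hb^{0,r}(\Omega)^{\bullet,-}}=0$, i.e.\ $u=0$. Existence, uniqueness, and the estimate above together give the corollary. The step needing real care — as opposed to routine bookkeeping — is the regularity upgrade, i.e.\ converting the a priori estimate \eqref{eq:P-b-loc-symb-est} into the implication $u\in\Hb^{0,r}(\Omega)^{\bullet,-},\ \cP u\in\Hb^{s-1,r}(\Omega)^{\bullet,-}\Rightarrow u\in\Hb^{s,r}(\Omega)^{\bullet,-}$: one must propagate the absence of b-wavefront set across the transition between the microlocal region and the collars of $H_1,H_2$ where only the non-microlocal energy estimates are available, tracking the supported/extendible conditions through the mollifiers and verifying the thresholds of Proposition~\ref{prop:b-saddle} along the entire flow. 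Given all the earlier results, though, this amounts to rerunning the arguments of Lemmas~\ref{lemma:loc-solve-energy}--\ref{lemma:unique} with \eqref{eq:P-b-energy-est} in place of their local inputs.
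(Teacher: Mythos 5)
Your proof is correct and follows essentially the same route the paper sketches: Hahn--Banach duality built on the global energy estimate \eqref{eq:P-b-energy-est}, then regularity upgrade via propagation of singularities and Proposition~\ref{prop:b-saddle}, with the threshold $s-1/2>\beta r$ automatic once $\beta r_0<-1/2$. The remark that no enlargement past $\Omega$ is needed slightly overstates the situation --- converting the a priori bound \eqref{eq:P-b-loc-symb-est} into the implication $u\in\Hb^{0,r}(\Omega)^{\bullet,-},\ \cP u\in\Hb^{s-1,r}(\Omega)^{\bullet,-}\Rightarrow u\in\Hb^{s,r}(\Omega)^{\bullet,-}$ still requires the extension device of Lemma~\ref{lemma:loc-solve-energy} --- but your concluding sentence conceding that one reruns the arguments of Lemmas~\ref{lemma:loc-solve-energy}--\ref{lemma:unique} acknowledges exactly this, so the argument stands.
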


\begin{cor}\label{cor:global-energy-est-strong}
There is $r_0<0$ such that if $r<r_0$, $-\wt r<r_0$ and $s\geq 0$
then there is $C>0$ such that the following holds.

For $u\in \Hb^{s,r}(\Omega)^{\bullet,-}$ with $\cP u\in
\Hb^{s-1,r}(\Omega)^{\bullet,-}$, one has
\begin{equation}\label{eq:global-P-forw-energy-est-bullet-strong}
\|u\|_{\Hb^{s,r}(\Omega)^{\bullet,-}}\leq C\|\cP u\|_{\Hb^{s-1,r}(\Omega)^{\bullet,-}},
\end{equation}
while for $v\in \Hb^{s,\wt
  r}(\Omega)^{-,\bullet}$ with $\cP^*v\in \Hb^{s-1,\wt
  r}(\Omega)^{-,\bullet}$, one has
\begin{equation}\label{eq:global-P*-back-energy-est-bullet-strong}
\|v\|_{\Hb^{s,\wt r}(\Omega)^{-,\bullet}}\leq
C\|\cP^* v\|_{\Hb^{s-1,\wt r}(\Omega)^{-,\bullet}}.
\end{equation}
\end{cor}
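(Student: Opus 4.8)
The plan is to deduce this corollary from the global well-posedness statement of Corollary~\ref{cor:global-WP-energy} in exactly the manner in which Corollary~\ref{cor:energy-est-strong} was deduced from its local counterpart Corollary~\ref{cor:loc-WP-energy}: namely, the given $u$ must coincide with the unique solution produced by the well-posedness result, whose norm is then controlled by the estimate there. I would carry out the argument for \eqref{eq:global-P-forw-energy-est-bullet-strong}; the estimate \eqref{eq:global-P*-back-energy-est-bullet-strong} is the time-reversed analogue and is handled identically, since $\cP^*$ satisfies the same structural hypotheses as $\cP$ (as $\cP^*-\Box_g\in\Diffb^1(M)$ and $\cP^*-(\cP^*)^*\in\Diffb^0(M)$) with the roles of the two spacelike hypersurfaces, and of the two halves $\Sigma_\pm$ of the characteristic set, interchanged. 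In particular the $\cP^*$ analogue of Corollary~\ref{cor:global-WP-energy} holds by the same proof, for $-\tilde r\le r_0$: the second inequality of \eqref{eq:P-b-energy-est} supplies the uniqueness, while solvability of $\cP^*v=g$, $g\in\Hb^{s-1,\tilde r}(\Omega)^{-,\bullet}$, follows from the Hahn--Banach/duality argument of Lemma~\ref{lemma:loc-solve-energy} applied to the first inequality of \eqref{eq:P-b-energy-est} (which is the required estimate for the $L^2$-adjoint $\cP$ on the dual space $\Hb^{1,-\tilde r}(\Omega)^{\bullet,-}$, available since $-\tilde r\le r_0$), upgraded to the stated regularity by propagation of singularities, including Proposition~\ref{prop:b-saddle}, whose threshold condition is satisfied automatically in this regime as noted in the text preceding Corollary~\ref{cor:global-WP-energy}.

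Concretely, fix $s\ge 0$ and $r<r_0$, with $r_0<0$ the constant from Corollary~\ref{cor:global-WP-energy} (which needs only $r\le r_0$), and suppose $u\in\Hb^{s,r}(\Omega)^{\bullet,-}$ with $f:=\cP u\in\Hb^{s-1,r}(\Omega)^{\bullet,-}$. By the existence part of Corollary~\ref{cor:global-WP-energy} there is $u'\in\Hb^{s,r}(\Omega)^{\bullet,-}$ with $\cP u'=f$ and $\|u'\|_{\Hb^{s,r}(\Omega)^{\bullet,-}}\le C\|f\|_{\Hb^{s-1,r}(\Omega)^{\bullet,-}}$. Since $u$ also lies in $\Hb^{s,r}(\Omega)^{\bullet,-}$ and $\cP(u-u')=0$, the uniqueness part of Corollary~\ref{cor:global-WP-energy} (equivalently, the energy estimate \eqref{eq:P-b-energy-est} combined with the propagation of singularities used to upgrade $u-u'$ to the regularity needed to apply it) forces $u=u'$, and \eqref{eq:global-P-forw-energy-est-bullet-strong} follows. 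Applying the same reduction to the $\cP^*$ analogue of Corollary~\ref{cor:global-WP-energy} discussed above, with weight $\tilde r$ satisfying $-\tilde r<r_0$, gives \eqref{eq:global-P*-back-energy-est-bullet-strong} verbatim.

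I do not expect a genuine obstacle here: all the analytic content has already been spent in the energy estimates \eqref{eq:P-b-energy-est} and in the microlocal propagation results (real principal type together with Proposition~\ref{prop:b-saddle}) underlying Corollary~\ref{cor:global-WP-energy}. The only points needing a little care are bookkeeping ones — verifying that the strict weight inequalities $r<r_0$ and $-\tilde r<r_0$ leave enough room to invoke the well-posedness results, which require only the non-strict versions, and keeping straight which of the two estimates in \eqref{eq:P-b-energy-est} is used ``directly'' (for uniqueness) and which is used through duality (for solvability) in each of the $\cP$ and $\cP^*$ cases.
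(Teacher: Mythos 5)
Your proposal matches the paper's (implicit) argument: the paper states Corollaries~\ref{cor:global-WP-energy} and \ref{cor:global-energy-est-strong} as ``the following analogues of Corollaries~\ref{cor:loc-WP-energy}--\ref{cor:energy-est-strong},'' and indeed the proof of Corollary~\ref{cor:energy-est-strong} was exactly the reduction you carry out — identify the given $u$ with the unique solution produced by the well-posedness result and read off the estimate. Your handling of the $\cP^*$ analogue of Corollary~\ref{cor:global-WP-energy}, via duality applied to the first inequality of \eqref{eq:P-b-energy-est} with weight $-\tilde r$ and the subsequent regularity upgrade through Proposition~\ref{prop:b-saddle}, fills in the symmetry argument the paper leaves implicit, and is consistent with how the paper treats the dual problem elsewhere.
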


We restate Corollary~\ref{cor:global-WP-energy} as an invertibility statement.

\begin{thm}
\label{ThmPFredholm}
There is $r_0<0$ with the following property.
Suppose $s\geq 0$, $r\leq r_0$, and let
$$
\cY^{s,r}=\Hb^{s,r}(\Omega)^{\bullet,-},\
\cX^{s,r}=\{u\in\Hb^{s,r}(\Omega)^{\bullet,-}:\ \cP u\in \Hb^{s-1,r}(\Omega)^{\bullet,-}\},
$$
where $\cP$ is a priori a map
$\cP\colon\Hb^{s,r}(\Omega)^{\bullet,-}\to \Hb^{s-2,r}(\Omega)^{\bullet,-}.$
Then
$$
\cP\colon\cX^{s,r}\to\cY^{s-1,r}
$$
is a continuous, invertible map, with continuous inverse.
\end{thm}

\begin{rmk}
\label{RmkCompleteSpace}
  Note that $\cY^{s,r}$, $\cX^{s,r}$ are complete, in the case of $\cX^{s,r}$ with the natural norm being $\|u\|^2_{\cX^{s,r}}=\|u\|^2_{\Hb^{s,r}(\Omega)^{\bullet,-}}+\|\cP u\|^2_{\Hb^{s-1,r}(\Omega)^{\bullet,-}}$, as follows by the continuity of $\cP$ as a map $\Hb^{s,r}(\Omega)^{\bullet,-}\to \Hb^{s-2,r}(\Omega)^{\bullet,-}$ and the completeness of the b-Sobolev spaces $\Hb^{s,r}(\Omega)^{\bullet,-}$.
\end{rmk}

\begin{rmk}\label{rmk:mod-Fredholm}
Using normal operators as in the discussion leading to
Proposition~\ref{prop:absorb-Fredholm}, one would get the following
statement: Suppose $s>1$, $s-3/2>\beta r$. Then with $\cX^{s,r}$,
$\cY^{s,r}$ as above, $\cP\colon\cX^{s,r}\to\cY^{s,r}$ is Fredholm. Here
the main loss is that one needs to assume $s>1$; this is done since in
the argument one needs to take $s'$ with $s'+1<s$ in order to
transition the normal operator estimates from $N(\cP)u$ to $\cP u$ and
still have a compact inclusion, but the normal
operator estimates need $s'\geq 0$ as, due to the boundary $H_2$, they
are again based on energy estimates. Using the direct global energy
estimate eliminates this loss, which is an artifact of combining local
energy estimates with the b-theory. In particular, in the complex
absorption setting, this problem does not arise, but on the other
hand, one need not have the forward support property of the solution.
\end{rmk}

The results of \cite{Va12} then are immediately applicable to obtain
an expansion of the solutions; the main point of the following theorem
being the elimination of the losses in differentiability in
\cite[Proposition~3.5]{Va12} due to Proposition~\ref{prop:b-saddle}.

\begin{thm}(Strengthened version of \cite[Proposition~3.5]{Va12}.)\label{thm:b-main}
Let $M$ be a manifold with a non-trapping b-metric $g$ as above, with boundary $X$
and let $\tau$ be a boundary defining function, $\cP$ as in \eqref{eq:cP-almost-Box}.
Suppose the domain $\Omega$ is as defined above, and $\frac{d\tau}{\tau}$
timelike.

Let $\sigma_j$ be the poles of $\widehat{\cP}^{-1}$, and let $\ell$ be such that $\im\sigma_j+\ell\notin\NN$ for all $j$. Let $\phi\in\CI(\RR)$ be such that $\supp\phi\subset(0,\infty)$, and $\phi\circ\frakt_1\equiv 1$ near $X\cap\Omega$.  Then for $s>3/2+\beta\ell$, there are $m_{jl}\in\NN$ such that solutions of $\cP u=f$ with $f\in \Hb^{s-1,\ell}(\Omega)^{\bullet,-}$, and with $u\in \Hb^{s_0,r_0}(\Omega)^{\bullet,-}$, $s\geq s_0\geq 1$, $s_0-1/2>\beta r_0$ satisfy that for some $a_{jl\kappa}\in\CI(X\cap\Omega)$,
\begin{equation}\label{eq:b-main-exp}
u'=u-\sum_j\sum_{l\in\NN}\sum_{\kappa\leq m_{jl}} \tau^{i\sigma_j+l}(\log
\tau)^\kappa (\phi\circ\frakt_1) a_{jl\kappa}
\in \Hb^{s,\ell}(\Omega)^{\bullet,-},
\end{equation}
where the sum is understood to be over a finite set with
$-\im\sigma_j+l<\ell$.

Here the (semi)norms of both $a_{jl\kappa}$ in $\CI(X\cap\Omega)$ and
$u'$ in $\Hb^{s,\ell}(\Omega)^{\bullet,-}$
are bounded by a constant times that of $f$ in
$\Hb^{s-1,\ell}(\Omega)^{\bullet,-}$.

The analogous result also holds if $f$ possesses an expansion modulo
$\Hb^{s-1,\ell}(\Omega)^{\bullet,-}$, namely
$$
f=f'+\sum_j\sum_{\kappa\leq m'_j}\tau^{\alpha_j}(\log
\tau)^\kappa (\phi\circ\frakt_1) a_{j\kappa},
$$
with $f'\in \Hb^{s-1,\ell}(\Omega)^{\bullet,-}$ and
$a_{j\kappa}\in\CI(X\cap\Omega)$, where terms corresponding to the
expansion of the $f$ are added to \eqref{eq:b-main-exp} in the sense of
the extended union of index sets \cite[\S5.18]{Me93}, recalled
below in Definition~\ref{def:index-set}.
\end{thm}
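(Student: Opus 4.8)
The plan is to carry out the argument of \cite[Proposition~3.5]{Va12} --- localize near $X$, pass to the b-normal operator $N(\cP)$, Mellin transform in $\tau$, and extract the expansion from the poles $\sigma_j$ of $\widehat\cP^{-1}$ by deforming the Mellin inversion contour --- but with every appeal on $M$ to a radial-point/threshold estimate replaced by Proposition~\ref{prop:b-saddle} (and the propagation results built on it), which is genuinely b-microlocal and hence loses no derivatives; this replacement is the whole gain over \cite{Va12}. All cutoffs below are functions of $\tau$ or $\frakt_1$, so the $\bullet,-$ structure is preserved. First I would normalize: since $\beta>0$, the hypothesis on $(s_0,r_0)$ is stable under decreasing $r_0$, so we may take $r_0<0$ so negative that $r_0\le\ell$ and $r_0$ lies below the threshold of Theorem~\ref{ThmPFredholm}. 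Then $f=\cP u\in\Hb^{s-1,\ell}(\Omega)^{\bullet,-}\subset\Hb^{s-1,r_0}(\Omega)^{\bullet,-}$, Proposition~\ref{prop:prop-sing-bdy} at weight $r_0$ (with the threshold $s-1/2>\beta r_0$ automatic) gives $u\in\Hb^{s,r_0}(\Omega)^{\bullet,-}$, and Theorem~\ref{ThmPFredholm} gives $\|u\|_{\Hb^{s,r_0}}\le C\|f\|_{\Hb^{s-1,r_0}}\le C\|f\|_{\Hb^{s-1,\ell}}$.

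For the expansion, localize by $\chi\in\CI_c([0,\ep)_\tau)$, $\chi\equiv1$ near $\tau=0$, $\supp\chi$ so small that $\phi\circ\frakt_1\equiv1$ there, and put $v=\chi u$, so $N(\cP)v=g$ with $g=\chi f+[\cP,\chi]u+(N(\cP)-\cP)v$: the middle term is $\CI$ and supported away from $X$, and since $N(\cP)-\cP\in\tau\Diffb^2(M)$ the last lies in $\Hb^{s-2,r_0+1}$. With $R(\sigma)=\widehat\cP(\sigma)^{-1}$ (meromorphic, with poles $\sigma_j$ and finite-rank principal parts) one has $\tilde v=R\tilde g$ on $\Im\sigma=-r_0$; using holomorphy of $\tilde g$ in $\Im\sigma>-(r_0+1)$ and the high-energy bounds for $R(\sigma)$ from the non-trapping hypothesis (\cite[Section~2]{Va12}), shift the contour down past the $\sigma_j$ in the strip to $\Im\sigma=-r_1$, $r_0<r_1<r_0+1$, collecting a finite sum of residue terms $\tau^{i\sigma_j}(\log\tau)^\kappa a_{j\kappa}$, $\kappa$ below the order of the corresponding pole, $a_{j\kappa}\in\CI(X\cap\Omega)$; for each $j$ the combination $\sum_\kappa\tau^{i\sigma_j}(\log\tau)^\kappa a_{j\kappa}$ is a generalized resonant state, killed by $N(\cP)$. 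Setting $u_1=u-\sum(\phi\circ\frakt_1)\tau^{i\sigma_j}(\log\tau)^\kappa a_{j\kappa}$, one finds $\cP u_1-f$ equals, modulo terms supported away from $X$, the image under $\cP-N(\cP)$ of those resonant states, hence $\cP u_1\in\Hb^{s-1,r_1}(\Omega)^{\bullet,-}$; together with $u_1\in\Hb^{s-1,r_1}(\Omega)^{\bullet,-}$ (from the Mellin formula near $X$, smoother away), a propagation-of-singularities argument as in the proof of Proposition~\ref{prop:prop-sing-bdy} --- whose only weight-sensitive input is the threshold $(s-1)-1/2>\beta r_1$ of Proposition~\ref{prop:b-saddle}, satisfied because $r_1<\ell$ and $s>3/2+\beta\ell$ --- upgrades $u_1$ to $\Hb^{s,r_1}(\Omega)^{\bullet,-}$, with the attendant estimate. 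Iterating: the weight advances by just under one unit each pass; the descendants $\tau^{i\sigma_j+l}$ arise because $\cP-N(\cP)$ applied to an already-extracted level-$l$ term feeds a pole at $\sigma_j-i(l+1)$ into the forcing, picked up at a later pass (with a bounded increase of the $\log\tau$-power when $\sigma_j-il$ is itself a resonance); since $\Im\sigma_j+\ell\notin\NN$ forbids a term borderline at weight $\ell$ and there are finitely many $\sigma_j$ per strip, with $0\le l<\ell+\Im\sigma_j$ for each, after finitely many passes the remainder $u'$ has $\cP u'\in\Hb^{s-1,\ell}(\Omega)^{\bullet,-}$ and $u'\in\Hb^{s-1,\ell}(\Omega)^{\bullet,-}$, whence $u'\in\Hb^{s,\ell}(\Omega)^{\bullet,-}$ by a final application of Proposition~\ref{prop:b-saddle} (threshold $s-3/2>\beta\ell$). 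The accumulated log-powers give the $m_{jl}\in\NN$, and all (semi)norms are controlled through the iteration by $\|u\|_{\Hb^{s,r_0}}\le C\|f\|_{\Hb^{s-1,\ell}}$.

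The hard part is the bookkeeping of the iteration rather than any single estimate: one must check that the set of exponents $i\sigma_j+l$ and the log-power counts $m_{jl}$ stabilize after finitely many passes with no bad interference between the contributions attached to different $\sigma_j$, and that each contour deformation is legitimate --- the latter resting on the high-energy resolvent bounds, which is precisely where $s>3/2+\beta\ell$ is used: inverting $R(\sigma)$ on the order-$(s-2)$ term $(N(\cP)-\cP)v$ along $\Im\sigma=-\ell$ needs $(s-1)-1/2>\beta\ell$, i.e.\ the one derivative lost in the normal-operator step must be recoverable at weight $\ell$ by Proposition~\ref{prop:b-saddle}. Finally, the variant where $f$ itself has an expansion modulo $\Hb^{s-1,\ell}(\Omega)^{\bullet,-}$ is handled by feeding the poles of $\tilde f$ --- at the $\alpha_j$ and their descendants $\alpha_j+l$ --- through the same contour deformations alongside those of $R(\sigma)$, the resulting exponents and log-powers combining into the extended union of index sets in the sense of \cite[Section~5.18]{Me93} (Definition~\ref{def:index-set}).
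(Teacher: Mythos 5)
Your proof is correct and follows essentially the same route as the paper: pass to the b-normal operator, use the Mellin transform and contour deformation to extract the polar expansion, and upgrade the remainder with the saddle-point propagation result (Proposition~\ref{prop:b-saddle}) together with Proposition~\ref{prop:prop-sing-bdy}, iterating weight by weight. The only difference is cosmetic: the paper outsources the contour-deformation and residue bookkeeping to \cite[Lemma~3.1]{Va12} rather than redoing it explicitly, and iterates in integer increments of $r_0$, whereas you advance by just under one unit each pass --- but the content of each pass, including the point that $s>3/2+\beta\ell$ is precisely what lets the one-derivative loss at the normal-operator step be recovered by propagation at weight $\ell$, matches the paper's argument.
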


\begin{rmk}
Here the factor $\phi\circ\frakt_1$ is added to cut off the expansion
away from $H_1$, thus assuring that $u'$ is in the indicated space (a
supported distribution).

Also, the sum over $l$ is generated by the lack of dilation invariance
of $\cP$. If we take $\ell$ such that $-\im\sigma_j>\ell-1$ for all
$j$ then all the terms in the expansion arise directly from the
resonances, thus $l=0$ and $m_{j0}+1$ is the order of the pole of
$\widehat{\cP}^{-1}$ at $\sigma_j$, with the $a_{j0\kappa}$ being
resonant states.
\end{rmk}

\begin{proof}
First assume that $-\im\sigma_j>\ell$ for every $j$; thus there are no
terms subtracted from $u$ in \eqref{eq:b-main-exp}.
We proceed as in \cite[Proposition~3.5]{Va12}, but use the propagation
of singularities, in particular Propositions~\ref{prop:b-saddle} and \ref{prop:prop-sing-bdy}, to
eliminate the losses. First, by the propagation of singularities,
using $s_0-1/2>\beta r_0$ and $s\geq s_0$, $s\geq 0$,
$$
u\in\Hb^{s,r_0}(\Omega)^{\bullet,-}.
$$
Thus, as $\cP-N(\cP)\in\tau\Diffb^2(M)$,
\begin{equation}\label{eq:normal-op-perturb-exp-ds}
N(\cP)u=f-\wt f,\ \wt f=(\cP-N(\cP))u\in\Hb^{s-2,r_0+1}(\Omega)^{\bullet,-}
\end{equation}

We apply \cite[Lemma~3.1]{Va12} (using $s\geq s_0\geq 1$), which is the lossless version of
\cite[Proposition~3.5]{Va12} in the dilation invariant case.
Note that in \cite{Va12}, Lemma~3.1 is stated on the normal
operator space $M_\infty$,
which does not have a boundary face corresponding to $H_2$, i.e.\
$S_2\times(0,\infty)$, with complex absorption being used instead. However,
given the analysis on $X\cap\Omega$ discussed above, all the
arguments go through essentially unchanged: this is a Mellin transform
and contour deformation argument.

One thus
obtains \eqref{eq:b-main-exp} with $\ell$ replaced by $\ell'=\min(\ell,r_0+1)$ except that
$u=u'\in\Hb^{s-1,\ell'}(\Omega)^{\bullet,-}$ corresponding to the $\wt
f$ term in $N(\cP)u$ rather than
$u=u'\in\Hb^{s,\ell'}(\Omega)^{\bullet,-}$ as desired. However, using
$\cP u=f\in\Hb^{s-1,\ell'}(\Omega)^{\bullet,-}$, we deduce by the
propagation of singularities, using $s-1>\beta\ell'+1/2$, $s\geq 0$, that
$u=u'\in\Hb^{s,\ell'}(\Omega)^{\bullet,-}$. If $\ell\leq r_0+1$, we
have proved \eqref{eq:b-main-exp}. Otherwise we iterate, replacing
$r_0$ by $r_0+1$. We thus reach the conclusion, \eqref{eq:b-main-exp},
in finitely many steps.

If there are $j$ such that $-\im\sigma_j<\ell$, then in the first
step, when using \cite[Lemma~3.1]{Va12}, we obtain the partial
expansion $u_1$ corresponding to $\ell'=\min(\ell,r_0+1)$ in place of
$\ell$; here we may need to decrease $\ell'$ by an arbitrarily small
amount to make sure that $\ell'$ is not $-\im\sigma_j$ for any
$j$. Further, the terms of the partial expansion are annihilated by
$N(\cP)$, so $u'$ satisfies
$$
\cP u'=\cP u-N(\cP)u_1-(\cP-N(\cP))u_1\in \Hb^{s-1,\ell'}(\Omega)^{\bullet,-}
$$
as $(\cP-N(\cP))u_1\in \Hb^{\infty,r_0+1}(\Omega)^{\bullet,-}$ in fact
due to the conormality of $u_1$ and  $\cP-N(\cP)\in\tau\Diffb^2(M)$.
Correspondingly, the propagation of singularities result is applicable
as above to conclude that $u'\in\Hb^{s,\ell'}(\Omega)^{\bullet,-}$. If
$\ell\leq r_0+1$ we are done. Otherwise
we have better information on $\wt f$ in the next
step, namely 
$$
\wt f=(\cP-N(\cP))u=(\cP-N(\cP))u'+(\cP-N(\cP))u_1
$$
with the first term in $\Hb^{s-2,r_0+1}(\Omega)^{\bullet,-}$ (same as
in the case first considered above, without relevant resonances),
while the expansion of $u_1$ shows that $(\cP-N(\cP))u_1$ has a
similar expansion, but with an extra power of $\tau$ (i.e.\
$\tau^{i\sigma_j}$ is shifted to $\tau^{i\sigma_j+1}$). We can now apply
\cite[Lemma~3.1]{Va12} again; in the case of the terms arising from
the partial expansion, $u_1$, there are now new terms corresponding to
shifting the powers $\tau^{i\sigma_j}$ to $\tau^{i\sigma_j+1}$, as
stated in the referred Lemma, and possibly causing logarithmic terms
if $\sigma_j-i$ is also a pole of $\widehat{\cP}^{-1}$. Iterating in
the same manner proves the theorem when $f\in
\Hb^{s-1,\ell}(\Omega)^{\bullet,-}$. When $f$ has an expansion modulo
$\Hb^{s-1,\ell}(\Omega)^{\bullet,-}$, the same argument works;
\cite[Lemma~3.1]{Va12} gives the terms with the extended union, which
then further generate additional terms due to $\cP-N(\cP)$, just as
the resonance terms did.
\end{proof}

There is one problem with this theorem for the purposes of semilinear equations: the resonant terms with $\Im\sigma_j\geq 0$ which give rise to unbounded, or at most just bounded, terms in the expansion which become larger when one takes powers of these, or when one iteratively applies $\cP^{-1}$ (with the latter being the only issue if $\Im\sigma_j=0$ and the pole is simple).

Concretely, we now consider an asymptotically de Sitter space $(\wt M,\wt g)$. We then blow up
a point $p$ at the future boundary $\wt X_+$, as discussed in the
introduction, to obtain the analogue of the static model of de Sitter
space $M=[\wt M;p]$ with the pulled back metric $g$, which is a
b-metric near the front face (but away from the side face); let $\cP=\Box_g-\lambda$. If $\wt M$ is actual de Sitter space, then
$M$ is the actual static model; otherwise the metric of the
asymptotically de Sitter space, frozen at $p$, induces a de Sitter
metric, $g_0$, which is well defined at the front face of the blow
up $M$ (but away from its side faces) as a b-metric. In particular,
the resonances in the `static region' of any asymptotically de Sitter
space are the same as in the static model of actual de Sitter space.

On actual de Sitter space, the poles of $\widehat\cP^{-1}$ are those on
the hyperbolic space in the interior of the light cone equipped by a
potential, as described in \cite[Lemma~7.11]{Va07}, or indeed in
\cite[Proposition~4.2]{Va12} where essentially the present notation is
used.\footnote{In \cite[Lemma~7.11]{Va07} $-\sigma^2$ plays the same
role as $\sigma^2$ here or in \cite[Proposition~4.2]{Va12}.} As
shown in Corollary~7.18 of \cite{Va07}, converted to our notation, the
only possible poles are at
\begin{equation}\label{eq:dS-poles}
i\wh s_\pm(\lambda)-i\NN,\qquad\wh s_\pm(\lambda)=-\frac{n-1}{2}\pm\sqrt{\frac{(n-1)^2}{4}-\lambda}.
\end{equation}
In particular, when $\lambda=m^2$, $m>0$, then we conclude:

\begin{lemma}
\label{LemmaPerturbDS}
For $m>0$, $\cP=\Box_g-m^2$, $g$ induced by an asymptotically de
Sitter metric as above, all poles of $\widehat\cP^{-1}$ have strictly negative imaginary part.
\end{lemma}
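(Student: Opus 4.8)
The plan is to reduce the assertion to a short computation with the explicit location of the de Sitter resonances. As explained in the paragraph preceding the lemma, the resonances of $\cP=\Box_g-\lambda$ in the `static region' of an asymptotically de Sitter space coincide with those of the corresponding operator on the static model of \emph{exact} de Sitter space, since the b-metric $g$ frozen at the blow-up point $p$ is literally a de Sitter b-metric at (and near the interior of) the front face. Hence it suffices to prove the statement for $g=g_0$ the exact de Sitter metric and $\lambda=m^2$.

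For the exact model I would simply invoke \cite[Corollary~7.18]{Va07} (equivalently \cite[Proposition~4.2]{Va12}), recalled in \eqref{eq:dS-poles}: all poles of $\widehat{\cP}^{-1}$ lie in $i\hat s_\pm(m^2)-i\NN$, where $\hat s_\pm(\lambda)=-\tfrac{n-1}{2}\pm\sqrt{\tfrac{(n-1)^2}{4}-\lambda}$. Since for $k\in\NN$ one has $\im\bigl(i\hat s_\pm(m^2)-ik\bigr)=\re\hat s_\pm(m^2)-k$ and $k\geq 0$, the lemma is reduced to the single inequality $\re\hat s_\pm(m^2)<0$.

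This inequality I would verify by splitting into two cases according to the sign of $\tfrac{(n-1)^2}{4}-m^2$. If $0<m^2\leq\tfrac{(n-1)^2}{4}$, then $\sqrt{\tfrac{(n-1)^2}{4}-m^2}$ is real, nonnegative, and — because $m>0$ — strictly less than $\tfrac{n-1}{2}$; hence $\hat s_+(m^2)=-\tfrac{n-1}{2}+\sqrt{\tfrac{(n-1)^2}{4}-m^2}<0$, and $\hat s_-(m^2)$ is smaller still. If $m^2>\tfrac{(n-1)^2}{4}$, then $\sqrt{\tfrac{(n-1)^2}{4}-m^2}$ is purely imaginary, so $\re\hat s_\pm(m^2)=-\tfrac{n-1}{2}<0$. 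In either case $\re\hat s_\pm(m^2)<0$, whence every candidate pole has imaginary part $\re\hat s_\pm(m^2)-k<0$, which is the claim.

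There is no real obstacle here: once the two cited inputs — the identification of static-region resonances with exact de Sitter resonances, and the pole location \eqref{eq:dS-poles} of \cite{Va07} — are granted, the remainder is the elementary case check above. The only point worth stressing is that the strictness hypothesis $m>0$ is exactly what forces even the leading resonances (the $k=0$ terms $i\hat s_\pm$) into $\im\sigma<0$; at $m=0$ one would instead get $\hat s_+(0)=0$, the zero resonance referred to in the introduction, which is precisely why the case $\lambda=0$ is excluded in this lemma.
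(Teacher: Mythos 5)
Your proof is correct and follows essentially the same route as the paper: it reduces to exact de Sitter space (as the paper does in the paragraph preceding the lemma), invokes \cite[Corollary~7.18]{Va07} via \eqref{eq:dS-poles}, and verifies $\re\hat s_\pm(m^2)<0$ by elementary case analysis. The paper leaves that last verification implicit, so you have simply filled in the routine step.
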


In other words, for small mass $m>0$, there are no resonances $\sigma$
of the Klein-Gordon operator with $\Im\sigma\geq-\eps_0$ for some
$\eps_0>0$. Therefore, the expansion of $u$ as in
\eqref{eq:b-main-exp} no longer has a constant term. Let us fix
such $m>0$ and $\eps_0>0$, which ensures that for
$0<\eps<\eps_0$, the only term in the asymptotic expansion
\eqref{eq:b-main-exp}, when $s>1/2+\eps$ and
$f\in\Hb^{s-1,\eps}(\Omega)^{\bullet,-}$, is the `remainder' term
$u'\in\Hb^{s,\eps}(\Omega)^{\bullet,-}$. Here we use that $\beta=1$
in de Sitter space, hence on an asymptotically de Sitter space, see \cite[\S4.4]{Va12}, in particular the
second displayed equation after Equation~(4.16) there which computes
$\beta$ in accordance with Remark~\ref{rmk:beta-comp}.

Being interested in finding {\em forward solutions} to (non-linear) wave
equations on {\em asymptotically de Sitter spaces}, we now define the forward solution operator
\begin{equation}
\label{EqFwdSolDS}
S_\KG\colon\Hb^{s-1,\eps}(\Omega)^{\bullet,-}\to\Hb^{s,\eps}(\Omega)^{\bullet,-}
\end{equation}
using Theorems~\ref{ThmPFredholm} and \ref{thm:b-main}.

\begin{rmk}
If $\wt M\subset M$ is an asymptotically de Sitter space with global time function $t$, $\tau=e^{-t}$ is the defining function for future infinity, and the domain $\Omega$ is such that $\Omega\cap\wt M=\{\tau<\tau_0\}$, then $S_\KG$ in fact restricts to a forward solution operator on $\wt M$ itself; indeed, if $E\colon\Hb^{s-1,\eps}(\{\tau<\tau_0\})\to\Hb^{s-1,\eps}(\Omega)^{\bullet,-}$ is an extension operator, then the forward solution operator on $\{\tau<\tau_0\}$ is given by extending $f\in\Hb^{s-1,\eps}(\{\tau<\tau_0\})$ using $E$, finding the forward solution on $\Omega$ using $S_\KG$, and restricting back to $\{\tau<\tau_0\}$. The result is independent of the extension operator, as is easily seen from standard energy estimates; see in particular \cite[Proposition~3.9]{Va12}.
\end{rmk}

%%%%%%%%%%%%%%%%%%%%%%%%%%%%%%%%%%%%%%%%%%%%%%%%%
\subsection{A class of semilinear equations}
\label{SubsecStaticDSSemi}

Let us fix $m>0$ and $\eps_0>0$ as above for statements about
semilinear equations involving the Klein-Gordon operator; for
equations involving the wave operator only, let $-\eps_0$ be equal to
the largest imaginary part of all non-zero resonances of $\Box_g$.
In Theorem~\ref{ThmDSQu} and further in the subsequent sections bundles like $\Tb^*\Omega$
  refer to $\Tb^*_\Omega M$; the boundaries $H_j$ of $\Omega$ are
  regarded as artificial, and do not affect the cotangent bundle or
  the corresponding vector fields.

\begin{thm}
\label{ThmDSQu}
Let $0\leq\eps<\eps_0$ and $s>3/2+\eps$. Moreover, let $q\colon\Hb^{s,\eps}(\Omega)^{\bullet,-}\times\Hb^{s-1,\eps}(\Omega;\Tb^*\Omega)^{\bullet,-}\to\Hb^{s-1,\eps}(\Omega)^{\bullet,-}$ be a continuous function with $q(0,0)=0$ such that there exists a continuous non-decreasing function $L\colon\R_{\geq 0}\to\R$ satisfying
\[
\|q(u,\bdiff u)-q(v,\bdiff v)\|\leq L(R)\|u-v\|,\quad \|u\|,\|v\|\leq R,
\] 
where we use the norms corresponding to the map $q$. Then there is a constant $C_L>0$ so that the following holds: If $L(0)<C_L$, then for small $R>0$, there exists $C>0$ such that for all $f\in\Hb^{s-1,\eps}(\Omega)^{\bullet,-}$ with norm $\leq C$, the equation
\begin{equation}
\label{EqThmPDE}
(\Box_g-m^2)u=f+q(u,\bdiff u)
\end{equation}
has a unique solution $u\in\Hb^{s,\eps}(\Omega)^{\bullet,-}$, with norm $\leq R$, that depends continuously on $f$.

More generally, suppose
\[
q\colon\Hb^{s,\eps}(\Omega)^{\bullet,-}\times\Hb^{s-1,\eps}(\Omega;\Tb^*\Omega)^{\bullet,-}\times\Hb^{s-1,\eps}(\Omega)^{\bullet,-}\to\Hb^{s-1,\eps}(\Omega)^{\bullet,-}
\]
satisfies $q(0,0,0)=0$ and
\[
\|q(u,\bdiff u,w)-q(u',\bdiff u',w')\|\leq L(R)(\|u-u'\|+\|w-w'\|)
\]
provided $\|u\|+\|w\|,\|u'\|+\|w'\|\leq R$, where we use the norms corresponding to the map $q$, for a continuous non-decreasing function $L\colon\R_{\geq 0}\to\R$. Then there is a constant $C_L>0$ so that the following holds: If $L(0)<C_L$, then for small $R>0$, there exists $C>0$ such that for all $f\in\Hb^{s-1,\eps}(\Omega)^{\bullet,-}$ with norm $\leq C$, the equation
\begin{equation}
\label{EqThmPDEBox}
(\Box_g-m^2)u=f+q(u,\bdiff u,\Box_g u)
\end{equation}
has a unique solution $u\in\Hb^{s,\eps}(\Omega)^{\bullet,-}$, with $\|u\|_{\Hb^{s,\eps}}+\|\Box_g u\|_{\Hb^{s-1,\eps}}\leq R$, that depends continuously on $f$.

Further, if $\eps>0$ and the non-linearity is of the form $q(\bdiff u)$, with
\[
  q\colon\Hb^{s-1,\eps}(\Omega;\Tb^*\Omega)^{\bullet,-}\to\Hb^{s-1,\eps}(\Omega)^{\bullet,-}
\]
having a small Lipschitz constant near $0$, then for small $R>0$, there exists $C>0$ such that for all $f\in\Hb^{s-1,\eps}(\Omega)^{\bullet,-}$ with $\|f\|\leq C$, the equation
\[
\Box_g u=f+q(\bdiff u)
\]
has a unique solution $u$ with
$u-(\phi\circ\frakt_1)c=u'\in\Hb^{s,\eps}(\Omega)^{\bullet,-}$, where
$c\in\C$, that depends continuously on $f$, i.e.\ $c\in\C$ and
$u'\in\Hb^{s,\eps}(\Omega)^{\bullet,-}$ depend continuously on $f$. Here, $\phi\in\CI(\R)$ with support in $(0,\infty)$ and $\frakt_1$ are as in Theorem~\ref{thm:b-main}. In fact, the statement even holds for non-linearities $q(u,\bdiff u)$ provided
\[
  q\colon(\C(\phi\circ\frakt_1)\oplus\Hb^{s,\eps}(\Omega))\times\Hb^{s-1,\eps}(\Omega;\Tb^*\Omega)^{\bullet,-}\to\Hb^{s-1,\eps}(\Omega)^{\bullet,-}
\]
has a small Lipschitz constant near $0$.
\end{thm}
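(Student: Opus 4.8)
The plan is to solve each of the three equations by the contraction mapping theorem, using the forward solution operator of the linear part as the (right) inverse. For \eqref{EqThmPDE} and \eqref{EqThmPDEBox} this is $S_\KG$ from \eqref{EqFwdSolDS}, provided by Theorems~\ref{ThmPFredholm} and \ref{thm:b-main} together with Lemma~\ref{LemmaPerturbDS}, which guarantees that for $0\le\eps<\eps_0$ no resonant terms appear in \eqref{eq:b-main-exp}, so $S_\KG\colon\Hb^{s-1,\eps}(\Omega)^{\bullet,-}\to\Hb^{s,\eps}(\Omega)^{\bullet,-}$ is bounded with $\cP S_\KG=\Id$, where $\cP=\Box_g-m^2$; write $\newconst$ for its norm. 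I will also use that $\bdiff\colon\Hb^{s,\eps}(\Omega)^{\bullet,-}\to\Hb^{s-1,\eps}(\Omega;\Tb^*\Omega)^{\bullet,-}$ is bounded, so $u\mapsto(u,\bdiff u)$ maps continuously into the domain of $q$ (the $\bdiff$ slot costs one derivative but keeps the weight $\eps$, matching the conventions on $q$).

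\textbf{First statement.} For $R>0$ let $B_R$ be the closed ball of radius $R$ in $\Hb^{s,\eps}(\Omega)^{\bullet,-}$ and set $\Phi(u)=S_\KG(f+q(u,\bdiff u))$. Since $q(0,0)=0$, the Lipschitz hypothesis gives $\|q(u,\bdiff u)\|\le L(R)\|u\|$ for $\|u\|\le R$, hence $\|\Phi(u)\|\le\newconst(\|f\|+L(R)R)$ and $\|\Phi(u)-\Phi(v)\|\le\newconst L(R)\|u-v\|$. Take $C_L=\newconst^{-1}$ (more precisely $\newconst^{-1}$ times the norm of $u\mapsto(u,\bdiff u)$). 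If $L(0)<C_L$, then by continuity of $L$ there is $R_0>0$ with $\theta:=\newconst L(R)<1$ for $R\le R_0$; then $\Phi$ maps $B_R$ into itself as soon as $\|f\|\le(1-\theta)R/\newconst=:C$, and is a $\theta$-contraction there. The Banach fixed point theorem yields a unique $u\in B_R$ with $\Phi(u)=u$; applying $\cP$ gives $\cP u=f+q(u,\bdiff u)$, and Lipschitz dependence of the fixed point on the parameter $f$ gives the asserted continuous dependence.

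\textbf{Second statement.} Here I replace the underlying Banach space by the graph space $\cX^{s,\eps}=\{u\in\Hb^{s,\eps}(\Omega)^{\bullet,-}\colon\cP u\in\Hb^{s-1,\eps}(\Omega)^{\bullet,-}\}$ with $\|u\|_{\cX^{s,\eps}}^2=\|u\|_{\Hb^{s,\eps}}^2+\|\cP u\|_{\Hb^{s-1,\eps}}^2$; since $\Box_g u=\cP u+m^2 u$ and $\Hb^{s,\eps}\hookrightarrow\Hb^{s-1,\eps}$, this norm is equivalent to $\|u\|_{\Hb^{s,\eps}}+\|\Box_g u\|_{\Hb^{s-1,\eps}}$, which is the quantity appearing in the theorem. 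From $\cP S_\KG g=g$ one gets $\|S_\KG g\|_{\cX^{s,\eps}}\lesssim\|g\|_{\Hb^{s-1,\eps}}$, so $S_\KG\colon\Hb^{s-1,\eps}(\Omega)^{\bullet,-}\to\cX^{s,\eps}$ is bounded. Now run the same argument with $\Phi(u)=S_\KG(f+q(u,\bdiff u,\Box_g u))$ on the $\cX^{s,\eps}$-ball of radius $R$: since $\bdiff\colon\cX^{s,\eps}\to\Hb^{s-1,\eps}(\Omega;\Tb^*\Omega)^{\bullet,-}$ is bounded, the hypothesis gives $\|q(u,\bdiff u,\Box_g u)-q(u',\bdiff u',\Box_g u')\|\le L(R)\bigl(\|u-u'\|_{\Hb^{s,\eps}}+\|\Box_g(u-u')\|_{\Hb^{s-1,\eps}}\bigr)\le L(R)\|u-u'\|_{\cX^{s,\eps}}$, and the mapping/contraction estimates go through verbatim with $C_L=\|S_\KG\|_{\Hb^{s-1,\eps}\to\cX^{s,\eps}}^{-1}$.

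\textbf{Third statement and main obstacle.} For $\lambda=0$ the operator $\cP=\Box_g$ has a (simple) resonance at $\sigma_0=0$ with constant resonant state, so by Theorem~\ref{thm:b-main}, for $0<\eps<\eps_0$ with $\beta=1$ (whence, exactly as in the discussion preceding \eqref{EqFwdSolDS}, no other resonance contributes to the sum), one gets a forward solution operator $S_0\colon\Hb^{s-1,\eps}(\Omega)^{\bullet,-}\to Z:=\Cx(\phi\circ\frakt_1)\oplus\Hb^{s,\eps}(\Omega)^{\bullet,-}$, bounded for $\|(\phi\circ\frakt_1)c+u'\|_Z=|c|+\|u'\|_{\Hb^{s,\eps}}$, with $\Box_g S_0 g=g$. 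Since $\phi\circ\frakt_1$ is supported away from $X$, the constant term contributes only a smooth compactly supported piece, so $\bdiff\colon Z\to\Hb^{s-1,\eps}(\Omega;\Tb^*\Omega)^{\bullet,-}$ is bounded and $q(\bdiff\,\cdot)$ (resp.\ $q(\cdot,\bdiff\,\cdot)$) maps $Z$ continuously to $\Hb^{s-1,\eps}(\Omega)^{\bullet,-}$. After absorbing $q(0)$ (resp.\ $q(0,0)$) into $f$ if nonzero, the map $\Phi(u)=S_0(f+q(\bdiff u))$ (resp.\ with $q(u,\bdiff u)$) is, for small $R$, a contraction on the $Z$-ball of radius $R$ provided the Lipschitz constant of $q$ near $0$ is below $\|S_0\|^{-1}$ times the norm of $\bdiff\colon Z\to\Hb^{s-1,\eps}$ (resp.\ of $u\mapsto(u,\bdiff u)$), and Banach's theorem delivers the fixed point $u=(\phi\circ\frakt_1)c+u'$, solving $\Box_g u=f+q(\bdiff u)$ with $c,u'$ depending continuously on $f$. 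The argument is otherwise routine; the only points needing real care are (i) verifying that $S_\KG$, resp.\ $S_0$, maps boundedly into the appropriate graph-type space ($\cX^{s,\eps}$ for \eqref{EqThmPDEBox}, $Z$ for the $\lambda=0$ case), and (ii) for $\lambda=0$, pinning down that the expansion \eqref{eq:b-main-exp} collapses to precisely the single term $(\phi\circ\frakt_1)c$—which rests on $\beta=1$ and the resonance locations—so that $Z$ is the correct space on which to iterate.
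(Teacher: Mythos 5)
Your proof is correct and takes essentially the same approach as the paper: a contraction mapping (Banach fixed point) argument using $S_\KG$ on $\Hb^{s,\eps}(\Omega)^{\bullet,-}$ for the first part, on the graph/coisotropic space for the second, and on $\Cx(\phi\circ\frakt_1)\oplus\Hb^{s,\eps}(\Omega)^{\bullet,-}$ for the third, with the threshold $C_L$ chosen as the reciprocal of the relevant solution-operator norm. One minor remark: your parenthetical ``more precisely $\newconst^{-1}$ times the norm of $u\mapsto(u,\bdiff u)$'' is unnecessary, since the stated Lipschitz hypothesis already bounds $\|q(u,\bdiff u)-q(v,\bdiff v)\|$ directly by $L(R)\|u-v\|_{\Hb^{s,\eps}}$, absorbing the $\bdiff$; the paper simply takes $C_L=\|S_\KG\|^{-1}$.
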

\begin{proof}
To prove the first part, let $S_\KG$ be the forward solution operator for $\Box_g-m^2$ as in \eqref{EqFwdSolDS}. We want to apply the Banach fixed point theorem to the operator $T_\KG\colon\Hb^{s,\eps}(\Omega)^{\bullet,-}\to\Hb^{s,\eps}(\Omega)^{\bullet,-}$, $T_\KG u=S_\KG(f+q(u,\bdiff u))$.

Let $C_L=\|S_\KG\|^{-1}$, then we have the estimate
\begin{equation}
\label{EqContractionQu}
\|T_\KG u-T_\KG v\|\leq\|S_\KG\|L(R')\|u-v\|\leq C_0\|u-v\|
\end{equation}
for $\|u\|,\|v\|\leq R$ and a constant $C_0<1$, granted that $L(R)\leq C_0\|S_\KG\|^{-1}$, which holds for small $R>0$ by assumption on $L$. Then, $T_\KG$ maps the $R$-ball in $\Hb^{s,\eps}(\Omega)^{\bullet,-}$ into itself if $\|S_\KG\|(\|f\|+L(R)R)\leq R$, i.e.\ if $\|f\|\leq R(\|S_\KG\|^{-1}-L(R))$. Put
\[
  C=R(\|S_\KG\|^{-1}-L(R)).
\]
Then the existence of a unique solution $u\in\Hb^{s,\eps}(\Omega)^{\bullet,-}$ with norm $\leq R$ to the PDE~\eqref{EqThmPDE} with $\|f\|_{\Hb^{s-1,\eps}}\leq C$ follows from the Banach fixed theorem.

To prove the continuous dependence of $u$ on $f$, suppose we are given $u_j\in\Hb^{s,\eps}(\Omega)^{\bullet,-}$, $j=1,2$, with norms $\leq R$, $f_j\in\Hb^{s-1,\eps}(\Omega)^{\bullet,-}$ with norms $\leq C$, such that
\[
(\Box_g-m^2)u_j=f_j+q(u_j,\bdiff u_j),\quad j=1,2.
\]
Then
\[
  (\Box_g-m^2)(u_1-u_2)=f_1-f_2+q(u_1,\bdiff u_1)-q(u_2,\bdiff u_2),
\]
hence
\[
  \|u_1-u_2\|\leq\|S_\KG\|(\|f_1-f_2\|+L(R)\|u_1-u_2\|),
\]
which in turn gives
\[
  \|u_1-u_2\|\leq\frac{\|f_1-f_2\|}{1-C_0}.
\]
This completes the proof of the first part.

For the more general statement, we use that one can think of $\Box_g$ in the non-linearity as a first order operator. Concretely, we work on the coisotropic space
\[
\cX=\{u\in\Hb^{s,\eps}(\Omega)^{\bullet,-}\colon\Box_g u\in\Hb^{s-1,\eps}(\Omega)^{\bullet,-}\}
\]
with norm
\[
\|u\|_{\cX}=\|u\|_{\Hb^{s,\eps}(\Omega)^{\bullet,-}}+\|\Box_g u\|_{\Hb^{s-1,\eps}(\Omega)^{\bullet,-}}.
\]
This is a Banach space: Indeed, if $(u_k)$ is a Cauchy sequence in $\cX$, then $u_k\to u$ in $\Hb^{s,\eps}(\Omega)^{\bullet,-}$, and $\Box_g u_k\to v$ in $\Hb^{s-1,\eps}(\Omega)^{\bullet,-}$; in particular, $\Box_g u_k\to \Box_g u$ and $\Box_g u_k\to v$ in $\tau^\eps\Hb^{s-2}(\Omega)^{\bullet,-}$, thus $\Box_g u=v\in\Hb^{s-1,\eps}(\Omega)^{\bullet,-}$, which was to be shown. We then define $T_\KG\colon\cX\to\cX$ by $T_\KG u=S_\KG(f+q(u,\bdiff u,\Box_g u))$ and obtain the estimate
\begin{align*}
\|T_\KG u-T_\KG v\|_{\cX}&=\|T_\KG u-T_\KG v\|_{\Hb^{s,\eps}}+\|q(u,\bdiff u,\Box_g u)-q(v,\bdiff v,\Box_g v)\|_{\Hb^{s-1,\eps}} \\
&\leq(\|S_\KG\|+1)L(R)(\|u-v\|_{\Hb^{s,\eps}}+\|\Box_g u-\Box_g v\|_{\Hb^{s-1,\eps}}) \\
&=(\|S_\KG\|+1)L(R)\|u-v\|_{\cX}\leq C_0\|u-v\|_{\cX}
\end{align*}
for $u,v\in\cX$ with norms $\leq R$, with $C_0<1$ if $R>0$ is small enough, provided we require $L(0)<C_L:=(\|S_\KG\|+1)^{-1}$. Then, for $u\in\cX$ with norm $\leq R$,
  \[
    \|T_\KG u\|_{\cX}\leq(\|S_\KG\|+1)(\|f\|_{\Hb^{s-1,\eps}}+L(R)R)\leq R
  \]
  if $\|f\|\leq C$, $C>0$ small. Thus, $T_\KG$ is a contraction on $\cX$, and we obtain the solvability of equation \eqref{EqThmPDEBox}. The continuous dependence of the solution on the forcing term $f$ is proved as above.

  For the third part, we use the forward solution operator
  $S\colon\Hb^{s-1,\eps}(\Omega)^{\bullet,-}\to\cY:=\C\oplus\Hb^{s,\eps}(\Omega)^{\bullet,-}$
  for $\Box_g$; note that $\cY$ is a Banach space with norm $\|(c,u')\|_{\cY}=|c|+\|u'\|_{\Hb^{s,\eps}(\Omega)^{\bullet,-}}$. (See \S\ref{SubsecDSPoly} for related and more general statements.) We will apply the Banach fixed point theorem to the operator $T\colon\cY\to\cY$, $Tu=S(f+q(u,\bdiff u))$: We again have an estimate like \eqref{EqContractionQu}, since $\bdiff u\in\Hb^{s-1,\eps}(\Omega;\Tb^*\Omega)^{\bullet,-}$ for $u\in\cY$, and for small $R>0$, $T$ maps the $R$-ball around $0$ in $\cY$ into itself if the norm of $f$ in $\Hb^{s-1,\eps}(\Omega)^{\bullet,-}$ is small, as above. The continuous dependence of the solution on the forcing term is proved as above.
\end{proof}

The following basic statement ensures that there are interesting non-linearities $q$ that satisfy the requirements of the theorem; see also \S\ref{SubsecDSPoly}.
\begin{lemma}
\label{LemmaHsAlgebra}
  Let $s>n/2$, then $\Hb^s(\R^n_+)$ is an algebra. In particular, $\Hb^s(N)$ is an algebra on any compact $n$-dimensional manifold $N$ with boundary which is equipped with a b-metric.
\end{lemma}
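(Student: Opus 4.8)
The plan is to reduce the statement on $\R^n_+$ to the classical algebra property of the standard Sobolev spaces on $\RR^n$, and then to reduce the statement for a general $N$ to the model cases by a partition of unity. \emph{Step 1: reduction of $\R^n_+$ to $\RR^n$.} Write $\R^n_+=[0,\infty)_x\times\RR^{n-1}_y$, with the b-structure generated by $x\pa_x,\pa_{y_1},\dots,\pa_{y_{n-1}}$ and the b-density $\frac{dx}{x}\,dy$. The substitution $x=e^{-t}$, $t\in\RR$, is a diffeomorphism of the interior onto $\RR^n_{t,y}$ under which $x\pa_x=-\pa_t$, $\pa_{y_j}=\pa_{y_j}$, and $\frac{dx}{x}\,dy=-\,dt\,dy$. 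Hence it induces an isomorphism $\Hb^s(\R^n_+)\cong H^s(\RR^n)$ for every $s\in\RR$ (indeed an isometry for the natural norm choices, coming directly from the vector-field characterization when $s\in\NN$ and by duality/interpolation, equivalently by Fourier transform in $(t,y)$, in general), and this isomorphism intertwines pointwise multiplication. Therefore $\Hb^s(\R^n_+)$ is an algebra for $s>n/2$ exactly because $H^s(\RR^n)$ is: there is $C>0$ with $\|uv\|_{\Hb^s(\R^n_+)}\le C\|u\|_{\Hb^s(\R^n_+)}\|v\|_{\Hb^s(\R^n_+)}$, this being the standard Sobolev multiplication estimate $H^s\times H^s\to H^s$ for $s>n/2$ (split $\widehat{uv}=\widehat u*\widehat v$ according to which factor carries the larger frequency and apply Cauchy--Schwarz using $\int\la\xi\ra^{-2s}\,d\xi<\infty$).

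\emph{Step 2: reduction of $N$ to the model cases.} Fix the b-metric on the compact manifold with boundary $N$ and choose a finite cover of $N$ by coordinate charts $U_i$, each diffeomorphic either to an open subset of $\RR^n$ (interior charts) or, near $\pa N$, to an open subset of $\R^n_+$ with $\pa N$ mapped to $\pa\R^n_+$, in such a way that the b-vector fields of $N$ correspond to smooth combinations of the model b-vector fields and the b-density to a smooth positive multiple of the model b-density; thus, in each chart, the local $\Hb^s$ norm is equivalent to the model norm, and the sum of these local norms is equivalent to $\|\cdot\|_{\Hb^s(N)}$ (a standard consequence of the b-calculus). Pick $\chi_i\in\CI_c(U_i)$ with $\sum_i\chi_i\equiv 1$ and $\psi_i\in\CI_c(U_i)$ with $\psi_i\equiv 1$ near $\supp\chi_i$; multiplication by any element of $\CI(N)$ (all of whose b-derivatives are bounded, by compactness of $N$) lies in $\Psib^0(N)$, hence is bounded on $\Hb^s(N)$ for every $s$, and similarly in the model spaces. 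For $u,v\in\Hb^s(N)$ write $uv=\sum_i\chi_i(\psi_i u)(\psi_i v)$. Transferring the $i$-th term to the model chart, $\psi_i u$ and $\psi_i v$ become compactly supported elements of $H^s(\RR^n)$ or $\Hb^s(\R^n_+)$ with norms $\lesssim\|u\|_{\Hb^s(N)},\|v\|_{\Hb^s(N)}$; their product lies in the same space with norm $\lesssim\|u\|_{\Hb^s(N)}\|v\|_{\Hb^s(N)}$ by Step 1 (respectively the classical $\RR^n$ statement); and multiplication by $\chi_i$ keeps it there, with the same bound. Summing over the finitely many $i$ gives $uv\in\Hb^s(N)$ with $\|uv\|_{\Hb^s(N)}\le C\|u\|_{\Hb^s(N)}\|v\|_{\Hb^s(N)}$.

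\emph{Main point of care.} There is no serious obstacle here; the only ``soft'' inputs are the two b-calculus facts used above --- equivalence of the global b-Sobolev norm with the sum of the local ones read off in the charts, and boundedness on $\Hb^s(N)$ of multiplication by a fixed $\CI$ function (which is an operator in $\Psib^0(N)$; for $s\in\NN$ this is just the Leibniz rule) --- which we take for granted. Everything substantive reduces, as indicated, to the classical Sobolev multiplication theorem on $\RR^n$, and the logarithmic change of variables $x=e^{-t}$ is what makes that reduction exact.
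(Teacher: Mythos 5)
Your proof is correct and follows essentially the same route as the paper's: a logarithmic change of variables $x=e^{-t}$ giving $\Hb^s(\R^n_+)\cong H^s(\R^n)$, invoking the classical Sobolev algebra property for $s>n/2$, and then passing to a general compact manifold with boundary by localization and coordinate invariance. The paper simply phrases the first step as the $k=0$ case of Lemma~\ref{LemmaYDerivatives}, which is the same classical statement you use directly.
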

\begin{proof}
  The first statement is the special case $k=0$ of
  Lemma~\ref{LemmaYDerivatives} after a logarithmic change of
  coordinates, which gives an isomorphism $\Hb^s(\R^n_+)\cong H^s(\R^n)$; the lemma is well-known in this case, see e.g.\ \cite[Chapter~13.3]{Tay}. The second statement follows by localization and from the coordinate invariance of $\Hb^s$.
\end{proof}
More and related statements will be given in \S\ref{SubsecDeSitterAlgebra}.

\begin{rmk}
  The algebra property of $\Hb^s(N)$ for $s>\dim(N)/2$ is a special
  case of the fact that for any $F\in\CI(\R)$, for real valued $u$, or
  $F\in\CI(\Cx)$, for complex valued $u$, with $F(0)=0$, the
composition map $\Hb^s(N)\ni u\mapsto F\circ u\in\Hb^s(N)$ is
well-defined and continuous, see for example
\cite[Chapter 13.10]{Tay}. In the real valued $u$ case, if $F(0)\neq 0$, then writing
$F(t)=F(0)+tF_1(t)$ shows that $F\circ u\in\Cx+\Hb^s(N)$. If $r>0$,
then $\Hb^{s,r}(N)\subset \Hb^s(N)$ shows that $F_1(u)\in\Hb^s(N)$, thus $F\circ u=F(0)+uF_1(u)\in\Cx+\Hb^{s,r}(N)$; and if $F$ vanishes to order $k$ at $0$
then $F(t)=t^k F_k(t)$, so $F\circ u=u^k (F_k\circ u)$, and the
multiplicative properties of $\Hb^{s,r}(N)$ show that $F\circ u\in
\Hb^{s,kr}(N)$. The argument is analogous for complex valued $u$,
indeed for $\RR^L$-valued $u$, using Taylor's theorem on $F$ at the origin.
\end{rmk}

As a corollary we have

\begin{cor}\label{cor:ndS}
If $s>n/2$, the hypotheses of Theorem~\ref{ThmDSQu}
hold for non-linearities $q(u)=cu^p$, $p\geq 2$ integer, $c\in\Cx$, as
well as $q(u)=q_0 u^p$, $q_0\in\Hb^s(M)$.

If $s-1>n/2$, the hypotheses of Theorem~\ref{ThmDSQu}
hold for non-linearities $q$
\begin{equation}
\label{EqDSPolynomial}
q(u,\bdiff u)=\sum_{2\leq j+|\alpha|\leq d} q_{j\alpha} u^j \prod_{k\leq|\alpha|}X_{\alpha,k}u,
\end{equation}
$q_{j,\alpha}\in\Cx+\Hb^s(M)$, $X_{\alpha,k}\in\Vb(M)$.

Thus, in either case, for $m>0$, $0\leq\eps<\eps_0$, $s>3/2+\eps$ and for small $R>0$, there exists $C>0$ such that for all $f\in\Hb^{s-1,\eps}(\Omega)^{\bullet,-}$ with norm $\leq C$, the equation
  \begin{equation}
  \label{EqThmPDE-cor}
    (\Box_g-m^2)u=f+q(u,\bdiff u)
  \end{equation}
  has a unique solution $u\in\Hb^{s,\eps}(\Omega)^{\bullet,-}$, with
  norm $\leq R$, that depends continuously on $f$.

The analogous conclusion also holds for $\Box_g u=f+q(u,\bdiff u)$ provided $\eps>0$ and
\begin{equation}
\label{EqDSPolynomial2}
q(u,\bdiff u)=\sum_{2\leq j+|\alpha|\leq d,|\alpha|\geq 1} q_{j\alpha} u^j \prod_{k\leq|\alpha|}X_{\alpha,k}u,
\end{equation}
with the solution being in $\Cx (\phi\circ\frakt_1)\oplus\Hb^{s,\eps}(\Omega)^{\bullet,-}$, $\phi\circ\frakt_1$ identically $1$ near $X\cap\Omega$, vanishing near $H_1$.
\end{cor}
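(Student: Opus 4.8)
The plan is to check that each of the listed non-linearities falls under the hypotheses of Theorem~\ref{ThmDSQu}; the solvability of \eqref{EqThmPDE-cor}, and of $\Box_g u=f+q(u,\bdiff u)$, is then an immediate application of that theorem --- in its Klein--Gordon form for the former, and, for the latter, in the form with target space $\cY=\C(\phi\circ\frakt_1)\oplus\Hb^{s,\eps}(\Omega)^{\bullet,-}$. The only tools required are the algebra property of $\Hb^{s'}$ for $s'>n/2$ from Lemma~\ref{LemmaHsAlgebra} together with its weighted multiplicative consequences: $\Hb^{s',r_1}(\Omega)^{\bullet,-}\cdot\Hb^{s',r_2}(\Omega)^{\bullet,-}\subset\Hb^{s',r_1+r_2}(\Omega)^{\bullet,-}$, and boundedness of multiplication $\Hb^{s'}\times\Hb^{s',r}\to\Hb^{s',r}$ for $s'>n/2$; see the remark after Lemma~\ref{LemmaHsAlgebra} and Section~\ref{SubsecDeSitterAlgebra}. (These descend from $\Hb^{s',r}(M)$ to the supported/extendible subspaces $\Hb^{s',r}(\Omega)^{\bullet,-}$ since multiplication is local and preserves the support condition $\frakt_1\geq 0$.) Since $\eps\geq 0$, one has $\Hb^{s',r\eps}(\Omega)^{\bullet,-}\subset\Hb^{s',\eps}(\Omega)^{\bullet,-}$ for $r\geq 1$, which we use freely.

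For $q(u)=q_0 u^p$ with $q_0\in\C+\Hb^s(M)$, $s>n/2$, $p\geq 2$: if $u\in\Hb^{s,\eps}(\Omega)^{\bullet,-}$ then $u^p\in\Hb^{s,p\eps}(\Omega)^{\bullet,-}\subset\Hb^{s,\eps}(\Omega)^{\bullet,-}$ and multiplication by $q_0$ preserves this space, so $q$ maps $\Hb^{s,\eps}(\Omega)^{\bullet,-}$ into itself, a fortiori into $\Hb^{s-1,\eps}(\Omega)^{\bullet,-}$. For the Lipschitz bound one writes $u^p-v^p=(u-v)\sum_{l=0}^{p-1}u^l v^{p-1-l}$, the second factor being bounded in $\Hb^s$ by $CR^{p-1}$ when $\|u\|,\|v\|\leq R$; hence $\|q(u)-q(v)\|\leq L(R)\|u-v\|$ with $L(R)=CR^{p-1}$, continuous, non-decreasing on $\RR_{\geq 0}$, and $L(0)=0$. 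Continuity of $q$ follows from the same bound on bounded sets. Since $L(0)=0<C_L$ whatever $C_L>0$ is, Theorem~\ref{ThmDSQu} applies and yields the first assertion.

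For the polynomial non-linearity \eqref{EqDSPolynomial} we use $s-1>n/2$, so that $\Hb^{s-1}(\Omega)^{\bullet,-}$ is an algebra. For $u\in\Hb^{s,\eps}(\Omega)^{\bullet,-}$ one has $u\in\Hb^{s-1,\eps}(\Omega)^{\bullet,-}$ and, since each $X_{\alpha,k}\in\Vb(M)$ is a first-order b-operator, $X_{\alpha,k}u\in\Hb^{s-1,\eps}(\Omega)^{\bullet,-}$; hence each monomial $q_{j\alpha}u^j\prod_k X_{\alpha,k}u$ lies in $\Hb^{s-1,(j+|\alpha|)\eps}(\Omega)^{\bullet,-}\subset\Hb^{s-1,\eps}(\Omega)^{\bullet,-}$, with $q_{j\alpha}\in\C+\Hb^s(M)$ acting as a bounded multiplier. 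For the Lipschitz estimate on the $R$-ball, expand each monomial difference $m(u)-m(v)$ as a telescoping sum of products with exactly one factor a difference: a factor $u-v$ is estimated in $\Hb^{s,\eps}$, a factor $X_{\alpha,k}u-X_{\alpha,k}v=X_{\alpha,k}(u-v)$ in $\Hb^{s-1,\eps}$ --- in either case by $\|u-v\|_{\Hb^{s,\eps}}$ --- while each remaining factor is bounded by $R$ in the relevant ($\Hb^s$ or $\Hb^{s-1}$) norm. As $j+|\alpha|\geq 2$, each term carries at least one such bounded factor, so $L(R)$ is a polynomial in $R$ with non-negative coefficients and no constant term; in particular $L(0)=0<C_L$. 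This gives the second assertion and, with it, the solvability statement \eqref{EqThmPDE-cor} for these $q$.

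For the last part, $\Box_g u=f+q(u,\bdiff u)$ with $q$ as in \eqref{EqDSPolynomial2} (so $\eps>0$, $s-1>n/2$, and every monomial has $|\alpha|\geq 1$), we work on $\cY$ using the forward solution operator $S\colon\Hb^{s-1,\eps}(\Omega)^{\bullet,-}\to\cY$ for $\Box_g$ provided by the corresponding part of Theorem~\ref{ThmDSQu}, so that it suffices to check that $q$ maps $\cY\times\Hb^{s-1,\eps}(\Omega;\Tb^*\Omega)^{\bullet,-}$ into $\Hb^{s-1,\eps}(\Omega)^{\bullet,-}$ with small Lipschitz constant near $0$. The key observation is that $\phi\circ\frakt_1$ is smooth on $M$, identically $1$ near $X\cap\Omega$, and supported in $\{\frakt_1>0\}$, so for any b-vector field $W\in\Vb(M)$ the function $W(\phi\circ\frakt_1)$ vanishes near $X\cap\Omega$, hence lies in $\bigcap_r\Hb^{s-1,r}(\Omega)^{\bullet,-}$, while each power $(\phi\circ\frakt_1)^j$ is a smooth bounded function on the compact $\Omega$ and hence a bounded multiplier on every $\Hb^{s',r}(\Omega)^{\bullet,-}$. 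Thus for $u=c(\phi\circ\frakt_1)+u'\in\cY$ one has $\bdiff u\in\Hb^{s-1,\eps}(\Omega;\Tb^*\Omega)^{\bullet,-}$ and $X_{\alpha,k}u\in\Hb^{s-1,\eps}(\Omega)^{\bullet,-}$, and when a monomial $q_{j\alpha}u^j\prod_{k=1}^{|\alpha|}X_{\alpha,k}u$ is expanded into the two summands of $u$, the condition $|\alpha|\geq 1$ forces each resulting term to contain a factor $X_{\alpha,k}u'\in\Hb^{s-1,\eps}$ or $cX_{\alpha,k}(\phi\circ\frakt_1)\in\bigcap_r\Hb^{s-1,r}$ --- either way a factor in $\Hb^{s-1,\eps}(\Omega)^{\bullet,-}$ --- all other factors being bounded multipliers there; hence $q$ maps into $\Hb^{s-1,\eps}(\Omega)^{\bullet,-}$. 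The Lipschitz bound on the $R$-ball of $\cY$ follows by the same telescoping, estimating $X_{\alpha,k}u-X_{\alpha,k}u_1=(c-c_1)X_{\alpha,k}(\phi\circ\frakt_1)+X_{\alpha,k}(u'-u_1')$ and a $u^j$-factor difference each by $\|u-u_1\|_{\cY}$; as $j+|\alpha|\geq 2$, every term keeps a factor of size $\lesssim R$, so $L(0)=0$ and the Lipschitz constant near $0$ is as small as required, and Theorem~\ref{ThmDSQu} applies. The one step requiring genuine care is precisely this last one: the piece $c(\phi\circ\frakt_1)$ of $u$ lies in no positively-weighted b-Sobolev space, yet its b-derivatives decay to infinite order --- which is exactly why the restriction $|\alpha|\geq 1$ in \eqref{EqDSPolynomial2} pushes the entire non-linearity into the decaying space $\Hb^{s-1,\eps}(\Omega)^{\bullet,-}$ --- and it still acts as a bounded multiplier, so that mixed terms involving both $c(\phi\circ\frakt_1)$ and $u'$ lose nothing.
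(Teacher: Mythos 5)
Your proposal is correct and follows the same route the paper intends: the corollary is an application of the algebra property of $\Hb^{s'}$ for $s'>n/2$ (Lemma~\ref{LemmaHsAlgebra}) together with its weighted and multiplier consequences, and the paper gives no separate proof beyond this. You spell out the details carefully --- the weighted algebra $\Hb^{s,r_1}\cdot\Hb^{s,r_2}\subset\Hb^{s,r_1+r_2}$, the telescoping for the Lipschitz bounds giving $L(0)=0$, the loss of exactly one b-derivative from $X_{\alpha,k}\in\Vb$, and in the final $\Box_g$ case the correct observation that the b-derivatives of $\phi\circ\frakt_1$ are supported away from $X$ and hence lie in every weighted space, so the restriction $|\alpha|\geq 1$ forces each monomial to contain a decaying factor; these are exactly the points the corollary rests on.
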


\begin{rmk}
  For such polynomial non-linearities, the Lipschitz constant $L(R)$ in the statement of Theorem~\ref{ThmDSQu} satisfies $L(0)=0$.
\end{rmk}

\begin{rmk}
  In this paper, we do not prove that one obtains smooth (i.e.\ conormal) solutions if the forcing term is smooth (conormal); see \cite{HintzQuasilinearDS} for such a result in the quasilinear setting.
\end{rmk}

Since in Theorem~\ref{ThmDSQu} we allow $q$ to depend on $\Box_g u$, we can in particular solve certain quasilinear equations if $s>\max(1/2+\eps,n/2+1)$: Suppose for example that $q'\colon\Hb^{s,\eps}(\Omega)^{\bullet,-}\to\Hb^{s-1}(\Omega)^{\bullet,-}$ is continuous with $\|q'(u)-q'(v)\|\leq L'(R)\|u-v\|$ for $u,v\in\Hb^{s,\eps}(\Omega)^{\bullet,-}$ with norms $\leq R$, where $L'\colon\R_{\geq 0}\to\R$ is locally bounded, then we can solve the equation
\[
  (1+q'(u))(\Box_g-m^2)u=f\in\Hb^{s-1,\eps}(\Omega)^{\bullet,-}
\]
provided the norm of $f$ is small. Indeed, put $q(u,w)=-q'(u)(w-m^2u)$, then $q(u,\Box_g u)=-q'(u)(\Box_g-m^2)u$ and the PDE becomes
\[
  (\Box_g-m^2)u=f+q(u,\Box_g u),
\]
which is solvable by Theorem~\ref{ThmDSQu}, since, with $\|\cdot\|=\|\cdot\|_{\Hb^{s-1,\eps}}$, for $u,u'\in\Hb^{s,\eps}(\Omega)^{\bullet,-}$, $w,w'\in\Hb^{s-1,\eps}(\Omega)^{\bullet,-}$ with $\|u\|+\|w\|,\|u'\|+\|w'\|\leq R$, we have
\begin{align*}
  \|q&(u,w)-q(u',w')\| \\
 &\leq \|q'(u)-q'(u')\|\|w-m^2u\|+\|q'(u')\|\|w-w'-m^2(u-u')\| \\
 &\leq L'(R)((1+m^2)R+m^2R)\|u-u'\|+L'(R)R\|w-w'\| \\
 &\leq L(R)(\|u-u'\|+\|w-w'\|)
\end{align*}
with $L(R)\to 0$ as $R\to 0$.
  
By a similar argument, one can also allow $q'$ to depend on $\bdiff u$ and $\Box_g u$.

\begin{rmk}\label{rmk:ODE}
  Recalling the discussion following Theorem~\ref{thm:b-main}, let us emphasize the importance of $\widehat P(\sigma)^{-1}$ having no poles in the closed upper half plane by looking at the explicit example of the operator $\cP=\pa_x$ in $1$ dimension. In terms of $\tau=e^{-x}$, we have $\cP=-\tau\pa_\tau$, thus $\widehat P(\sigma)=-i\sigma$, considered as an operator on the boundary (which is a single point) at $+\infty$ of the radial compactification of $\R$; hence $\widehat P(\sigma)^{-1}$ has a simple pole at $\sigma=0$, corresponding to constants being annihilated by $\cP$. Now suppose we want to find a forward solution of $u'=u^2+f$, where $f\in\CI_c(\R)$. In the first step of the iterative procedure described above, we will obtain a constant term; the next step gives a term that is linear in $x$ ($x$ being the antiderivative of $1$), i.e.\ in $\log\tau$, then we get quadratic terms and so on, therefore the iteration does not converge (for general $f$), which is of course to be expected, since solutions to $u'=u^2+f$ in general blow up in finite time. On the other hand, if $\cP=\pa_x+1$, then $\widehat P(\sigma)^{-1}=(1-i\sigma)^{-1}$, which has a simple pole at $\sigma=-i$, which means that forward solutions $u$ of $u'+u=u^2+f$ with $f$ as above can be constructed iteratively, and the first term of the expansion of $u$ at $+\infty$ is $c\tau^{i(-i)}=ce^{-x}$, $c\in\Cx$.
\end{rmk}

%%%%%%%%%%%%%%%%%%%%%%%%%%%%%%%%%%%%%%%%%%%%%%%%%
\subsection{Semilinear equations with polynomial non-linearity}
\label{SubsecDSPoly}

With polynomial non-linearities as in \eqref{EqDSPolynomial}, we can use the second part of Theorem~\ref{thm:b-main} to obtain an asymptotic expansion for the solution; see Remark~\ref{RemAPosterioriExp} and, in a slightly different setting, \S\ref{SubsecKdSSemi} for details on this. Here, we instead define a space that encodes asymptotic expansions directly in such a way that we can run a fixed point argument directly.

To describe the exponents appearing in the expansion, we use index sets as introduced by Melrose, see \cite{Me93}.

\begin{definition}
\label{def:index-set}\ 
  \begin{enumerate}
  \item An \emph{index set} is a discrete subset $\sE$ of $\C\times\N_0$ satisfying the conditions
    \begin{enumerate}
      \item $(z,k)\in\sE$ $\Rightarrow$ $(z,j)\in\sE$ for $0\leq j\leq k$, and
      \item If $(z_j,k_j)\in\sE$, $|z_j|+k_j\to\infty$ $\Rightarrow$ $\Re z_j\to\infty$.
    \end{enumerate}
  \item For any index set $\sE$, define
    \[
	  w_{\sE}(z)=\begin{cases}
	    \max\{k\in\N_0\colon(z,k)\in\sE\},&(z,0)\in\sE \\
		-\infty &\tn{otherwise}.
	   \end{cases}
	\]
  \item For two index sets $\sE,\sE'$, define their extended union by
    \[
	  \sE\ecup\sE'=\sE\cup\sE'\cup\{(z,l+l'+1)\colon (z,l)\in\sE,(z,l')\in\sE'\}
	\]
	and their product by $\sE\sE'=\{(z+z',l+l')\colon (z,l)\in\sE,(z',l')\in\sE'\}$. We shall write $\sE^k$ for the $k$-fold product of $\sE$ with itself.
  \item A \emph{positive index set} is an index set $\sE$ with the property that $\Re z>0$ for all $z\in\C$ with $(z,0)\in\sE$.
  \end{enumerate}
\end{definition}

\begin{rmk}
  To ensure that the class of polyhomogeneous conormal distributions with a given index set $\sE$ is invariantly defined, Melrose \cite{Me93} in addition requires that $(z,k)\in\sE$ implies $(z+j,k)\in\sE$ for all $j\in\N_0$. In particular, this is a natural condition in non dilation-invariant settings as in Theorem~\ref{thm:b-main}. A convenient way to enforce this condition in all relevant situations is to enlarge the index set corresponding to the poles of the inverse of the normal operator accordingly; see the statement of Theorem~\ref{ThmDSPoly}.

  Observe though that this condition is not needed in the dilation-invariant cases of the solvability statements below.
\end{rmk}

Since we want to capture the asymptotic behavior of solutions near $X\cap\Omega$, we fix a cutoff $\phi\in\CI(\R)$ with support in $(0,\infty)$ such that $\phi\circ\frakt_1\equiv 1$ near $X\cap\Omega$ (we already used such a cutoff in Theorem~\ref{thm:b-main}), and make the following definition.

\begin{definition}
  Let $\sE$ be an index set, and let $s,r\in\R$. For $\eps>0$ with the property that there is no $(z,0)\in\sE$ with $\Re z=\eps$, define the space $\cX_{\sE}^{s,r,\eps}$ to consist of all tempered distributions $v$ on $M$ with support in $\bar\Omega$ such that
  \begin{equation}
  \label{EqXDefExpansion}  v'=v-\sum_{\genfrac{}{}{0pt}{}{(z,k)\in\sE}{\Re z<\eps}}\tau^z(\log\tau)^k (\phi\circ\frakt_1)v_{z,k}\in\Hb^{s,\eps}(\Omega)^{\bullet,-}
  \end{equation}
  for certain $v_{z,k}\in H^r(X\cap\Omega)$.
\end{definition}

Observe that the terms $v_{z,k}$ in the expansion \eqref{EqXDefExpansion} are uniquely determined by $v$, since $\eps>\Re z$ for all $z\in\C$ for which $(z,0)$ appears in the sum \eqref{EqXDefExpansion}; then also $v'$ are uniquely determined by $v$. Therefore, we can use the isomorphism
\[
  \cX_{\sE}^{s,r,\eps}\cong\Bigl(\bigoplus_{\genfrac{}{}{0pt}{}{(z,k)\in\sE}{\Re z<\eps}}H^r(X\cap\Omega)\Bigr)\oplus\Hb^{s,\eps}(\Omega)^{\bullet,-}
\]
to give $\cX_{\sE}^{s,r,\eps}$ the structure of a Banach space.

\begin{lemma}
\label{LemmaLimIndex}
  Let $\sP,\sF$ be positive index sets, and let $\eps>0$. Define $\sE'_0=\sP\ecup\sF$ and recursively $\sE'_{N+1}=\sP\ecup\bigl(\sF\cup\bigcup_{k\geq 2}(\sE'_N)^k\bigr)$; put $\sE_N=\{(z,k)\in\sE'_N\colon 0<\Re z\leq\eps\}$. Then there exists $N_0\in\N$ such that $\sE_N=\sE_{N_0}$ for all $N\geq N_0$; moreover, the limiting index set $\sE_\infty(\sP,\sF,\eps):=\sE_{N_0}$ is finite.
\end{lemma}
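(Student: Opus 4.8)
The plan is to deduce the statement from two facts about the sequence $(\sE'_N)$: first, that it is monotone, $\sE'_N\subseteq\sE'_{N+1}$ (hence $\sE_N\subseteq\sE_{N+1}$); second, that $\bigcup_N\sE_N$ is contained in a single finite set. Granting these, $(\sE_N)$ is a non-decreasing sequence of subsets of a finite set, hence eventually constant, which produces the index $N_0$ and the finiteness of $\sE_\infty(\sP,\sF,\eps)=\sE_{N_0}$ simultaneously.

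For monotonicity, I would first note that both operations in Definition~\ref{def:index-set}(3) are monotone in the relevant slot: if $\ms B\subseteq\ms B'$ then $\ms A\ecup\ms B\subseteq\ms A\ecup\ms B'$ and $\ms B^k\subseteq(\ms B')^k$, both immediate from the definitions. Since $\sF\subseteq\sF\cup\bigcup_{k\geq2}(\sE'_0)^k$, the first gives $\sE'_0=\sP\ecup\sF\subseteq\sE'_1$; and if $\sE'_N\subseteq\sE'_{N+1}$ then $(\sE'_N)^k\subseteq(\sE'_{N+1})^k$ for all $k$, so $\sF\cup\bigcup_k(\sE'_N)^k\subseteq\sF\cup\bigcup_k(\sE'_{N+1})^k$, and applying $\sP\ecup(\cdot)$ yields $\sE'_{N+1}\subseteq\sE'_{N+2}$. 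Thus $\sE'_N\subseteq\sE'_{N+1}$ for all $N$.

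Next I would pin down the finitely many complex numbers that can occur as first components. We may assume $\sP\cup\sF\neq\emptyset$ (else everything is empty), and set $\eta=\min\{\Re z:(z,0)\in\sP\cup\sF\}$, which is positive and attained since $\sP,\sF$ are positive index sets and a strip $\{\Re z\leq C\}$ meets an index set in a finite set (Definition~\ref{def:index-set}(b)). An induction on $N$ through the structure of $\ecup$ and of the products shows every $(z,k)\in\sE'_N$ has $\Re z\geq\eta$ and, more precisely, that $z$ is a sum of finitely many elements of $Z_\infty:=\{w:(w,l)\in\sP\cup\sF\text{ for some }l\}$. Restricting to $\Re z\leq\eps$: in a representation $z=w_1+\cdots+w_m$ with $w_i\in Z_\infty$ one has $\Re w_i\geq\eta$, forcing $m\leq\eps/\eta$ and $\Re w_i\leq\eps$, so every $w_i$ lies in $Z_0:=\{w\in Z_\infty:\Re w\leq\eps\}$, a finite set by Definition~\ref{def:index-set}(b). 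Hence the set $W$ of first components occurring in $\bigcup_N\sE_N$ is finite, being contained in $\{w_1+\cdots+w_m:w_i\in Z_0,\ 1\leq m\leq\lfloor\eps/\eta\rfloor\}$.

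The remaining---and main---point is to bound, uniformly in $N$, the multiplicity $w_{\sE'_N}(z)$ for each of the finitely many $z\in W$. I would argue by induction on the (finitely many) values of $\Re z$ on $W$: fix $z$ with $\Re z=r$ and assume $c(z'):=\sup_N w_{\sE'_N}(z')<\infty$ for all $z'\in W$ with $\Re z'<r$, with $C_{<r}$ their maximum (so $C_{<r}<\infty$, and $C_{<r}=-\infty$ if there are no such $z'$). The multiplicity of $\ms A\ecup\ms B$ at $z$ is at most one more than the sum of the multiplicities of $\ms A$ and $\ms B$ there (and equals the larger one when only one of them reaches $z$), while $w_{(\sE'_N)^k}(z)$ is the largest value of $l_1+\cdots+l_k$ over representations $z=z_1+\cdots+z_k$ with $(z_i,l_i)\in\sE'_N$. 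For $k\geq2$ any such representation has $\Re z_i\leq r-(k-1)\eta<r$, because $\Re z_j\geq\eta$ for every $j$; so $l_i\leq w_{\sE'_N}(z_i)\leq C_{<r}$ by the inductive hypothesis, giving $w_{(\sE'_N)^k}(z)\leq(\eps/\eta)C_{<r}$, while $w_{(\sE'_N)^k}(z)=-\infty$ once $k>r/\eta$. Feeding this into $\sE'_{N+1}=\sP\ecup\bigl(\sF\cup\bigcup_{k\geq2}(\sE'_N)^k\bigr)$ bounds $w_{\sE'_{N+1}}(z)$ in terms of $w_\sP(z)$, $w_\sF(z)$ and $(\eps/\eta)C_{<r}$ only, hence independently of $N$; together with $w_{\sE'_0}(z)<\infty$ this shows $c(z)<\infty$. (The base $r=\eta$ is this computation with $C_{<\eta}=-\infty$: no $k\geq2$ product reaches such $z$, so $w_{\sE'_N}(z)=w_{\sP\ecup\sF}(z)$ for all $N$.) Therefore $\bigcup_N\sE_N\subseteq\{(z,k):z\in W,\ 0\leq k\leq c(z)\}$ is finite, and combined with monotonicity the lemma follows. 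The delicate part is precisely this last induction: monotonicity and the bound on $W$ are routine, but the control of the logarithmic orders relies on the observation that each application of $\sP\ecup(\cdots)$ can only raise the multiplicity at $z$ by an amount governed by $\sP$, $\sF$, and the already-controlled data at strictly smaller real part; one also has to keep track of the $-\infty$ conventions in the formulas for $w_{\ms A\ecup\ms B}$ and $w_{\ms B^k}$.
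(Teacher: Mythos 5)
Your proof is correct, and it takes a somewhat different route from the paper's. The paper works directly with a recursion relation for $w_{\sE_N}(z)$: after showing the projections $\pi_1\sE_N$ stabilize already at $N=1$ and form a finite set, it unrolls that recursion $N_0=\lfloor(\Re z)/\delta\rfloor+1$ times (your $\eta$ is the paper's $\delta$), at which point $w_{\sE_N}(z)$ is expressed purely in terms of $\sP$- and $\sF$-data and is manifestly independent of $N$ once $N\geq N_0$. You instead begin with a monotonicity observation, $\sE'_N\subseteq\sE'_{N+1}$, which the paper neither states nor uses, and then prove only that $\sup_N w_{\sE'_N}(z)$ is finite, by an induction on the finitely many real parts occurring in $W$; eventual constancy then falls out of monotonicity together with finiteness of the ambient set $\{(z,j):z\in W,\ 0\leq j\leq c(z)\}$. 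The core mechanism is the same in both arguments---each $k\geq 2$ product forces every summand to drop in real part by at least $\eta$, which bounds the recursion depth---but your decomposition into a qualitative claim (the sequence is non-decreasing) and a quantitative one (multiplicities are bounded) is conceptually a bit cleaner and requires only boundedness rather than exact stabilization at each stage, while the paper's unrolling buys an explicit value of $N_0$.
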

\begin{proof}
Writing $\pi_1\colon\C\times\N_0\to\C$ for the projection, one has
\[
  \pi_1\sE_1=\Bigl\{z\colon 0<\Re z\leq\eps, z=\sum_{j=1}^k z_j\colon k\geq 1,z_j\in\pi_1\sE_0\Bigr\},
\]
and it is then clear that $\pi_1\sE_N=\pi_1\sE_1$ for all $N\geq 1$. Since $\sE_0$ is a positive index set, there exists $\delta>0$ such that $\Re z\geq\delta$ for all $z\in\sE_0$; hence $\pi_1\sE_\infty=\pi_1\sE_1$ is finite.

To finish the proof, we need to show that for all $z\in\C$, the number $w_{\sE_N}(z)$ stabilizes. Defining $p(z)=w_{\sP}(z)+1$ for $z\in\pi_1\sP$ and $p(z)=0$ otherwise, we have a recursion relation
\begin{equation}
\label{EqIndexSetRecursion}
  w_{\sE_N}(z)=p(z)+\max\biggl\{w_{\sF}(z),\max_{\genfrac{}{}{0pt}{}{z=z_1+\cdots+z_k}{k\geq 2,z_j\in\pi_1\sE_\infty}}\Bigl\{\sum_{j=1}^k w_{\sE_{N-1}}(z_j)\Bigr\}\biggr\},\quad N\geq 1.
\end{equation}
For each $z_j$ appearing in the sum, we have $\Im z_j\leq\Im z-\delta$. Thus, we can use \eqref{EqIndexSetRecursion} with $z$ replaced by such $z_j$ and $N$ replaced by $N-1$ to express $w_{\sE_N}(z)$ in terms of a finite number of $p(z_\alpha)$ and $w_{\sF}(z_\alpha)$, $\Im z_\alpha\leq\Im z$, and a finite number of $w_{\sE_{N-2}}(z_\beta)$, $z_\beta\leq\Im z-2\delta$. Continuing in this way, after $N_0=\lfloor{(\Im z)/\delta\rfloor}+1$ steps we have expressed $w_{\sE_N(z)}$ in terms of a finite number of $p(z_\gamma)$ and $w_{\sF}(z_\gamma)$, $\Im z_\gamma\leq\Im z$, only, and this expression is independent of $N$ as long as $N\geq N_0$.
\end{proof}
  
\begin{definition}
  Let $\sP,\sF$ be positive index sets, and let $\eps>0$ be such that there is no $(z,0)\in\sE_\infty(\sP,\sF,\eps)$ with $\Re z=\eps$, with $\sE_\infty(\sP,\sF,\eps)$ as defined in the statement of Lemma~\ref{LemmaLimIndex}. Then for $s,r\in\R$, define the Banach spaces
  \begin{align*}
    \cX_{\sP,\sF}^{s,r,\eps}&:=\cX_{\sE_\infty(\sP,\sF,\eps)}^{s,r,\eps}, \\
	{}^0\cX_{\sP,\sF}^{s,r,\eps}&:=\cX_{\sE_\infty(\sP,\sF,\eps)\cup\{(0,0)\}}^{s,r,\eps}.
  \end{align*}
\end{definition}

Note that the spaces ${}^{(0)}\cX_{\sP,\sF}^{s,s,\eps}$ are Banach
algebras for $s>n/2$ in the sense that there is a constant $C>0$ such that $\|uv\|\leq C\|u\|\|v\|$ for all $u,v\in{}^{(0)}\cX_{\sP,\sF}^{s,s,\eps}$. Moreover, $\cX_{\sP,\sF}^{s,s,\eps}$ interacts well with the forward solution operator $S_\KG$ of $\Box_g-m^2$ in the sense that $u\in\cX_{\sP,\sF}^{s,s,\eps}$, $k\geq 2$, with $\sP$ being related to the poles of $\widehat\cP(\sigma)^{-1}$, where $\cP=\Box_g-m^2$, as will be made precise in the statement of Theorem~\ref{ThmDSPoly} below, implies $S_\KG(u^k)\in\cX_{\sP,\sF}^{s,s,\eps}$.

We can now state the result giving an asymptotic expansion of the solution of $(\Box_g-m^2)u=f+q(u,\bdiff u)$ for polynomial non-linearities $q$.

\begin{thm}
\label{ThmDSPoly}
 Let $\eps>0,s>\max(3/2+\eps,n/2+1)$, and $q$ as in
 \eqref{EqDSPolynomial}. Moreover, if $\sigma_j\in\C$ are the poles of
 the inverse family $\widehat\cP(\sigma)^{-1}$, where
 $\cP=\Box_g-m^2$, and $m_j+1$ is the order of the pole of
 $\widehat\cP(\sigma)^{-1}$ at $\sigma_j$, let
 $\sP=\{(i\sigma_j+k,\ell)\colon 0\leq\ell\leq m_j,k\in\N_0\}$. Assume
 that $\eps\neq\Re(i\sigma_j)$ for all $j$, and that moreover $m>0$,
 which implies that $\sP$ is a positive index set; see Lemma~\ref{LemmaPerturbDS}. Finally, let $\sF$ be a positive index set.
 
 Then for small enough $R>0$, there exists $C>0$ such that for all $f\in\cX_{\sF}^{s-1,s-1,\eps}$ with norm $\leq C$, the equation
  \[
    (\Box_g-m^2)u=f+q(u,\bdiff u)
  \]
  has a unique solution $u\in\cX_{\sP,\sF}^{s,s,\eps}$, with norm $\leq R$, that depends continuously on $f$; in particular, $u$ has an asymptotic expansion with remainder term in $\Hb^{s,\eps}(\Omega)^{\bullet,-}$.

  Further, if the polynomial non-linearity is of the form $q(\bdiff u)$, then for small $R>0$, there exists $C>0$ such that for all $f\in\cX_{\sF}^{s-1,s-1,\eps}$ with norm $\leq C$, the equation
  \[
    \Box_g u=f+q(\bdiff u)
  \]
  has a unique solution $u\in{}^0\cX_{\sP,\sF}^{s,s,\eps}$, with norm $\leq R$, that depends continuously on $f$.
\end{thm}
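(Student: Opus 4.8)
The plan is to reproduce the Banach fixed point argument of Theorem~\ref{ThmDSQu}, but now carried out on a ball in the Banach space $\cX:=\cX_{\sP,\sF}^{s,s,\eps}$ (and, for the second part, on ${}^0\cX:={}^0\cX_{\sP,\sF}^{s,s,\eps}$), so that the asymptotic expansion is already encoded in the space in which we solve. One first checks this is legitimate: since $m>0$, Lemma~\ref{LemmaPerturbDS} shows that $\sP$ is a positive index set, and with $\sF$ positive and $\eps\neq\Re(i\sigma_j)$ for all $j$, Lemma~\ref{LemmaLimIndex} produces the finite limiting index set $\sE_\infty:=\sE_\infty(\sP,\sF,\eps)$, hence the Banach space $\cX=\cX_{\sE_\infty}^{s,s,\eps}$. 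Let $S_\KG$ be the forward solution operator of $\cP=\Box_g-m^2$ (as in the discussion preceding the theorem, used through the expansion statement of Theorem~\ref{thm:b-main}), and define $Tu=S_\KG(f+q(u,\bdiff u))$.

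The structural input is a pair of mapping properties: (i) every $X\in\Vb(M)$ maps $\cX_\sE^{s,s,\eps}$ to $\cX_\sE^{s-1,s-1,\eps}$ for any index set $\sE$ — because $\tau\pa_\tau$ and smooth vector fields on $X\cap\Omega$ send $\tau^z(\log\tau)^k(\phi\circ\frakt_1)v_{z,k}$ to a combination of terms with the same $(z,k)$, using $\tau\pa_\tau\tau^z(\log\tau)^k=z\tau^z(\log\tau)^k+k\tau^z(\log\tau)^{k-1}$, modulo a contribution supported away from $X\cap\Omega$, and reduce the $\Hb$-order by one; and (ii) for $s>n/2$ there are continuous multiplications $\cX_\sE^{s,s,\eps}\times\cX_{\sE'}^{s,s,\eps}\to\cX_{\sE\ecup\sE'}^{s,s,\eps}$ and $\cX_\sE^{s,s,\eps}\times\cX_{\sE'}^{s-1,s-1,\eps}\to\cX_{\sE\ecup\sE'}^{s-1,s-1,\eps}$, and multiplication by the coefficients $q_{j\alpha}\in\Cx+\Hb^s(M)$ preserves all of these. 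Granting (i)--(ii): for $u\in\cX=\cX_{\sE_\infty}^{s,s,\eps}$ each monomial $q_{j\alpha}u^j\prod_{k\leq|\alpha|}X_{\alpha,k}u$ of \eqref{EqDSPolynomial} with $j+|\alpha|\geq2$ lies in $\cX_{\sE_\infty^{j+|\alpha|}}^{s-1,s-1,\eps}$, so $f+q(u,\bdiff u)\in\cX_\sG^{s-1,s-1,\eps}$ with $\sG:=\sF\cup\bigcup_{k\geq2}\sE_\infty^k$. Applying $S_\KG$, Theorem~\ref{thm:b-main} yields an element of $\cX^{s,s,\eps}$ with index set $\sP\ecup\sG$ together with the $\NN$-shifts coming from $\cP-N(\cP)$; truncated to $0<\Re z\leq\eps$ this equals exactly $\sE_\infty$, by the very definition of $\sE_\infty(\sP,\sF,\eps)$ as the (stable, finite) truncation of the recursion $\sE'_{N+1}=\sP\ecup(\sF\cup\bigcup_{k\geq2}(\sE'_N)^k)$ of Lemma~\ref{LemmaLimIndex}. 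Hence $T\colon\cX\to\cX$.

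From here the fixed point mechanics run exactly as in Theorem~\ref{ThmDSQu}. Since $q$ vanishes to second order, the telescoped difference $q(u,\bdiff u)-q(v,\bdiff v)$ is a sum of terms each carrying a factor $u-v$ or $\bdiff(u-v)$ and at least one further factor of $\cX$-norm $\leq R$; by the algebra and module estimates in (ii) this gives $\|q(u,\bdiff u)-q(v,\bdiff v)\|\leq L(R)\|u-v\|_\cX$ with $L(R)\to0$ as $R\to0$. Therefore $\|Tu-Tv\|_\cX\leq\|S_\KG\|L(R)\|u-v\|_\cX$ shows $T$ is a contraction on the $R$-ball once $R$ is small, while $\|Tu\|_\cX\leq\|S_\KG\|(\|f\|+L(R)R)\leq R$ provided $\|f\|\leq C:=R(\|S_\KG\|^{-1}-L(R))$; compare \eqref{EqContractionQu}. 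Banach's theorem produces the unique solution $u\in\cX_{\sP,\sF}^{s,s,\eps}$ of norm $\leq R$, and $u\in\cX_{\sP,\sF}^{s,s,\eps}$ is precisely the asserted expansion with remainder in $\Hb^{s,\eps}(\Omega)^{\bullet,-}$; continuous dependence on $f$ is obtained as in Theorem~\ref{ThmDSQu}.

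For the equation $\Box_g u=f+q(\bdiff u)$, the operator $\cP=\Box_g$ (now $\lambda=0$) has a resonance at $\sigma=0$, so its forward solution operator $S$ lands in ${}^0\cX_{\sP,\sF}^{s,s,\eps}$, the extra $(0,0)$ accounting for that zero resonance (with $\sP$ here understood to exclude the $(0,0)$ term, which is restored by the ${}^0$). The key point is that $\bdiff$ annihilates the constant term: for $u=(\phi\circ\frakt_1)c+u'$ one has $\bdiff u=c\,\bdiff(\phi\circ\frakt_1)+\bdiff u'$, and $\bdiff(\phi\circ\frakt_1)$ is supported away from $X\cap\Omega$, hence in $\Hb^{s-1,N}(\Omega)^{\bullet,-}$ for every $N$. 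Thus $q(\bdiff u)$ again lies in $\cX_\sG^{s-1,s-1,\eps}$, but now with $\sG$ a \emph{positive} index set, and the argument above goes through with $S$ in place of $S_\KG$ and ${}^0\cX$ in place of $\cX$: the zero resonance only contributes the fixed outermost $(0,0)$ term and is never fed back into the nonlinearity, so the index set still stabilizes and $\eps>0$ alone suffices. I expect the main obstacle to be the index-set bookkeeping of the second paragraph — verifying that the output index set of $S_\KG$ truncates to $\sE_\infty(\sP,\sF,\eps)$, which hinges on the precise expansion statement of Theorem~\ref{thm:b-main} (especially the $\NN$-shifts from $\cP-N(\cP)$) matched against the finiteness and stabilization of the recursion in Lemma~\ref{LemmaLimIndex}; once the relevant spaces, the algebra properties (ii), and the mapping property of $S_\KG$ are in hand, the contraction estimates are entirely parallel to those already carried out in Theorem~\ref{ThmDSQu}.
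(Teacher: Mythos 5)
Your proposal follows the paper's own argument: a Banach fixed point in $\cX_{\sP,\sF}^{s,s,\eps}$ (resp.\ ${}^0\cX_{\sP,\sF}^{s,s,\eps}$), justified by the algebra/module structure of these spaces and the mapping property of $S_\KG$ via Theorem~\ref{thm:b-main}, and you make explicit the index-set bookkeeping that the paper's very terse proof leaves implicit. One minor slip in point~(ii): the product $\cX_\sE^{s,s,\eps}\cdot\cX_{\sE'}^{s,s,\eps}$ lands in $\cX_{\sE\sE'}^{s,s,\eps}$ (the index-set \emph{product}, since multiplying $\tau^{z}(\log\tau)^l$ by $\tau^{z'}(\log\tau)^{l'}$ adds exponents and log powers), not $\cX_{\sE\ecup\sE'}^{s,s,\eps}$ --- the extended union arises only when $S_\KG$ is applied, per Theorem~\ref{thm:b-main}; you do use $\sE_\infty^{j+|\alpha|}$ correctly in the monomial estimate and in the recursion defining $\sE_\infty$, so this is a mislabeling rather than a flaw.
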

\begin{proof}
 By Theorem~\ref{thm:b-main} and the definition of the space $\cX=\cX_{\sP,\sF}^{s,s,\eps}$, we have a forward solution operator $S_\KG\colon\cX\to\cX$ of $\Box_g-m^2$. Thus, we can apply the Banach fixed point theorem to the operator $T\colon\cX\to\cX$, $Tu=S_\KG(f+q(u,\bdiff u))$, where we note that $q\colon\cX\to\cX$, which follows from the definition of $\cX$ and the fact that $q$ is a polynomial only involving terms of the form $u^j\prod_{k\leq|\alpha|}X_{\alpha,k}u$ for $j+|\alpha|\geq 2$. This condition on $q$ also ensures that $T$ is a contraction on a sufficiently small ball in $\cX_+$.

 For the second part, writing ${}^0\cX={}^0\cX_{\sP,\sF}^{s,s,\eps}$, we have a forward solution operator $S\colon\cX\to{}^0\cX$. But $q(\bdiff u)\colon{}^0\cX\to\cX$, since $\bdiff$ annihilates constants, and we can thus finish the proof as above.

 The continuous dependence of the solution on the right hand side is proved as in the proof of Theorem~\ref{ThmDSQu}.
\end{proof}

Note that $\eps>0$ is (almost) unrestricted here, and thus we can get arbitrarily many terms in the asymptotic expansion if we work with arbitrarily high Sobolev spaces.

The condition that the polynomial $q(u,\bdiff u)$ does not involve a linear term is very important as it prevents logarithmic terms from stacking up in the iterative process used to solve the equation. Also, adding a term $\nu u$ to $q(u,\bdiff u)$ effectively changes the Klein-Gordon parameter from $-m^2$ to $\nu-m^2$, which will change the location of the poles of $\widehat P(\sigma)^{-1}$; in the worst case, if $\nu>m^2$, this would even cause a pole to move to $\Im\sigma>0$, corresponding to a resonant state that blows up exponentially in time. Lastly, let us remark that the form \eqref{EqDSPolynomial2} of the non-linearity is not sufficient to obtain an expansion beyond leading order, since in the iterative procedure, logarithmic terms would stack up in the next-to-leading order term of the expansion.

\begin{rmk}
\label{RemAPosterioriExp}
  Instead of working with the spaces ${}^{(0)}\cX_{\sP,\sF}^{s,s,\eps}$, which have the expansion built in, one could alternatively first prove the existence of a solution $u$ in a (slightly) decaying b-Sobolev space, which then allows one to regard the polynomial non-linearity as a perturbation of the linear operator $\Box_g-m^2$; then an iterative application of the dilation-invariant result \cite[Lemma~3.1]{Va12} gives an expansion of the solution to the non-linear equation. We will follow this idea in the discussion of polynomial non-linearities on asymptotically Kerr-de Sitter spaces in the next section.
\end{rmk}

%%%%%%%%%%%%%%%%%%%%%%%%%%%%%%%%%%%%%%%%%%%%%%%%%%%%%%%%%%%%%%%%%%%%%

\section{Kerr-de Sitter space}
\label{SecKdS}

In this section we analyze semilinear waves on Kerr-de Sitter space,
and more generally on spaces with normally hyperbolic trapping,
discussed below. The effect of the latter is a loss of derivatives for
the linear estimates in
general, but we show that at least derivatives with principal symbol
vanishing on the trapped set are well-behaved. We then use these
results to solve semilinear equations in the rest of the section.

\subsection{Linear Fredholm theory}\label{SecKdS-Fredholm}

The linear theorem in the case of normally hyperbolic trapping
for dilation-invariant operators $\cP=\Box_g-\lambda$ is the following:

\begin{thm}(See \cite[Theorem~1.4]{Va12}.)\label{thm:KdS-main}
Let $M$ be a manifold with a b-metric $g$ as above,
with boundary $X$, and let
$\tau$ be the boundary defining function, $\cP$ as in \eqref{eq:cP-almost-Box}.
If $g$ has normally hyperbolic
trapping, $\frakt_1,\Omega$ are as above, $\phi\in\CI(\R)$ as in Theorem~\ref{thm:b-main},
then there exist $C'>0$, $\varkappa>0$, $\beta\in\R$ such that
for $0\leq\ell<C'$ and $s>1/2+\beta\ell$, $s\geq 0$,
solutions $u\in\Hb^{-\infty,-\infty}(\Omega)^{\bullet,-}$ of $(\Box_g-\lambda)u=f$
with $f\in\Hb^{s-1+\varkappa,\ell}(\Omega)^{\bullet,-}$
satisfy that for some $a_{j\kappa}\in\CI(\Omega\cap X)$ (which are
the resonant states) and $\sigma_j\in\Cx$ (which are the resonances),
\begin{equation}
\label{EqAsympExpansion}
  u'=u-\sum_j\sum_{\kappa\leq m_j} \tau^{i\sigma_j}(\log \tau)^\kappa (\phi\circ\frakt_1)a_{j\kappa} \in\Hb^{s,\ell}(\Omega)^{\bullet,-}.
\end{equation}
Here the (semi)norms of both $a_{j\kappa}$ in $\CI(\Omega\cap X)$ and $u'$ in $\Hb^{s,\ell}(\Omega)^{\bullet,-}$ are bounded by a constant times that of $f$ in $\Hb^{s-1+\varkappa,\ell}(\Omega)^{\bullet,-}$.
The same conclusion holds for sufficiently small perturbations of the
metric as a symmetric bilinear form on
$\Tb M$ provided the trapping is normally hyperbolic.
\end{thm}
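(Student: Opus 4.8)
\emph{Proof proposal.} The plan is to follow the architecture of the static case in Section~\ref{SecStaticDeSitter-Fredholm} and of Theorem~\ref{thm:b-main}, modifying it at the one place where the Hamilton dynamics differ, namely the trapped set $\Gamma\subset\Sb^*M$, by feeding in the b-normally hyperbolic propagation/resolvent estimate of \cite{HintzVasyNormHyp}.

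\textbf{Step 1 (linear estimates and invertibility on $\Omega$).} First I would assemble the microlocal estimate for $\cP$ on $\Omega$. Outside a neighborhood of $\Gamma\cup L_+\cup L_-$, real principal type propagation of b-singularities carries regularity off the spacelike hypersurfaces $H_1,H_2$ exactly as in Lemma~\ref{lemma:energy-est-back}, Corollary~\ref{cor:P-energy-est} and the discussion preceding \eqref{eq:P-b-loc-symb-est}; at $L_\pm$ one invokes Proposition~\ref{prop:b-saddle} under the threshold $s-1/2>\beta\ell$ (with $m=2$). The new input is a microlocal neighborhood of $\Gamma$: the b-version of the Wunsch--Zworski estimate \cite{Wu11} proved in \cite{HintzVasyNormHyp} (see also \cite{Dy13}) controls $\Hb^{s,\ell}$ regularity at $\Gamma$ in terms of $\|\cP u\|_{\Hb^{s-1+\varkappa,\ell}}$ together with $\Hb^{s,\ell}$ regularity just off $\Gamma$, the latter being propagated in from $H_1,H_2$; the price is the loss of $\varkappa>0$ derivatives. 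Patching the estimates and absorbing the resulting error term by the global energy estimate \eqref{eq:P-b-energy-est} (which also fixes the forward support), one obtains, for $\ell\geq 0$ small and $s>1/2+\beta\ell$, $s\geq 0$, a bound
\[
\|u\|_{\Hb^{s,\ell}(\Omega)^{\bullet,-}}\leq C\|\cP u\|_{\Hb^{s-1+\varkappa,\ell}(\Omega)^{\bullet,-}},
\]
and the analogue for $\cP^*$; together with uniqueness from Lemma~\ref{lemma:unique} this yields invertibility of $\cP$ on the corresponding coisotropic domain, exactly as in Corollary~\ref{cor:global-WP-energy} and Theorem~\ref{ThmPFredholm}.

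\textbf{Step 2 (the asymptotic expansion).} Near $\tau=0$ dilation invariance gives $\cP=N(\cP)$; the Mellin transform in $\tau$ identifies weighted b-Sobolev spaces with semiclassical Sobolev spaces on $X$ along $\im\sigma=-\ell$, and $\widehat\cP(\sigma)^{-1}$ is meromorphic with poles precisely at the resonances $\sigma_j$. Starting from the (possibly very negative) weight at which $u$ is known to lie by Step~1 applied to the forward problem, one writes $u$ near $\tau=0$ by the inverse Mellin transform and deforms the contour up to $\im\sigma=-\ell$, picking up a residue at each $\sigma_j$ with $-\im\sigma_j<\ell$; a pole of order $m_j+1$ contributes the terms $\tau^{i\sigma_j}(\log\tau)^\kappa a_{j\kappa}$, $\kappa\leq m_j$, with $a_{j\kappa}\in\CI(\Omega\cap X)$ the (generalized) resonant states. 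The integral along the shifted contour lies in $\Hb^{s,\ell}$ by the high-energy bounds for $\widehat\cP(\sigma)^{-1}$, which in the presence of normally hyperbolic trapping hold with exactly the polynomial loss $\varkappa$ by \cite{Wu11,Dy13,HintzVasyNormHyp}; multiplying by $\phi\circ\frakt_1$ keeps $u'$ a supported distribution. This is the argument of \cite[Lemma~3.1]{Va12} and \cite[Section~2]{Va12} (the origin of \cite[Theorem~1.4]{Va12}), here re-run in the presence of the artificial boundaries $H_1,H_2$; the norm bounds on $a_{j\kappa}$ and $u'$ are read off from the Mellin-side estimates.

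\textbf{Step 3 (stability under perturbations) and the main obstacle.} Normal hyperbolicity of $\Gamma$ is an open condition on the metric, by structural stability of normally hyperbolic invariant manifolds --- the setting of \cite{Wu11,Dy13} --- so small perturbations (as symmetric bilinear forms on $\Tb M$) stay in the allowed class; the radial-set data at $L_\pm$ (the transversality and sign conditions \eqref{eq:tilde-rho-tau-pos}--\eqref{eq:rho_0-pos}, and $\beta$) vary continuously, and the constants in Proposition~\ref{prop:b-saddle}, in the trapping estimate, and in the energy estimates can be taken uniform near the unperturbed metric, as can the high-energy bounds for $\widehat\cP(\sigma)^{-1}$ (the robust part of \cite{Dy13}). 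Hence Steps~1--2 go through uniformly, the resonances $\sigma_j$ of the perturbed operator in a fixed half-plane $\im\sigma>-\ell'$ depend continuously (with multiplicity) on the metric, and \eqref{EqAsympExpansion} holds verbatim for the perturbed metric with its own $\sigma_j$, $a_{j\kappa}$. The genuinely new analytic ingredient --- and the expected main obstacle --- is the normally hyperbolic trapping estimate and its insertion into the global b-Fredholm scheme: one must reconcile the $\varkappa$-loss of derivatives with the energy estimates (comfortable only for Sobolev order $\geq 0$ and lossless) and with the contour deformation, where the trapped-set resolvent bound must still be summable against the Sobolev/elliptic gain along $\im\sigma=-\ell$. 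For the perturbation statement specifically, the crux is the \emph{uniformity} of all of these --- above all the trapping estimate --- over a neighborhood of the unperturbed metric, which rests on structural stability of normal hyperbolicity and on \cite{Dy13}; everything else is soft.
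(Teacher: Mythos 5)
This theorem carries the attribution ``(See \cite[Theorem~1.4]{Va12}.)'' and is not proved in the paper: it is imported from \cite{Va12}, with the only paper-internal remark being the footnote in the proof of Theorem~\ref{thm:b-main} explaining that the complex-absorption framework of \cite[Lemma~3.1]{Va12} transfers unchanged to the artificial-boundary setting ``this is a Mellin transform and contour deformation argument.'' So there is no paper proof to line your argument up against; your proposal is being measured against the argument of \cite{Va12}.

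Against that yardstick, your Step~2 is the proof: Mellin transform, contour deformation across the strip $\im\sigma\in(-\ell,0]$ picking up residues at the resonances, and bounding the shifted-contour integral using the high-energy resolvent estimate for $\widehat\cP(\sigma)$, which in the presence of normally hyperbolic trapping carries the polynomial loss $\varkappa$. That is exactly the structure of \cite[Theorem~1.4]{Va12}, and your Step~3 on perturbative stability of normal hyperbolicity is also in order.

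Your Step~1, however, contains a real gap. The displayed inequality $\|u\|_{\Hb^{s,\ell}(\Omega)^{\bullet,-}}\leq C\|\cP u\|_{\Hb^{s-1+\varkappa,\ell}(\Omega)^{\bullet,-}}$ ``for $\ell\geq 0$ small'' cannot be obtained by patching the b-microlocal estimates and ``absorbing the resulting error term by the global energy estimate \eqref{eq:P-b-energy-est}'': that energy estimate is established only for sufficiently negative weights $r\leq r_0<0$. There is no direct commutator route to nonnegative weights; crossing from negative weight to nonnegative weight is precisely what the contour deformation in Step~2 does, and what generates the resonance terms $\tau^{i\sigma_j}(\log\tau)^\kappa(\phi\circ\frakt_1)a_{j\kappa}$ that must be subtracted from $u$. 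Indeed those very terms show why your Step~1 bound cannot be used as stated for the theorem: the hypothesis is only $u\in\Hb^{-\infty,-\infty}(\Omega)^{\bullet,-}$, and the conclusion is an expansion rather than $u\in\Hb^{s,\ell}$. You effectively acknowledge this at the start of Step~2, where you write ``starting from the (possibly very negative) weight at which $u$ is known to lie by Step~1'' --- which is the correct role of Step~1 (energy estimate at negative weight plus propagation of singularities there), but contradicts your Step~1 display. Finally, a smaller point: for this dilation-invariant theorem the natural trapping input is the semiclassical Wunsch--Zworski estimate \cite{Wu11} for $\widehat\cP(\sigma)$ at large $|\sigma|$ along $\im\sigma=-\ell$; the b-version \cite{HintzVasyNormHyp} (Proposition~\ref{prop:b-normiso-propagation}) is what the paper introduces for the non-dilation-invariant Theorem~\ref{ThmPFredholm-norm-hyp}. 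Invoking \cite{HintzVasyNormHyp} in Step~1 conflates the two and suggests an argument structure that the paper deliberately keeps separate.
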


In order to state the analogue of Theorems~\ref{ThmPFredholm}-\ref{thm:b-main} when one has {\em normally hyperbolic trapping} at $\Gamma\subset\Sb^*_XM$, we will employ non-trapping estimates in certain so-called normally isotropic functions spaces, established in \cite{HintzVasyNormHyp}. To put our problem into the context of \cite{HintzVasyNormHyp}, we need some notation in addition to that in \S\ref{SecStaticDeSitter}: In the setting of \S\ref{SecStaticDeSitter}, as leading up to Theorem~\ref{ThmPFredholm}, see the discussion above Figure~\ref{FigDS}, we define
\begin{enumerate}
  \item the {\em forward trapped set in $\Sigma_+$} as the set of points in $\Sigma_\Omega\cap(\Sigma_+\setminus L_+)$ the bicharacteristics through which do not flow (within $\Sigma_\Omega$) to $\Sb^*_{H_1}M\cup L_+$ in the forward direction (i.e.\ they do not reach $\Sb^*_{H_1}M$ in finite time and they do not tend to $L_+$),
  \item the  {\em backward trapped set in $\Sigma_+$} as the set of points in $\Sigma_\Omega\cap(\Sigma_+\setminus L_+)$ the bicharacteristics through which do not flow  to $\Sb^*_{H_2} M\cup L_+$ in the backward direction,
  \item the {\em forward trapped set in $\Sigma_-$} as the set of points in $\Sigma_\Omega\cap(\Sigma_-\setminus L_-)$ the bicharacteristics through which do not flow to $\Sb^*_{H_2} M\cup L_-$ in the forward direction,
  \item the {\em backward trapped set in $\Sigma_-$} as the set of points in $\Sigma_\Omega\cap(\Sigma_-\setminus L_-)$ the bicharacteristics through which do not flow to $\Sb^*_{H_1}M\cup L_-$ in the backward direction.
\end{enumerate}
The {\em forward trapped set} $\Gamma_-$ is the union of the forward trapped sets in $\Sigma_\pm$, and analogously for the {\em backward trapped set} $\Gamma_+$. The {\em trapped set} $\Gamma$ is the intersection of the forward and backward trapped sets. We say that $\cP$ is {\em normally hyperbolically trapping}, or has normally hyperbolic trapping, if $\Gamma\subset\Sb^*_XM$ is b-normally hyperbolic in the sense discussed in \cite[\S3.2]{HintzVasyNormHyp}.

Following \cite{HintzVasyNormHyp}, we introduce replacements for the b-Sobolev spaces used in \S\ref{SecStaticDeSitter} which are called {\em normally isotropic at $\Gamma$}; these spaces $\bnormiso^s$, see also \eqref{eq:b-norm-isotropic-KdS}, and dual spaces $\bnormiso^{*,-s}$ are just the standard b-Sobolev spaces $\Hb^s(M)$, resp.\ $\Hb^{-s}(M)$, microlocally away from $\Gamma$.

Concretely, suppose $\Gamma$ is locally (in a neighborhood $U_0$ of
$\Gamma$) defined by $\tau=0$, $\phi_+=\phi_-=0$, $\wh p=0$ in $\Sb^*M$,
with $d\tau,d\phi_+,d\phi_-,d\wh p$, $\wh p=\wt\rho^m p$, linearly
independent at $\Gamma$. Here one should
  think of $\phi_-$ as being a defining function of
  $\Gamma_\pm\cap\Sigma_\pm$ (with the either the top or the bottom
  choice of sign in both $\pm$) within $\Sb^* M$, and $\phi_+$ of
  $\Gamma_\pm\cap\Sigma_\mp$ within $\Sb^*_X M$. Then taking any $Q_\pm\in\Psib^0(M)$ with principal symbol $\phi_\pm$, $\wh P\in\Psib^0(M)$ with principal symbol $\wh p$, and $Q_0\in\Psib^0(M)$ elliptic on $U_0^c$ with $\WFb'(Q_0)\cap\Gamma=\emptyset$, we define the (global) b-normally isotropic spaces at $\Gamma$ of order $s$, $\bnormiso^s=\bnormiso^s(M)$, by the norm
\begin{equation}
\label{eq:b-norm-isotropic-KdS}
\|u\|_{\bnormiso^s}^2=\|Q_0 u\|^2_{\Hb^s}+\|Q_+u\|^2_{\Hb^s}+\|Q_-u\|^2_{\Hb^s}+\|\tau^{1/2} u\|^2_{\Hb^s}+\|\wh P u\|^2_{\Hb^{s}}+\|u\|^2_{\Hb^{s-1/2}},
\end{equation}
and let $\bnormiso^{*,-s}$ be the dual space relative to $L^2$ which is
\[
Q_0\Hb^{-s}+Q_+\Hb^{-s}+Q_-\Hb^{-s}+\tau^{1/2}\Hb^{-s}+\wh P\Hb^{-s}+\Hb^{-s+1/2}.
\]
In particular,
\begin{equation}\begin{split}\label{eq:b-Sobolev-normiso}
&\Hb^s(M)\subset\bnormiso^s(M)\subset\Hb^{s-1/2}(M)\cap\Hb^{s,-1/2}(M),\\
&\Hb^{s+1/2}(M)+\Hb^{s,1/2}(M)\subset\bnormiso^{*,s}(M)\subset\Hb^s(M).
\end{split}\end{equation}
Microlocally away from $\Gamma$, $\bnormiso^s(M)$ is indeed just the standard $\Hb^s$ space while $\bnormiso^{*,-s}$ is $\Hb^{-s}$ since at least one of $Q_0$, $Q_\pm$, $\tau$, $\wh P$ is elliptic; the space is independent of the choice of $Q_0$ satisfying the criteria since at least one of $Q_\pm$, $\tau$, $\wh P$ is elliptic on $U_0\setminus\Gamma$. Moreover, every operator in $\Psib^k(M)$ defines a continuous map $\bnormiso^s(M)\to\bnormiso^{s-k}(M)$ as for $A\in\Psib^k(M)$, $Q_+Au=AQ_+u+[Q_+,A]u$ and $[Q_+,A]\in\Psib^{k-1}(M)$; the analogous statement also holds for the dual spaces.

The non-trapping estimates then are:

\begin{prop}\label{prop:b-normiso-propagation}
(See \cite[Theorem~3]{HintzVasyNormHyp}.) With $\cP,\bnormiso^s,\bnormiso^{*,s}$ as above, for any neighborhood $U$ of $\Gamma$ and for any $N$ there exist $B_0\in\Psib^0(M)$ elliptic at $\Gamma$ and $B_1,B_2\in\Psib^0(M)$ with $\WFb'(B_j)\subset U$, $j=0,1,2$, $\WFb'(B_2)\cap\Gamma_+=\emptyset$ and $C>0$ such that
\begin{equation}\label{eq:b-normiso}
\|B_0 u\|_{\bnormiso^s}\leq \|B_1\cP u\|_{\bnormiso^{*,s-m+1}}+\|B_2u\|_{\Hb^s}+C\|u\|_{\Hb^{-N}},
\end{equation} 
i.e.\ if all the functions on the right hand side are in the indicated spaces: $B_1\cP u\in \bnormiso^{*,s-m+1}$, etc., then $B_0 u\in\bnormiso^s$, and the inequality holds.

The same conclusion also holds if we assume $\WFb'(B_2)\cap\Gamma_-=\emptyset$ instead of $\WFb'(B_2)\cap\Gamma_+=\emptyset$.

Finally, if $r<0$, then, with $\WFb'(B_2)\cap\Gamma_+=\emptyset$,
\eqref{eq:b-normiso} becomes
\begin{equation}\label{eq:b-normiso-offspect}
\|B_0 u\|_{\Hb^{s,r}}\leq \|B_1\cP u\|_{\Hb^{s-m+1,r}}+\|B_2u\|_{\Hb^{s,r}}+C\|u\|_{\Hb^{-N,r}},
\end{equation}
while if $r>0$, then, with $\WFb'(B_2)\cap\Gamma_-=\emptyset$,
\begin{equation}\label{eq:b-normiso-offspect-reverse}
\|B_0 u\|_{\Hb^{s,r}}\leq \|B_1\cP u\|_{\Hb^{s-m+1,r}}+\|B_2u\|_{\Hb^{s,r}}+C\|u\|_{\Hb^{-N,r}},
\end{equation}
\end{prop}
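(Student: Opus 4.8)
The plan is to deduce everything from \cite[Theorem~3]{HintzVasyNormHyp}, which is the b-analogue of the semiclassical normally hyperbolic trapping estimates of Wunsch and Zworski \cite{Wu11} and Dyatlov \cite{Dy13}; the work here is to check that our setup matches the framework of that paper and then to extract the off-spectrum estimates. First I would record that, by hypothesis, $\Gamma\subset\Sb^*_XM$ is b-normally hyperbolic in the sense of \cite[Section~3.2]{HintzVasyNormHyp}, so that $\Gamma_-$ and $\Gamma_+$ are the stable and unstable manifolds of $\Gamma$ and the normal expansion and contraction rates are as required there. Via the Mellin transform, which identifies $\Hb^{s,0}(M)$ near $X$ with semiclassical Sobolev spaces on the line $\im\sigma=0$ (see \cite[Equations~(3.8)-(3.9)]{Va12}), the functions $\phi_\pm$, $\hat p$ and the operators $Q_\pm$, $\hat P$ entering \eqref{eq:b-norm-isotropic-KdS} become precisely the data defining the semiclassical normally isotropic spaces of \cite{Wu11} for the family $\widehat\cP(\sigma)$ at its (semiclassical) normally hyperbolic trapped set; thus $\bnormiso^s$ and $\bnormiso^{*,-s}$ are the b-avatars of those spaces. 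With this dictionary, \eqref{eq:b-normiso} follows by combining the high-energy normally hyperbolic estimate of \cite{HintzVasyNormHyp} in a neighborhood of $\Gamma$ with elliptic regularity and real-principal-type propagation away from $\Gamma$, assembling $B_0$ (elliptic at $\Gamma$), $B_1$ and $B_2$ from microlocal cutoffs in the usual way, with $B_2$ carrying the a priori control transported into $\Gamma$ along the stable manifold $\Gamma_-$ and hence disjoint from the unstable manifold $\Gamma_+$.

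The second version of \eqref{eq:b-normiso}, with $\WFb'(B_2)\cap\Gamma_-=\emptyset$ in place of $\WFb'(B_2)\cap\Gamma_+=\emptyset$, I would obtain by time reversal: apply the first version to $\cP^*$, whose bicharacteristic flow is that of $\cP$ with reversed orientation, so that the roles of $\Gamma_+$ and $\Gamma_-$ are exchanged, and then dualize, using that $\bnormiso^s$ and $\bnormiso^{*,-s}$ are $L^2$-dual; the estimate for $\cP^*$ on the dual spaces then pairs into the asserted estimate for $\cP$. This is the same device as the two halves of Proposition~\ref{prop:b-saddle}, where the direction in which one can push control through the radial set reverses for the adjoint.

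Finally, for the off-spectrum estimates I would argue that when $r\neq 0$ the normally isotropic refinement is unnecessary and ordinary weighted b-Sobolev spaces suffice. I would conjugate: $\tau^{-r}\cP\tau^r$ differs from $\cP$ by an operator in $\Psib^{m-1}(M)$ with the same principal symbol, and its normal operator has Mellin transform $\widehat\cP(\sigma-ir)$, so an $\Hb^{s,r}$ estimate for $\cP$ is equivalent to an $\Hb^{s,0}$ estimate for $\tau^{-r}\cP\tau^r$, whose trapped set now corresponds to the line $\im\sigma=-r\neq 0$ for $\widehat\cP$. Off the critical line the normally hyperbolic trapping estimate holds on plain semiclassical Sobolev spaces, but --- just as at a radial or saddle point, cf.\ the dichotomy in Proposition~\ref{prop:b-saddle} --- only with control propagated through $\Gamma$ in the single direction dictated by the sign of $\im\sigma$: for $r<0$ one propagates so that the a priori term is needed only away from $\Gamma_+$, and for $r>0$ only away from $\Gamma_-$. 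Transferring back and combining with real-principal-type propagation then yields \eqref{eq:b-normiso-offspect} and \eqref{eq:b-normiso-offspect-reverse}. I expect the only genuine obstacle to sit inside the $r=0$ step --- verifying that b-normal hyperbolicity of $\Gamma$ yields a bona fide semiclassical normally hyperbolic trapped set for $\widehat\cP(\sigma)$ with uniform control down to and including the line $\im\sigma=0$ --- but this is exactly what is established in \cite{HintzVasyNormHyp}, so for the present paper it enters only as a citation.
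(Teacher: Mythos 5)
The paper does not prove Proposition~\ref{prop:b-normiso-propagation}; it is imported verbatim from \cite[Theorem~3]{HintzVasyNormHyp}, so there is no internal proof against which to compare. Your outline of the reduction --- Mellin transform identifying $\bnormiso^s$, $\bnormiso^{*,s}$ with the semiclassical normally isotropic spaces on the critical line, then pasting the near-$\Gamma$ trapping estimate with elliptic and real-principal-type estimates away from $\Gamma$, and conjugating by $\tau^{-r}$ to move off the critical line for the weighted versions --- is the expected route and matches how such statements are assembled elsewhere in the paper.

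There is, however, a genuine error in your treatment of the second case ($\WFb'(B_2)\cap\Gamma_-=\emptyset$). You propose to obtain it by applying the first case to $\cP^*$, asserting that the bicharacteristic flow of $\cP^*$ is that of $\cP$ ``with reversed orientation.'' This is false: $\cP$ and $\cP^*$ have the same real principal symbol and therefore the same Hamilton vector field and the same flow, with $\Gamma$, $\Gamma_\pm$ unchanged. Taking adjoints changes the sign of the skew part of the subprincipal symbol, which matters for threshold conditions at radial sets, but it does not exchange the stable and unstable manifolds of $\Gamma$. Moreover, even granting the reversal, the inequality \eqref{eq:b-normiso} is an a priori microlocal estimate rather than a bounded-operator statement, so it does not dualize directly into an estimate of the same shape. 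The two variants of the estimate are best understood as propagation of control forward versus backward in the flow parameter, for the \emph{same} operator $\cP$; in a positive-commutator proof one would establish them in parallel by choosing the commutant's symbol to be monotone in the chosen time direction, not by deducing one from the other via $\cP^*$. The same remark applies to the pairing of $r<0$ with $\Gamma_+$ and $r>0$ with $\Gamma_-$ in \eqref{eq:b-normiso-offspect}--\eqref{eq:b-normiso-offspect-reverse}: this reflects the direction in which the weight gives a favorable sign in the commutator, again a direct argument rather than a duality.
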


\begin{rmk}
Note that the weighted versions \eqref{eq:b-normiso-offspect}-\eqref{eq:b-normiso-offspect-reverse} use {\em standard} weighted b-Sobolev spaces.
\end{rmk}

Next, if $\Omega\subset M$, as in \S\ref{SecStaticDeSitter}, is such that $\Sb^*_{H_j}{\Omega}\cap\Gamma=\emptyset$, $j=1,2$, then spaces such as
$$
\bnormiso^{*,s}(\Omega)^{\bullet,-}
$$
are not only well-defined, but are standard $\Hb^s$-spaces near the $H_j$. The inclusions analogous to \eqref{eq:b-Sobolev-normiso} also hold for the corresponding spaces over $\Omega$.

Notice that elements of $\Psib^p(M)$ only map $\bnormiso^s(M)$ to $\bnormiso^{*,s-p-1}(M)$, with the issues being at $\Gamma$ corresponding to \eqref{eq:b-Sobolev-normiso} (thus there is no distinction between the behavior on the $\Omega$ vs.\ the $M$-based spaces). However, if $A\in\Psib^p(M)$ has principal symbol vanishing on $\Gamma$ then
\begin{equation}\label{eq:bnormiso-mapping}
A:\bnormiso^s(M)\to\Hb^{s-p}(M),\ A:\Hb^s(M)\to\bnormiso^{*,s-p}(M),
\end{equation}
as $A$ can be expressed as $A_+Q_++A_- Q_-+A_\pa\tau+\wh A\wh P+A_0Q_0+R$, $A_\pm,A_0,A_\pa,\wh A\in\Psib^0(M)$, $R\in\Psib^{-1}(M)$, with the second mapping property following by duality as $\Psib^p(M)$ is closed under adjoints, and the principal symbol of the adjoint vanishes wherever that of the original operator does. Correspondingly, if $A_j\in\Psib^{m_j}(M)$, $j=1,2$, have principal symbol vanishing at $\Gamma$ then $A_1A_2 u:\bnormiso^s(M)\to\bnormiso^{*,s-m_1-m_2}(M)$.

We consider $\cP$ as a map
$$
\cP\colon\bnormiso^{s}(\Omega)^{\bullet,-}\to \bnormiso^{s-2}(\Omega)^{\bullet,-},
$$
and let
$$
\cY^{s}_{\Gamma}=\bnormiso^{*,s}(\Omega)^{\bullet,-},\
\cX^{s}_{\Gamma}=\{u\in\bnormiso^s(\Omega)^{\bullet,-}:\ \cP u\in \cY^{s-1}_{\Gamma}\}.
$$
While $\cX^s_\Gamma$ is complete,\footnote{
Also, elements of $\CI(\Omega)$ vanishing to infinite order at
$H_1$ and $X\cap\Omega$ are dense in $\cX^s_\Gamma$. Indeed, in view of
\cite[Lemma~A.3]{Melrose-Vasy-Wunsch:Corners} the only possible
issue is at $\Gamma$, thus the distinction between $\Omega$ and
$M$ may be dropped.
To complete the argument, one proceeds as in the quoted lemma,
using the ellipticity of $\sigma$ at
$\Gamma$,
letting $\Lambda_n \in\Psib^{-\infty}(M)$, $n\in\NN$, be a quantization
of $\phi(\sigma/n)a$, $a\in\CI(\Sb^*M)$ supported in a neighborhood of
$\Gamma$, identically $1$ near $\Gamma$, $\phi\in\CI_c(\RR)$, noting that
$[\Lambda_n,\cP]\in\Psib^{-\infty}(M)$ is uniformly bounded in
$\Psib^0(M)+\tau\Psib^1(M)$ in view of \eqref{eq:b-Ham-vf}, and thus for $u\in\cX^s_\Gamma$,
$\cP\Lambda_n u=\Lambda_n\cP u+[\cP,\Lambda_n]u\to \cP u$ in
$\bnormiso^{*,s-1}$ since $[\cP,\Lambda_n]$ is uniformly bounded
$\Hb^{s-1/2}\cap\Hb^{s,-1/2}\to\Hb^{s-1/2}\cap\Hb^{s-1,1/2}$, and thus
$\bnormiso^s\to\bnormiso^{*,s-1}$ by \eqref{eq:b-Sobolev-normiso}.}
it is a slightly
exotic space, unlike $\cX^s$ in
Theorem~\ref{ThmPFredholm} which is a coisotropic space depending on
$\Sigma$ (and thus the principal symbol of $\cP$) only, since elements of
$\Psib^p(M)$ only map $\bnormiso^s(M)$ to $\bnormiso^{*,s-p-1}(M)$ as
remarked earlier.
Correspondingly, $\cX^{s}_\Gamma$ actually depends on $\cP$
modulo $\Psib^0(M)$ plus first order pseudodifferential operators of the
form $A_1A_2$, $A_1\in\Psib^0(M)$, $A_2\in\Psib^1(M)$, both with
principal symbol vanishing at $\Gamma$ -- here the operators should
have Schwartz kernels supported away from the $H_j$; near $H_j$ (but
away from $\Gamma$), one
should say $\cP$ matters modulo $\Diffb^1(M)$, i.e.\ only the
principal symbol of $\cP$ matters.

We then have:

\begin{thm}
\label{ThmPFredholm-norm-hyp}
Suppose $s\geq 3/2$, and that the inverse of the Mellin transformed normal operator
$\widehat{\cP}(\sigma)^{-1}$ has no poles with $\im\sigma\geq 0$.
Then
$$
\cP\colon\cX^{s}_\Gamma\to\cY^{s-1}_\Gamma
$$
is invertible,
giving the forward solution operator.
\end{thm}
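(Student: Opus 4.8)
The plan is to mimic the Fredholm argument leading to Proposition~\ref{prop:absorb-Fredholm} and Theorem~\ref{ThmPFredholm}, but with the standard b-Sobolev spaces replaced by the normally isotropic spaces $\bnormiso^s$, $\bnormiso^{*,s}$, using Proposition~\ref{prop:b-normiso-propagation} in place of the real principal type propagation and Proposition~\ref{prop:b-saddle} at the trapped set. First I would combine the non-trapping propagation of singularities away from $\Gamma$ (real principal type, plus Proposition~\ref{prop:b-saddle} at the radial sets $L_\pm$, valid here since $s-1/2>\beta\cdot 0$ is not quite the right regime — rather we will be on the spectrum, so we take the weight $r=0$ and need $s$ above threshold, i.e. $s>1/2$, consistent with $s\geq 3/2$) together with the energy estimates from Corollary~\ref{cor:global-energy-est-strong} near the artificial boundaries $H_1,H_2$, and the normally hyperbolic trapping estimate \eqref{eq:b-normiso} near $\Gamma$. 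Patching these with a microlocal partition of unity subordinate to $U$ (the neighborhood of $\Gamma$) and its complement yields a symbolic-type estimate
\begin{equation*}
\|u\|_{\bnormiso^s(\Omega)^{\bullet,-}}\leq C\|\cP u\|_{\bnormiso^{*,s-1}(\Omega)^{\bullet,-}}+C\|u\|_{\Hb^{-N,0}(\Omega)^{\bullet,-}},
\end{equation*}
with an analogous estimate for $\cP^*$ on $\bnormiso^{*,\cdot}(\Omega)^{-,\bullet}$ obtained by reversing the propagation directions (this is why Proposition~\ref{prop:b-normiso-propagation} is stated with $B_2$ controlling either $\Gamma_+$ or $\Gamma_-$). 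The $s\geq 3/2$ requirement enters because the error term, via \eqref{eq:b-Sobolev-normiso}, should include compactly into $\bnormiso^s$ relative to the $\cP$-graph norm, and the propagation across $L_\pm$ needs $s$ above the threshold while the energy estimates need Sobolev order $\geq 1$.

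Second, I would upgrade this to the full invertibility statement exactly as in the passage from \eqref{eq:P-b-loc-symb-est} to Theorem~\ref{ThmPFredholm}: use the hypothesis that $\widehat\cP(\sigma)^{-1}$ has no poles with $\im\sigma\geq 0$ to run the normal operator argument. Concretely, split $u=\chi u+(1-\chi)u$ with $\chi$ supported near $\pa M$, use the invertibility of $\widehat\cP(\sigma)$ on the line $\im\sigma=0$ together with the high-energy (semiclassical) estimates — which here are precisely the normally hyperbolic trapping estimates of \cite{HintzVasyNormHyp}, giving a loss of $\varkappa$ derivatives but still Fredholm-type control on $\tau^0\Hb^s$-type spaces — to obtain an estimate on $N(\cP)\chi u$, then convert back to $\cP u$ using $\cP-N(\cP)\in\tau\Diffb^2(M)$ to gain a power of $\tau$, making the error term $\Hb^{s',-1}$-type and hence compactly included. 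This gives the Fredholm estimate; the corresponding estimate for $\cP^*$, noting $\widehat{\cP^*}(\sigma)=\widehat\cP(\bar\sigma)^*$ so the no-poles condition is the same, gives that the cokernel is also controlled. Since we work on the supported/extendible spaces at $H_1,H_2$, the energy estimate Corollary~\ref{cor:global-energy-est-strong} — or rather its analogue with $r=0$, which holds here because on these spaces the weight is immaterial for the energy argument once one is at the spectrum and the trapping is handled microlocally — then shows that the kernel of $\cP$ on $\cX^s_\Gamma$ is trivial (a forward solution of $\cP u=0$ vanishes), and by the duality pairing between $\cX^s_\Gamma$ and the $\cP^*$-space the cokernel is trivial as well, so $\cP$ is invertible. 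The forward support property of the solution is automatic from the $\bullet$ at $H_1$.

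The main obstacle I expect is handling the weight $r=0$ — i.e. being exactly on the spectrum — in two places: first, in the energy estimates, where Corollary~\ref{cor:global-energy-est-strong} required $r\leq r_0<0$, so one must instead argue that the trivially-kernel/cokernel conclusion survives at $r=0$, which is why the statement is phrased as invertibility rather than just Fredholm plus a separate injectivity argument; the resolution is that the energy estimates \eqref{eq:P-forw-energy-est-bullet-strong} on compact $\region_{[T_0,T_1]}$ hold for all $r$ (the weight there is harmless on a compact piece of $M^\circ$ away from $X$), and one patches these local-in-$\tau$ energy estimates with the microlocal estimate near $X$, exactly as Remark~\ref{rmk:mod-Fredholm} and the discussion around \eqref{eq:P-b-loc-symb-est}–\eqref{eq:P*-b-loc-symb-est} do in the non-trapping case, only now with $\bnormiso$-spaces. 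Second, one must check that the normally hyperbolic trapping estimate \eqref{eq:b-normiso} and its weighted variant \eqref{eq:b-normiso-offspect} combine consistently with the saddle-point propagation at $L_\pm$ and the energy estimates at $H_j$ — but this is exactly the non-trapping assumption built into the definition of $\Gamma$ (the trapped set is the only obstruction), so the bicharacteristic combinatorics go through verbatim from the proof of Theorem~\ref{ThmPFredholm} with the one extra microlocal region near $\Gamma$ inserted, where $\WFb'(B_2)$ is arranged to be flowed out to $H_j$ or $L_\pm$ in the appropriate direction. The loss of $\varkappa$ derivatives in the high-energy estimate for $\widehat\cP(\sigma)^{-1}$ is accommodated by the $s\geq 3/2$ hypothesis together with the fact that, in the definition of $\cX^s_\Gamma$, one only demands $\cP u\in\cY^{s-1}_\Gamma$, leaving room to absorb the loss when transitioning from $N(\cP)u$ to $\cP u$.
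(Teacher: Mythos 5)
Your proposal takes a genuinely different route from the paper's. You aim to establish a single Fredholm-type estimate
\begin{equation*}
\|u\|_{\bnormiso^s(\Omega)^{\bullet,-}}\leq C\|\cP u\|_{\bnormiso^{*,s-1}(\Omega)^{\bullet,-}}+C\|u\|_{\Hb^{-N,0}(\Omega)^{\bullet,-}}
\end{equation*}
by patching the normally hyperbolic trapping estimate, the saddle-point propagation, and the energy estimates with a microlocal partition of unity, then run a normal operator argument directly on the $\bnormiso$-spaces at weight $r=0$ to make the error compact, and finally check injectivity and surjectivity. The paper instead \emph{constructs} the inverse by a three-stage bootstrap and never writes down a Fredholm estimate on the exotic $\bnormiso$-spaces: (i) for $r<-1/2$ the weighted normally hyperbolic trapping estimates \eqref{eq:b-normiso-offspect}--\eqref{eq:b-normiso-offspect-reverse} reduce to \emph{standard} b-Sobolev estimates without loss, so Theorem~\ref{ThmPFredholm} applies unchanged and one gets $u\in\Hb^{s,r}(\Omega)^{\bullet,-}$ solving $\cP u=f$; (ii) using $\cP-N(\cP)\in\tau\Diffb^2(M)$ together with the dilation-invariant Mellin/contour-deformation argument of \cite[Lemma~3.1]{Va12} and the no-poles-in-$\im\sigma\geq 0$ hypothesis, the weight is pushed from $r<0$ up to $0$, giving $u\in\Hb^{s-1,0}(\Omega)^{\bullet,-}$; (iii) since $\cP u=f\in\bnormiso^{*,s-1}(\Omega)^{\bullet,-}$, the propagation estimate \eqref{eq:b-normiso} of Proposition~\ref{prop:b-normiso-propagation} upgrades the regularity to $u\in\bnormiso^s(\Omega)^{\bullet,-}$. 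Triviality of $\Ker\cP$ and $\Ker\cP^*$ is then read off from the off-spectrum result, since $\bnormiso^s\subset\Hb^{s-1/2,r}$ for $r\leq 0$.

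The advantage of the paper's route is precisely that it sidesteps the most delicate point in your plan: running the Mellin transform/normal operator argument on the $\bnormiso$-spaces at $r=0$. The Mellin isomorphism identifies weighted b-Sobolev spaces with semiclassical Sobolev spaces on $\pa M$, but $\bnormiso^s$ is \emph{not} a weighted b-Sobolev space — it sits strictly between $\Hb^s$ and $\Hb^{s-1/2}\cap\Hb^{s,-1/2}$ by \eqref{eq:b-Sobolev-normiso} — so there is no direct Mellin-transformed analogue of \eqref{eq:b-normiso} available on the line $\im\sigma=0$. You acknowledge using only "$\tau^0\Hb^s$-type" control with a $\varkappa$-loss, which means your normal operator step, by itself, would land you in a lossy $\Hb^{s-\varkappa,0}$-type space, not in $\bnormiso^s$; you would then need to invoke the $\bnormiso$-propagation a second time anyway to recover full regularity, essentially reproducing step (iii) of the paper's argument on top of a heavier Fredholm scaffold. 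The bootstrap organization is therefore not cosmetic: it is how the paper avoids ever needing Fredholm control on the exotic spaces at the spectral line, which is also why the $s\geq 3/2$ restriction in the theorem is driven by the regularity needed to run \cite[Lemma~3.1]{Va12} (and the second-step propagation), rather than by a compact-inclusion threshold as you suggest.
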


\begin{proof}
First, with $r<-1/2$, thus with dual spaces having weight $\wt r>1/2$, Theorem~\ref{ThmPFredholm} holds without changes as Proposition~\ref{prop:b-normiso-propagation} gives non-trapping estimates in this case on the standard b-Sobolev spaces.  In particular, if $r\ll 0$, $\Ker\cP$ is trivial even on $\Hb^{s-1/2,r}(\Omega)^{\bullet,-}$, hence certainly on its subspace $\bnormiso^{s}(\Omega)^{\bullet,-}$. Similarly, $\Ker\cP^*$ is trivial on $\Hb^{s,\wt r}(\Omega)^{-,\bullet}$, $\wt r\gg 0$, and thus with $r<-1/2$, for $f\in\Hb^{-1,r}(\Omega)^{\bullet,-}$ there exists $u\in\Hb^{0,r}(\Omega)^{\bullet,-}$ with $\cP u=f$. Further, making use of the non-trapping estimates in Proposition~\ref{prop:b-normiso-propagation}, if $r<0$ and $f\in\Hb^{s-1,r}(\Omega)^{\bullet,-}$, then the argument of Theorem~\ref{thm:b-main} improves this statement to $u\in\Hb^{s,r}(\Omega)^{\bullet,-}$.

In particular, if $f\in\bnormiso^{*,s-1}(\Omega)^{\bullet,-}\subset \Hb^{s-1,0}(\Omega)^{\bullet,-}$, then
$u\in\Hb^{s,r}(\Omega)^{\bullet,-}$ for $r<0$. This can be
improved using the argument of Theorem~\ref{thm:b-main}.
Indeed, with $-1\leq r<0$ arbitrary,
$\cP-N(\cP)\in\tau\Diffb^2(M)$ implies as in \eqref{eq:normal-op-perturb-exp-ds} that
\begin{equation}\label{eq:normal-op-perturb-exp}
N(\cP)u=f-\wt f,\ \wt f=(\cP-N(\cP))u\in\Hb^{s-2,r+1}(\Omega)^{\bullet,-}.
\end{equation}
But $f\in \bnormiso^{*,s-1}(\Omega)^{\bullet,-}\subset\Hb^{s-1,0}(\Omega)^{\bullet,-}$, hence the right hand side is in $\Hb^{s-2,0}(\Omega)^{\bullet,-}$; thus the dilation-invariant result, \cite[Lemma~3.1]{Va12}, gives
$u\in\Hb^{s-1,0}(\Omega)^{\bullet,-}$. This can then be improved
further since in view of
$\cP u=f\in\bnormiso^{*,s-1}(\Omega)^{\bullet,-}$, propagation of
singularities, most crucially
Proposition~\ref{prop:b-normiso-propagation}, yields
$u\in\bnormiso^{s}(\Omega)^{\bullet,-}$.
This completes the proof of the theorem.
\end{proof}

This result shows the importance of controlling the resonances in
$\im\sigma\geq 0$.
For the wave operator on exact Kerr-de Sitter space, Dyatlov's
analysis \cite{Dy11a,Dy11b} shows that the zero resonance of $\Box_g$ is the only
one in $\Im\sigma\geq 0$, the residue at $0$ having constant functions
as its range. For the Klein-Gordon operator $\Box_g-m^2$, the
statement is even better from our perspective as there are no
resonances in $\im\sigma\geq 0$ for $m>0$ small. This is pointed out
in \cite{Dy11a}; we give a direct proof based on perturbation theory.

\begin{lemma}
\label{LemmaPerturb}
  Let $\cP=\Box_g$ on exact Kerr-de Sitter space. Then for small $m>0$, all poles of $(\widehat\cP(\sigma)-m^2)^{-1}$ have strictly negative imaginary part.
\end{lemma}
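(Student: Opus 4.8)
The plan is to use analytic perturbation theory in the parameter $m^2$, exploiting that the resonances of $\cP=\Box_g$ on exact Kerr-de Sitter space in $\im\sigma\geq 0$ are completely understood by Dyatlov's work: the only such resonance is a simple one at $\sigma=0$ whose residue has range the constant functions. First I would recall that $\widehat{\cP}(\sigma)$ is a holomorphic family of Fredholm operators between the appropriate (semiclassical / variable-order) Sobolev spaces on $X=\partial M$, so that $(\widehat{\cP}(\sigma)-m^2)^{-1}$ is meromorphic in $\sigma$ jointly with holomorphic dependence on $m^2$, and that for any fixed compact set $K\subset\{\im\sigma\geq 0\}$ there are no resonances of $\widehat{\cP}(\sigma)-m^2$ in $K$ other than possibly near $\sigma=0$ once $m^2$ is small, by continuity of the resolvent away from the pole at $0$ (this uses the high-energy, i.e.\ semiclassical, estimates to rule out resonances escaping to $\infty$ while staying in $\im\sigma\geq 0$, uniformly for small $m$). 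So the entire question reduces to tracking the single resonance emanating from $\sigma=0$.

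Next I would set up the perturbation of the zero resonance. Since $0$ is a simple pole of $\widehat{\cP}(\sigma)^{-1}$ with one-dimensional residue spanned by the constants, I would use the standard Grushin/Schur complement reduction: write $\widehat{\cP}(\sigma)-m^2 = \widehat{\cP}(\sigma) - m^2$ and, near $\sigma=0$, reduce invertibility to a scalar equation $E_{-+}(\sigma,m^2)=0$ where $E_{-+}$ is holomorphic in both variables, $E_{-+}(0,0)=0$, and $\partial_\sigma E_{-+}(0,0)\neq 0$ (the latter because the pole of $\widehat{\cP}(\sigma)^{-1}$ at $0$ is simple, equivalently the linearization of $\widehat{\cP}$ along $\sigma$ is nondegenerate on the relevant one-dimensional spaces). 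The implicit function theorem then produces a unique resonance $\sigma(m^2)$ with $\sigma(0)=0$, depending smoothly (indeed analytically) on $m^2$ for $m^2$ small. The key computation is the sign of $\frac{d}{d(m^2)}\im\sigma(m^2)\big|_{m^2=0}$: differentiating $E_{-+}(\sigma(m^2),m^2)=0$ gives $\sigma'(0) = -\partial_{m^2}E_{-+}(0,0)/\partial_\sigma E_{-+}(0,0)$, and since $\partial_{m^2}(\widehat{\cP}(\sigma)-m^2) = -\mathrm{Id}$, the quantity $\partial_{m^2}E_{-+}(0,0)$ is (up to a positive factor) the pairing of the dual resonant state against the constant resonant state, which is nonzero and whose phase is pinned down by the known structure of the zero resonance of $\Box_g$ on Kerr-de Sitter space. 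One then reads off that $\im\sigma(m^2)<0$ for small $m^2>0$.

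The main obstacle I expect is pinning down this sign, i.e.\ showing that the perturbed resonance genuinely moves into the \emph{open} lower half plane rather than staying on the real axis or moving up. For this I would either (a) appeal directly to the statement in \cite{Dy11a} that for small $m>0$ there are no resonances in $\im\sigma\geq 0$, in which case the perturbation-theoretic argument above is just an alternative derivation of the qualitative fact that $\sigma(m^2)$ leaves the closed upper half plane; or (b) make the sign explicit by a pairing/energy argument: the residue of $\widehat{\cP}(\sigma)^{-1}$ at $0$ is a rank-one projection onto constants, the generalized inverse satisfies a solvability (Fredholm alternative) condition, and the effective scalar coefficient multiplying $m^2$ can be identified with $\int$ of the product of the resonant and co-resonant states over $X$ against a positive density, which is nonzero; combined with $\partial_\sigma E_{-+}(0,0)\neq 0$ and a direct check (on exact de Sitter, or by the conformal structure) of the relative phase, one gets strict negativity. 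Once $\im\sigma(m^2)<0$ is established for the one resonance near $0$, together with the exclusion of resonances elsewhere in $\{\im\sigma\geq 0\}$ for small $m$, the lemma follows.
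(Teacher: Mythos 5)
Your overall strategy matches the paper's: both proofs track the simple resonance at $\sigma=0$ by analytic perturbation theory in the spectral parameter $m^2$ (the paper's $\lambda$), both differentiate the eigenvalue relation and pair with the dual resonant state $\psi(0)=1_{\{\mu>0\}}$ to get
\[
\sigma'(0)=\frac{\la\psi(0),\phi(0)\ra}{\la\psi(0),\widehat\cP'(0)\phi(0)\ra},
\]
and both rely on Dyatlov's classification of resonances of $\Box_g$ in $\im\sigma\geq 0$ to reduce to this single resonance. Your Grushin/Schur-complement formulation is a slightly different packaging of the same first-order perturbation computation, not a different method.

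The genuine gap is the sign. You have the two denominator/numerator roles slightly misassigned in the discussion: the numerator $\la\psi(0),\phi(0)\ra=\la 1_{\{\mu>0\}},1\ra$ is a positive real number with no phase ambiguity whatsoever (it is a volume), so nothing there is ``pinned down by the known structure of the zero resonance.'' All of the difficulty sits in the denominator $\la\psi(0),\widehat\cP'(0)\phi(0)\ra$, equivalently your $\partial_\sigma E_{-+}(0,0)$, and for this you offer no concrete computation. Option (a) --- citing Dyatlov for the absence of resonances in $\im\sigma\geq 0$ for small $m>0$ --- is exactly what the lemma is designed \emph{not} to do: the paper states explicitly that the result ``is pointed out in \cite{Dy11a}; we give a direct proof based on perturbation theory,'' so invoking that result is circular for the purposes of this lemma. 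Option (b) (``a direct check, on exact de Sitter, or by the conformal structure, of the relative phase'') is not a proof. The paper's actual proof computes $\la\psi(0),\widehat\cP'(0)\phi(0)\ra$ explicitly from the Kerr--de Sitter dual metric in the coordinates $(t,\tilde t+h(r),\ldots)$ of \cite[Section~6]{Va12}: it identifies the single cross term in $\Box_g$ (involving $\tau D_\tau$ and $D_r$) that contributes nontrivially to $\widehat\cP'(0)$ once paired against $\psi(0)\equiv 1_{\{\tilde\mu>0\}}$ and $\phi(0)\equiv 1$, reduces the integral by Stokes to boundary contributions at the two roots $r_\pm$ of $\tilde\mu$, and uses the known singular behavior $h'(r)=\mp\frac{1+\gamma}{\tilde\mu}(r^2+a^2)$ at $r_\pm$ to read off the strict sign $\im\sigma'(0)<0$. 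Without this (or an equivalent) computation, your argument establishes only that the resonance moves holomorphically, not that it moves into the open lower half-plane.
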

\begin{proof}
  By perturbation theory, the inverse family of $\widehat\cP(\sigma)-\lambda$ has a simple pole at $\sigma(\lambda)$ coming with a single resonant state $\phi(\lambda)$ and a dual state $\psi(\lambda)$, with analytic dependence on $\lambda$, where $\sigma(0)=0,\phi(0)\equiv 1,\psi(0)=1_{\{\mu>0\}}$, where we use the notation of \cite[\S6]{Va12}. Differentiating $\widehat\cP(\sigma(\lambda))\phi(\lambda)=\lambda\phi(\lambda)$ with respect to $\lambda$ and evaluating at $\lambda=0$ gives
  \[
    \sigma'(0)\widehat\cP'(0)\phi(0)+\widehat\cP(0)\phi'(0)=\phi(0).
  \]
  Pairing this with $\psi(0)$, which is orthogonal to $\Ran\widehat\cP(0)$, yields
  \[
    \sigma'(0)=\frac{\la\psi(0),\phi(0)\ra}{\la\psi(0),\widehat\cP'(0)\phi(0)\ra},
  \]
  Since $\phi(0)=1$ and $\psi(0)=1_{\{\mu>0\}}$, this implies
  \begin{equation}
  \label{EqMovingResonance}\sgn\Im\sigma'(0)=-\sgn\Im\la\psi(0),\widehat\cP'(0)\phi(0)\ra.
  \end{equation}
  To find the latter quantity, we note that the only terms in the
  general form of the d'Alembertian that could possibly yield a
  non-zero contribution here are terms involving $\tau D_\tau$ and
  either $D_r$, $D_\phi$ or $D_\theta$. Concretely, using the explicit
  form of the dual metric $G$, see~Equation (6.1) in \cite{Va12}, in the new coordinates $t=\wt t+h(r),\phi=\wt\phi+P(r),\tau=e^{-t}$, with $h(r),P(r)$ as in \cite[Equation~(6.5)]{Va12},
  \begin{align*}
    G=-\rho^{-2}\biggl(&\wt\mu(\pa_r-h'(r)\tau\pa_\tau+P'(r)\pa_\phi)^2+\frac{(1+\gamma)^2}{\varkappa\sin^2\theta}(-a\sin^2\theta \tau\pa_\tau+\pa_\phi)^2+\varkappa\pa_\theta^2 \\
	&-\frac{(1+\gamma)^2}{\wt\mu}(-(r^2+a^2)\tau\pa_\tau+a\pa_\phi)^2\biggr),
  \end{align*}
  and its determinant $|\det G|^{1/2}=(1+\gamma)^2\rho^{-2}(\sin\theta)^{-1}$, we see that the only non-zero contribution to the right hand side of \eqref{EqMovingResonance} comes from the term
  \begin{align*}
    (1+\gamma)^2&\rho^{-2}(\sin\theta)^{-1}D_r\bigl((1+\gamma)^{-2}\rho^2\sin\theta \rho^{-2}\wt\mu h'(r)\bigr)\tau D_\tau \\
	  &=-i\rho^{-2}\pa_r(\wt\mu h'(r))\tau D_\tau
  \end{align*}
  of the d'Alembertian. Mellin transforming this amounts to replacing $\tau D_\tau$ by $\sigma$; then differentiating the result with respect to $\sigma$ gives
  \begin{align}
    \la&\psi(0),\widehat\cP'(0)\phi(0)\ra=-i\int_{\wt\mu>0}\rho^{-2}\pa_r(\wt\mu h'(r))\,d\mathrm{vol} \nonumber\\
	  &=-i\int_0^\pi\!\!\int_0^{2\pi}\!\!\int_{r_-}^{r_+}(1+\gamma)^{-2}\sin\theta\,\pa_r(\wt\mu h'(r))\,dr\,d\phi\,d\theta \nonumber\\
	\label{EqMovingResonance2} &=-\frac{4\pi i}{(1+\gamma)^2}[(\wt\mu h'(r))|_{r_+}-(\wt\mu h'(r))|_{r_-}].
  \end{align}
  Since the singular part of $h'(r)$ at $r_\pm$ (which are the roots of $\wt\mu$) is $h'(r)=\mp\frac{1+\gamma}{\wt\mu}(r^2+a^2)$, the right hand side of \eqref{EqMovingResonance2} is positive up to a factor of $i$; thus $\Im\sigma'(0)<0$ as claimed.
\end{proof}

In other words, for small mass $m>0$, there are no resonances $\sigma$
of the Klein-Gordon operator with $\Im\sigma\geq-\eps_0$ for some
$\eps_0>0$. Therefore, the expansion of $u$ as in
\eqref{EqAsympExpansion} no longer has a constant
term. Correspondingly, for $\eps\in\RR$, $\eps\leq\eps_0$,
Theorem~\ref{thm:KdS-main}
gives the forward solution operator
\begin{equation}\label{eq:varkappa-KG}
  S_{\KG,\I}\colon \Hb^{s-1+\varkappa,\eps}(\Omega)^{\bullet,-}\to \Hb^{s,\eps}(\Omega)^{\bullet,-}
\end{equation}
in the dilation-invariant case.

Further, Theorem~\ref{ThmPFredholm-norm-hyp} is applicable and gives the forward solution operator
\begin{equation}\label{eq:non-trapping-KG}
  S_\KG\colon \bnormiso^{*,s-1}(\Omega)^{\bullet,-}\to\bnormiso^s(\Omega)^{\bullet,-}
\end{equation}
on the normally isotropic spaces.

For the semilinear application, for non-linearities without derivatives, it is important that the loss of
derivatives $\varkappa$ in the space $\Hb^{s-1+\varkappa,\eps}$ is $\leq 1$. This is not explicitly specified in
the paper of Wunsch and Zworski \cite{Wu11}, though their proof
directly (see especially the part before Section 4.4 of \cite{Wu11}) gives that, for small $\eps>0$, $\varkappa$ can be taken
proportional to $\eps$, and there is $\eps'_0>0$ such
that $\varkappa\in(0,1]$ for $\eps<\eps'_0$. We reduce $\eps_0>0$
above if needed so that $\eps_0\leq\eps_0'$; then
\eqref{eq:varkappa-KG} holds with $\varkappa=c\eps\in(0,1]$ if
$\eps<\eps_0$, where $c>0$.

In fact, one does not need to go through the proof of \cite{Wu11}, for the
Phragm\'en-Lindel\"of theorem allows one to obtain the same conclusion
from their final result:

\begin{lemma}
\label{LemmaPhragmen}
  Suppose $h\colon U\to E$ is a holomorphic function on the half strip $U=\{z\in\C \colon 0\leq\Im z\leq c,\Re z\geq 1\}$ which is continuous on $\ol U$, with values in a Banach space $E$, and suppose moreover that there are constants $A,C>0$ such that
  \begin{gather*}
    \|h(z)\|\leq C|z|^{k_1}, \quad\Im z=0, \\
	\|h(z)\|\leq C|z|^{k_2}, \quad\Im z=c, \\
	\|h(z)\|\leq C\exp(A|z|), \quad z\in\ol U.
  \end{gather*}
  Then there is a constant $C'>0$ such that
  \[
    \|h(z)\|\leq C'|z|^{k_1\left(1-\frac{\Im z}{c}\right)+k_2\frac{\Im z}{c}}
  \]
  for all $z\in\ol U$.
\end{lemma}
\begin{proof}
  Consider the function $f(z)=z^{k_1-i\frac{k_2-k_1}{c}z}$, which is holomorphic on a neighborhood of $\ol U$. Writing $z\in\ol U$ as $z=x+iy$ with $x,y\in\R$, one has
  \begin{align*}
    |f(z)|&=|z|^{k_1}\exp\left(\Im\left(\frac{k_2-k_1}{c}z\log z\right)\right) \\
	  &=|z|^{k_1}|z|^{\frac{k_2-k_1}{c}\Im z}\exp\left(\frac{k_2-k_1}{c}x\arctan(y/x)\right).
  \end{align*}
  Noting that $|x\arctan(y/x)|=y|(x/y)\arctan(y/x)|$ is bounded by $c$ for all $x+iy\in\ol U$, we conclude that
  \[
    e^{-|k_2-k_1|}|z|^{k_1\left(1-\frac{\Im z}{c}\right)+k_2\frac{\Im z}{c}}\leq|f(z)|\leq e^{|k_2-k_1|}|z|^{k_1\left(1-\frac{\Im z}{c}\right)+k_2\frac{\Im z}{c}}.
  \]
  Therefore, $f(z)^{-1}h(z)$ is bounded by a constant $C'$ on $\partial\ol U$, and satisfies an exponential bound for $z\in U$. By the Phragm\'en-Lindel\"of theorem, $\|f(z)^{-1}h(z)\|_E\leq C'$, and the claim follows.\qedhere
\end{proof}

Since for any $\delta>0$, we can bound $|\log z|\leq C_\delta|z|^\delta$ for $|\Re z|\geq 1$, we obtain that the inverse family $R(\sigma)=\widehat\cP(\sigma)^{-1}$ of the normal operator of $\Box_g$ on (asymptotically) Kerr-de Sitter spaces as in \cite{Va12}, here in the setting of artificial boundaries as opposed to complex absorption, satisfies a bound
 \begin{equation}
   \|R(\sigma)\|_{|\sigma|^{-(s-1)}H^{s-1}_{|\sigma|^{-1}}(X\cap\Omega)\to|\sigma|^{-s}H^s_{|\sigma|^{-1}}(X\cap\Omega)}\leq C_\delta|\sigma|^{-1+\varkappa'+\delta}
  \label{eq-estimate-near-r}
 \end{equation}
 for any $\delta>0$, $\Im\sigma\geq-c\varkappa'$ and $|\Re\sigma|$ large. Therefore, as mentioned above, by the proof of Theorem~\ref{thm:KdS-main}, i.e.\ \cite[Theorem~1.4]{Va12}, in particular using \cite[Lemma~3.1]{Va12}, we can assume $\varkappa\in(0,1]$ in the dilation-invariant result, Theorem~\ref{thm:KdS-main}, if we take $C'>0$ small enough, i.e.\ if we do not go too far into the lower half plane $\Im\sigma<0$, which amounts to only taking terms in the expansion \eqref{EqAsympExpansion} which decay to at most some fixed order, which we may assume to be less than $-\Im\sigma_j$ for all resonances $\sigma_j$.

%%%%%%%%%%%%%%%%%%%%%%%%%%%%%%%%%%%%%%%%%%%%%%%%%
\subsection{A class of semilinear equations; equations with polynomial non-linearity}
\label{SubsecKdSSemi}

In the following semilinear applications, let us fix $\varkappa\in(0,1]$ and $\eps_0$ as explained before Lemma~\ref{LemmaPhragmen}, so that we have the forward solution operator $S_{\KG,\I}$ as in \eqref{eq:varkappa-KG}.

We then have statements paralleling Theorems~\ref{ThmDSQu}, \ref{ThmDSPoly} and Corollary~\ref{cor:ndS}, namely Theorems~\ref{ThmKdSQu}, \ref{ThmKdSPoly} and Corollary~\ref{cor:nKdS}, respectively.

\begin{thm}
\label{ThmKdSQu}
 Suppose $(M,g)$ is dilation-invariant. Let $-\infty<\eps<\eps_0,s>1/2+\beta\eps$, $s\geq 1$, and let $q\colon\Hb^{s,\eps}(\Omega)^{\bullet,-}\to \Hb^{s-1+\varkappa,\eps}(\Omega)^{\bullet,-}$ be a continuous function with $q(0)=0$ such that there exists a continuous non-decreasing function $L\colon\R_{\geq 0}\to\R$ satisfying
  \[
    \|q(u)-q(v)\|\leq L(R)\|u-v\|,\quad \|u\|,\|v\|\leq R.
  \]
  Then there is a constant $C_L>0$ so that the following holds: If $L(0)<C_L$, then for small $R>0$, there exists $C>0$ such that for all $f\in\Hb^{s-1+\varkappa,\eps}(\Omega)^{\bullet,-}$ with norm $\leq C$, the equation
  \[
    (\Box_g-m^2)u=f+q(u)
  \]
  has a unique solution $u\in\Hb^{s,\eps}(\Omega)^{\bullet,-}$, with norm $\leq R$, that depends continuously on $f$.

  More generally, suppose
  \[
    q\colon\Hb^{s,\eps}(\Omega)^{\bullet,-}\times\Hb^{s-1+\varkappa,\eps}(\Omega)^{\bullet,-}\to\Hb^{s-1+\varkappa,\eps}(\Omega)^{\bullet,-}
  \]
  satisfies $q(0,0)=0$ and
  \[
    \|q(u,w)-q(u',w')\|\leq L(R)(\|u-u'\|+\|w-w'\|)
  \]
  provided $\|u\|+\|w\|,\|u'\|+\|w'\|\leq R$, where we use the norms corresponding to the map $q$, for a continuous non-decreasing function $L\colon\R_{\geq 0}\to\R$. Then there is a constant $C_L>0$ so that the following holds: If $L(0)<C_L$, then for small $R>0$, there exists $C>0$ such that for all $f\in\Hb^{s-1+\varkappa,\eps}(\Omega)^{\bullet,-}$ with norm $\leq C$, the equation
  \[
    (\Box_g-m^2)u=f+q(u,\Box_g u)
  \]
  has a unique solution $u\in\Hb^{s,\eps}(\Omega)^{\bullet,-}$, with $\|u\|_{\Hb^{s,\eps}}+\|\Box_g u\|_{\Hb^{s-1+\varkappa,\eps}}\leq R$, that depends continuously on $f$.
\end{thm}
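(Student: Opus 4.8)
The plan is to mimic the proof of Theorem~\ref{ThmDSQu} exactly, now using the forward solution operator $S_{\KG,\I}$ from \eqref{eq:varkappa-KG} in place of $S_\KG$, and noting that the loss of $\varkappa\leq 1$ derivatives is harmless because the nonlinearity is smoothing of order $\geq 1$ in the sense that $q$ maps into $\Hb^{s-1+\varkappa,\eps}$, which is precisely the domain of $S_{\KG,\I}$. For the first statement, I would set up the map $T_{\KG,\I}u=S_{\KG,\I}(f+q(u))$ on the $R$-ball of $\Hb^{s,\eps}(\Omega)^{\bullet,-}$, take $C_L=\|S_{\KG,\I}\|^{-1}$ (with the operator norm taken between the relevant spaces), and verify the contraction estimate $\|T_{\KG,\I}u-T_{\KG,\I}v\|_{\Hb^{s,\eps}}\leq\|S_{\KG,\I}\|L(R)\|u-v\|\leq C_0\|u-v\|$ with $C_0<1$ for small $R$, granted $L(0)<C_L$. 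The self-mapping property holds if $\|S_{\KG,\I}\|(\|f\|+L(R)R)\leq R$, i.e.\ if $\|f\|\leq C:=R(\|S_{\KG,\I}\|^{-1}-L(R))$; the Banach fixed point theorem then yields the unique solution in the ball, and continuous dependence on $f$ follows by subtracting the equations for two solutions $u_1,u_2$ with data $f_1,f_2$ exactly as in the proof of Theorem~\ref{ThmDSQu}, giving $\|u_1-u_2\|\leq\|f_1-f_2\|/(1-C_0)$.

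For the more general statement involving $\Box_g u$ in the nonlinearity, I would work on the coisotropic-type space
\[
\cX=\{u\in\Hb^{s,\eps}(\Omega)^{\bullet,-}\colon\Box_g u\in\Hb^{s-1+\varkappa,\eps}(\Omega)^{\bullet,-}\},\qquad
\|u\|_\cX=\|u\|_{\Hb^{s,\eps}}+\|\Box_g u\|_{\Hb^{s-1+\varkappa,\eps}},
\]
which is a Banach space by the same argument as in Theorem~\ref{ThmDSQu} (a Cauchy sequence converges in $\Hb^{s,\eps}$, its images under $\Box_g$ converge in $\Hb^{s-1+\varkappa,\eps}$, and the two limits agree in $\tau^\eps\Hb^{s-2}$). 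Since $S_{\KG,\I}$ maps $\Hb^{s-1+\varkappa,\eps}(\Omega)^{\bullet,-}$ into $\Hb^{s,\eps}(\Omega)^{\bullet,-}$ and, by the equation $(\Box_g-m^2)S_{\KG,\I}g=g$, also sends $g$ to an element whose $\Box_g$-image is $g+m^2 S_{\KG,\I}g\in\Hb^{s-1+\varkappa,\eps}$, we get $S_{\KG,\I}\colon\Hb^{s-1+\varkappa,\eps}(\Omega)^{\bullet,-}\to\cX$, bounded. Defining $T u=S_{\KG,\I}(f+q(u,\Box_g u))$ on $\cX$, the estimate
\[
\|Tu-Tv\|_\cX\leq(\|S_{\KG,\I}\|_{\Hb^{s-1+\varkappa,\eps}\to\cX}+1)\,L(R)\,\|u-v\|_\cX
\]
(the extra $+1$ accounting for the $\Box_g$-component of the norm via the equation) gives a contraction provided $L(0)<C_L:=(\|S_{\KG,\I}\|_{\Hb^{s-1+\varkappa,\eps}\to\cX}+1)^{-1}$ and $R$ is small; the self-mapping and continuous-dependence checks are identical to the previous paragraph, completing the proof.

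The main point to get right — and the only place where the normally hyperbolic trapping intervenes — is that the nonlinearity genuinely lands in $\Hb^{s-1+\varkappa,\eps}$ rather than merely $\Hb^{s-1,\eps}$, since $S_{\KG,\I}$ only accepts data with that extra $\varkappa$ derivatives of regularity. This is why the theorem is stated for $q=q(u)$ (or $q=q(u,\Box_g u)$) with no genuine $\bdiff u$ dependence: if $q$ is, say, a polynomial in $u$ with $\Hb^s$ (or $\CI$) coefficients and $s>n/2$, then the algebra property of $\Hb^{s,\eps}$ (Lemma~\ref{LemmaHsAlgebra} and its weighted analogues) gives $q(u)\in\Hb^{s,\eps}\subset\Hb^{s-1+\varkappa,\eps}$ since $\varkappa\leq 1$, so no regularity is lost to the trapping; the derivative losses would defeat the iteration only if the nonlinearity itself differentiated $u$. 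The self-mapping of the $R$-ball and the sharpness of the constant $C_L$ are then entirely routine, and $\varkappa\in(0,1]$ — guaranteed by the Phragmén–Lindelöf argument of Lemma~\ref{LemmaPhragmen} together with the high-energy bound \eqref{eq-estimate-near-r} — is exactly what makes the whole scheme close.
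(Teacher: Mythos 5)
Your proposal is correct and follows the paper's own argument essentially verbatim: both reduce to the proof of Theorem~\ref{ThmDSQu}, with $S_{\KG,\I}$ (from \eqref{eq:varkappa-KG}) replacing $S_\KG$ in the contraction map $Tu=S_{\KG,\I}(f+q(u))$ for the first part, and for the second part both use the same Banach space $\cX=\{u\in\Hb^{s,\eps}(\Omega)^{\bullet,-}:\Box_g u\in\Hb^{s-1+\varkappa,\eps}(\Omega)^{\bullet,-}\}$ with the graph-type norm. The extra commentary you add (e.g., why $q$ must be derivative-free, and the explicit verification that $S_{\KG,\I}$ maps into $\cX$ via the identity $\Box_g S_{\KG,\I}g=g+m^2 S_{\KG,\I}g$) is accurate and fills in details the paper leaves implicit by referring back to Theorem~\ref{ThmDSQu}.
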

\begin{proof}
  We use the proof of the first part of Theorem~\ref{ThmDSQu}, where in the current setting the solution operator $S_{\KG,\I}$ maps $\Hb^{s-1+\varkappa,\eps}(\Omega)^{\bullet,-}\to\Hb^{s,\eps}(\Omega)^{\bullet,-}$, and the contraction map is $T\colon\Hb^{s,\eps}(\Omega)^{\bullet,-}\to\Hb^{s,\eps}(\Omega)^{\bullet,-}$, $Tu=S_{\KG,\I}(f+q(u))$.

  For the general statement, we follow the proof of the second part of Theorem~\ref{ThmDSQu}, where we now instead use the Banach space
  \[
    \cX=\{u\in\Hb^{s,\eps}(\Omega)^{\bullet,-}\colon \Box_g u\in\Hb^{s-1+\varkappa,\eps}(\Omega)^{\bullet,-}\}
  \]
  with norm
  \[
    \|u\|_{\cX}=\|u\|_{\Hb^{s,\eps}}+\|\Box_g u\|_{\tau^\eps \Hb^{s-1+\varkappa}}.
  \]
  which is a Banach space by the same argument as in the proof of Theorem~\ref{ThmDSQu}.
\end{proof}

We have a weaker statement in the general, non dilation-invariant case, where we work in unweighted spaces.

\begin{thm}
\label{ThmKdSQuNoninv}
  Let $s\geq 1$, and suppose $q\colon\Hb^s(\Omega)^{\bullet,-}\to\Hb^s(\Omega)^{\bullet,-}$ is a continuous function with $q(0)=0$ such that there exists a continuous non-decreasing function $L\colon\R_{\geq 0}\to\R$ satisfying
  \[
    \|q(u)-q(v)\|\leq L(R)\|u-v\|,\quad \|u\|,\|v\|\leq R.
  \]
  Then there is a constant $C_L>0$ so that the following holds: If $L(0)<C_L$, then for small $R>0$, there exists $C>0$ such that for all $f\in\Hb^s(\Omega)^{\bullet,-}$ with norm $\leq C$, the equation
  \[
    (\Box_g-m^2)u=f+q(u)
  \]
  has a unique solution $u\in\Hb^s(\Omega)^{\bullet,-}$, with norm $\leq R$, that depends continuously on $f$.

  An analogous statement holds for non-linearities $q=q(u,\Box_g u)$ which are continuous maps $q\colon\Hb^s(\Omega)^{\bullet,-}\times\Hb^s(\Omega)^{\bullet,-}\to\Hb^s(\Omega)^{\bullet,-}$, vanish at $(0,0)$ and have a small Lipschitz constant near $0$.
\end{thm}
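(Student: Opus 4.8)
The plan is to run the same contraction mapping argument as in the proofs of Theorems~\ref{ThmDSQu} and \ref{ThmKdSQu}, but using the forward solution operator on the normally isotropic spaces, \eqref{eq:non-trapping-KG}, together with a suitable shift of the regularity index that absorbs the half-derivative loss at the trapped set $\Gamma$ encoded in the inclusions \eqref{eq:b-Sobolev-normiso}. Concretely, I would apply Theorem~\ref{ThmPFredholm-norm-hyp}, equivalently \eqref{eq:non-trapping-KG}, with $s$ replaced by $s+1/2$; this is legitimate since $s\geq 1$ gives $s+1/2\geq 3/2$, and $\widehat\cP(\sigma)^{-1}$, $\cP=\Box_g-m^2$, has no poles in $\im\sigma\geq 0$ for small $m>0$ by Lemma~\ref{LemmaPerturb}. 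This produces a forward solution operator $S_\KG\colon\bnormiso^{*,s-1/2}(\Omega)^{\bullet,-}\to\bnormiso^{s+1/2}(\Omega)^{\bullet,-}$. The inclusions analogous to \eqref{eq:b-Sobolev-normiso} over $\Omega$ give $\Hb^s(\Omega)^{\bullet,-}\subset\bnormiso^{*,s-1/2}(\Omega)^{\bullet,-}$ and $\bnormiso^{s+1/2}(\Omega)^{\bullet,-}\subset\Hb^s(\Omega)^{\bullet,-}$, so composing these inclusions with $S_\KG$ yields a bounded operator
\[
  S_\KG\colon\Hb^s(\Omega)^{\bullet,-}\to\Hb^s(\Omega)^{\bullet,-},
\]
which, by uniqueness of forward solutions, assigns to $f$ its unique forward solution of $\Box_g-m^2$.

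With $S_\KG$ understood in this sense, the first part is proved exactly as the first part of Theorem~\ref{ThmDSQu}: put $Tu=S_\KG(f+q(u))$ on $\Hb^s(\Omega)^{\bullet,-}$, set $C_L=\|S_\KG\|_{\Hb^s\to\Hb^s}^{-1}$, and observe that for $\|u\|,\|v\|\leq R$ one has $\|Tu-Tv\|_{\Hb^s}\leq\|S_\KG\|\,L(R)\|u-v\|_{\Hb^s}$, which is a contraction for small $R$ since $L$ is continuous, non-decreasing and $L(0)<C_L$. As $q(0)=0$ gives $\|q(u)\|_{\Hb^s}\leq L(R)R$, the map $T$ preserves the $R$-ball provided $\|f\|_{\Hb^s}\leq C:=R(C_L-L(R))$, so the Banach fixed point theorem produces the desired unique solution in the $R$-ball; it has the correct forward support because $S_\KG$ is the forward solution operator. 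Continuous dependence on $f$ follows by subtracting the equations for two solutions and using the contraction estimate, as in Theorem~\ref{ThmDSQu}.

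For $q=q(u,\Box_g u)$ I would copy the second part of Theorem~\ref{ThmKdSQu}, working on the Banach space $\cX=\{u\in\Hb^s(\Omega)^{\bullet,-}\colon\Box_g u\in\Hb^s(\Omega)^{\bullet,-}\}$ with norm $\|u\|_{\cX}=\|u\|_{\Hb^s}+\|\Box_g u\|_{\Hb^s}$ (completeness follows as in Theorem~\ref{ThmDSQu} from the continuity of $\Box_g\colon\Hb^s\to\Hb^{s-2}$), and with $Tu=S_\KG(f+q(u,\Box_g u))$. Here $q(u,\Box_g u)\in\Hb^s$ for $u\in\cX$, so $Tu\in\Hb^s$ by the mapping property above, and moreover $\Box_g Tu=(f+q(u,\Box_g u))+m^2 S_\KG(f+q(u,\Box_g u))\in\Hb^s$, whence $Tu\in\cX$; the Lipschitz hypothesis on $q$ then gives a contraction on small balls of $\cX$ once $L(0)$ lies below the corresponding threshold, now involving $\|S_\KG\|$ and $m^2$, and one concludes as before. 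I expect the only genuinely new point, relative to the proofs already given, to be the choice of the regularity offset $1/2$ and the verification via \eqref{eq:b-Sobolev-normiso} that $S_\KG$ maps $\Hb^s(\Omega)^{\bullet,-}$ back into itself despite the normally hyperbolic trapping; once this bookkeeping is in place, the fixed point argument is routine.
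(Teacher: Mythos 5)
Your proof is correct and follows essentially the same route as the paper: the paper's proof is precisely the observation that $S_\KG\colon\Hb^s(\Omega)^{\bullet,-}\subset\bnormiso^{*,s-1/2}(\Omega)^{\bullet,-}\to\bnormiso^{s+1/2}(\Omega)^{\bullet,-}\subset\Hb^s(\Omega)^{\bullet,-}$ via \eqref{eq:b-Sobolev-normiso} and \eqref{eq:non-trapping-KG}, followed by the Banach fixed point theorem. Your write-up simply makes explicit the contraction-mapping bookkeeping and the second-part construction on $\cX$, which the paper leaves implicit by reference to the earlier proofs.
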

\begin{proof}
  Since
  \[
    S_\KG\colon \Hb^s(\Omega)^{\bullet,-}\subset\bnormiso^{*,s-1/2}(\Omega)^{\bullet,-}\to\bnormiso^{s+1/2}(\Omega)^{\bullet,-}\subset\Hb^s(\Omega)^{\bullet,-},
  \]
  by \eqref{eq:b-Sobolev-normiso} and \eqref{eq:non-trapping-KG}, this follows again from the Banach fixed point theorem.
\end{proof}

\begin{rmk}
  The proof of Theorem~\ref{ThmPFredholm-norm-hyp} shows that equations on function spaces with negative weights (i.e.\ growing near infinity) behave as nicely as equations on the static part of asymptotically de Sitter spaces, discussed in \S\ref{SecStaticDeSitter}. However, naturally occurring non-linearities (e.g., polynomials) will not be continuous non-linear operators on such growing spaces.
\end{rmk}

\begin{cor}\label{cor:nKdS}
If $s>n/2$, the hypotheses of Theorem~\ref{ThmKdSQuNoninv}
hold for non-linearities $q(u)=cu^p$, $p\geq 2$ integer, $c\in\Cx$, as
well as $q(u)=q_0 u^p$, $q_0\in\Hb^s(M)$.

Thus for small $m>0$ and $R>0$, there exists $C>0$ such that for all $f\in\Hb^s(\Omega)^{\bullet,-}$ with norm $\leq C$, the equation
  \[
    (\Box_g-m^2)u=f+q(u)
  \]
  has a unique solution $u\in\Hb^s(\Omega)^{\bullet,-}$, with norm $\leq R$, that depends continuously on $f$.
\end{cor}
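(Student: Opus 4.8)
\emph{Proof proposal.} The plan is to verify the hypotheses of Theorem~\ref{ThmKdSQuNoninv} for the two families of non-linearities, the only point requiring care being the algebra property of the supported/extendible b-Sobolev space $\Hb^s(\Omega)^{\bullet,-}$ together with the fact that products preserve the support condition $\frakt_1\geq 0$; everything else is a routine computation, and in particular the Lipschitz constant will satisfy $L(0)=0$, so the smallness condition $L(0)<C_L$ is automatic.

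First I would record that $\Hb^s(\Omega)^{\bullet,-}$ is a Banach algebra (up to a fixed multiplicative constant) for $s>n/2$. Elements of $\Hb^s(\Omega)^{\bullet,-}$ are, by definition, restrictions to $\Omega\setminus(H_2\cup X)$ of elements of $\Hb^s(M)$ supported in $\{\frakt_1\geq 0\}$. Given $u,v\in\Hb^s(\Omega)^{\bullet,-}$ choose representatives $\tilde u,\tilde v\in\Hb^s(M)$ supported in $\{\frakt_1\geq 0\}$ with norms at most twice those of $u,v$. Localizing by a partition of unity on the compact set $\overline\Omega$ and passing to the model $H^s(\R^n)$ via the logarithmic change of coordinates as in Lemma~\ref{LemmaHsAlgebra}, the standard algebra property of $H^s(\R^n)$ for $s>n/2$ gives $\tilde u\tilde v\in\Hb^s(M)$ with $\|\tilde u\tilde v\|_{\Hb^s}\leq C\|\tilde u\|_{\Hb^s}\|\tilde v\|_{\Hb^s}$; since $\tilde u\tilde v$ is again supported in $\{\frakt_1\geq 0\}$, its restriction (which is $uv$, independently of the choice of representatives) lies in $\Hb^s(\Omega)^{\bullet,-}$ with $\|uv\|\leq C'\|u\|\,\|v\|$. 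The same argument with one representative replaced by a fixed $q_0\in\Hb^s(M)$ shows that multiplication by $q_0$ is bounded on $\Hb^s(\Omega)^{\bullet,-}$, the product remaining supported in $\{\frakt_1\geq 0\}$.

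Next, for $q(u)=c u^p$, $p\geq 2$ an integer, iterating the algebra property shows $q$ maps $\Hb^s(\Omega)^{\bullet,-}$ continuously into itself and $q(0)=0$; factoring $u^p-v^p=(u-v)\sum_{k=0}^{p-1}u^k v^{p-1-k}$ and applying the algebra property gives $\|u^p-v^p\|\leq C'|c|\,p R^{p-1}\|u-v\|$ whenever $\|u\|,\|v\|\leq R$, so one may take $L(R)=C'|c|\,p R^{p-1}$, which is continuous, non-decreasing on $[0,\infty)$, and has $L(0)=0$. For $q(u)=q_0 u^p$ with $q_0\in\Hb^s(M)$ the same computation, combined with boundedness of multiplication by $q_0$, yields $L(R)=C''\|q_0\|_{\Hb^s}p R^{p-1}$, again with $L(0)=0$. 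In both cases the hypotheses of Theorem~\ref{ThmKdSQuNoninv} hold, and that theorem — whose proof uses the forward solution operator $S_\KG$ from \eqref{eq:non-trapping-KG} on the normally isotropic spaces together with the inclusions \eqref{eq:b-Sobolev-normiso}, valid since for small $m>0$ the operator $\Box_g-m^2$ has no resonances in $\im\sigma\geq 0$ by Lemma~\ref{LemmaPerturb} (so that Theorem~\ref{ThmPFredholm-norm-hyp} applies) — gives at once the asserted unique small solution $u\in\Hb^s(\Omega)^{\bullet,-}$ of $(\Box_g-m^2)u=f+q(u)$ for $f\in\Hb^s(\Omega)^{\bullet,-}$ of small norm, depending continuously on $f$. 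The only mild subtlety worth flagging is the reduction of the algebra property to the boundaryless model on the corner domain $\Omega$, but this is handled exactly as in Lemma~\ref{LemmaHsAlgebra} by localization, since $\overline\Omega$ is compact and the artificial boundaries $H_j$ do not affect $\Tb^*\Omega$ or $\Hb^s$.
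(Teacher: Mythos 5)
Your proposal is correct and matches what the paper leaves implicit: the corollary is a direct consequence of the algebra property $\Hb^s(\Omega)^{\bullet,-}\cdot\Hb^s(\Omega)^{\bullet,-}\subset\Hb^s(\Omega)^{\bullet,-}$ (via Lemma~\ref{LemmaHsAlgebra}, with products preserving the support condition at $H_1$) combined with the factorization $u^p-v^p=(u-v)\sum_{k}u^k v^{p-1-k}$ giving $L(R)\to 0$ as $R\to 0$, after which Theorem~\ref{ThmKdSQuNoninv} applies directly. The details you spell out — the choice of representatives with controlled norm, the preservation of the support property under multiplication, and the explicit Lipschitz constant — are exactly the routine checks the paper elides.
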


If $f$ satisfies stronger decay assumptions, then $u$ does as well. More precisely, denoting the inverse family of the normal operator of the Klein-Gordon operator with (small) mass $m$ by $R_m(\sigma)=(\widehat\cP(\sigma)-m^2)^{-1}$, which has poles only in $\Im\sigma<0$ (cf.\ Lemma~\ref{LemmaPerturb} and \cite{Dy11a,Va12}), and moreover defining the spaces $\cX_{\sF}^{s,r,\eps}$ and $\cX_{\sP,\sF}^{s,r,\eps}$ analogously to the corresponding spaces in \S\ref{SubsecDSPoly}, we have the following result:

\begin{thm}
\label{ThmKdSPoly}
 Fix $0<\eps<\min\{C',1/2\}$ and let $s\gg s'\geq\max(1/2+\beta\eps,n/2,1+\varkappa)$. (A concrete bound for $s$ will be given in the course of the proof, see equation~\ref{EqSBound}.) Let
 \[
   q(u)=\sum_{p=2}^d q_pu^p,\quad q_p\in\Hb^s(M).
 \]
 Moreover, if $\sigma_j\in\C$ are the poles of the inverse family $R_m(\sigma)$, and $m_j+1$ is the order of the pole of $R_m(\sigma)$ at $\sigma_j$, let $\sP=\{(i\sigma_j+k,\ell)\colon 0\leq\ell\leq m_j,k\in\N_0\}$. Assume that $\eps\neq\Re(i\sigma_j)$ for all $j$, and that $m>0$ is so small that $\sP$ is a positive index set. Finally, let $\sF$ be a positive index set.
 
 Then for small enough $R>0$, there exists $C>0$ such that for all $f\in\cX_{\sF}^{s,s,\eps}$ with norm $\leq C$, the equation
  \begin{equation}
  \label{EqKdSPoly}
    (\Box_g-m^2)u=f+q(u)
  \end{equation}
  has a unique solution $u\in\cX_{\sP,\sF}^{s',s',\eps}$, with norm $\leq R$, that depends continuously on $f$; in particular, $u$ has an asymptotic expansion with remainder in $\Hb^{s',\eps}(\Omega)^{\bullet,-}$.
\end{thm}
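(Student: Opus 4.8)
The plan is to run the Banach fixed point argument exactly as in the proof of Theorem~\ref{ThmDSPoly}, but with the extra complication that the linear solution operator now loses $\varkappa$ derivatives, so one must make up for this loss before closing the iteration. First I would observe that by Theorem~\ref{ThmPFredholm-norm-hyp}, since $m>0$ is small so that $\widehat\cP(\sigma)^{-1}$ has no poles with $\Im\sigma\geq 0$, we have the forward solution operator $S_\KG$ on the normally isotropic spaces as in \eqref{eq:non-trapping-KG}, and by the analogue of Theorem~\ref{thm:b-main} in this setting (which is Theorem~\ref{thm:KdS-main}, combined with the improvement $\varkappa\in(0,1]$ discussed after Lemma~\ref{LemmaPhragmen}) we get a forward solution operator $S_{\KG,\I}\colon\cX_{\sF'}^{s'-1+\varkappa,s'-1+\varkappa,\eps}\to\cX_{\sP,\sF'}^{s',s',\eps}$ for any positive index set $\sF'$, provided the relevant non-resonance condition on $\eps$ holds. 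The key point, as in Remark~\ref{RemAPosterioriExp}, is that the expansion spaces $\cX_{\sP,\sF}^{s,r,\eps}$ are set up precisely so that $S_{\KG,\I}$ maps into a space of the same type, with the index set generated recursively by the poles $\sP$ and the forcing index set $\sF$.

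Next I would set up the contraction. Define $T\colon\cX_{\sP,\sF}^{s',s',\eps}\to\cX_{\sP,\sF}^{s',s',\eps}$ by $Tu=S_{\KG,\I}(f+q(u))$. Here I need two things: (i) that $q$ maps $\cX_{\sP,\sF}^{s',s',\eps}$ into $\cX_{\sF'}^{s'-1+\varkappa,s'-1+\varkappa,\eps}$ for an appropriate $\sF'$, using that $q$ is a polynomial with only terms $u^p$, $p\geq 2$, with coefficients in $\Hb^s(M)$, and that $\cX_{\sP,\sF}^{s',s',\eps}$ is a Banach algebra for $s'>n/2$ so that $u^p\in\cX_{\sP^p\cdots\sP,\ldots}^{s',s',\eps}$ with an index set that is still positive (this uses $m>0$, so $\sP$ positive, and the extended-union/product calculus of Lemma~\ref{LemmaLimIndex}); (ii) that after applying $S_{\KG,\I}$, which loses $\varkappa$ derivatives, one lands back in a space with $s'$ derivatives. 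For (ii) one needs $s'\geq(s'-1)+\varkappa$ on the nose, which fails by $\varkappa>0$; the fix is the standard bootstrap: first solve with fewer derivatives in a slightly decaying b-Sobolev space, then use propagation of singularities (Proposition~\ref{prop:b-normiso-propagation}) and the dilation-invariant lemma \cite[Lemma~3.1]{Va12} iteratively to recover the full regularity and the expansion, which is exactly why the hypothesis is $s\gg s'$ — the large $s$ in $q_p\in\Hb^s(M)$ and $f\in\cX_\sF^{s,s,\eps}$ provides the room to absorb the finitely many $\varkappa$-losses in the bootstrap (the number of steps being controlled by $\eps<1/2$ and the spacing of the poles, as in Lemma~\ref{LemmaLimIndex}). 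With these in hand, the polynomial structure (no linear term, $p\geq 2$) gives the Lipschitz estimate $\|Tu-Tv\|\leq\|S_{\KG,\I}\|L(R)\|u-v\|$ with $L(R)\to 0$ as $R\to 0$, and $T$ maps a small ball into itself once $\|f\|\leq C$ with $C$ small; the Banach fixed point theorem then yields the unique solution $u\in\cX_{\sP,\sF}^{s',s',\eps}$ with norm $\leq R$, and continuous dependence on $f$ follows exactly as in the proof of Theorem~\ref{ThmDSQu}.

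The main obstacle I expect is bookkeeping the derivative loss $\varkappa$ together with the weight/index-set structure: one must verify that the recursively generated index set $\sE_\infty(\sP,\sF,\eps)$ is still finite and positive (Lemma~\ref{LemmaLimIndex} handles this, but one must check the hypothesis $m>0$ makes $\sP$ positive and that taking powers $u^p$ with $p\geq 2$ only pushes $\Re z$ up), and that the bootstrap recovering $s'$ derivatives from the $\varkappa$-lossy estimate terminates — this is where the condition $s\gg s'$ must be made quantitative (a concrete bound on $s$ in terms of $s',\varkappa,\eps,\beta$ and the spacing of resonances, to be recorded as promised in the footnote). The normally isotropic spaces $\bnormiso^s$ only enter to get the clean forward solution operator and the triviality of kernel/cokernel; once that is in place the argument is parallel to the de Sitter case. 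I would also note, as in Remark~\ref{RemAPosterioriExp}, that an alternative is to first solve in a slightly decaying $\Hb^{s',\eps}$ (treating $q$ as a perturbation, using Theorem~\ref{ThmKdSQuNoninv}-type estimates or Theorem~\ref{thm:KdS-main} directly) and then derive the expansion a posteriori by iterating \cite[Lemma~3.1]{Va12}; this may in fact be the cleaner route and is the one the authors signal they will follow.
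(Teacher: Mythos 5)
Your primary route sets up the contraction directly on the expansion space $\cX_{\sP,\sF}^{s',s',\eps}$ using the dilation-invariant solution operator $S_{\KG,\I}$. But Theorem~\ref{ThmKdSPoly}, unlike Theorem~\ref{ThmKdSQu}, does \emph{not} assume dilation invariance, so $S_{\KG,\I}$ and the mapping property $S_{\KG,\I}\colon\cX_{\sF'}^{s'-1+\varkappa,\cdot,\eps}\to\cX_{\sP,\sF'}^{s',\cdot,\eps}$ are not available; this route at best proves a special case. (Incidentally, your claim that ``$s'\geq(s'-1)+\varkappa$ fails by $\varkappa>0$'' is an arithmetic slip: $\varkappa\leq 1$ by the post-Lemma~\ref{LemmaPhragmen} normalization, so $s'-1+\varkappa\leq s'$ and the contraction on $\Hb^{s',\eps}$ would close at fixed regularity in the dilation-invariant case with no bootstrap needed.) The argument you relegate to the last sentence as an ``alternative'' is in fact the paper's actual proof, and it is not just cosmetically different.

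What you are missing is the interpolation that makes the preliminary contraction close in the general case. The paper runs the fixed-point argument on $\Hb^{s,\delta}(\Omega)^{\bullet,-}$ with $\delta>0$ small — not on the expansion space, and not at weight $\eps$. Given $u\in\Hb^{s,\delta}$, one has $f+q(u)\in\Hb^{s,2\delta}\subset\Hb^s$, and then two separate estimates on $Tu=S_\KG(f+q(u))$: (a) the normally isotropic estimate (proof of Theorem~\ref{ThmPFredholm-norm-hyp}) gives $Tu\in\Hb^{s+1,r}$ for every $r<0$ — full elliptic gain, no positive weight — while (b) the normal operator argument (Theorem~\ref{thm:KdS-main} applied to $N(\cP)Tu=f+q(u)+(N(\cP)-\cP)Tu\in\Hb^{s-1,2\delta}$) gives $Tu\in\Hb^{s-\varkappa,2\delta}$ — the right weight at a $\varkappa$ loss. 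Since $\varkappa=c\delta$ with $\delta$ free, interpolating (a) and (b) lands $Tu$ back in $\Hb^{s,\delta}$ for $\delta$ small enough; that is the single step that absorbs the trapping loss, and your ``standard bootstrap'' gesture does not supply it. Only once the solution exists in $\Hb^{s,\delta}$ does one derive the expansion a posteriori, iterating Theorem~\ref{thm:KdS-main}: each pass roughly doubles the decay order at a cost of a bounded number of derivatives, reaching weight $\eps$ with regularity $s'$ after $O(\log_2(\eps/\delta))$ steps — this is the origin of $s\gg s'$ and of the quantitative bound promised in Footnote~\ref{FootSBound}. Your index-set bookkeeping (Lemma~\ref{LemmaLimIndex}, positivity of $\sP$) is correct and does transfer, but it enters only in this a-posteriori stage, not in the contraction itself.
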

\begin{proof}
  Let us write $\cP=\Box_g-m^2$. Let $\delta<1/2$ be such that $0<2\delta<\Re z$ for all $(z,0)\in\sF$, then $f\in\Hb^{s,2\delta}(\Omega)^{\bullet,-}$. Now, for $u\in\Hb^{s,\delta}(\Omega)^{\bullet,-}$, consider $Tu:=S_\KG(f+q(u))$. First of all, $f+q(u)\in\Hb^{s,2\delta}(\Omega)^{\bullet,-}\subset\Hb^s(\Omega)^{\bullet,-}$, thus the proof of Theorem~\ref{ThmPFredholm-norm-hyp} shows that we have $Tu\in\Hb^{s+1,r}(\Omega)^{\bullet,-}$, $r<0$ arbitrary. Therefore,
  \[
    N(\cP)u=f+q(u)+(N(\cP)-\cP)u\in\Hb^{s,2\delta}(\Omega)^{\bullet,-}+\Hb^{s-1,r+1}(\Omega)^{\bullet,-}\subset\Hb^{s-1,2\delta}(\Omega)^{\bullet,-},
  \]
  and thus if $\delta>0$ is sufficiently small, namely, $\delta<\inf\{-\Im\sigma_j\}/2$, Theorem~\ref{thm:KdS-main} implies $u\in\Hb^{s-\varkappa,2\delta}(\Omega)^{\bullet,-}$. Since we can choose $\varkappa=c\delta$ for some constant $c>0$, we obtain
  \[
    Tu\in\bigcap_{r>0}\Hb^{s+1,r}(\Omega)^{\bullet,-}\cap\Hb^{s-c\delta,2\delta}(\Omega)^{\bullet,-}\subset\bigcap_{r'>0}\Hb^{s,2\delta-2c\delta^2/(1+c\delta)-r'}(\Omega)^{\bullet,-}
  \]
  by interpolation. In particular, choosing $\delta>0$ even smaller if necessary, we obtain $Tu\in\Hb^{s,\delta}(\Omega)^{\bullet,-}$. Applying the Banach fixed point theorem to the map $T$ thus gives a solution $u\in\Hb^{s,\delta}(\Omega)^{\bullet,-}$ to the equation~\eqref{EqKdSPoly}.

  For this solution $u$, we obtain
  \[
    N(\cP)u=\cP u+(N(\cP)-\cP)u\in\Hb^{s,2\delta}+\Hb^{s-2,\delta+1}\subset\Hb^{s-2,2\delta}
  \]
  since $q$ only has quadratic and higher terms. Hence Theorem~\ref{thm:KdS-main} implies that $u=u_1+u'$, where $u_1$ is an expansion with terms coming from poles of $\widehat\cP^{-1}$ whose decay order lies between $\delta$ and $2\delta$, and $u'\in\Hb^{s-1-\varkappa,2\delta}(\Omega)^{\bullet,-}$. This in turn implies that $f+q(u)$ has an expansion with remainder term in $\Hb^{s-1-\varkappa,\min\{4\delta,\eps\}}(\Omega)^{\bullet,-}$, thus
  \[
    N(\cP)u\in\Hb^{s-3-\varkappa,\min\{4\delta,\eps\}}(\Omega)^{\bullet,-}\tn{ plus an expansion},
  \]
  and we proceed iteratively, until, after $k$ more steps, we have $4\cdot 2^k\delta\geq\eps$, and then $u$ has an expansion with remainder term $\Hb^{s-3-2k-\varkappa,\eps}(\Omega)^{\bullet,-}$ provided we can apply Theorem~\ref{thm:KdS-main} in the iterative procedure, i.e.\ provided $s-3-2k-\varkappa=:s'>\max(1/2+\beta\eps,n/2,1+\varkappa)$. This is satisfied if
  \begin{equation}
  \label{EqSBound}
    s>\max(1/2+\beta\eps,n/2,1+\varkappa)+2\lceil\log_2(\eps/\delta)\rceil+\varkappa-1.\qedhere
  \end{equation}
\end{proof}

\subsection{Semilinear equations with derivatives in the non-linearities}\label{sec:KdS-derivs}
Theorem~\ref{ThmPFredholm-norm-hyp} allows one to solve even
semilinear equations with derivatives in some cases. For instance,
in the case of de Sitter-Schwarzschild space,
within $\Sigma\cap\Sb^*_XM$, $\Gamma$ is given by $r=r_c$,
$\sigma_1(D_r)=0$, where $r_c=\frac{3}{2}r_s$ is the radius of the photon
sphere, see e.g.\ \cite[\S6.4]{Va12}. Thus,
non-linear terms such as $(r-r_c)(\pa_r u)^2$ are allowed for
$s>\frac{n}{2}+1$ since
$\pa_r:\bnormiso^s(M)\to\Hb^{s-1}(M)$, with the latter space being an
algebra, while multiplication by $r-r_c$ maps this space to
$\bnormiso^{*,s-1}$ by \eqref{eq:bnormiso-mapping}. Thus, a
straightforward modification of Theorem~\ref{ThmKdSQuNoninv},
applying the fixed point theorem on the normally isotropic spaces
directly, gives well-posedness.

%%%%%%%%%%%%%%%%%%%%%%%%%%%%%%%%%%%%%%%%%%%%%%%%%%%%%%%%%%%%%%%%%%%%%
\section{Asymptotically de Sitter spaces: global approach}
\label{SecDeSitter}

We can approach the problem of solving non-linear wave equations on
global asymptotically de Sitter spaces in two ways: Either, we proceed
as in the previous two sections, first showing invertibility of the
linear operator on suitable spaces and then applying the contraction
mapping principle to solve the non-linear problem; or we use the
solvability results from \S\ref{SecStaticDeSitter} for backward
light cones from points at future conformal infinity and glue the
solutions on all these `static' parts together to obtain a global
solution. The first approach, which we will follow in
\S\S\ref{SubsecGlobalDSLinear}-\ref{SubsecSemiPolyDS}, has the
disadvantage that the conditions on the non-linearity that guarantee
the existence of solutions are quite restrictive, however in case the
conditions are met, one has good decay estimates for solutions. The
second approach on the other hand, detailed in
\S\ref{SubsecGlobalFromStatic}, allows many of the
non-linearities, suitably reinterpreted, that work on `static parts'
of asymptotically de Sitter spaces (i.e.\ backward light cones), but the decay estimates for solutions are quite weak relative to the decay of the forcing term because of the gluing process.

%%%%%%%%%%%%%%%%%%%%%%%%%%%%%%%%%%%%%%%%%%%%%%%%%
\subsection{The linear framework}
\label{SubsecGlobalDSLinear}

Let $g$ be the metric on an $n$-dimensional asymptotically de Sitter space $X$ with global time function $t$ \cite{Va07}. Then, following \cite[Section 4]{Va12}, the operator\footnote{$P_\sigma$ in our notation corresponds to $P_{\bar\sigma}^*$ in \cite{Va12}, the latter operator being the one for which one solves the forward problem.}
\begin{equation}
\label{EqPSigma}
  P_\sigma=\mu^{-1/2}\mu^{i\sigma/2-(n+1)/4}\biggl(\Box_g-\Bigl(\frac{n-1}{2}\Bigr)^2-\sigma^2\biggr)\mu^{-i\sigma/2+(n+1)/4}\mu^{-1/2}
\end{equation}
extends non-degenerately to an operator on a closed manifold $\wt X$
which contains the compactification $\overline X$ of the
asymptotically de Sitter space as a submanifold with boundary $Y$,
where $Y=Y_-\cup Y_+$ has two connected components, which we call the
boundary of $X$ at past, resp.\ future, infinity. The expression
`non-degenerately' here means that near $Y_\pm$, $P_\sigma$ fits into
the framework of \cite{Va12}. Here, $\mu=0$ is the defining function of $Y$, and $\mu>0$ is the interior of the asymptotically de Sitter space. Moreover, null-bicharacteristics of $P_\sigma$ tend to $Y_\pm$ as $t\to\pm\infty$.

Following Vasy \cite{Va13}, let us in fact assume that $\wt
X=\overline{C_-}\cup\overline X\cup\overline{C_+}$ is the union of the
compactifications of asymptotically de Sitter space $X$ and two
asymptotically hyperbolic caps $C_\pm$; one might need to take two
copies of $X$ to construct $\wt X$ as explained in \cite{Va13}. For
the purposes of the next statement we recall that variable order
Sobolev spaces $H^s(\wt X)$ were discussed in \cite[Section 1, Appendix]{Ba13}. Then $P_\sigma$ is the restriction to $X$ of an operator $\wt P_\sigma\in\Diff^2(\wt X)$, which is Fredholm as a map
\[
  \wt P_\sigma\colon \wt\cX^s\to \wt\cY^{s-1},\quad \wt\cX^s=\{u\in H^s\colon\wt P_\sigma u\in H^{s-1}\},\quad \wt\cY^{s-1}=H^{s-1},
\]
where $s\in\CI(S^*\wt X)$, monotone along the bicharacteristic flow,
is such that $s|_{N^*Y_-}>1/2-\Im\sigma$, $s|_{N^*Y_+}<1/2-\Im\sigma$,
and $s$ is constant near $S^*Y_\pm$. Note that the choice of signs here
  is opposite to the one in \cite{Va13}, since here we are going to
  construct the forward solution operator on $X$. 

Restricting our attention to $X$, we define the space $H^s(X)^{\bullet,-}$ to be the completion in $H^s(X)$ of the space of $\CI$ functions that vanish to infinite order at $Y_-$; thus the superscripts indicate that distributions in $H^s(X)^{\bullet,-}$ are supported distributions near $Y_-$ and extendible distributions near $Y_+$. Then, define the spaces
\[
  \cX^s=\{u\in H^s(X)^{\bullet,-}\colon P_\sigma u\in H^{s-1}(X)^{\bullet,-}\},\quad \cY^{s-1}=H^{s-1}(X)^{\bullet,-}.
\]

\begin{thm}
\label{ThmGlobalDS-lin}
  Fix $\sigma\in\C$ and $s\in\CI(S^*\overline X)$ as above. Then $P_\sigma\colon\cX^s\to\cY^{s-1}$ is invertible, and $P_\sigma^{-1}\colon H^{s-1}(X)^{\bullet,-}\to H^s(X)^{\bullet,-}$ is the forward solution operator of $P_\sigma$.
\end{thm}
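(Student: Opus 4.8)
The idea is to deduce invertibility on the half-space-type spaces $\cX^s\to\cY^{s-1}$ over $X$ from the Fredholm statement for $\tilde P_\sigma$ on the closed manifold $\tilde X$ together with the structure of the complex absorption / asymptotically hyperbolic caps $C_\pm$, exactly in the spirit of \cite{Va13}, and then to identify the inverse with the forward solution operator. First I would recall that by the result of \cite{Va13}, restated above, $\tilde P_\sigma\colon\tilde\cX^s\to\tilde\cY^{s-1}$ is Fredholm of index $0$ (the index being $0$ because the variable order $s$ is the microlocally natural one and one can deform $\sigma$ within the strip to a value where invertibility is known; alternatively the caps $C_\pm$ are chosen so that the adjoint problem has the reversed order function and is set up symmetrically). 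Next I would observe that the choice of $s$ — namely $s|_{N^*Y_-}>1/2-\Im\sigma$ and $s|_{N^*Y_+}<1/2-\Im\sigma$, constant near $S^*Y_\pm$, monotone along the flow — is precisely the radial-point threshold condition that makes the problem on $\tilde X$ a \emph{forward} problem when restricted to $X$: the high regularity at $Y_-$ forces supported behavior there and the low regularity at $Y_+$ allows extendible behavior, matching the superscripts $\bullet,-$ in the definition of $H^s(X)^{\bullet,-}$.

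The key step is then the following localization argument. Given $f\in H^{s-1}(X)^{\bullet,-}$, extend it by $0$ across $Y_-$ into the cap $C_-$ (which is possible precisely because $f$ is a supported distribution at $Y_-$), and extend arbitrarily across $Y_+$ into $C_+$ as an element of $H^{s-1}(\tilde X)$; call the result $\tilde f$. By the Fredholm and (after the index-$0$ and triviality-of-kernel discussion) invertibility statement for $\tilde P_\sigma$, there is $\tilde u\in\tilde\cX^s$ with $\tilde P_\sigma\tilde u=\tilde f$. One then shows $\tilde u$ vanishes in $C_-$ up to $Y_-$: this is a uniqueness statement for the asymptotically hyperbolic problem on the cap $C_-$ with the given (high) regularity, which is where one uses that $\tilde P_\sigma$ restricted to $C_-$ is an asymptotically hyperbolic operator whose indicial roots do not obstruct this vanishing — equivalently, that $\sigma$ is not such that there is a nontrivial solution in the relevant space, and here one may invoke the absence of poles statement of \cite{Va13} / the results on asymptotically hyperbolic resolvents. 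Restricting $\tilde u$ to $X$ then produces $u\in\cX^s$ with $P_\sigma u=f$, giving surjectivity; injectivity follows from the same uniqueness argument applied on all of $\tilde X$ (a solution of $P_\sigma u=0$ in $\cX^s$ extends by $0$ into $C_-$, solves $\tilde P_\sigma\tilde u=0$ on the region $\overline X\cup\overline{C_-}$, hence vanishes by the cap uniqueness and the radial-point propagation estimates, so $u=0$). Boundedness of the inverse is then automatic from the closed graph theorem, $\cX^s$ and $\cY^{s-1}$ being Banach spaces (with the graph norm $\|u\|_{H^s}^2+\|P_\sigma u\|_{H^{s-1}}^2$ on $\cX^s$).

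Finally, to see that $P_\sigma^{-1}$ is the \emph{forward} solution operator I would argue as in the static de Sitter and Kerr–de Sitter sections: for $f$ supported in $\{t\geq T\}$ one checks $u=P_\sigma^{-1}f$ is supported in $\{t\geq T\}$ as well. This uses that $P_\sigma$ is, up to the conjugation in \eqref{EqPSigma}, a wave operator whose null-bicharacteristics tend to $Y_\pm$ as $t\to\pm\infty$, together with a standard energy estimate on $X$ (as in the proof of Theorem~\ref{ThmPFredholm} via the energy estimates of Lemma~\ref{lemma:energy-est-back}-type, or directly via \cite[Proposition~3.9]{Va12}), propagated from $Y_-$; since the solution in $\cX^s$ is unique, it must coincide with the forward one. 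The main obstacle is the cap uniqueness / no-resonance input: one must be sure that for the given $\sigma$ and variable order $s$ the homogeneous problem on $\overline{C_-}\cup\overline X$ with the prescribed (supported at $Y_-$) behavior has only the trivial solution, which is exactly the content that makes $\tilde P_\sigma$ invertible rather than merely Fredholm, and which is where the detailed analysis of \cite{Va13} and the asymptotically hyperbolic resolvent enters; everything else is a combination of the already-established radial point propagation (Proposition~\ref{prop:b-saddle} and its boundaryless analogue), real principal type propagation, and energy estimates.
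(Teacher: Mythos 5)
There is a genuine gap at the central step of your argument. You assume $\tilde P_\sigma$ is \emph{invertible} (``after the index-$0$ and triviality-of-kernel discussion'') and at the end you explicitly locate the main obstacle as establishing exactly that invertibility. But $\tilde P_\sigma^{-1}$ is only a \emph{meromorphic} family: at a discrete set of $\sigma$ it has poles, and the theorem is asserting invertibility of $P_\sigma\colon\cX^s\to\cY^{s-1}$ for \emph{every} $\sigma\in\C$, including those pole locations. Your construction---extend $f$ by zero into $C_-$, apply $\tilde P_\sigma^{-1}$, restrict to $X$---breaks down precisely at those $\sigma$. The missing ingredient is the observation the paper imports from \cite[Lemma~8.3]{Ba13}: the composite $R\circ\tilde P_\sigma^{-1}\circ E$, with $E$ extension by zero in $\overline{C_-}$ and $R$ restriction to $X$, is \emph{pole-free} as a meromorphic family even though $\tilde P_\sigma^{-1}$ is not, because the polar parts of $\tilde P_\sigma^{-1}$ have output supported in $\overline{C_+}$ (annihilated by $R$) and detect only data nontrivial on $\overline{C_-}$ (not produced by $E$). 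Identifying this composite with $P_\sigma^{-1}$, rather than passing through pointwise invertibility of $\tilde P_\sigma$, is what makes the statement true for all $\sigma$.

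Your subsidiary claim that $\tilde u=\tilde P_\sigma^{-1}\tilde f$ vanishes in $C_-$ --- equivalently, that the composite lands in $H^s(X)^{\bullet,-}$, i.e., in supported distributions at $Y_-$ --- is also not proved by what you write, and it cannot be reduced to a vague ``cap uniqueness'' without circularity with the resonance-freeness you invoke. The paper's actual mechanism is a meromorphic continuation trick you do not mention: for $\Re\sigma\gg0$ one pastes microlocal estimates near $\overline{C_-}$ with standard energy estimates away from $Y_-$ to get the coercive bound \eqref{EqGlobalDSE}, hence forward propagation of supports for $\tilde P_\sigma^{-1}$; then $\psi\tilde P_\sigma^{-1}\phi$, for cutoffs $\phi$ supported in $\{t\geq T_1\}$ and $\psi$ in $\{t\leq T_1\}$, vanishes for $\Re\sigma\gg0$ and therefore, by meromorphy, for all $\sigma$ with $\Im\sigma>1/2-s$. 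Without that continuation step (or a genuine replacement), your appeal to ``a standard energy estimate on $X$ propagated from $Y_-$'' has nothing to bite on for a fixed, arbitrary $\sigma$, where $\tilde P_\sigma$ need not be close to a wave operator in any useful sense.
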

\begin{proof}
  First, let us assume $\re\sigma\gg 0$ so semiclassical/large
  parameter estimates are applicable to $\wt P_\sigma$, and let $T_0\in\R$ be such
  that $s$ is constant in $\{t\leq T_0\}$. Then for any $T_1\leq T_0$,
  we can paste together microlocal energy estimates for $\wt P_\sigma$
  near $\overline{C_-}$ and standard energy estimates for the wave
  equation in $\{t\leq T_1\}$ away from $Y_-$ as in the derivation of
  Equation~(3.29) of \cite{Va12}, and thereby obtain
  \begin{equation}
  \label{EqGlobalDSE}
    \|u\|_{H^1(\{t\leq T_1\})}\lesssim \|\wt P_\sigma u\|_{H^0(\{t\leq T_1\})};
  \end{equation}
  thus, for $f\in\CI(\wt X)$, $\supp f\subset\{t\geq T_1\}$ implies $\supp\wt P_\sigma^{-1}f\subset\{t\geq T_1\}$. Choosing $\phi\in\CI_c(X)$ with support in $\{t\geq T_1\}$ and $\psi\in\CI(\wt X)$ with support in $\{t\leq T_1\}$, we therefore obtain $\psi\wt P_\sigma^{-1}\phi=0$. Since $\wt P_\sigma^{-1}$ is meromorphic, this continues to hold for all $\sigma\in\C$ such that $\Im\sigma>1/2-s$. Since $T_1\leq T_0$ is arbitrary, this, together with standard energy estimates on the asymptotically de Sitter space $X$, proves that $P_\sigma^{-1}$ propagates supports forward, provided $P_\sigma$ is invertible. Moreover, elements of $\ker\wt P_\sigma$ are supported in $\overline{C_+}$.
  
  The invertibility of $P_\sigma$ is a consequence of \cite[Lemma~8.3]{Ba13}, also see Footnote 15 there: Let $E\colon H^{s-1}(X)^{\bullet,-}\to H^{s-1}(\wt X)$ be a continuous extension operator that extends by $0$ in $\overline{C_-}$ and $R\colon H^s(\wt X)\to H^s(X)^{-,-}$ the restriction, then $R\circ\wt P_\sigma^{-1}\circ E$ does not have poles; and since
  \[
    \bigcup_{T_1\leq T_0} H^s(\{t>T_1\})^{\bullet,-} \subset H^s(X)^{\bullet,-}
  \]
  (where ($\bullet$) denotes supported distributions at $\{t=T_1\}$, resp.\ $Y_-$) is dense, $R\circ\wt P_\sigma^{-1}\circ E$ in fact maps into $H^s(X)^{\bullet,-}$, thus $P_\sigma^{-1}=R\circ\wt P_\sigma^{-1}\circ E$ indeed exists and has the claimed properties.
\end{proof}

In our quest for finding forward solutions of semilinear equations, we
restrict ourselves to a submanifold with boundary
$\Omega\subset\overline X$ containing and localized near future
infinity, so that we can work in fixed order Sobolev spaces; moreover,
it will be useful to measure the conormal regularity of solutions to
the linear equation at the conormal bundle of the boundary of $X$ at
future infinity more precisely.
So let $H^{s,k}(\wt X,Y_+)$ be the subspace of $H^s(\wt X)$ with $k$-fold regularity with respect to the $\Psi^0(\wt X)$-module $\mc M$ of first order \psdo{}s with principal symbol vanishing on $N^*Y_+$.
A result of Haber and Vasy,
\cite[Theorem~6.3]{Haber-Vasy:Radial}, with $s_0=1/2-\im\sigma$ in our
case, shows that $f\in H^{s-1,k}(\wt
X,Y_+)$, $\wt P_\sigma u=f$, $u$ a distribution, in fact imply that
$u\in H^{s,k}(\wt X,Y_+)$.
So if we let $H^{s,k}(\Omega)^{\bullet,-}$ denote the space of all
$u\in H^s(X)^{\bullet,-}$ which are restrictions to $\Omega$ of
functions in $H^{s,k}(\wt X,Y_+)$, supported in
$\Omega\cup\overline{C_+}$, the argument of
Theorem~\ref{ThmGlobalDS-lin} shows that we have a forward solution
operator
$S_\sigma\colon H^{s-1,k}(\Omega)^{\bullet,-}\to H^{s,k}(\Omega)^{\bullet,-}$, provided
\begin{equation}
\label{EqGlobalDSForwardCond}
  s<1/2-\Im\sigma.
\end{equation}

\subsubsection{The backward problem}
\label{SubsubsecDSBackward}

Another problem that we will briefly consider below is the backward problem, i.e.\ where one solves the equation on $X$ backward from $Y_+$, which is the same, up to relabelling, as solving the equation forward from $Y_-$. Thus, we have a backward solution operator $S_\sigma^-\colon H^{s-1,k}(\Omega)^{-,\bullet}\to H^{s,k}(\Omega)^{-,\bullet}$ (where $\Omega$ is chosen as above so that we can use constant order Sobolev spaces), provided $s>1/2-\Im\sigma$. Similarly to the above, ($-$) denotes extendible distributions at $\pa\Omega\cap X^\circ$ and ($\bullet$) supported distributions at $Y_+$; the module regularity is measured at $Y_+$.

%%%%%%%%%%%%%%%%%%%%%%%%%%%%%%%%%%%%%%%%%%%%%%%%%
\subsection{Algebra properties of $H^{s,k}(\Omega)^{\bullet,-}$}
\label{SubsecDeSitterAlgebra}

%%%%%%%%%%%%%%%%%%%%%%%%%%%%%%%%%%%%%%%

Let us call a polynomially bounded measurable function
$w\colon\R^n\to(0,\infty)$ a \emph{weight function}. For such a weight
function $w$, we define
\[
  H^{(w)}(\R^n)=\{u\in S'(\R^n)\colon w\wh u\in L^2(\R^n)\}.
\]
The following lemma is similar in spirit to, but different from, Strichartz'
result on Sobolev algebras \cite{Strichartz:Sobolev}; it is the basis
for the multiplicative properties of the more delicate spaces
considered below.

\begin{lemma}
\label{LemmaProductsWeights}
  Let $w_1,w_2,w$ be weight functions such that one of the quantities
  \begin{equation}
  \label{EqWeightConditionGeneral}
    \begin{split}
      M_+&:=\sup_{\xi\in\R^n} \int\left(\frac{w(\xi)}{w_1(\eta)w_2(\xi-\eta)}\right)^2\,d\eta \\
 	  M_-&:=\sup_{\eta\in\R^n} \int\left(\frac{w(\xi)}{w_1(\eta)w_2(\xi-\eta)}\right)^2\,d\xi \\
    \end{split}
  \end{equation}
  is finite. Then $H^{(w_1)}(\R^n)\cdot H^{(w_2)}(\R^n)\subset H^{(w)}(\R^n)$.
\end{lemma}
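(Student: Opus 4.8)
The statement is a weighted Plancherel/Schur-test estimate for the bilinear map given by multiplication, transported to the Fourier side where it becomes convolution. First I would reduce the claim to a bound on the Fourier transform of a product. If $u_1\in H^{(w_1)}(\R^n)$ and $u_2\in H^{(w_2)}(\R^n)$, then $\widehat{u_1 u_2}=(2\pi)^{-n}\widehat{u_1}*\widehat{u_2}$, so it suffices to show that
\[
  \xi\mapsto w(\xi)\int \widehat{u_1}(\eta)\widehat{u_2}(\xi-\eta)\,d\eta
\]
lies in $L^2(\R^n)$, with norm controlled by $\|w_1\widehat{u_1}\|_{L^2}\|w_2\widehat{u_2}\|_{L^2}$. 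Writing $v_1(\eta)=w_1(\eta)|\widehat{u_1}(\eta)|$ and $v_2(\zeta)=w_2(\zeta)|\widehat{u_2}(\zeta)|$, which are nonnegative $L^2$ functions of the respective norms, the integrand is bounded in absolute value by
\[
  \frac{w(\xi)}{w_1(\eta)w_2(\xi-\eta)}\,v_1(\eta)\,v_2(\xi-\eta),
\]
so the problem becomes showing that the integral operator with kernel $K(\xi,\eta)=\dfrac{w(\xi)}{w_1(\eta)w_2(\xi-\eta)}$ maps $L^2_\eta\otimes$(convolution structure) appropriately — more precisely, that $\bigl\|\,\xi\mapsto\int K(\xi,\eta)v_1(\eta)v_2(\xi-\eta)\,d\eta\,\bigr\|_{L^2_\xi}\le M^{1/2}\|v_1\|_{L^2}\|v_2\|_{L^2}$ where $M=\min(M_+,M_-)$.

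The core estimate is a Cauchy--Schwarz argument in the style of Schur's test. In the case $M_+<\infty$, for fixed $\xi$ I would write
\[
  \int K(\xi,\eta)v_1(\eta)v_2(\xi-\eta)\,d\eta
  \le\Bigl(\int K(\xi,\eta)^2\,d\eta\Bigr)^{1/2}\Bigl(\int v_1(\eta)^2 v_2(\xi-\eta)^2\,d\eta\Bigr)^{1/2}
  \le M_+^{1/2}\Bigl(\int v_1(\eta)^2 v_2(\xi-\eta)^2\,d\eta\Bigr)^{1/2};
\]
squaring, integrating in $\xi$, and applying Tonelli's theorem to the resulting double integral $\int\!\!\int v_1(\eta)^2 v_2(\xi-\eta)^2\,d\eta\,d\xi=\|v_1\|_{L^2}^2\|v_2\|_{L^2}^2$ gives the bound. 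In the case $M_-<\infty$ one runs the symmetric argument, splitting the kernel so that the Cauchy--Schwarz is taken with the $v_1(\eta)^2$ (or $v_2(\xi-\eta)^2$) factor paired against $K(\xi,\eta)^2$ and integrating the other factor first; alternatively one changes variables $\zeta=\xi-\eta$ to make the roles of $w_1$ and $w_2$ symmetric and reduce to the first case. I would also note at the outset that it suffices to treat $u_1,u_2$ with $\widehat{u_j}\ge 0$ (or pass to absolute values as above), and that polynomial boundedness of the weights guarantees all the integrals make sense as (possibly infinite, a priori) Lebesgue integrals before the finiteness of $M_\pm$ is invoked.

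I do not expect a serious obstacle here: the only mild subtlety is bookkeeping with the two cases $M_+$ versus $M_-$ and making sure the Cauchy--Schwarz split is done on the correct pair of factors in each case, plus a routine density/approximation remark if one wants to be careful that $u_1u_2$ is a priori a tempered distribution whose Fourier transform is given by the convolution formula (which holds since, e.g., $H^{(w_j)}\subset H^{-N}$ for $N$ large by polynomial boundedness, and one can mollify). The substance is entirely the Schur-test computation above, which is short; the lemma is essentially the $n$-dimensional, general-weight version of the classical fact that $H^s$ is an algebra for $s>n/2$ (take $w_1=w_2=w=\la\xi\ra^s$, where $M_\pm<\infty$ exactly when $2s>n$ forces the remaining factor $\la\xi\ra^{2s}\la\eta\ra^{-2s}\la\xi-\eta\ra^{-2s}$ to be integrable after using Peetre's inequality).
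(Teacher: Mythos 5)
Your proposal is correct and follows essentially the same route as the paper: transport to the Fourier side, insert and divide by $w_1(\eta)w_2(\xi-\eta)$, apply Cauchy--Schwarz with the pairing chosen according to which of $M_\pm$ is finite, and finish by Tonelli/Fubini and density of $\mathcal S$. The only small misstep is your alternative suggestion that the change of variables $\zeta=\xi-\eta$ reduces the $M_-$ case to the $M_+$ case: since $w$ depends on $\xi=\eta+\zeta$, that substitution does not turn $M_-$ into an instance of $M_+$ (indeed $M_+$ is already symmetric under $w_1\leftrightarrow w_2$), so the correct treatment of $M_-$ is the one you listed first, namely re-splitting the Cauchy--Schwarz and integrating in $\xi$ first.
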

\begin{proof}
  For $u,v\in S(\R^n)$, we use Cauchy-Schwarz to estimate
  \begin{align*}
    \|u&v\|_{H^{(w)}}^2=\int w(\xi)^2|\widehat{uv}(\xi)|^2\,d\xi \\
	 &=\int w(\xi)^2 \left(\int w_1(\eta)|\wh u(\eta)| w_2(\xi-\eta)|\wh v(\xi-\eta)| w_1(\eta)^{-1}w_2(\xi-\eta)^{-1}\,d\eta\right)^2\,d\xi \\
	 &\leq \int\left(\int\left(\frac{w(\xi)}{w_1(\eta)w_2(\xi-\eta)}\right)^2\,d\eta\right) \\
	 &\hspace{12ex}\times\left(\int w_1(\eta)^2|\wh u(\eta)|^2 w_2(\xi-\eta)^2|\wh v(\xi-\eta)|^2\,d\eta\right) d\xi \\
	 &\leq M_+\|u\|_{H^{(w_1)}}^2\|v\|_{H^{(w_2)}}^2
  \end{align*}
  as well as
  \begin{align*}
    \|u&v\|_{H^{(w)}}^2\leq\int\left(\int w_2(\xi-\eta)^2|\wh v(\xi-\eta)|^2\,d\eta\right) \\
	  &\hspace{12ex}\times\left(\int\left(\frac{w(\xi)}{w_1(\eta)w_2(\xi-\eta)}\right)^2 w_1(\eta)^2|\wh u(\eta)|^2\,d\eta\right)d\xi \\
	  &= \|v\|_{H^{(w_2)}}^2 \int w_1(\eta)^2|\wh u(\eta)|^2 \left(\int\left(\frac{w(\xi)}{w_1(\eta)w_2(\xi-\eta)}\right)^2\,d\xi\right)d\eta \\
	  &\leq M_-\|u\|_{H^{(w_1)}}^2\|v\|_{H^{(w_2)}}^2.
  \end{align*}
  Since $S(\R^n)$ is dense in $H^{(w_1)}(\R^n)$ and $H^{(w_2)}(\R^n)$, the lemma follows.
\end{proof}

In particular, if
\begin{equation}
\label{EqWeightCondition}\left\|\frac{w(\xi)}{w(\eta)w(\xi-\eta)}\right\|_{L^\infty_\xi L^2_\eta}<\infty,
\end{equation}
then $H^{(w)}$ is an algebra.

For example, the weight function $w(\xi)=\la\xi\ra^s$ for $s>n/2$
satisfies \eqref{EqWeightCondition} as we will check below, which implies that $H^s(\R^n)$ is
an algebra for $s>n/2$; this is the special case $k=0$ of
Lemma~\ref{LemmaYDerivatives} below, and is well-known, see e.g.\
\cite[Chapter~13.3]{Tay}.
Also, product-type weight functions $w_d(\xi)=\la\xi'\ra^s\la\xi''\ra^k$ (where $\xi=(\xi',\xi'')\in\R^{d+(n-d)}$) for $s>d/2,k>(n-d)/2$ satisfy \eqref{EqWeightCondition}.

The following lemma, together with the triangle inequality $\la\xi\ra^\alpha\lesssim\la\eta\ra^\alpha+\la\xi-\eta\ra^\alpha$ for $\alpha\geq 0$, will often be used to check conditions like \eqref{EqWeightConditionGeneral}.
\begin{lemma}
\label{LemmaFracEst}
  Suppose $\alpha,\beta\geq 0$ are such that $\alpha+\beta>n$. Then
  \[
    \int_{\R^n}\frac{d\eta}{\la\eta\ra^\alpha\la\xi-\eta\ra^\beta}\in L^\infty(\R^n_\xi).
  \]
\end{lemma}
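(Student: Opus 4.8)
The statement to prove is Lemma~\ref{LemmaFracEst}: for $\alpha,\beta\geq 0$ with $\alpha+\beta>n$, the function $\xi\mapsto\int_{\R^n}\la\eta\ra^{-\alpha}\la\xi-\eta\ra^{-\beta}\,d\eta$ is bounded on $\R^n$. The plan is to estimate the integral by splitting $\R^n$ into the region where $|\eta|$ is small compared to $|\xi|$, the region where $|\xi-\eta|$ is small compared to $|\xi|$, and the region where both $|\eta|$ and $|\xi-\eta|$ are comparable to $|\xi|$ (or all of them are bounded). On the first region, $\la\xi-\eta\ra\gtrsim\la\xi\ra$, so the integral is bounded by $\la\xi\ra^{-\beta}\int_{|\eta|\lesssim|\xi|}\la\eta\ra^{-\alpha}\,d\eta$; on the second region symmetrically by $\la\xi\ra^{-\alpha}\int_{|\xi-\eta|\lesssim|\xi|}\la\xi-\eta\ra^{-\beta}\,d\eta$; on the third region both factors are $\gtrsim\la\xi\ra$, giving a bound $\la\xi\ra^{-\alpha-\beta}$ times the volume of a ball of radius $\sim|\xi|$, which is $\la\xi\ra^{n-\alpha-\beta}$.

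The key step is then to bound each of these three contributions uniformly in $\xi$. The third term is immediately $\lesssim\la\xi\ra^{n-\alpha-\beta}\lesssim 1$ since $\alpha+\beta>n$. For the first term, one uses the elementary estimate $\int_{|\eta|\leq R}\la\eta\ra^{-\alpha}\,d\eta\lesssim 1$ if $\alpha>n$, and $\lesssim R^{n-\alpha}$ if $\alpha<n$ (with a logarithmic factor if $\alpha=n$); combined with the prefactor $\la\xi\ra^{-\beta}$ this gives $\lesssim\la\xi\ra^{-\beta}$ if $\alpha>n$, $\lesssim\la\xi\ra^{n-\alpha-\beta}$ if $\alpha<n$, and $\lesssim\la\xi\ra^{-\beta}\log\la\xi\ra$ if $\alpha=n$. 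In all three cases the exponent is negative (using $\alpha+\beta>n$ and $\beta\geq 0$, noting that the borderline $\beta=0$ forces $\alpha>n$ so only the first case occurs), hence the term is bounded. The second term is handled identically after the change of variables $\eta\mapsto\xi-\eta$. This proves boundedness.

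The main (and essentially only) obstacle is bookkeeping of the borderline cases $\alpha=n$ or $\beta=n$ and making sure that when one of the exponents vanishes the strict inequality $\alpha+\beta>n$ is used correctly; there is no analytic difficulty. I would present this as a short case analysis, perhaps recording the sharper scaling bound
\[
  \int_{\R^n}\frac{d\eta}{\la\eta\ra^\alpha\la\xi-\eta\ra^\beta}\lesssim
  \begin{cases}
    \la\xi\ra^{n-\alpha-\beta}, & \alpha<n,\ \beta<n,\\
    \la\xi\ra^{-\min(\alpha,\beta)}\log\la\xi\ra, & \alpha=n \text{ or } \beta=n,\\
    \la\xi\ra^{-\min(\alpha,\beta,\alpha+\beta-n)}, & \text{otherwise},
  \end{cases}
\]
though for the lemma as stated only the uniform boundedness is needed, and the decay will be convenient in the subsequent applications to checking conditions like \eqref{EqWeightConditionGeneral}.
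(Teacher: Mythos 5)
Your three-region decomposition is a legitimate strategy and, beyond the boundedness the lemma asserts, it records quantitative decay in $\la\xi\ra$; the paper does not need that extra information (every invocation of this lemma only uses uniform boundedness), but it is a reasonable thing to want. However, as written there is a gap in the treatment of the third region. You describe it as the set where \emph{both} $|\eta|$ and $|\xi-\eta|$ are comparable to $|\xi|$ and bound its measure by the volume of a ball of radius $\sim|\xi|$. But the complement of your first two regions is not bounded: when $|\eta|\geq 2|\xi|$ one has $|\xi-\eta|\sim|\eta|\gg|\xi|$ as well, so this far region lies in none of the three sets you describe, and a "max of integrand times volume of a ball" estimate cannot cover it. The fix is cheap — on $\{|\eta|\geq 2|\xi|\}$ one has $\la\xi-\eta\ra\gtrsim\la\eta\ra$, so the integrand is $\lesssim\la\eta\ra^{-\alpha-\beta}$ and $\int_{|\eta|\geq 2|\xi|}\la\eta\ra^{-\alpha-\beta}\,d\eta\lesssim\la\xi\ra^{n-\alpha-\beta}$ — but it does need to be said.

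The paper's proof avoids all of this bookkeeping. It splits $\R^n$ into $\{\la\eta\ra<\la\xi-\eta\ra\}$ and its complement; on the first set $\la\xi-\eta\ra^{-\beta}\leq\la\eta\ra^{-\beta}$, on the second $\la\eta\ra^{-\alpha}\leq\la\xi-\eta\ra^{-\alpha}$, and after the substitution $\eta\mapsto\xi-\eta$ in the second piece one obtains
\[
  \int_{\R^n}\frac{d\eta}{\la\eta\ra^\alpha\la\xi-\eta\ra^\beta}\leq 2\int_{\R^n}\frac{d\eta}{\la\eta\ra^{\alpha+\beta}}<\infty,
\]
with no reference to $|\xi|$, no scaling, and no case distinction between $\alpha<n$, $=n$, $>n$. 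If all you need is uniform boundedness, this is the way to do it; your approach is what one would use if the decay exponent mattered.
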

\begin{proof}
  Splitting the domain of integration into the two regions $\{\la\eta\ra<\la\xi-\eta\ra\}$ and $\{\la\eta\ra\geq\la\xi-\eta\ra\}$, we obtain the bound
  \[
    \int_{\R^n}\frac{d\eta}{\la\eta\ra^\alpha\la\xi-\eta\ra^\beta}\leq 2\int_{\R^n}\frac{d\eta}{\la\eta\ra^{\alpha+\beta}},
  \]
  which is finite in view of $\alpha+\beta>n$.
\end{proof}

Another important consequence of Lemma~\ref{LemmaProductsWeights} is that $H^{s'}(\R^n)$ is an $H^s(\R^n)$-module provided $|s'|\leq s$, $s>n/2$, which follows for $s'\geq 0$ from $M_+<\infty$, and for $s'<0$ either by duality or from $M_-<\infty$ (with $M_\pm$ as in the statement of the lemma, with the corresponding weight functions).

\begin{lemma}
\label{LemmaYDerivatives}
  Write $x\in\R^n$ as $x=(x',x'')\in\R^{d+(n-d)}$. For $s\in\R,k\in\N_0$, let
  \[
    \mc Y^{s,k}_d(\R^n)=\{u\in H^s(\R^n)\colon D_{x''}^k u\in H^s(\R^n)\}.
  \]
  Then for $s>d/2,s+k>n/2$, $\mc Y^{s,k}_d(\R^n)$ is an algebra.
\end{lemma}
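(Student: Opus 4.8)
The plan is to prove the algebra property of $\mc Y^{s,k}_d(\R^n)$ by realizing it as an $H^{(w)}$-space for a suitable weight function $w$ and then verifying the condition \eqref{EqWeightCondition} via Lemma~\ref{LemmaProductsWeights}. Concretely, the natural weight is
\[
  w(\xi)=\la\xi\ra^s(1+\la\xi''\ra^k)\sim\la\xi\ra^s+\la\xi\ra^s\la\xi''\ra^k,
\]
writing $\xi=(\xi',\xi'')$ dual to $x=(x',x'')$; one checks easily that $u\in\mc Y^{s,k}_d(\R^n)$ if and only if $w\hat u\in L^2$, i.e.\ $\mc Y^{s,k}_d(\R^n)=H^{(w)}(\R^n)$ with equivalence of norms, since $D_{x''}^k u\in H^s$ controls exactly the $\la\xi\ra^s\la\xi''\ra^k$ part. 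By Lemma~\ref{LemmaProductsWeights} (taking $w_1=w_2=w$), it then suffices to show
\[
  \int_{\R^n}\Bigl(\frac{w(\xi)}{w(\eta)w(\xi-\eta)}\Bigr)^2\,d\eta\in L^\infty_\xi.
\]

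The main step is this last estimate, and I expect it to be the principal (if routine) obstacle: one must exploit that $w$ is \emph{not} a product weight but a sum, so the decay in the "full" directions $\xi'$ is only $\la\xi\ra^s$ while the $\xi''$ directions additionally carry $\la\xi''\ra^k$. First I would bound the numerator using the triangle inequalities $\la\xi\ra^s\lesssim\la\eta\ra^s+\la\xi-\eta\ra^s$ and $\la\xi''\ra^k\lesssim\la\eta''\ra^k+\la\xi''-\eta''\ra^k$ (valid since $s,k\ge0$), so that
\[
  w(\xi)\lesssim\bigl(\la\eta\ra^s+\la\xi-\eta\ra^s\bigr)\bigl(1+\la\eta''\ra^k+\la\xi''-\eta''\ra^k\bigr),
\]
and expand. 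By the symmetry $\eta\leftrightarrow\xi-\eta$ it is enough to treat the terms in which $\la\eta\ra^s$ (rather than $\la\xi-\eta\ra^s$) appears; dividing by $w(\eta)w(\xi-\eta)\gtrsim\la\eta\ra^s\la\xi-\eta\ra^s\,\max(1,\la\eta''\ra^k)\max(1,\la\xi''-\eta''\ra^k)$ and absorbing, one is left to show that each of
\[
  \int\frac{d\eta}{\la\xi-\eta\ra^{2s}},\qquad
  \int\frac{\la\eta''\ra^{2k}\,d\eta}{\la\eta\ra^{2s}\la\xi-\eta\ra^{2s}\la\xi''-\eta''\ra^{2k}},\qquad
  \int\frac{d\eta}{\la\eta\ra^{2s}\la\xi-\eta\ra^{2s}}
\]
(and the analogous mixed terms) is bounded in $\xi$. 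The first is clearly \emph{not} integrable by itself — this is exactly the point where the hypothesis $s>d/2$ enters rather than $s>n/2$: I would integrate first in $\eta''\in\R^{n-d}$ using $2s+2k>n>n-d$ (via Lemma~\ref{LemmaFracEst} applied in $\R^{n-d}$, after bounding $\la\eta''\ra^{2k}/\la\eta\ra^{2s}\le\la\eta'\ra^{-2s}\cdot(\text{bounded})$ when $\la\eta''\ra\lesssim\la\eta'\ra$, and separately handling $\la\eta''\ra\gtrsim\la\eta'\ra$), which produces a factor $\la\eta'\ra^{-2s}$ or better, and then integrate in $\eta'\in\R^d$ using $2s>d$ together with Lemma~\ref{LemmaFracEst} in $\R^d$. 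Carrying this out case by case — splitting according to which of $\la\eta''\ra,\la\eta'\ra,\la\xi''-\eta''\ra$ dominates — gives the uniform bound in every term.

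Finally, since $S(\R^n)$ is dense in $H^{(w)}(\R^n)$, Lemma~\ref{LemmaProductsWeights} yields $H^{(w)}(\R^n)\cdot H^{(w)}(\R^n)\subset H^{(w)}(\R^n)$ with a bound $\|uv\|_{H^{(w)}}\le C\|u\|_{H^{(w)}}\|v\|_{H^{(w)}}$, which is precisely the assertion that $\mc Y^{s,k}_d(\R^n)$ is an algebra. The only genuinely delicate bookkeeping is the case analysis in the convolution integral above; everything else is formal. I note that the special case $d=n$ (so there is no $x''$, $k=0$) recovers the standard fact that $H^s(\R^n)$ is an algebra for $s>n/2$, consistent with the remark following the lemma.
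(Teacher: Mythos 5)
Your proposal is correct and, at its technical heart, is the same as the paper's: realize $\mc Y^{s,k}_d(\R^n)$ as the weighted $L^2$ space $H^{(w)}$ and bound the convolution integral of Lemma~\ref{LemmaProductsWeights} by introducing an intermediate exponent $s'\in(d/2,s)$, using $\la\xi-\eta\ra^{2s}\geq\la\xi'-\eta'\ra^{2s'}\la\xi''-\eta''\ra^{2(s-s')}$ so that the $\eta'$-integral is controlled by $s'>d/2$ and the $\eta''$-integral by Lemma~\ref{LemmaFracEst} via $s+k>n/2$. Two small points to clean up: (i) your $w(\xi)=\la\xi\ra^s(1+\la\xi''\ra^k)$ \emph{is} comparable to the product weight $\la\xi\ra^s\la\xi''\ra^k$ since $\la\xi''\ra\geq 1$, so the remark that $w$ is ``not a product weight but a sum'' is misleading and plays no role; (ii) your first displayed term $\int\la\xi-\eta\ra^{-2s}\,d\eta$ genuinely diverges whenever $d<n$ --- the factors $\la\eta''\ra^{-2k}\la\xi''-\eta''\ra^{-2k}$ coming from $w(\eta)w(\xi-\eta)$ must not be dropped there, and once retained the same $s'$-splitting you apply to the other terms makes it finite (and the hypotheses $s>d/2$, $s+k>n/2$ are exactly what make that work). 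The only structural difference from the paper's proof is that the paper first applies the Leibniz rule, reducing to the more general statement $\mc Y^{s,a}_d\cdot\mc Y^{s,b}_d\subset H^s$ for $a+b\geq k$, so that the numerator in the convolution estimate is just $\la\xi\ra^{2s}$ with no $\la\xi''\ra$-powers; this saves one triangle inequality and is the form reused in Proposition~\ref{PropHsk}, but for the present lemma your direct verification is equally valid.
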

\begin{proof}
Using the Leibniz rule, we see that it suffices to show: If $u,v\in\mc
Y^{s,k}_d$, then $D_{x''}^\alpha u D_{x''}^\beta v\in H^s$, provided
$|\alpha|+|\beta|\leq k$. Since $D_{x''}^\alpha u\in\mc
Y^{s,k-|\alpha|}_d$ and $D_{x''}^\beta v\in\mc Y^{s,k-|\beta|}_d$,
this amounts to showing that
\begin{equation}\label{eq:Ysa-mult-prop}
\mc Y^{s,a}_d\cdot \mc Y^{s,b}_d\subset H^s\ \text{if}\ a+b\geq k.
\end{equation}
Using the characterization $\mc Y^{s,a}_d=H^{(w)}$ for $w(\xi)=\la\xi\ra^s\la\xi''\ra^k$, Lemma~\ref{LemmaProductsWeights} in turn reduces this to the estimate
\begin{align*}
  \int&\frac{\la\xi\ra^{2s}}{\la\eta\ra^{2s}\la\eta''\ra^{2a}\la\xi-\eta\ra^{2s}\la\xi''-\eta''\ra^{2b}}\,d\eta \\
   &\lesssim \int\frac{d\eta}{\la\eta''\ra^{2a}\la\xi-\eta\ra^{2s}\la\xi''-\eta''\ra^{2b}}+\int\frac{d\eta}{\la\eta\ra^{2s}\la\eta''\ra^{2a}\la\xi''-\eta''\ra^{2b}} \\
   &\leq \int\frac{d\eta'}{\la\xi'-\eta'\ra^{2s'}} \int\frac{d\eta''}{\la\eta''\ra^{2a}\la\xi''-\eta''\ra^{2b+2(s-s')}} \\
   &\quad +\int\frac{d\eta'}{\la\eta'\ra^{2s'}}\int\frac{d\eta''}{\la\eta''\ra^{2a+2(s-s')}\la\xi''-\eta''\ra^{2b}},
\end{align*}
where we choose $d/2<s'<s$ such that $a+b+s-s'>(n-d)/2$, which holds if $k+s>(n-d)/2+s'$, which is possible by our assumptions on $s$ and $k$. The integrals are uniformly bounded in $\xi$: For the $\eta'$-integrals, this follows from $s'>d/2$; for the $\eta''$-integrals, we use Lemma~\ref{LemmaFracEst}.
\end{proof}

We shall now use this (non-invariant) result to prove algebra properties of spaces with iterated module regularity: Consider a compact manifold without boundary $X$ and a submanifold $Y$. Let $\mc M\supset\Psi^0(X)$ be the $\Psi^0(X)$-module of first order \psdo{}s whose principal symbol vanishes on $N^*Y$. For $s\in\R,k\in\N_0$, define
\[
  H^{s,k}(X,Y)=\{u\in H^s(X)\colon \mc M^k u\in H^s(X)\}.
\]

\begin{prop}
\label{PropHsk}
 Suppose $\dim(X)=n$ and $\codim(Y)=d$. Assume that $s>d/2$ and $s+k>n/2$. Then $H^{s,k}(X,Y)$ is an algebra.
\end{prop}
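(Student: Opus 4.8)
The strategy is to reduce the invariant statement to the local, Euclidean model computation already carried out in Lemma~\ref{LemmaYDerivatives}. First I would choose a finite cover of $X$ by coordinate charts $U_\alpha$ together with a subordinate partition of unity $\chi_\alpha$, arranged so that each chart either misses $Y$ entirely or is adapted to $Y$ in the sense that, in the coordinates $x=(x',x'')\in\R^{d}\times\R^{n-d}$ on $U_\alpha$, the submanifold $Y$ is cut out by $x'=0$ (so $N^*Y$ corresponds to covectors with $\xi''=0$, i.e.\ module elements lose their principal part exactly in the $x'$ directions --- one must be careful here that the roles of primed and unprimed match Lemma~\ref{LemmaYDerivatives}, where $\mc M$ vanishes on $N^*Y$ means vanishing at $\xi''=0$, hence $Y=\{x''=0\}$ in that lemma; I will align conventions accordingly). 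The key local fact is that in such an adapted chart, $\mc M$ is generated over $\Psi^0$ by $\Psi^0(X)$ together with the vector fields $\pa_{x'_i}$ and the multiplications by $x'_j$, or more simply that $H^{s,k}(X,Y)$ localizes to the space $\mc Y^{s,k}(\R^n)$ of Lemma~\ref{LemmaYDerivatives} (with primed/unprimed swapped): $u\mapsto D_{x}^\alpha u \in H^s$ for iterated application of at most $k$ module generators is equivalent, modulo lower-order/elliptic terms handled by $\Psi^0(X)$-boundedness on $H^s$, to membership in that Euclidean space.

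The main steps, in order, are: (1) establish the local characterization, i.e.\ that for $u$ supported in an adapted chart, $u\in H^{s,k}(X,Y)$ iff the coordinate representation lies in $\mc Y^{s,k}_d(\R^n)$, with equivalence of norms; this uses that $\mc M$ is a finitely generated module, that compositions and commutators of module elements stay within the module filtration (a standard fact about such test-module algebras, cf.\ the module regularity literature cited), and that elements of $\Psi^0(X)$ are bounded on $H^s$; (2) given $u,v\in H^{s,k}(X,Y)$, write $uv=\sum_{\alpha,\beta}(\chi_\alpha u)(\chi_\beta v)$; for a pair of charts with disjoint closures the product vanishes, and for overlapping charts one may work in a single adapted coordinate system, so the problem reduces to showing $(\chi_\alpha u)(\chi_\beta v)\in H^{s,k}(X,Y)$ locally; (3) apply Lemma~\ref{LemmaYDerivatives} (in the chart, with $d=\codim Y$, using exactly the hypotheses $s>d/2$ and $s+k>n/2$) to conclude the local product lies in $\mc Y^{s,k}_d$, hence back in $H^{s,k}(X,Y)$; (4) sum over the finitely many charts and invoke that $H^{s,k}(X,Y)$ is a (complete) vector space closed under the module action, so the finite sum is again in it, with the norm estimate $\|uv\|\lesssim\|u\|\,\|v\|$ following by tracking constants through the finitely many localizations.

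The step I expect to be the main obstacle is (1), the clean reduction to the Euclidean model --- specifically verifying that iterated module regularity $\mc M^k u\in H^s$ really is captured, uniformly and with norm equivalence, by finitely many coordinate derivative conditions of the form $D_{x}^\gamma u\in H^s$ with $|\gamma|\le k$ acting only in the directions transverse to... wait, here is the subtlety: the module $\mc M$ consists of first-order operators with symbol vanishing on $N^*Y$, and $N^*Y$ sits over $Y$ at the covectors conormal to $Y$; a generating set is given by $\Psi^0$, the vector fields tangent to $Y$, \emph{and} multiplication by defining functions of $Y$, so the ``transverse derivatives'' in Lemma~\ref{LemmaYDerivatives} correspond to the tangential-to-$Y$ directions plus weights, not to normal derivatives. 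Getting the bookkeeping of which variables are primed versus unprimed correct, and checking that the module generated this way is closed under commutators so that $\mc M^k$ is spanned by ordered products of generators (so one can reduce to applying the Leibniz rule as in Lemma~\ref{LemmaYDerivatives}), is where the real content lies; everything else is routine localization. I would handle this by citing the standard structural properties of the test module $\mc M$ (as in the work on propagation at radial points and conormal regularity referenced earlier), reducing (1) to a statement that is essentially definitional once the generators are identified.
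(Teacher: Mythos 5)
There is a genuine gap in step (1), which step (3) then inherits. In the local model $\R^d_{x'}\times\R^{n-d}_{x''}$ with $Y=\{x'=0\}$, the module $\mc M$ is generated over $\Psi^0$ by $\Id$, $\pa_{x''_i}$, and $x'_j\pa_{x'_k}$ --- so, besides the tangential derivatives, also the \emph{weighted} normal ones. The space $\mc Y^{s,k}_d$ of Lemma~\ref{LemmaYDerivatives} controls only iterates of $\pa_{x''}$. The local characterization you assert --- ``$u\in H^{s,k}(X,Y)$ iff the coordinate representation lies in $\mc Y^{s,k}_d$'' --- is therefore false: one only has the one-way inclusion $H^{s,k}\subset\mc Y^{s,k}_d$. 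Consequently step (3), which applies the algebra property $\mc Y^{s,k}_d\cdot\mc Y^{s,k}_d\subset\mc Y^{s,k}_d$ and concludes ``hence back in $H^{s,k}(X,Y)$,'' does not go through: landing in $\mc Y^{s,k}_d$ gives no control whatsoever on the iterated $x'_j\pa_{x'_k}$ derivatives of the product, and so does not place it in $H^{s,k}(X,Y)$. You flag the relevant subtlety (that the module contains more than tangential derivatives) near the end, but this does not get resolved; the proposed mechanism --- cite the algebra property of $\mc Y^{s,k}_d$ and pull back --- simply targets the wrong space.

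The correct route, which is what the paper does, is to apply the Leibniz rule to the \emph{full} local module $\mc M'=\langle\Id,\pa_{x''_i},x'_j\pa_{x'_k}\rangle$ \emph{first}. After distributing $k$ generators over $uv$, what remains to show is only that each cross term $\bigl((x')^{\tilde\alpha}D_{x'}^\alpha D_{x''}^\beta u\bigr)\bigl((x')^{\tilde\gamma}D_{x'}^\gamma D_{x''}^\delta v\bigr)$, $|\alpha|+|\beta|+|\gamma|+|\delta|\le k$, lies in plain $H^s$ --- not in a module-regular space. Since those two factors lie in $H^{s,a}$ and $H^{s,b}$ with $a+b\ge k$, this reduces to $H^{s,a}\cdot H^{s,b}\subset H^s$, and here the one-way inclusion $H^{s,c}\subset\mc Y^{s,c}_d$ plus the multiplicativity $\mc Y^{s,a}_d\cdot\mc Y^{s,b}_d\subset H^s$ established in \eqref{eq:Ysa-mult-prop} inside the proof of Lemma~\ref{LemmaYDerivatives} is exactly what is needed. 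In short, Lemma~\ref{LemmaYDerivatives} supplies only a one-sided upper bound with target $H^s$; the module regularity of the product is recovered by the Leibniz rule, not by identifying $H^{s,k}$ with $\mc Y^{s,k}_d$.
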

\begin{proof}
 Away from $Y$, $H^{s,k}(X,Y)$ is just $H^{s+k}(X)$, which is an algebra since $s+k>\dim(X)/2$. Thus, since the statement is local, we may assume that we have a product decomposition near $Y$, namely $X=\R_{x'}^d\times\R_{x''}^{n-d}$, $Y=\{x'=0\}$, and that we are given arbitrary $u,v\in H^{s,k}(X,Y)$ with compact support close to $(0,0)$ for which we have to show $uv\in H^{s,k}(X,Y)$. Notice that for $f\in H^s(X)$ with such small support, $f\in H^{s,k}(X,Y)$ is equivalent to $\mc M'^k f\in H^s(X)$, where $\mc M'$ is the $C^\infty(M)$-module of differential operators generated by $\mathrm{Id},\partial_{x''_i},x'_j\partial_{x'_k}$, where $1\leq i\leq n-d,1\leq j,k\leq d$.

 Thus the proposition follows from the following statement: For $s,k$ as in the statement of the proposition,
 \[
   H^{s,k}(\R^n,\R^{n-d}):=\{u\in H^s(\R^n)\colon (x')^{\wt\alpha}D_{x'}^\alpha D_{x''}^\beta u\in H^s(\R^n), |\wt\alpha|=|\alpha|,|\alpha|+|\beta|\leq k\}
 \]
 is an algebra. Using the Leibniz rule, we thus have to show that
 \begin{equation}
 \label{EqProduct}((x')^{\wt\alpha}D_{x'}^\alpha D_{x''}^\beta u)((x')^{\wt\gamma}D_{x'}^\gamma D_{x''}^{\delta} v)\in H^s,
 \end{equation}
 provided
 $|\wt\alpha|=|\alpha|,|\wt\gamma|=|\gamma|,|\alpha|+|\beta|+|\gamma|+|\delta|\leq
 k$. Since the two factors in \eqref{EqProduct} lie in
 $H^{s,k-|\alpha|-|\beta|}$ and $H^{s,k-|\gamma|-|\delta|}$,
 respectively, this amounts to showing that $H^{s,a}\cdot
 H^{s,b}\subset H^s$ for $a+b\geq k$. This however is easy to see,
 since $H^{s,c}\subset\mc Y^{s,c}_d$ for all $c\in\N_0$, and $\mc
 Y^{s,a}_d\cdot\mc Y^{s,b}_d\subset H^s$ was proved in
 \eqref{eq:Ysa-mult-prop}.
\end{proof}

In order to be able to obtain sharper results for particular non-linear equations in \S\ref{subsec:semi-dS}, we will now prove further results in the case $\codim(Y)=1$, which we will assume to hold from now on; also, we fix $n=\dim(X)$.

\begin{prop}
\label{PropHskModule}
  Assume that $s>1/2$ and $k>(n-1)/2$. Then $H^{s,k}(X,Y)\cdot H^{s-1,k}(X,Y)\subset H^{s-1,k}(X,Y)$.
\end{prop}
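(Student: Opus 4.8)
The plan is to reduce, exactly as in the proof of Proposition~\ref{PropHsk}, to a local product-type computation in $\R^n$ and then apply Lemma~\ref{LemmaProductsWeights} with suitable (non-isotropic) weight functions. Away from $Y$ the spaces $H^{s,k}(X,Y)$ and $H^{s-1,k}(X,Y)$ are just $H^{s+k}$ and $H^{s+k-1}$, and since $s+k>n/2$ we have the module property $H^{s+k}\cdot H^{s+k-1}\subset H^{s+k-1}$ by the discussion following Lemma~\ref{LemmaYDerivatives} (here $|s+k-1|\leq s+k$ because $s+k>n/2\geq 1/2$, say, so the relevant $M_\pm$ are finite); localizing, it therefore suffices to work in a product decomposition $X=\R_{x'}\times\R^{n-1}_{x''}$, $Y=\{x'=0\}$, with $u\in H^{s,k}$, $v\in H^{s-1,k}$ of small compact support, and to show $uv\in H^{s-1,k}(X,Y)$. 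As in Proposition~\ref{PropHsk}, for such small supports membership in $H^{\bullet,k}(X,Y)$ is equivalent to $(x')^{\tilde\alpha}D_{x'}^\alpha D_{x''}^\beta(\cdot)\in H^\bullet$ for all $|\tilde\alpha|=|\alpha|$, $|\alpha|+|\beta|\leq k$; applying the Leibniz rule to such a derivative of $uv$, the two factors land in $H^{s,k-|\alpha_1|-|\beta_1|}$ and $H^{s-1,k-|\alpha_2|-|\beta_2|}$ respectively, so everything reduces to the claim
\[
  H^{s,a}(\R^n,\R^{n-1})\cdot H^{s-1,b}(\R^n,\R^{n-1})\subset H^{s-1}(\R^n),\qquad a+b\geq k.
\]

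Next, using $H^{s,c}\subset\mc Y^{s,c}_1$, this follows from $\mc Y^{s,a}_1\cdot\mc Y^{s-1,b}_1\subset H^{s-1}$; writing these as $H^{(w_1)}$, $H^{(w_2)}$ with $w_1(\xi)=\la\xi\ra^s\la\xi''\ra^a$, $w_2(\xi)=\la\xi\ra^{s-1}\la\xi''\ra^b$ and target weight $w(\xi)=\la\xi\ra^{s-1}$, we must verify finiteness of one of $M_\pm$ in \eqref{EqWeightConditionGeneral}. I would check $M_+$: bound $\la\xi\ra^{s-1}\lesssim\la\eta\ra^{s-1}+\la\xi-\eta\ra^{s-1}$, split accordingly, and in each of the two resulting integrals further split $\R^n_\eta=\R_{\eta'}\times\R^{n-1}_{\eta''}$ and apply the triangle inequality in the $\eta''$-variable, choosing an auxiliary exponent $1/2<s'<s$ so that the $\eta'$-integral converges (needs $s'>1/2$, available since $s>1/2$) and the $\eta''$-integral converges by Lemma~\ref{LemmaFracEst} (needs the two exponents to sum to more than $n-1$, which one arranges from $a+b\geq k>(n-1)/2$ together with the leftover power $s-s'$ of $\la\xi-\eta\ra$ or $\la\eta\ra$; more precisely one wants $k+(s-s')>(n-1)/2$, i.e. $s'<k+s-(n-1)/2$, which is compatible with $s'<s$ and $s'>1/2$ precisely because $k>(n-1)/2$). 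The bookkeeping here is essentially identical to the chain of inequalities displayed in the proof of Lemma~\ref{LemmaYDerivatives}, with one factor of $\la\cdot\ra^s$ replaced by $\la\cdot\ra^{s-1}$ throughout and the target weight lowered by one; I would present it as a short display of the same form.

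The only subtlety — and the step I expect to require a little care rather than being purely routine — is the case distinction on which of the Leibniz factors carries the "low" regularity $s-1$ versus $s$: when the derivative falls mostly on $u$ (so we get $H^{s,a}\cdot H^{s-1,b}$ with $a$ small) versus mostly on $v$, one must make sure the auxiliary exponent $s'$ and the split of the triangle inequality are chosen so that it is always the $\la\cdot\ra^{s-1}$-factor, not the $\la\cdot\ra^s$-factor, that gets the small exponent in the $\eta'$-integral; since $s-1$ could be negative if $s<1$, one uses instead the alternative bound via $M_-$ (equivalently, duality: $H^{s,a}\cdot H^{s-1,b}\subset H^{s-1,b}$ follows from the module property of $H^{s-1}$ over $H^s$ when $|s-1|\leq s$, i.e. $s\geq 1/2$, upgraded with module regularity). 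In fact the cleanest route, avoiding sign issues altogether, is: first prove $\mc Y^{s,a}_1\cdot\mc Y^{s-1,b}_1\subset H^{s-1}$ by the $M_-$ version of Lemma~\ref{LemmaProductsWeights} (integrating $d\xi$ rather than $d\eta$), where the target weight $\la\xi\ra^{s-1}$ is dominated by $\la\eta\ra^{s-1}\la\xi-\eta\ra^{s-1}$ up to a harmless factor whenever $s\geq 1$, and handle $1/2<s<1$ separately by interpolating with the already-established case $s=1$. Granting Lemma~\ref{LemmaProductsWeights}, Lemma~\ref{LemmaFracEst}, Lemma~\ref{LemmaYDerivatives} and the localization argument of Proposition~\ref{PropHsk}, the proof is then complete.
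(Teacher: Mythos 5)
Your overall strategy --- localize, reduce via the Leibniz rule to $\cY^{s,a}_1\cdot\cY^{s-1,b}_1\subset H^{s-1}$ for $a+b\geq k$, and estimate via Lemma~\ref{LemmaProductsWeights} --- matches the paper, and your $M_+$ computation handles $s\geq 1$ essentially as the paper does. You also correctly flag the obstruction for $s<1$: the bound $\la\xi\ra^{s-1}\lesssim\la\eta\ra^{s-1}+\la\xi-\eta\ra^{s-1}$ requires $s\geq 1$. But neither of the fixes you propose for $1/2<s<1$ closes the gap.

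Your ``cleanest route'' --- a pure $M_-$ bound for $s\geq 1$ followed by interpolation --- fails already at the $M_-$ step. After dominating $\la\xi\ra^{s-1}\lesssim\la\eta\ra^{s-1}\la\xi-\eta\ra^{s-1}$, the integrand in $M_-=\sup_\eta\int(\cdots)\,d\xi$ has no decay left in the conormal covariable $\xi_1$ (the factor $\la\xi-\eta\ra^{-2(s-1)}$ is cancelled, and $a,b$ only weight $\xi',\eta'$), so $\int d\xi_1$ diverges; in fact, setting $\eta=0$ shows directly that $M_-=\infty$ for every $s>1/2$, and one checks similarly that $M_+=\infty$ once $s\leq 3/4$. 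There is therefore an honest range of $s$ on which Lemma~\ref{LemmaProductsWeights}, in either the $M_+$ or the $M_-$ form, is simply not applicable. This also blocks the appeal to ``the module property of $H^{s-1}$ over $H^s$'': on $\R^n$ that module property needs $s>n/2$, not $s>1/2$, and the compensating $k>(n-1)/2$ tangential derivatives cannot be absorbed into the isotropic statement --- that non-isotropic compensation is exactly what has to be proved. The interpolation step is likewise unsubstantiated: there is no second endpoint in $s$ at which the target inclusion is established (the claim degenerates as $s\downarrow 1/2$), so neither bilinear complex interpolation nor freezing one factor and interpolating the other gets off the ground. For $1/2<s\leq 1$ the paper instead runs a genuinely anisotropic hybrid: Cauchy--Schwarz only in the tangential covariable $\eta'$ (where both factors carry $k>(n-1)/2$ derivatives, the $M_+$-flavored direction), combined with a second Cauchy--Schwarz and a Schur-type kernel bound in the conormal covariable $\eta_1$ (the $M_-$-flavored direction). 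You would need to supply that mixed estimate, or an equivalent, to complete the proof in the range $1/2<s<1$.
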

\begin{proof}
  Using the Leibniz rule, this follows from $\cY_1^{s,a}\cdot\cY_1^{s-1,b}\subset H^{s-1}$ for $a+b\geq k$. This, as before, can be reduced to the local statement on $\R^n=\R_{x_1}\times\R_{x'}^{n-1}$ with $Y=\{x_1=0\}$. We write $\xi=(\xi_1,\xi')\in\R^{1+(n-1)}$ and $\eta=(\eta_1,\eta')\in\R^{1+(n-1)}$. By Lemma~\ref{LemmaProductsWeights}, the case $s\geq 1$ follows from the estimate
  \begin{align*}
    \int&\frac{\la\xi\ra^{2(s-1)}}{\la\eta\ra^{2s}\la\eta'\ra^{2a} \la\xi-\eta\ra^{2(s-1)}\la\xi'-\eta'\ra^{2b}}\,d\eta \\
	  &\lesssim \int\frac{d\eta}{\la\eta\ra^2 \la\eta'\ra^{2a} \la\xi-\eta\ra^{2(s-1)}\la\xi'-\eta'\ra^{2b}}+\int\frac{d\eta}{\la\eta\ra^{2s}\la\eta'\ra^{2a}\la\xi'-\eta'\ra^{2b}} \\
	  &\leq 2\int\frac{d\eta_1}{\la\eta_1\ra^{2s}} \int\frac{d\eta'}{\la\eta'\ra^{2a}\la\xi'-\eta'\ra^{2b}} \in L^\infty_\xi
  \end{align*}
  by Lemma~\ref{LemmaFracEst}.

  If $1/2<s\leq 1$, then $\xi_1$ and $\xi'$ play different
  roles. Indeed, the background regularity to be proved is $H^{s-1}$,
  $s-1\leq 0$, thus the continuity of multiplication in the conormal
  direction to $Y$ is proved by `duality' (i.e.\ using
  Lemma~\ref{LemmaProductsWeights} with $M_-<\infty$), whereas the
  continuity in the tangential (to $Y$) directions, where both factors
  have $k>(n-1)/2$ derivatives, is proved directly (i.e.\ using
  Lemma~\ref{LemmaProductsWeights} with $M_+<\infty$). So let $u\in\cY_1^{s,a}$, $v\in\cY_1^{s-1,b}$, and put
  \[
    u_0(\xi)=\la\xi\ra^s\la\xi'\ra^a u(\xi)\in L^2(\R^n),\quad v_0(\xi)=\la\xi\ra^{s-1}\la\xi'\ra^b v(\xi)\in L^2(\R^n).
  \]
  Then
  \[
    \la\xi\ra^{s-1}\widehat{uv}(\xi)=\int\frac{\la\eta\ra^{1-s}}{\la\xi\ra^{1-s}\la\eta'\ra^b\la\xi-\eta\ra^s\la\xi'-\eta'\ra^a} u_0(\xi-\eta)v_0(\eta)\,d\eta,
  \]
  hence by Cauchy-Schwarz and Lemma~\ref{LemmaFracEst}
  \begin{align*}
    \int&\la\xi\ra^{2(s-1)}|\widehat{uv}(\xi)|^2\,d\xi \\
	  &\leq \int\left(\int\frac{d\eta'}{\la\eta'\ra^{2b}\la\xi'-\eta'\ra^{2a}}\right)\left(\int \left| \int\frac{\la\eta\ra^{1-s}}{\la\xi\ra^{1-s}\la\xi-\eta\ra^s} u_0(\xi-\eta)v_0(\eta)\,d\eta_1\right|^2\,d\eta'\right) d\xi \\
	  &\lesssim \iint \left(\int |u_0(\xi-\eta)|^2\,d\eta_1\right) \left(\int\frac{\la\eta\ra^{2(1-s)}}{\la\xi\ra^{2(1-s)}\la\xi-\eta\ra^{2s}}|v_0(\eta)|^2\,d\eta_1\right) \,d\eta'\,d\xi \\
	  &\lesssim \iint \|u_0(\cdot,\xi'-\eta')\|_{L^2}^2 |v_0(\eta)|^2 \\
	  &\hspace{12ex}\times \left(\int\frac{1}{\la\xi-\eta\ra^{2s}}+\frac{1}{\la\xi\ra^{2(1-s)}\la\xi-\eta\ra^{2(2s-1)}}\,d\xi_1\right) \,d\xi'\,d\eta \\
	  &\lesssim \|u\|_{\cY_1^{s,a}}^2\|v\|_{\cY_1^{s-1,b}}^2,
  \end{align*}
  since $1/2<s\leq 1$, thus $1-s\geq 0$ and $2s-1>0$, and the $\xi_1$-integral is thus bounded from above by
  \[
    \int\frac{1}{\la\xi_1-\eta_1\ra^{2s}}+\frac{1}{\la\xi_1\ra^{2(1-s)}\la\xi_1-\eta_1\ra^{2(2s-1)}}\,d\xi_1 \in L^\infty_{\eta_1}.
  \]
  The proof is complete.
\end{proof}

For semilinear equations whose non-linearity does not involve any derivatives, one can afford to lose derivatives in multiplication statements. We give two useful results in this context, the first being a consequence of Proposition~\ref{PropHskModule}.

\begin{cor}
\label{CorMuMult}
  Let $\mu\in\CI(X)$ be a defining function for $Y$,
  i.e.\ $\mu|_Y\equiv 0$, $d\mu\neq 0$ on $Y$, and $\mu$ vanishes on $Y$ only. Suppose $s>1/2$ and $\ell\in\C$ are such that $\Re\ell+3/2>s$. Then multiplication by $\mu_+^\ell$ defines a continuous map $H^{s,k}(X,Y)\to H^{s-1,k}(X,Y)$ for all $k\in\N_0$.
\end{cor}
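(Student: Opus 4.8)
The plan is to reduce the statement to a local multiplication estimate on $\R^n$ via the same mechanism used in Propositions~\ref{PropHsk} and \ref{PropHskModule}: away from $Y$ the space $H^{s,k}(X,Y)$ is just $H^{s+k}(X)$, and since $\Re\ell\geq 0$ (as $\Re\ell>s-3/2>-1$, but in fact the hypothesis $\Re\ell+3/2>s>1/2$ gives $\Re\ell>-1$; for the interesting case one has $\Re\ell\geq 0$, and if $-1<\Re\ell<0$ one reduces to this case by writing $\mu_+^\ell=\mu\cdot\mu_+^{\ell-1}$ with $\Re(\ell-1)>-1$ still and iterating, or simply by noting $\mu_+^\ell$ is then still an $L^\infty$ multiplier up to an $H^1$-regular factor) multiplication by $\mu_+^\ell$ is harmless microlocally away from $N^*Y$: it is a classical conormal symbol of order $0$, hence preserves $H^{s+k}$. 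So the content is entirely at $Y$, where in a product decomposition $X=\R_{x_1}\times\R^{n-1}_{x'}$, $Y=\{x_1=0\}$, $\mu=x_1$, we must show that multiplication by $(x_1)_+^\ell$ maps $H^{s,k}(\R^n,\R^{n-1})\to H^{s-1,k}(\R^n,\R^{n-1})$.

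Next I would use the Leibniz rule together with the module generators $\mathrm{Id},\partial_{x'_i},x_1\partial_{x_1}$, exactly as in Proposition~\ref{PropHsk}: since $x_1\partial_{x_1}\bigl((x_1)_+^\ell\bigr)=\ell(x_1)_+^\ell$, the generators $x_1\partial_{x_1}$ and $\partial_{x'_i}$ applied to $(x_1)_+^\ell u$ produce only terms of the form (bounded constant)$\cdot(x_1)_+^\ell(\text{module derivative of }u)$. Thus it suffices to prove the $k=0$ statement with uniform constants, i.e.\ that multiplication by $(x_1)_+^\ell$ is continuous $H^s(\R^n)\to H^{s-1}(\R^n)$ when $s>1/2$ and $\Re\ell+3/2>s$, because then applying it to each $\mathcal M^j u\in H^s$, $j\le k$, gives $\mathcal M^j\bigl((x_1)_+^\ell u\bigr)\in H^{s-1}$. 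Here one only needs the behavior in the single variable $x_1$, tensored with the identity in $x'$; so really one reduces to the one-dimensional claim: $(x_1)_+^\ell\colon H^s(\R)\to H^{s-1}(\R)$ is bounded for $1/2<s<\Re\ell+3/2$.

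The one-dimensional estimate is where the real work lies, and I expect it to be the main obstacle. The Fourier transform of $(x_1)_+^\ell$ (suitably cut off near $x_1=0$, the smooth tail being irrelevant) behaves like $|\xi_1|^{-\Re\ell-1}$ near infinity, so the multiplication becomes, on the Fourier side, a convolution against a kernel decaying like $\la\eta_1\ra^{-\Re\ell-1}$. One then estimates, via Lemma~\ref{LemmaProductsWeights} applied with weights $w_1(\xi_1)=\la\xi_1\ra^s$, $w(\xi_1)=\la\xi_1\ra^{s-1}$ and $w_2$ playing the role of the (inverse of the) convolution kernel, the quantity
\[
  \sup_{\xi_1}\int \frac{\la\xi_1\ra^{2(s-1)}}{\la\eta_1\ra^{2s}\,\la\xi_1-\eta_1\ra^{2(\Re\ell+1)}}\,d\eta_1 \qquad\text{or its $M_-$ analogue,}
\]
which is finite precisely when $s>1/2$ (from the $\la\eta_1\ra^{-2s}$ factor being integrable) and $s+\Re\ell+1 - (s-1) = \Re\ell+2 > 1/2$, i.e.\ $\Re\ell>-3/2$, together with the matching near $\xi_1=\eta_1$ after distributing $\la\xi_1\ra^{s-1}\lesssim\la\eta_1\ra^{s-1}+\la\xi_1-\eta_1\ra^{s-1}$; the precise bookkeeping of which of $\xi_1,\eta_1$ carries which power, and using $s-1\le 0$ so that $\la\xi_1-\eta_1\ra^{s-1}\le 1$, yields exactly the condition $\Re\ell+3/2>s$. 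The delicate point, as in the case $1/2<s\le 1$ of Proposition~\ref{PropHskModule}, is that when $s<1$ one cannot freely apply the triangle inequality to $\la\xi_1\ra^{s-1}$ with a positive exponent, so one should instead argue by the $M_-$ (duality) side of Lemma~\ref{LemmaProductsWeights}, or equivalently test against $H^{1-s}(\R)$ and use $1-s<1/2+\Re\ell$; Lemma~\ref{LemmaFracEst} then handles the resulting single integral. Finally, reassembling via the Leibniz computation and patching with the elliptic region away from $Y$ using a partition of unity gives the global statement on $X$.
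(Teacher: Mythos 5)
Your overall plan — reduce via the module Leibniz rule and then control the multiplication on the Fourier side — is in the same spirit as the paper, but there is a genuine gap in the key Fourier-side step, and the paper in fact proceeds differently: it simply observes that $\mu_+^\ell\in H^{\Re\ell+1/2-\eps,\infty}(X,Y)\subset H^{s-1,\infty}(X,Y)$ for small $\eps$ (this is precisely where $\Re\ell+3/2>s$ is used) and then cites Proposition~\ref{PropHskModule}, i.e.\ $H^{s,k}\cdot H^{s-1,k}\subset H^{s-1,k}$, rather than re-deriving any convolution estimate. Your version of the estimate misapplies Lemma~\ref{LemmaProductsWeights}: you take $w_2(\xi_1)=\la\xi_1\ra^{\Re\ell+1}$, i.e.\ the reciprocal of the \emph{pointwise} decay rate of $\widehat{\chi x_+^\ell}$, but that lemma needs $w_2\hat v\in L^2$, and since $|\hat v(\xi_1)|\sim\la\xi_1\ra^{-\Re\ell-1}$ this fails — $\chi x_+^\ell$ lies in $H^{\Re\ell+1/2-\eps}$, not in $H^{\Re\ell+1}$, a discrepancy of exactly $1/2$ coming from the $L^2$ normalization. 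If one instead uses the pointwise bound directly, the relevant test is the $L^1$ Schur test, and that yields only $\Re\ell>s-1$, which is a strictly stronger hypothesis than $\Re\ell>s-3/2$; so the fact that your display "yields exactly $\Re\ell+3/2>s$" is a numerical coincidence, not a valid derivation.

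Two further points. First, your patching suggestion for the case $1/2<s\le 1$ does not close the gap: the condition $1-s<1/2+\Re\ell$ you propose to use is $\Re\ell>1/2-s$, which is not the hypothesis (e.g.\ for $s$ near $1/2$ it forbids $\Re\ell$ near $-1$, which the corollary allows), and simply "going to the $M_-$ side" with the wrong $w_2$ suffers the same half-order loss. The delicate $1/2<s\le 1$ regime is handled in the paper by the two-stage Cauchy–Schwarz in the proof of Proposition~\ref{PropHskModule}, which puts $u_0=\la\cdot\ra^s\hat u$ flat as the $L^2$ factor and all the weights on the $v_0=\la\cdot\ra^{s-1}\hat v$ side, so the right choice is effectively $w_2=\la\cdot\ra^{s-1}$ (and this is what $\mu_+^\ell\in H^{s-1,\infty}$ feeds into), not anything tied to $\Re\ell$. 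Second, the preamble reduction to "$\Re\ell\ge 0$" via $\mu_+^\ell=\mu\cdot\mu_+^{\ell-1}$ is incorrect: this lowers $\Re\ell$ by $1$ and $\Re(\ell-1)>-1$ then fails for exactly the allowed range $\Re\ell\in(-1,0)$; no such reduction is needed since the entire content is local at $Y$ anyway. In short, the cleanest repair is to abandon the attempt to re-derive a sharp convolution bound and instead invoke Proposition~\ref{PropHskModule} together with the Sobolev order of $\mu_+^\ell$, as the paper does.
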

\begin{proof}
  By the Leibniz rule, it suffices to prove the statement for
  $k=0$. We have $\mu_+^\ell\in H^{\Re\ell+1/2-\eps;\infty}(X,Y)$ for
  all $\eps>0$: Indeed, the Fourier transform of
    $\chi(x)x_+^\ell$ on $\R$, with $\chi\in\CI_c(\R)$, is bounded by
    a constant multiple of $\la\xi\ra^{-\Re\ell-1}$, which is an element of $\la\xi\ra^{-r}L^2_\xi$ if and only if $r-\Re\ell-1<-1/2$, i.e.\ if $\Re\ell+1/2>r$. Hence, the corollary follows from Proposition~\ref{PropHskModule}, since one has $\Re\ell+1/2-\eps\geq s-1$ for some $\eps>0$ provided $\Re\ell+3/2>s$.
\end{proof}

\begin{prop}
\label{PropHskLowRegMult}
  Let $0\leq s',s_1,s_2<1/2$ be such that $s'<s_1+s_2-1/2$, and let $k>(n-1)/2$. Then $H^{s_1,k}(X,Y)\cdot H^{s_2,k}(X,Y)\subset H^{s',k}(X,Y)$.
\end{prop}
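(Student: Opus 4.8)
The plan is to follow the same pattern as in Propositions~\ref{PropHsk} and \ref{PropHskModule}: reduce to a local statement on $\R^n=\R_{x_1}\times\R^{n-1}_{x'}$ with $Y=\{x_1=0\}$, use the Leibniz rule to reduce to an estimate of the form $\cY_1^{s_1,a}\cdot\cY_1^{s_2,b}\subset H^{s'}$ for $a+b\geq k$, and then verify the corresponding weight condition from Lemma~\ref{LemmaProductsWeights}. Concretely, away from $Y$ the space $H^{s,k}(X,Y)$ is just $H^{s+k}$, and since $s_1+k,s_2+k>n/2$ and $s'+k<s_1+s_2+k-1/2<s_1+k$ (so certainly $s'+k>n/2$ is \emph{not} automatic---one checks $s'+k>(n-1)/2+s'>n/2$ using $k>(n-1)/2$ only if $s'>1/2$, which fails), one must be a little careful; but in fact $s_1,s_2<1/2$ forces $s'<1/2$, and $s'+k>(n-1)/2$ follows from $k>(n-1)/2$, $s'\geq 0$, so away from $Y$ the algebra property $H^{s'+k}$ holds because $s'+k>(n-1)/2\geq n/2-1/2$; to be safe one notes $\min(s_1+k,s_2+k)>n/2$ gives the interior multiplication $H^{s_1+k}\cdot H^{s_2+k}\subset H^{s_1+k}\subset H^{s'+k}$. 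Since the statement is local and we have a product decomposition near $Y$, it remains to treat the model.

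The heart of the matter is the inequality $\cY_1^{s_1,a}(\R^n)\cdot\cY_1^{s_2,b}(\R^n)\subset H^{s'}(\R^n)$ whenever $a+b\geq k$. Writing $\xi=(\xi_1,\xi')$, $\eta=(\eta_1,\eta')$ and, for $u\in\cY_1^{s_1,a}$, $v\in\cY_1^{s_2,b}$, setting $u_0(\xi)=\la\xi\ra^{s_1}\la\xi'\ra^a\hat u(\xi)$, $v_0(\xi)=\la\xi\ra^{s_2}\la\xi'\ra^b\hat v(\xi)$ in $L^2$, one has
\[
\la\xi\ra^{s'}\widehat{uv}(\xi)=\int\frac{\la\xi\ra^{s'}}{\la\eta\ra^{s_1}\la\eta'\ra^a\la\xi-\eta\ra^{s_2}\la\xi'-\eta'\ra^b}u_0(\eta)v_0(\xi-\eta)\,d\eta.
\]
Here all four of $s_1,s_2,s',a,b$ may be small, so, unlike in Proposition~\ref{PropHskModule}, one \emph{cannot} bound $\la\xi\ra^{s'}$ by $\la\eta\ra^{s'}+\la\xi-\eta\ra^{s'}$ and be done, because neither $s_1-s'$ nor $s_2-s'$ need be large enough to integrate in $\eta_1$. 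The trick---this is where $s'<s_1+s_2-1/2$ enters---is to split the $\la\xi\ra^{s'}$ weight in the $\xi_1$ variable only: use $\la\xi_1\ra^{s'}\lesssim\la\eta_1\ra^{s'}+\la\xi_1-\eta_1\ra^{s'}$ is again too lossy, so instead estimate by Cauchy--Schwarz in $\eta_1$ with the weight $(\la\eta_1\ra^{s_1}\la\xi_1-\eta_1\ra^{s_2}\la\xi_1\ra^{-s'})^{-1}$, whose square integrates in $\eta_1$ to something bounded uniformly in $\xi_1$ precisely when $2(s_1+s_2-s')>1$, i.e. $s'<s_1+s_2-1/2$; this is the analogue of the last displayed $\xi_1$-integral bound in the proof of Proposition~\ref{PropHskModule}. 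In the tangential variables $\xi',\eta'$, where both factors carry $a$, resp.\ $b$, module derivatives with $a+b\geq k>(n-1)/2$, one argues directly as in Lemma~\ref{LemmaYDerivatives}: bound $\la\xi'\ra^{0}$ trivially (the tangential background order is $0$ here), use $1=\la\xi'\ra^0\lesssim$ nothing needed, and apply Lemma~\ref{LemmaFracEst} to $\int\la\eta'\ra^{-2a}\la\xi'-\eta'\ra^{-2b}\,d\eta'\in L^\infty$ since $2a+2b\geq 2k>n-1$. Combining the two splittings (an $L^2_{\eta'}$-estimate in the tangential directions, an $L^\infty_{\xi_1}$ Cauchy--Schwarz bound in the conormal direction, then Fubini and $\|u_0\|_{L^2}\|v_0\|_{L^2}$) yields $\|uv\|_{H^{s'}}\lesssim\|u\|_{\cY_1^{s_1,a}}\|v\|_{\cY_1^{s_2,b}}$.

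The main obstacle, as in Proposition~\ref{PropHskModule} for $1/2<s\leq 1$, is organizing the interplay between the conormal variable $\xi_1$ and the tangential variables $\xi'$: one must do a ``mixed'' estimate, handling the $\xi_1$-integration via the sharp condition $s'<s_1+s_2-1/2$ (which is exactly the failure threshold of the 1-dimensional Sobolev multiplication $H^{s_1}\cdot H^{s_2}\subset H^{s'}$) and the $\xi'$-integration via the $k>(n-1)/2$ condition and Lemma~\ref{LemmaFracEst}, then fitting the pieces together with Fubini without ever needing a triangle-inequality split that would require any of the background orders to exceed $1/2$. Once this local estimate is in hand, the Leibniz rule (applied to all monomials $(x_1)^{\tilde\alpha}D_{x_1}^\alpha D_{x'}^\beta$ with $|\tilde\alpha|=|\alpha|$, $|\alpha|+|\beta|\leq k$ hitting the product) reduces the general module-regularity statement to the case $k=0$ of this estimate for the various lower-order pieces $\cY_1^{s_1,a}\cdot\cY_1^{s_2,b}$, $a+b\geq k$, and patching with the interior region via a partition of unity completes the proof.
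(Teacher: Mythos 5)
Your overall strategy is the same as the paper's: localize near $Y$, reduce via Leibniz to $\cY_1^{s_1,a}\cdot\cY_1^{s_2,b}\subset H^{s'}$ for $a+b\geq k$, then verify the $M_+<\infty$ condition of Lemma~\ref{LemmaProductsWeights} by factoring the $\eta$-integral into $\eta_1$ and $\eta'$ pieces and using Lemma~\ref{LemmaFracEst}. Your conclusion is correct. However, your motivating claim that the triangle-inequality split ``is too lossy'' is mistaken — the paper does exactly that split and it works cleanly. Writing $\la\xi\ra^{2s'}\lesssim\la\eta\ra^{2s'}+\la\xi-\eta\ra^{2s'}$ and then using $\la\eta\ra\geq\la\eta_1\ra$, $\la\xi-\eta\ra\geq\la\xi_1-\eta_1\ra$ leaves the two $\eta_1$-integrals $\int\la\eta_1\ra^{-2(s_1-s')}\la\xi_1-\eta_1\ra^{-2s_2}\,d\eta_1$ and $\int\la\eta_1\ra^{-2s_1}\la\xi_1-\eta_1\ra^{-2(s_2-s')}\,d\eta_1$. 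These are uniformly bounded by Lemma~\ref{LemmaFracEst}: the hypotheses $s_1,s_2<1/2$ force $s_1+s_2-1/2<\min\{s_1,s_2\}$, so $s'<s_1+s_2-1/2$ already gives $s_1-s'>0$, $s_2-s'>0$, \emph{and} $(s_1-s')+s_2>1/2$. So the exponents are positive and sum to more than $1$, which is precisely what Lemma~\ref{LemmaFracEst} requires — no loss.

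Your alternative route — directly bounding $\sup_{\xi_1}\int\la\xi_1\ra^{2s'}\la\eta_1\ra^{-2s_1}\la\xi_1-\eta_1\ra^{-2s_2}\,d\eta_1$ without splitting — is also fine, but the qualifier ``precisely when $s'<s_1+s_2-1/2$'' is not accurate in general: that integral is finite iff $s'\leq\min\{s_1,s_2\}$ \emph{and} $s'\leq s_1+s_2-1/2$. It happens that the first constraint is implied by the second under the standing assumption $s_1,s_2<1/2$, so your conclusion stands, but the unqualified ``precisely'' obscures this, and verifying the claim ultimately requires the same case analysis (or the same triangle-inequality split) that you declared unnecessary. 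In short: correct result, same structure as the paper, but the narrative about why the split fails and the ``precisely when'' should be corrected or removed.
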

\begin{proof}
 Using the Leibniz rule, this reduces to the statement that $\cY^{s_1,a}_1\cdot\cY^{s_2,b}_1\subset H^{s'}$ if $a+b\geq k$. Splitting variables $\xi=(\xi_1,\xi')$, $\eta=(\eta_1,\eta')$, Lemma~\ref{LemmaProductsWeights} in turn reduces this to the observation that
 \begin{align*}
   \int&\frac{\la\xi\ra^{2s'}}{\la\eta\ra^{2s_1}\la\eta'\ra^{2a}\la\xi-\eta\ra^{2s_2}\la\xi'-\eta'\ra^{2b}}\,d\eta \\
     &\lesssim \left(\int\frac{d\eta_1}{\la\eta_1\ra^{2(s_1-s')}\la\xi_1-\eta_1\ra^{2s_2}}+\int\frac{d\eta_1}{\la\eta_1\ra^{2s_1}\la\xi_1-\eta_1\ra^{2(s_2-s')}}\right) \\
	 &\hspace{12ex}\times\int\frac{d\eta'}{\la\eta'\ra^{2a}\la\xi'-\eta'\ra^{2b}}
 \end{align*}
 is uniformly bounded in $\xi$ by Lemma~\ref{LemmaFracEst} in view of $s'<s_1+s_2-1/2<\min\{s_1,s_2\}$, thus $s_1-s'>0$ and $s_2-s'>0$, and $s_1+s_2-s'>1/2$, as well as $a+b>(n-1)/2$.
\end{proof}

\begin{cor}
\label{CorHskLowRegPower}
  Let $p\in\N$ and $s=1/2-\eps$ with $0\leq\eps<1/2p$, and let $k>(n-1)/2$. Then $u\in H^{s,k}(X,Y)$ $\Rightarrow$ $u^p\in H^{0,k}(X,Y)$.
\end{cor}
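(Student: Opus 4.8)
The plan is to reduce Corollary~\ref{CorHskLowRegPower} to an iterated application of Proposition~\ref{PropHskLowRegMult}. The point is that $u\mapsto u^p$ is built out of $p-1$ successive multiplications, so we need to track how the Sobolev exponent degrades at each step and make sure it stays nonnegative (as required by the hypothesis $0\leq s',s_1,s_2<1/2$ of the proposition) all the way up to the final power $u^p$, while the module order $k$ is preserved throughout since $k>(n-1)/2$ is the relevant fixed threshold in that proposition.

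Concretely, I would set $s_j$ to be the exponent for which $u^j\in H^{s_j,k}(X,Y)$, with $s_1=s=1/2-\eps$. Given $u^j\in H^{s_j,k}(X,Y)$ and $u\in H^{s,k}(X,Y)$, Proposition~\ref{PropHskLowRegMult} applies provided $0\leq s_j,s<1/2$ and gives $u^{j+1}=u\cdot u^j\in H^{s',k}(X,Y)$ for any $s'$ with $0\leq s'<s_j+s-1/2$. The natural choice is to track the "deficit": writing $s_j=1/2-\eps_j$ one gets the recursion $\eps_{j+1}>\eps_j+\eps$ (choosing $s'$ as close to the endpoint as we like, but strictly below it), so after $p-1$ steps we can arrange $\eps_p$ to be anything larger than $p\eps$. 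Since $p\eps<1/2$ by hypothesis, and since $H^{0,k}(X,Y)\subset H^{s',k}(X,Y)$ for $s'<0$ trivially — or rather, more to the point, we can take the target exponent at the last step to be exactly $0$, which is allowed because $0<p\eps$ leaves room: at the final step we need $0<s_{p-1}+s-1/2$, i.e. $\eps_{p-1}+\eps<1/2$, which we can ensure by choosing the intermediate $\eps_j$'s just slightly above $j\eps$, e.g. $\eps_j=j\eps+j\delta$ for $\delta>0$ small enough that $(p-1)(\eps+\delta)+\eps<1/2$, possible since $p\eps<1/2$. Then $s_{p-1}=1/2-(p-1)(\eps+\delta)>0$, so the proposition applies at every stage, and the last application yields $u^p\in H^{0,k}(X,Y)$.

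One has to be slightly careful at the very first multiplication and when $\eps=0$: if $\eps=0$ then $s=1/2$ is not $<1/2$, so Proposition~\ref{PropHskLowRegMult} is not literally applicable. In that case, however, $u\in H^{1/2,k}(X,Y)\subset H^{s_1,k}(X,Y)$ for any $s_1<1/2$, so we simply replace $s$ by some $s_1=1/2-\delta'$ with $\delta'>0$ small enough that $p\delta'<1/2$, and run the same argument with $\eps$ replaced by $\delta'$. More uniformly, for any $0\leq\eps<1/2p$ pick $\eps'$ with $\max(\eps,0)\leq\eps'<1/2p$ and $\eps'>0$ (if $\eps>0$ take $\eps'=\eps$; if $\eps=0$ take any small $\eps'>0$); since $H^{1/2-\eps,k}\subset H^{1/2-\eps',k}$, it suffices to treat the case $\eps=\eps'>0$, which is exactly the recursion above.

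I don't expect a genuine obstacle here — this is a bookkeeping argument on top of Proposition~\ref{PropHskLowRegMult}, together with the Leibniz-rule/module structure already encapsulated in that proposition (which is why $k$ need not change). The only thing requiring a moment's attention is the strict-versus-nonstrict inequalities: the proposition needs the source exponents strictly below $1/2$ and the target strictly below the sum minus $1/2$, so the deficits must be chosen with a little slack, and the hypothesis $p\eps<1/2$ (strict) is precisely what provides that slack across all $p-1$ steps. Thus the proof is: reduce to $\eps>0$ by the trivial inclusion of Sobolev spaces; choose $\delta>0$ with $(p-1)(\eps+\delta)+\eps<1/2$; inductively apply Proposition~\ref{PropHskLowRegMult} to get $u^{j+1}\in H^{1/2-j(\eps+\delta)-\eps,k}(X,Y)$ hence a fortiori in $H^{1/2-(j+1)(\eps+\delta)+\delta,k}$; and conclude $u^p\in H^{0,k}(X,Y)$ at step $j=p-1$.
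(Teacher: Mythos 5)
Your proof is correct and follows essentially the same route as the paper's: iteratively apply Proposition~\ref{PropHskLowRegMult}, tracking the small loss in Sobolev order at each of the $p-1$ multiplications and using the slack provided by the strict inequality $p\eps<1/2$ to absorb these losses. The only (minor) difference is that you explicitly handle the edge case $\eps=0$ via the inclusion $H^{1/2,k}\subset H^{1/2-\eps',k}$, which the paper's one-line argument leaves implicit.
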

\begin{proof}
  Proposition~\ref{PropHskLowRegMult} gives $u^2\in H^{1/2-2\eps-\eps'_2,k}$ for all $\eps'_2>0$, thus $u^3\in H^{1/2-3\eps-\eps'_3,k}$ for all $\eps'_3>0$, since $\eps'_2>0$ is arbitrary; continuing in this way gives $u^p\in H^{1/2-p\eps-\eps'_p,k}$ for all $\eps'_p>0$, and the claim follows.
\end{proof}

%%%%%%%%%%%%%%%%%%%%%%%%%%%%%%%%%%%%%%%%%%%%%%%%%
\subsection{A class of semilinear equations}
\label{subsec:semi-dS}

Recall that we have a forward solution operator $S_\sigma\colon H^{s-1,k}(\Omega)^{\bullet,-}\to H^{s,k}(\Omega)^{\bullet,-}$ of $P_\sigma$, defined in \eqref{EqPSigma}, provided $s<1/2-\Im\sigma$. Let us fix such $s\in\R$ and $\sigma\in\C$. Undoing the conjugation, we obtain a forward solution operator
\begin{align*}
  S&=\mu^{-1/2}\mu^{-i\sigma/2+(n+1)/4}S_\sigma \mu^{i\sigma/2-(n+1)/4}\mu^{-1/2}, \\
  S&\colon \mu^{(n+3)/4+\Im\sigma/2}H^{s-1,k}(\Omega)^{\bullet,-} \to \mu^{(n-1)/4+\Im\sigma/2}H^{s,k}(\Omega)^{\bullet,-}
\end{align*}
of $(\Box_g-(n-1)^2/4-\sigma^2)$. Since $g$ is a 0-metric, the natural vector fields to appear in a non-linear equation are 0-vector fields; see \S\ref{SubsecGlobalFromStatic} for a brief discussion of these concepts. However, since the analysis is based on ordinary Sobolev spaces relative to which one has b-regularity (regularity with respect to the module $\mc M$), we consider b-vector fields in the non-linearities. In case one does use 0-vector fields, the solvability conditions can be relaxed; see \S\ref{SubsecSemiPolyDS}.

\begin{thm}
\label{ThmGlobalDSQu}
Suppose $s<1/2-\im\sigma$.
  Let
  \begin{align*}
    q\colon \mu^{(n-1)/4+\Im\sigma/2}&H^{s,k}(\Omega)^{\bullet,-}\times\mu^{(n-1)/4+\Im\sigma/2}H^{s,k-1}(\Omega;\Tb^*\Omega)^{\bullet,-} \\
	  &\to \mu^{(n+3)/4+\Im\sigma/2}H^{s-1,k}(\Omega)^{\bullet,-}
  \end{align*}
  be a continuous function with $q(0,0)=0$ such that there exists a continuous non-decreasing function $L\colon\R_{\geq 0}\to\R$ satisfying
  \[
    \|q(u,\bdiff u)-q(v,\bdiff v)\|\leq L(R)\|u-v\|,\quad \|u\|,\|v\|\leq R.
  \]
  Then there is a constant $C_L>0$ so that the following holds: If $L(0)<C_L$, then for small $R>0$, there exists $C>0$ such that for all $f\in\mu^{(n+3)/4+\Im\sigma/2}H^{s-1,k}(\Omega)^{\bullet,-}$ with norm $\leq C$, the equation
  \[
    \biggl(\Box_g-\Bigl(\frac{n-1}{2}\Bigr)^2-\sigma^2\biggr)u=f+q(u,\bdiff u)
  \]
  has a unique solution $u\in\mu^{(n-1)/4+\Im\sigma/2}H^{s,k}(\Omega)^{\bullet,-}$, with norm $\leq R$, that depends continuously on $f$.
\end{thm}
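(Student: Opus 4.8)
The plan is to apply the Banach fixed point theorem exactly as in the proof of Theorem~\ref{ThmDSQu}, but now on the space $\mu^{(n-1)/4+\Im\sigma/2}H^{s,k}(\Omega)^{\bullet,-}$ in place of $\Hb^{s,\eps}(\Omega)^{\bullet,-}$, using the forward solution operator $S$ for $\Box_g-\bigl(\frac{n-1}{2}\bigr)^2-\sigma^2$ obtained by undoing the conjugation in $P_\sigma$. Concretely, set $\cZ=\mu^{(n-1)/4+\Im\sigma/2}H^{s,k}(\Omega)^{\bullet,-}$ and $\cW=\mu^{(n+3)/4+\Im\sigma/2}H^{s-1,k}(\Omega)^{\bullet,-}$, so that $S\colon\cW\to\cZ$ is bounded by Theorem~\ref{ThmGlobalDS-lin} together with the module regularity propagation result of Haber--Vasy \cite[Theorem~6.3]{Haber-Vasy:Radial} quoted in Section~\ref{SubsecGlobalDSLinear} (which is exactly why the hypothesis $s<1/2-\Im\sigma$, i.e.\ \eqref{EqGlobalDSForwardCond}, is imposed). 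Define $T\colon\cZ\to\cZ$ by $Tu=S(f+q(u,\bdiff u))$; this makes sense because $q$ is assumed to map $\cZ\times\mu^{(n-1)/4+\Im\sigma/2}H^{s,k-1}(\Omega;\Tb^*\Omega)^{\bullet,-}$ into $\cW$, and $\bdiff$ maps $\cZ$ continuously into the middle space (the weight is unchanged since $\bdiff$ commutes with powers of $\mu$ up to lower-weight, more regular, terms, or more simply because b-differentiation does not change the b-Sobolev weight).

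The verification that $T$ is a contraction on a small ball is then verbatim the argument in Theorem~\ref{ThmDSQu}: put $C_L=\|S\|^{-1}$; for $\|u\|,\|v\|\leq R$ one has
\[
\|Tu-Tv\|_{\cZ}\leq\|S\|\,\|q(u,\bdiff u)-q(v,\bdiff v)\|_{\cW}\leq\|S\|L(R)\|u-v\|_{\cZ}\leq C_0\|u-v\|_{\cZ}
\]
with $C_0<1$ once $R$ is small enough that $L(R)<C_0\|S\|^{-1}$, which is possible since $L(0)<C_L$ and $L$ is continuous and non-decreasing. Moreover $T$ maps the $R$-ball into itself provided $\|S\|(\|f\|_{\cW}+L(R)R)\leq R$, i.e.\ provided $\|f\|_{\cW}\leq C:=R(\|S\|^{-1}-L(R))$, which is positive for small $R$. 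The Banach fixed point theorem then yields a unique $u\in\cZ$ with $\|u\|_{\cZ}\leq R$ solving the PDE, and continuous dependence on $f$ follows from the standard estimate
\[
\|u_1-u_2\|_{\cZ}\leq\frac{\|f_1-f_2\|_{\cW}}{1-C_0}
\]
obtained by subtracting the two equations and using the contraction estimate, exactly as in Theorem~\ref{ThmDSQu}.

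Since $S$ is the \emph{forward} solution operator — this is the content of the support statement in Theorem~\ref{ThmGlobalDS-lin}, combined with the $\bullet$ (supported at $Y_-$) convention built into the spaces $H^{s,k}(\Omega)^{\bullet,-}$ — the solution $u$ produced by the fixed point is automatically the forward solution, so no separate uniqueness/causality discussion is needed beyond what the fixed point already gives. The only genuinely non-routine input is the boundedness of $S$ on the module-regularity spaces $H^{s,k}(\Omega)^{\bullet,-}$, and that is precisely what was established in Section~\ref{SubsecGlobalDSLinear} by invoking \cite[Theorem~6.3]{Haber-Vasy:Radial}; given that, the proof of the theorem is a direct transcription of the de Sitter static-model argument. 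The \textbf{main obstacle}, such as it is, is bookkeeping: one must track the weight $\mu^{(n-1)/4+\Im\sigma/2}$ through the conjugation and make sure the hypothesis on $q$ is stated with the shifted weight $\mu^{(n+3)/4+\Im\sigma/2}$ on the target (reflecting the two-derivative gain of $S$ together with the conjugation factors $\mu^{\pm 1/2}$ on each side), but no new analysis beyond the linear theory and the fixed point argument is required.

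\begin{proof}
Let $S$ be the forward solution operator for $\Box_g-\bigl(\frac{n-1}{2}\bigr)^2-\sigma^2$,
\[
S\colon\cW:=\mu^{(n+3)/4+\Im\sigma/2}H^{s-1,k}(\Omega)^{\bullet,-}\to\cZ:=\mu^{(n-1)/4+\Im\sigma/2}H^{s,k}(\Omega)^{\bullet,-},
\]
obtained from $S_\sigma$ by undoing the conjugation in \eqref{EqPSigma}, as described before the statement of the theorem; this uses the hypothesis $s<1/2-\Im\sigma$, i.e.\ \eqref{EqGlobalDSForwardCond}. Define $T\colon\cZ\to\cZ$ by
\[
T u=S\bigl(f+q(u,\bdiff u)\bigr),
\]
which is well-defined since $\bdiff$ maps $\cZ$ continuously into $\mu^{(n-1)/4+\Im\sigma/2}H^{s,k-1}(\Omega;\Tb^*\Omega)^{\bullet,-}$ and $q$ maps into $\cW$ by hypothesis.

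Let $C_L=\|S\|^{-1}$. For $u,v\in\cZ$ with $\|u\|_{\cZ},\|v\|_{\cZ}\leq R$,
\[
\|T u-T v\|_{\cZ}\leq\|S\|\,\|q(u,\bdiff u)-q(v,\bdiff v)\|_{\cW}\leq\|S\|L(R)\|u-v\|_{\cZ}.
\]
Since $L(0)<C_L$ and $L$ is continuous and non-decreasing, there is $R>0$ and $C_0<1$ with $\|S\|L(R)\leq C_0$; shrinking $R$ we may also assume this. Then $T$ is a contraction with constant $C_0$ on the closed $R$-ball of $\cZ$. Moreover, for $\|u\|_{\cZ}\leq R$,
\[
\|T u\|_{\cZ}\leq\|S\|\bigl(\|f\|_{\cW}+L(R)R\bigr),
\]
which is $\leq R$ provided $\|f\|_{\cW}\leq C:=R\bigl(\|S\|^{-1}-L(R)\bigr)>0$. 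Thus $T$ maps the closed $R$-ball into itself, and by the Banach fixed point theorem there is a unique $u\in\cZ$ with $\|u\|_{\cZ}\leq R$ and $T u=u$, i.e.\ a unique solution of
\[
\biggl(\Box_g-\Bigl(\frac{n-1}{2}\Bigr)^2-\sigma^2\biggr)u=f+q(u,\bdiff u)
\]
with norm $\leq R$. Since $S$ propagates supports forward, this $u$ is the forward solution.

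For continuous dependence, if $u_j\in\cZ$, $\|u_j\|_{\cZ}\leq R$, solve the equation with forcing $f_j$, $\|f_j\|_{\cW}\leq C$, $j=1,2$, then
\[
\biggl(\Box_g-\Bigl(\frac{n-1}{2}\Bigr)^2-\sigma^2\biggr)(u_1-u_2)=f_1-f_2+q(u_1,\bdiff u_1)-q(u_2,\bdiff u_2),
\]
so
\[
\|u_1-u_2\|_{\cZ}\leq\|S\|\bigl(\|f_1-f_2\|_{\cW}+L(R)\|u_1-u_2\|_{\cZ}\bigr),
\]
hence
\[
\|u_1-u_2\|_{\cZ}\leq\frac{\|f_1-f_2\|_{\cW}}{1-C_0}.
\]
This completes the proof.
\end{proof}
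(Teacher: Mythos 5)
Your proof is correct and follows exactly the approach the paper indicates: the paper's own proof of this theorem is the single sentence ``Use the Banach fixed point theorem as in the proof of Theorem~\ref{ThmDSQu},'' and your write-up is precisely that argument transplanted to the conjugated solution operator $S$ and the weighted spaces $\cZ$, $\cW$, with the contraction and continuous-dependence estimates identical to those in Theorem~\ref{ThmDSQu}.
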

\begin{proof}
  Use the Banach fixed point theorem as in the proof of Theorem~\ref{ThmDSQu}.
\end{proof}

\begin{rmk}
  As in Theorem~\ref{ThmDSQu}, we can also allow non-linearities $q(u,\bdiff u,\Box_g u)$, provided
  \begin{align*}
    q\colon &\mu^{(n-1)/4+\Im\sigma/2}H^{s,k}(\Omega)^{\bullet,-}\times\mu^{(n-1)/4+\Im\sigma/2}H^{s-1,k}(\Omega;\Tb^*\Omega)^{\bullet,-} \\
	  &\hspace{15ex}\times\mu^{(n+3)/4+\Im\sigma/2}H^{s-1,k}(\Omega)^{\bullet,-} \\
	  &\to \mu^{(n+3)/4+\Im\sigma/2}H^{s-1,k}(\Omega)^{\bullet,-}
  \end{align*}
  is continuous, $q(0,0,0)=0$ and $q$ has a small Lipschitz constant near $0$.
\end{rmk}

%%%%%%%%%%%%%%%%%%%%%%%%%%%%%%%%%%%%%%%%%%%%%%%%%
\subsection{Semilinear equations with polynomial non-linearity}
\label{SubsecSemiPolyDS}

Next, we want to find a forward solution of the semilinear PDE
\begin{equation}
\label{EqDSPDE}\biggl(\Box_g-\Bigl(\frac{n-1}{2}\Bigr)^2-\sigma^2\biggr)u=f+c\mu^A u^p X(u)
\end{equation}
where $c\in C^\infty(\wt X)$, and $X(u)=\prod_{j=1}^q X_j u$ is a
$q$-fold product of derivatives of $u$ along vector fields $X_j\in\mc
M$. The purpose of the following computations is to obtain conditions on $A,p,q$ which guarantee that the map $u\mapsto c\mu^A u^p X(u)$ satisfies the conditions of the map $q$ in Theorem~\ref{ThmGlobalDSQu}. Note that the derivatives in the non-linearity lie in the module $\mc M$ (in coordinates: $\mu\pa_\mu$, $\pa_y$), whereas, as mentioned above, the natural vector fields are 0-derivatives (in coordinates: $x\pa_x=2\mu\pa_\mu$ and $x\pa_y=\mu^{1/2}\pa_y$), but since it does not make the computation more difficult, we consider module instead of 0-derivatives and compensate this by allowing any weight $\mu^A$ in front of the non-linearity.

Rephrasing the PDE in terms of $P_\sigma$ using $\wt u=\mu^{i\sigma/2-(n+1)/4+1/2}u$ and $\wt f=\mu^{-1/2+i\sigma/2-(n+1)/4}f$, we obtain
\begin{align*}
  P_\sigma\wt u&=\wt f+c\mu^A\mu^{-1/2+i\sigma/2-(n+1)/4}\mu^{(p+q)(-i\sigma/2+(n-1)/4)}\wt u^p\prod_{j=1}^q (f_j+X_j\wt u) \\
	&=\wt f+c\mu^\ell\wt u^p\prod_{j=1}^q (f_j+X_j\wt u),
\end{align*}
where $f_j\in\CI(\wt X)$ and
\begin{equation}
\label{EqGlobalDSEll}
  \ell=A+(p+q-1)(-i\sigma/2+(n-1)/4)-1.
\end{equation}
Therefore, if $\wt u\in H^{s,k}(\Omega)^{\bullet,-}$, we obtain that the right hand side of the equation lies in $H^{s,k-1}(\Omega)^{\bullet,-}$ if $\wt f\in H^{s,k-1}(\Omega)^{\bullet,-}$, $s>1/2,k>(n+1)/2$, which by Proposition~\ref{PropHsk} implies that $H^{s,k-1}(\Omega)^{\bullet,-}$ is an algebra, and if
\begin{equation}
\label{EqGeneralDSCond}
  \Re\ell+1/2=A+(p+q-1)(\Im\sigma/2+(n-1)/4)-1/2>s,
\end{equation}
since this condition ensures that $\mu^\ell\in H^{s,\infty}(X)$, which
implies that multiplication by $\mu^\ell$ is a bounded map
$H^{s,k-1}(\Omega)^{\bullet,-}\to
H^{s,k-1}(\Omega)^{\bullet,-}$.\footnote{If one works in higher
  regularity spaces, $s\geq 3/2$, we in fact only need
  $\Re\ell+3/2>s$, since then multiplication by $\mu^\ell$ is a
  bounded map $H^{s,k-1}(\Omega)^{\bullet,-}\subset
  H^{s-1,k}(\Omega)^{\bullet,-}\to
  H^{s-1,k}(\Omega)^{\bullet,-}$. However, the solvability criterion
  \eqref{EqDSCondition} would be weaker, namely the role of the dimension $n$ shifts by $2$, since in order to use $s\geq 3/2$, we need $\Im\sigma<-1$.} Given the restriction \eqref{EqGlobalDSForwardCond} on $s$ and $\Im\sigma$, we see that by choosing $s>1/2$ close to $1/2$, $\Im\sigma<0$ close to $0$, we obtain the condition
\begin{equation}
\label{EqDSCondition} p+q>1+\frac{4(1-A)}{n-1}.
\end{equation}
If these conditions are satisfied, the right hand side of the
re-written PDE lies in $H^{s,k-1}(\Omega)^{\bullet,-}\subset
H^{s-1,k}(\Omega)^{\bullet,-}$, so Theorem~\ref{ThmGlobalDSQu} is
applicable, and thus \eqref{EqDSPDE} is well-posed in these spaces.

From \eqref{EqDSCondition} with $A=0$, we see that quadratic non-linearities are fine for $n\geq 6$, cubic ones for $n\geq 4$.

To sum this up, we revert back to $u=\mu^{(n-1)/4-i\sigma/2}\wt u$ and $f=\mu^{(n+3)/4-i\sigma/2}\wt f$:

\begin{thm}
\label{ThmGlobalDS}
  Let $s>1/2,k>(n+1)/2$, and assume $A\in\R$ and $p,q\in\N_0$, $p+q\geq 2$ satisfy condition \eqref{EqGeneralDSCond}. Moreover, suppose $\sigma\in\C$ satisfies \eqref{EqGlobalDSForwardCond}, i.e.\ $\Im\sigma<1/2-s$. Finally, let $c\in\CI(\wt M)$ and $X(u)=\prod_{j=1}^q X_j u$, where $X_j$ are vector fields in $\mc M$. Then for small enough $R>0$, there exists a constant $C>0$ such that for all $f\in\mu^{(n+3)/4+\Im\sigma/2}H^{s,k}(\Omega)^{\bullet,-}$ with norm $\leq C$, the PDE
  \[
    \biggl(\Box_g-\Bigl(\frac{n-1}{2}\Bigr)^2-\sigma^2\biggr)u=f+c\mu^A u^p X(u)
  \]
  has a unique solution $u\in\mu^{(n-1)/4+\Im\sigma/2}H^{s,k}(\Omega)^{\bullet,-}$, with norm $\leq R$, that depends continuously on $f$.
  
  The same conclusion holds if the non-linearity is a finite sum of terms of the form $c\mu^A u^p X(u)$, provided each such term separately satisfies \eqref{EqGlobalDSForwardCond}.
\end{thm}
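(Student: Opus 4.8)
The proof is essentially the computation carried out in the paragraphs preceding the theorem statement, so the plan is to package that computation as an application of Theorem~\ref{ThmGlobalDSQu}. First I would recall the conjugation: setting $\tilde u=\mu^{i\sigma/2-(n+1)/4+1/2}u$ and $\tilde f=\mu^{i\sigma/2-(n+1)/4-1/2}f$, the equation $(\Box_g-((n-1)/2)^2-\sigma^2)u=f+c\mu^A u^p X(u)$ is equivalent to $P_\sigma\tilde u=\tilde f+c\mu^\ell\tilde u^p\prod_{j=1}^q(f_j+X_j\tilde u)$ with $f_j\in\CI(\tilde X)$ and $\ell$ given by \eqref{EqGlobalDSEll}, so that $\Re\ell+1/2$ equals the left side of \eqref{EqGeneralDSCond}. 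Since $s<1/2-\Im\sigma$ by \eqref{EqGlobalDSForwardCond}, we have the forward solution operator $S_\sigma\colon H^{s-1,k}(\Omega)^{\bullet,-}\to H^{s,k}(\Omega)^{\bullet,-}$ of $P_\sigma$ from Section~\ref{SubsecGlobalDSLinear}; equivalently $S$ acts between the weighted spaces in the statement of Theorem~\ref{ThmGlobalDSQu}.

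The core point is to verify that the nonlinear map $q(u,\bdiff u)=c\mu^A u^p X(u)$ satisfies the hypotheses of Theorem~\ref{ThmGlobalDSQu}, i.e.\ that (after conjugation) $v\mapsto c\mu^\ell v^p\prod_{j=1}^q(f_j+X_j v)$ maps a ball in $H^{s,k}(\Omega)^{\bullet,-}$ into $H^{s-1,k}(\Omega)^{\bullet,-}\supset H^{s,k-1}(\Omega)^{\bullet,-}$ with small Lipschitz constant near $0$. Here $s>1/2$ and $k>(n+1)/2$, so $s+(k-1)>n/2$ with $\codim Y=1$, hence by Proposition~\ref{PropHsk} the space $H^{s,k-1}(\Omega)^{\bullet,-}$ is an algebra; each factor $v$ lies in $H^{s,k}(\Omega)^{\bullet,-}\subset H^{s,k-1}(\Omega)^{\bullet,-}$, and each $X_j v$ with $X_j\in\mc M$ lies in $H^{s,k-1}(\Omega)^{\bullet,-}$ by definition of the module regularity; the smooth functions $c,f_j$ multiply this space continuously into itself. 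Thus the product $v^p\prod_{j=1}^q(f_j+X_j v)$ lies in $H^{s,k-1}(\Omega)^{\bullet,-}$, and the remaining issue is multiplication by $\mu^\ell$. Since $\mu_+^\ell\in H^{\Re\ell+1/2-\delta,\infty}(\tilde X,Y)$ for every $\delta>0$ (the one-dimensional computation of the Fourier transform of $\chi x_+^\ell$ recorded in the footnote to Corollary~\ref{CorMuMult}), Proposition~\ref{PropHsk} (or Corollary~\ref{CorMuMult}) shows that multiplication by $\mu^\ell$ preserves $H^{s,k-1}(\Omega)^{\bullet,-}$ precisely when $\Re\ell+1/2>s$, which is condition \eqref{EqGeneralDSCond}. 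The Lipschitz estimates follow by the standard telescoping identity for differences of products, together with these multiplicative properties; since $p+q\geq 2$, every term contains at least two factors that are small in norm, so the Lipschitz constant $L(R)\to 0$ as $R\to 0$, in particular $L(0)=0<C_L$.

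Given this, Theorem~\ref{ThmGlobalDSQu} applies with this choice of $q$ and yields, for $\|\tilde f\|$ small, a unique small solution $\tilde u\in H^{s,k}(\Omega)^{\bullet,-}$; undoing the conjugation gives the asserted $u\in\mu^{(n-1)/4+\Im\sigma/2}H^{s,k}(\Omega)^{\bullet,-}$, unique in a small ball and depending continuously on $f$. The statement about finite sums $\sum_i c_i\mu^{A_i}u^{p_i}X_i(u)$ is immediate: each summand maps $H^{s,k}(\Omega)^{\bullet,-}$ continuously into $H^{s-1,k}(\Omega)^{\bullet,-}$ with $L(0)=0$ as long as it separately satisfies \eqref{EqGeneralDSCond} (equivalently the displayed condition with $\sigma$ near the forbidden line, i.e.\ \eqref{EqDSCondition} when $A=0$), so the sum does too, and the same fixed point argument applies verbatim. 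I do not expect a genuine obstacle here; the only mildly delicate bookkeeping is tracking the exact weight $\ell$ through the conjugation and checking that the boundary regime is the $\bullet$ (supported) one at $Y_-$ and extendible at the artificial interior boundary $\partial\Omega\cap X^\circ$, so that the algebra and module multiplication statements of Section~\ref{SubsecDeSitterAlgebra}, which are stated on the closed manifold $\tilde X$, localize correctly to $\Omega$ with these support conditions — this is routine given Proposition~\ref{PropHsk} and the definition of $H^{s,k}(\Omega)^{\bullet,-}$.
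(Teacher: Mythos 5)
Your proposal is correct and matches the paper's proof in all essentials: the paper likewise reduces to the conjugated equation $P_\sigma\tilde u=\tilde f+c\mu^\ell\tilde u^p\prod_j(f_j+X_j\tilde u)$ and runs a Banach fixed point argument on $H^{s,k}(\Omega)^{\bullet,-}$, with the algebra property of $H^{s,k-1}(\Omega)^{\bullet,-}$ (Proposition~\ref{PropHsk}) and the weight condition $\Re\ell+1/2>s$ (i.e.\ \eqref{EqGeneralDSCond}) providing exactly the multiplicative control you describe. Whether one phrases it as a direct contraction argument (as the paper does) or as an invocation of Theorem~\ref{ThmGlobalDSQu} (as you do) is a cosmetic difference; the content is the same.
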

\begin{proof}
  Reformulating the PDE in terms of $\wt u$ and $\wt f$ as above, this follows from an application of the Banach fixed point theorem to the map
  \[
    H^{s,k}(\Omega)^{\bullet,-}\ni\wt u\mapsto S_\sigma\biggl(\wt f+\mu^\ell\wt u^p\prod_{j=1}^q (f_j+X_j\wt u)\biggr)\in H^{s,k}(\Omega)^{\bullet,-}
  \]
  with $\ell$ given by \eqref{EqGlobalDSEll} and $f_j\in\CI(\wt X)$. Here, $p+q\geq 2$ and the smallness of $R$ ensure that this map is a contraction on the ball of radius $R$ in $H^{s,k}(\Omega)^{\bullet,-}$.
\end{proof}

\begin{rmk}
  Even though the above conditions force $\Im\sigma<0$, let us remark that the conditions of the theorem, most importantly \eqref{EqGeneralDSCond}, can be satisfied if $m^2=(n-1)^2/4+\sigma^2>0$ is real, which thus means that we are in fact considering a non-linear equation involving the Klein-Gordon operator $\Box_g-m^2$. Indeed, let $\sigma=i\wt\sigma$ with $\wt\sigma<0$, then condition~\eqref{EqGeneralDSCond} with $A=0,p+q=2$, becomes $\wt\sigma>2-(n-1)/2$ (where we accordingly have to choose $s>1/2$ close, depending on $\wt\sigma$, to $1/2$), and the requirement $\wt\sigma<0$ forces $n\geq 6$. On the other hand, we want $(n-1)^2/4-\wt\sigma^2=m^2>0$; we thus obtain the condition
  \[
    0<m^2<\left(\frac{n-1}{2}\right)^2-\left(2-\frac{n-1}{2}\right)^2
  \]
  for masses $m$ that Theorem~\ref{ThmGlobalDS} can handle, which does give a non-trivial range of allowed $m$ for $n\geq 6$.
\end{rmk}

\begin{rmk}
  Let us compare the numerology in Theorem~\ref{ThmGlobalDS} with the numerology for the static model of an asymptotically de Sitter space in \S\ref{SecStaticDeSitter}: First, we can solve fewer equations globally on asymptotically de Sitter spaces, and second, we need stronger regularity assumptions in order to make an iterative argument work: In the static model, we needed to be in a b-Sobolev space of order $>(n+2)/2$, which in the non-blown-up picture corresponds to 0-regularity of order $>(n+2)/2$, whereas in the global version, we need a background Sobolev regularity $>1/2$, relative to which we have `b-regularity' (i.e.\ regularity with respect to the module $\mc M$) of order $>(n+1)/2$. This comparison is of course only a qualitative one, though, since the underlying geometries in the two cases are different.
\end{rmk}

Using Proposition~\ref{PropHskModule} and Corollary~\ref{CorMuMult}, one can often improve this result. Thus, let us consider the most natural case of equation~\eqref{EqDSPDE} in which we use 0-derivatives $X_j$, corresponding to the 0-structure on the \emph{not} even-ified manifold $X$, and no additional weight. The only difference this makes is if there are tangential 0-derivatives (in coordinates: $\mu^{1/2}\pa_y$). For simplicity of notation, let us therefore assume that $X_j=\mu^{1/2}\wt X_j$, $1\leq j\leq\alpha$, and $X_j=\wt X_j$, $\alpha<j\leq q$, where the $\wt X_j$ are vector fields in $\mc M$. Then the PDE~\eqref{EqDSPDE}, rewritten in terms of $P_\sigma$, $\wt u$ and $\wt f$, becomes
\begin{equation}
\label{EqDSNullDeriv}
  P_\sigma\wt u=\wt f+c\mu^\ell\wt u^p\prod_{j=1}^q(\wt f_j+\wt X_j\wt u)
\end{equation}
with $\wt f_j\in\CI(\wt X)$, where
\[
  \ell=\alpha/2+(p+q-1)(-i\sigma/2+(n-1)/4)-1.
\]
First, suppose that there are no derivatives in the non-linearity so that $p\geq 2$, $q=\alpha=0$. Then $\mu^\ell\wt u^p\in H^{s-1,k}(\Omega)^{\bullet,-}$ provided $\Re\ell+3/2>s>1/2$ by Corollary~\ref{CorMuMult}; choosing $s$ arbitrarily close to $1/2$, this is equivalent to
\begin{equation}
\label{EqDecay}
  \Im\sigma/2+(n-1)/4>0.
\end{equation}
This is a very natural condition: The solution operator for the linear
wave equation produces solutions with asymptotics $\mu^{(n-1)/4\pm
  i\sigma/2}$; see \eqref{eq:dS-poles}, and recall that we are working with the even-ified manifold with boundary defining function $\mu=x^2$. The non-linear equation~\eqref{EqDSPDE} should therefore only be well-behaved if solutions to the linear equation decay at infinity, i.e.\ if $\pm\Im\sigma+(n-1)/4\geq 0$. Since we need $\Im\sigma<0$ to be allowed to take $s>1/2$, condition~\eqref{EqDecay} is equivalent to the (small) decay of solutions to the linear equation at infinity (where $\mu=0$).

Next, let us assume that $q>0$. Then the non-linear term in equation~\eqref{EqDSNullDeriv} is an element of
\[
  \mu^\ell H^{s,k}(\Omega)^{\bullet,-}\cdot H^{s,k-1}(\Omega)^{\bullet,-}\subset H^{s,k-1}(\Omega)^{\bullet,-}
\]
by Proposition~\ref{PropHskModule}, provided $\Re\ell+1/2>s>1/2$, which gives the condition
\[
  \Im\sigma/2+(n-1)/4>1-\alpha/2
\]
where we again choose $s>1/2$ arbitrarily close to $1/2$, i.e.\ for $\alpha=2$, we again get condition \eqref{EqDecay}, and for $\alpha>2$, we get an even weaker one.

\ \\
Finally, let us discuss a non-linear term of the form $c\mu^A u^p$, $p\geq 2$, in the setting of even lower regularity $0\leq s<1/2$, the technical tool here being Corollary~\ref{CorHskLowRegPower}: Rewriting the PDE~\eqref{EqDSPDE} with this non-linearity in terms of $P_\sigma$, $\wt u$ and $\wt f$, we get
\[
  P_\sigma\wt u=\wt f+c\mu^\ell\wt u^p,\quad \ell=A+(p-1)(-i\sigma/2+(n-1)/4)-1.
\]
Let $s=1/2-\eps$ with $0\leq\eps<1/2p$. Then if $\wt u\in H^{1/2-\eps,k}(\Omega)^{\bullet,-}$ with $k>(n-1)/2$, Corollary~\ref{CorHskLowRegPower} yields $\wt u^p\in H^{0,k}(\Omega)^{\bullet,-}$, thus
\[
  \mu^\ell\wt u^p\in H^{0,k}(\Omega)^{\bullet,-}\subset H^{s-1,k}(\Omega)^{\bullet,-}
\]
provided $\Re\ell\geq 0$, i.e.
\begin{equation}
\label{EqLowRegCond}
  n>1+\frac{4(1-A)}{p-1}-2\Im\sigma,
\end{equation}
where we still require $\Im\sigma<1/2-s=\eps$, which in particular allows $\sigma$ to be real if $\eps>0$.

In summary:
\begin{thm}
\label{ThmGlobalDSLowReg}
  Let $p\geq 2$ be an integer, $1/2-1/2p<s\leq 1/2,k>(n-1)/2$, and suppose $\sigma\in\C$ is such that $\Im\sigma<1/2-s$. Moreover, assume $A\in\R$ and the dimension $n$ satisfy condition \eqref{EqLowRegCond}. Then for small enough $R>0$, there exists a constant $C>0$ such that for all $f\in\mu^{(n+3)/4+\Im\sigma/2}H^{s,k}(\Omega)^{\bullet,-}$ with norm $\leq C$, the PDE
  \[
    \biggl(\Box_g-\Bigl(\frac{n-1}{2}\Bigr)^2-\sigma^2\biggr)u=f+c\mu^A u^p
  \]
  has a unique solution $u\in\mu^{(n-1)/4+\Im\sigma/2}H^{s,k}(\Omega)^{\bullet,-}$, with norm $\leq R$, that depends continuously on $f$.
\end{thm}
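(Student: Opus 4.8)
The plan is to reduce the non-linear PDE to a fixed point problem for the operator $S$, the forward solution operator of the Klein-Gordon operator $\Box_g-(n-1)^2/4-\sigma^2$, obtained by undoing the conjugation in the forward solution operator $S_\sigma$ for $P_\sigma$ from Section~\ref{SubsecGlobalDSLinear}. Concretely, I would first rephrase the equation in terms of $P_\sigma$ using the substitutions $\tilde u=\mu^{i\sigma/2-(n+1)/4+1/2}u$ and $\tilde f=\mu^{-1/2+i\sigma/2-(n+1)/4}f$, exactly as in the discussion preceding the theorem, so that the PDE becomes
\[
  P_\sigma\tilde u=\tilde f+c\mu^\ell\tilde u^p,\qquad \ell=A+(p-1)(-i\sigma/2+(n-1)/4)-1.
\]
Then solving the original equation for $u\in\mu^{(n-1)/4+\Im\sigma/2}H^{s,k}(\Omega)^{\bullet,-}$ with $f\in\mu^{(n+3)/4+\Im\sigma/2}H^{s,k}(\Omega)^{\bullet,-}$ is equivalent to solving the rephrased equation for $\tilde u\in H^{s,k}(\Omega)^{\bullet,-}$ with $\tilde f\in H^{s,k}(\Omega)^{\bullet,-}$, and the forward solution operator $S_\sigma$ is available precisely because $\Im\sigma<1/2-s$, i.e.\ condition \eqref{EqGlobalDSForwardCond} holds.

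The main analytic input is the multiplicativity estimate: I would check that the map $\tilde u\mapsto c\mu^\ell\tilde u^p$ is a well-defined, locally Lipschitz (in fact with Lipschitz constant going to $0$ with the radius, since $p\geq 2$) self-map of a ball in $H^{s,k}(\Omega)^{\bullet,-}$, or more precisely a contraction of a small ball once composed with $S_\sigma$, which maps $H^{s-1,k}(\Omega)^{\bullet,-}\to H^{s,k}(\Omega)^{\bullet,-}$. Here is where Corollary~\ref{CorHskLowRegPower} enters: with $s=1/2-\eps$, $0\leq\eps<1/(2p)$, and $k>(n-1)/2$, one has $\tilde u\in H^{s,k}(\Omega)^{\bullet,-}\Rightarrow\tilde u^p\in H^{0,k}(\Omega)^{\bullet,-}$, with the corresponding norm bound. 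Multiplication by $c\in\CI(\tilde X)$ preserves $H^{0,k}(\Omega)^{\bullet,-}$ (as $c$ and its $\mc M$-derivatives are bounded), and multiplication by $\mu^\ell$ maps $H^{0,k}(\Omega)^{\bullet,-}$ into itself when $\Re\ell\geq 0$, since $\mu^\ell\in H^{0,\infty}(X,Y)$ in that range — this is exactly the module-regularity version of the footnote observation about Fourier transforms of $x_+^\ell$. Unwinding $\Re\ell\geq 0$ via $\ell=A+(p-1)(\Im\sigma/2+(n-1)/4)-1$ and using that we may take $\Im\sigma$ as close to $1/2-s=\eps$ from below as we like (with $\eps$ itself as small as we like, subject to $\eps<1/(2p)$), the constraint $\Re\ell\geq 0$ becomes $A+(p-1)(n-1)/4-1-(p-1)\Im\sigma/2>0$, which rearranges to $n>1+4(1-A)/(p-1)-2\Im\sigma$, i.e.\ condition \eqref{EqLowRegCond}; one then picks $s,\eps,\sigma$ so that all the strict inequalities are satisfied simultaneously. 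So $H^{0,k}(\Omega)^{\bullet,-}\subset H^{s-1,k}(\Omega)^{\bullet,-}$ feeds back into $S_\sigma$, closing the loop.

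Given these pieces, the fixed point argument is routine and parallels the proof of Theorem~\ref{ThmDSQu}: define $T\tilde u=S_\sigma(\tilde f+c\mu^\ell\tilde u^p)$ on the $R$-ball of $H^{s,k}(\Omega)^{\bullet,-}$; the bound $\|\tilde u^p-\tilde v^p\|_{H^{0,k}}\lesssim R^{p-1}\|\tilde u-\tilde v\|_{H^{s,k}}$ (from Corollary~\ref{CorHskLowRegPower} together with a telescoping identity $\tilde u^p-\tilde v^p=\sum_{j}\tilde u^j(\tilde u-\tilde v)\tilde v^{p-1-j}$ and the module-algebra property of $H^{s,k}$, valid since $s+k>(n-1)/2+1/2>n/2$ — note $k>(n-1)/2$ and $s>0$) shows $T$ contracts the ball for $R$ small, and maps it to itself provided $\|\tilde f\|\leq C$ with $C\sim R(\|S_\sigma\|^{-1}-\text{const}\cdot R^{p-1})$. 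Banach's fixed point theorem then yields the unique $\tilde u$, hence the unique $u$, and continuous dependence on $f$ follows by the same subtraction estimate as in Theorem~\ref{ThmDSQu}.

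The main obstacle, and the one point requiring genuine care rather than bookkeeping, is verifying that $\mu^\ell$ acts boundedly on $H^{0,k}(\Omega)^{\bullet,-}$ — i.e.\ that $\mu^\ell\in H^{0,\infty}(X,Y)$ for $\Re\ell\geq 0$ with uniform module bounds — and keeping the chain of strict inequalities among $s$, $\eps$, $\Im\sigma$, and the support/extendibility conditions at $Y_+$ and at $\partial\Omega\cap X^\circ$ consistent throughout; in particular one must ensure that the substitution $\tilde u=\mu^{i\sigma/2-(n+1)/4+1/2}u$ does not disturb the supported/extendible structure, which it does not since $\mu$ is elliptic (nonvanishing) away from $Y$ and the conjugation is by a power of the boundary defining function of $Y$, hence compatible with $H^{s,k}(\Omega)^{\bullet,-}$. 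Everything else — the algebra property, the power estimate, the contraction mapping — is supplied by Section~\ref{SecStaticDeSitter}'s scheme and the lemmas of Section~\ref{SubsecDeSitterAlgebra}, specialized to $\codim Y=1$.
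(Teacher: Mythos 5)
Your overall strategy — conjugate to $P_\sigma$, invoke Corollary~\ref{CorHskLowRegPower} for the power estimate, use $\Re\ell\geq 0$ to control the $\mu^\ell$ factor, and close via Banach's fixed point theorem — is exactly the route the paper takes (the theorem is stated as a ``summary'' of the calculation immediately preceding it). Two of your justifications, however, are not quite right and should be repaired.

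First, for the contraction estimate you invoke ``the module-algebra property of $H^{s,k}$, valid since $s+k>n/2$.'' This is false in the regime of the theorem: Proposition~\ref{PropHsk} requires \emph{both} $s>d/2=1/2$ and $s+k>n/2$, and here $s\leq 1/2$, so $H^{s,k}$ is not known to be an algebra. The correct tool for estimating the telescoped mixed products $\tilde u^j(\tilde u-\tilde v)\tilde v^{p-1-j}$ is Proposition~\ref{PropHskLowRegMult}, iterated exactly as in the proof of Corollary~\ref{CorHskLowRegPower}: a product of $p$ factors from $H^{1/2-\eps,k}$, $k>(n-1)/2$, lands in $H^{1/2-p\eps-\delta,k}$ for any $\delta>0$, hence in $H^{0,k}$ once $\eps<1/(2p)$. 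That gives $\|\tilde u^p-\tilde v^p\|_{H^{0,k}}\lesssim R^{p-1}\|\tilde u-\tilde v\|_{H^{s,k}}$, which is what the contraction needs.

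Second, you justify that $\mu^\ell$ is a multiplier on $H^{0,k}$ by ``$\mu^\ell\in H^{0,\infty}(X,Y)$.'' That $L^2$-based membership does not by itself yield a multiplier bound on $H^{0,k}$ (again, $H^{0,k}$ is not an algebra). The correct and much simpler reason is that for $\Re\ell\geq 0$ the function $\mu^\ell$ is bounded and all its iterated $\mc M$-derivatives (e.g. $\mu\pa_\mu\mu^\ell=\ell\mu^\ell$, $\pa_y\mu^\ell=0$) are bounded as well; Leibniz then gives $\|\mu^\ell v\|_{H^{0,k}}\lesssim\|v\|_{H^{0,k}}$. With these two repairs the argument is complete and agrees with the paper's.
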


In particular, if $1/4<s<1/2$, $0<\Im\sigma<1/2-s$ and $A=0$, then
quadratic non-linearities are fine for $n\geq 5$; if $\im\sigma=0$ and
$A=0$, then they work for $n\geq 6$.

%%%%%%%%%%%%%%%%%%%%%%%%%%%%%
\subsubsection{Backward solutions to semilinear equations with polynomial non-linearity}\label{sec:backward-dS}

Recalling the setting of \S\ref{SubsubsecDSBackward}, let us briefly turn to the backward problem for \eqref{EqDSPDE}, which we rephrase in terms of $P_\sigma$ as above. For simplicity, let us only consider the `least sophisticated' conditions, namely $s>1/2,k>(n+1)/2$,
 \begin{equation}
 \label{EqGeneralDSCondBack}
   A+(p+q-1)(\Im\sigma/2+(n-1)/4)-1/2>s,
 \end{equation}
and, this is the important change compared to the forward problem, $s>1/2-\Im\sigma$, where the latter guarantees the existence of the backward solution operator $S_\sigma^-$. Thus, if $\Im\sigma>0$ is large enough and $s>1/2$ satisfies \eqref{EqGeneralDSCondBack}, then \eqref{EqDSPDE} is solvable in any dimension.

In the special case that we only consider 0-derivatives and no extra weight, which corresponds to putting $A=q+\alpha/2$, we obtain the condition
\[
  \Im\sigma>\frac{4(1-q-\alpha/2)-(p+q-1)(n-1)}{2(p+q+1)}
\]
if we choose $s>1/2-\Im\sigma$ close to $1/2$, which in particular allows $\im\sigma\geq 0$, and thus $\sigma^2$ {\em arbitrary}, if
$p>1+\frac{4}{n-1}$ (so $p\geq 2$ is acceptable if $n\geq 6$) or $q+\alpha/2\geq 1$.

%%%%%%%%%%%%%%%%%%%%%%%%%%%%%%%%%%%%%%%%%%%%%%%%%
\subsection{From static parts to global asymptotically de Sitter spaces}
\label{SubsecGlobalFromStatic}

Let us consider the equation
\begin{equation}
\label{EqGS}
  (\Box_g-m^2)u=f+q(u,{}^0du),
\end{equation}
where the reason for using the 0-differential ${}^0d$, see below, will be given momentarily. The idea is that every point in $X$ lies in the interior of the backward light cone from some point $p$ at future infinity $Y_+$, denoted $S_p$; that is, the blow-up of $\ol X$ at $p$ contains the static part $S_p$ of an asymptotically de Sitter space where the solvability statements have been explained in \S\ref{SecStaticDeSitter}. Consider a suitable neighborhood $\Omega_p\subset[\ol X;p]$ of the static patch as in \S\ref{SecStaticDeSitter}, so the boundary of $\Omega_p$ is the union of $\pa S_p$ and an `artificial' spacelike boundary, which on the non-blown-up space $\ol X$ all meet at the point $p$, and a Cauchy surface. In fact, we may choose the $\Omega_p$ in a fashion that is uniform in $p$. We then solve equation~\eqref{EqGS} on $\Omega_p$, thereby obtaining a forward solution $u_p$, and by local uniqueness for $\Box_g-m^2$ in $X$, all such solutions agree on their overlap, i.e.\ $u_p\equiv u_q$ on $\Omega_p\cap\Omega_q$. Therefore, we can define a function $u$ by setting $u=u_p$ on $\Omega_p$, $p\in Y_+$, which then is a solution of \eqref{EqGS} on $X$. To make this precise, we need to analyze the relationships between the function spaces on the $\Omega_p$, $p\in Y_+$, and $X$. As we will see in Lemma~\ref{LemmaHbH0} below, b-Sobolev spaces on the blow-ups $\Omega_p$ of $\overline{X}$ at boundary points are closely related to 0-Sobolev spaces on $X$.

Recall the definition of 0-Sobolev spaces on a manifold with boundary $M$ (for us, $M=\overline{X}$) with a 0-metric, i.e.\ a metric of the form $x^{-2}\wh g$ with $x$ a boundary defining function, where $\wh g$ extends non-degenerately to the boundary: If $\Vf_0(M)=x\Vf(M)$ denotes the Lie algebra of 0-vector fields, where $\Vf(M)$ are smooth vector fields on $M$, and $\Diff_0^*(M)$ the enveloping algebra of 0-differential operators, then
\[
  H_0^k(M)=\{u\in L^2(M,d\mathrm{vol})\colon Pu\in L^2(M,d\mathrm{vol}), P\in\Diff_0^k(M)\},
\]
and $H_0^{k,\ell}(M)=x^\ell H_0^k(M)$. For clarity, we shall write $L^2_0(M)=L^2(M,d\mathrm{vol})$. We also recall the definition of the 0-(co)tangent spaces: If $\cI_p$ denotes the ideal of $\CI(M)$ functions vanishing at $p\in M$, then the 0-tangent space at $p$ is defined as ${}^0T_pM=\Vf_0(M)/\cI_p\cdot\Vf_0(M)$, and the 0-cotangent space at $p$, ${}^0T^*_pM$, as the dual of ${}^0T_pM$. In local coordinates $(x,y)\in\R_x\times\R_y^{n-1}$ near the boundary of $M$, we have $d\mathrm{vol}=f(x,y)\frac{dx}{x}\frac{dy}{x^{n-1}}$ with $f$ smooth and non-vanishing, and $\Vf_0(M)$ is spanned by $x\pa_x$ and $x\pa_y$; also $x\pa_x$ and $x\pa_{y_j}$, $j=2,\ldots,n$, form a basis of ${}^0T_pM$ (for $p\in\pa M$, which is the only place where 0-spaces differ from the standard spaces), and $\frac{dx}{x}$, $\frac{dy_j}{x}$, $j=2,\ldots,n$, form a basis of ${}^0T^*_pM$. The exterior derivative $d$ induces the first order 0-differential operator ${}^0d$ on sections of $\Lambda{}^0TM$; this follows from
\[
  df = (\pa_x f)\,dx + (\pa_y f)\,dy = (x\pa_x f)\,\frac{dx}{x} + (x\pa_y f)\,\frac{dy}{x}.
\]

Now, let $\Omega\subset\overline{X}$ be a domain as in \S\ref{SubsecGlobalDSLinear}. Moreover, let $\beta_p\colon\Omega_p\to X$ be the blow-down map. We then have:

\begin{lemma}
\label{LemmaHbH0}
  Let $k\in\N_0$, $\ell\in\R$. Then there are constants $C>0$ and $C_\delta>0$ such that for all $\delta>0$,
  \begin{equation}
  \label{EqH0Hb}
    \|f\|_{H_0^{k,\ell-(n-1)/2-\delta}(\Omega)^\bullet}\leq C_\delta\sup_{p\in Y_+}\|\beta_p^* f\|_{\Hb^{k,\ell}(\Omega_p)^{\bullet,-}}\leq CC_\delta\|f\|_{H_0^{k,\ell}(\Omega)^\bullet}.
  \end{equation}
  Here, ($\bullet$) indicates supported distributions at the `artificial' boundary and ($-$) extendible distributions at all other boundary hypersurfaces.
\end{lemma}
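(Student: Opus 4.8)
The plan is to reduce the inequality \eqref{EqH0Hb} to an explicit local computation near $Y_+$, where the only subtlety lies, since away from the boundary $Y_+$ the blow-downs $\beta_p$ are diffeomorphisms onto their images and the 0-structure coincides with the smooth structure, so both sides are uniformly comparable to the corresponding ordinary Sobolev norms on a fixed compact piece of the interior. Thus I would first fix a collar neighborhood $[0,x_0)_x\times Y_+$ of $Y_+$ in $\overline X$, with $\mu=x$ (or $x^2$ in the even setting — I will keep $x$ for the 0-structure computation) a boundary defining function, and cover $Y_+$ by finitely many coordinate charts; since the estimate is a supremum over $p\in Y_+$ and $Y_+$ is compact, it suffices to prove the two inequalities uniformly for $p$ ranging over one such chart, with all constants depending only on the chart.

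Next, I would work out the change of variables relating the blow-up $S_p=[\overline X;p]$ near the interior of its front face to $\overline X$ near $p$. Introducing local coordinates $(x,y)\in\R_+\times\R^{n-1}$ on $\overline X$ centered at $p$, the blow-up introduces projective coordinates such as $x'=x$, $Y=y/x$ on $S_p$ near the interior of the front face; the front face itself is $\{x'=0\}$ and the lift of $\pa\overline X$ is $\{x'=0\}$ as well in one chart, with the side face being at $|Y|=\infty$ (which we stay away from, since $\Omega_p=\beta_p^{-1}(\Omega)$ and $\Omega$ is localized near future infinity, i.e.\ near the interior of the backward light cone from $p$). The crucial point is that $\beta_p^*$ intertwines 0-vector fields on $\overline X$ with b-vector fields on $S_p$: $x\pa_x$ pulls back (modulo lower-order b-terms, and using $x\pa_x = x'\pa_{x'} - Y\pa_Y$ in the new coordinates) to a b-vector field, and $x\pa_{y_j} = \pa_{Y_j}$ is a b-vector field tangent to the front face. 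Hence $\beta_p^*$ maps $\Diff_0^k(\overline X)$ into $\Diffb^k(S_p)$ locally, which accounts for the matching of the differentiability orders $k$ on both sides of \eqref{EqH0Hb}.

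The weight bookkeeping is where the shift by $(n-1)/2+\delta$ enters, and this is the step I expect to be the main obstacle. The 0-volume form is $d\mathrm{vol}_0 = f(x,y)\,\frac{dx}{x}\,\frac{dy}{x^{n-1}}$, whereas the b-volume form on $S_p$ is, in the $(x',Y)$ coordinates, $\sim \frac{dx'}{x'}\,dY$ up to a smooth nonvanishing factor; since $dy = x'^{\,n-1}\,dY$, pulling back $d\mathrm{vol}_0$ gives exactly $f\,\frac{dx'}{x'}\,\frac{dY}{1}$ — so on the nose the 0-$L^2$ norm on $\overline X$ matches the b-$L^2$ norm on $S_p$, \emph{pointwise in $p$}, without weight loss. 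The loss of $(n-1)/2$ comes instead from passing between the \emph{supremum over $p$} and a \emph{single} weighted 0-norm: writing $f$ in terms of its values on the family $\{S_p\}_{p\in Y_+}$, the point $q\in X$ with coordinates $(x,y)$ lies in $S_p$ for $p$ in a set of $Y_+$-measure $\sim x^{n-1}$ (the size of the base of the backward light cone from $p$ that reaches $q$), so that $\int_{Y_+}\|\beta_p^* f\|^2\,dp \sim \int |f|^2 x^{n-1}\,\frac{dx\,dy}{x^n}\cdot$(weight) — comparing this averaged quantity with the supremum costs a factor governed by how $x^{n-1}$ concentrates, yielding the $x^{-(n-1)/2-\delta}$ on the left and, for the right inequality, no loss beyond the uniform constant $C$; the extra $\delta$ absorbs the borderline logarithmic divergence from integrating $x^{n-1-1}$ near $x=0$. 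Concretely, for the upper bound $\sup_p\|\beta_p^* f\|_{\Hb^{k,\ell}(\Omega_p)^{\bullet,-}}\le C\|f\|_{H_0^{k,\ell}(\Omega)^\bullet}$ one uses that $\beta_p^*$ takes $\tau^\ell$ on $S_p$ to $x^\ell$ (the defining functions correspond under $\beta_p$ up to bounded factors on $\Omega_p$) and the pointwise matching of volume forms and derivatives just described, so this direction is clean; for the lower bound one writes $\|f\|_{H_0^{k,\ell'}(\Omega)^\bullet}^2 \lesssim \int_{Y_+}\big(\,\cdot\,\big)\,dp$ via a partition of unity subordinate to the cover of $X$ by the $S_p$'s together with the weight identity $\int_{Y_+}\mathbf 1_{S_p}(q)\,dp \sim x(q)^{n-1}$, and estimates the integrand by $\sup_p\|\beta_p^* f\|^2$, the extra $x^{-\delta}$ making $\int_0^{x_0} x^{-\delta-1+ (\ell-\ell')\cdot 2 + n-1}\,dx$ converge when $\ell' = \ell-(n-1)/2-\delta$. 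I would also note that the supported/extendible decorations are preserved because $\beta_p$ maps the artificial spacelike boundary of $\Omega_p$ to that of $\Omega$ and is a diffeomorphism there, and vanishing to infinite order at the respective boundaries is intrinsic; finally, density of $\CI$ functions with the appropriate support conditions lets one reduce to the case $f\in\CI$, where all the manipulations above are literal computations.
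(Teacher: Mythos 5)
Your proposal identifies the correct local setup (projective coordinates on $[\overline X;p]$, the volume-form match $\frac{dx}{x}\frac{dy}{x^{n-1}} = \frac{dx'}{x'}dY$, and the intertwining of 0-vector fields with b-vector fields via $x\pa_x = x'\pa_{x'}-Y\pa_Y$, $x\pa_y=\pa_Y$), and your proof of the right-hand inequality (pullback restricts the domain of integration, so $\|\beta_p^* f\|_{\Hb^{k,\ell}}\lesssim\|f\|_{H_0^{k,\ell}}$ with no loss) is the paper's argument. Where you diverge is the left-hand inequality: the paper uses a \emph{discrete} dyadic covering argument, choosing finite sets $\sB(x_0)\subset Y_+$ with $|\sB(x_0)|\lesssim x_0^{-(n-1)}$ covering $\{x\ge x_0\}$, and then summing $\sum_j 2^{-j(2\alpha - n+1)}$ over dyadic shells — the geometric series is where $\alpha=(n-1)/2+\delta$ is forced and the $\delta$-loss originates. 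You instead propose a \emph{continuous} averaging: integrate $\|\beta_p^*f\|^2_{\Hb^{k,\ell}(\Omega_p)}$ over $p\in Y_+$, apply Fubini, and use the measure identity $\int_{Y_+}\mathbf 1_{q\in S_p}\,dp\sim x(q)^{n-1}$ together with $\int_{Y_+}(\cdot)\,dp\le|Y_+|\sup_p(\cdot)$. This is a genuinely different route and is sound (the same nondegeneracy of $x^2g$ up to $Y_+$ that the paper invokes to bound the slice area of $S_p$ from below is exactly what gives the lower bound on that measure). In fact your route is cleaner: the Fubini argument gives the inequality with $\delta=0$, i.e.\ $\|f\|_{H_0^{k,\ell-(n-1)/2}}\lesssim\sup_p\|\beta_p^*f\|_{\Hb^{k,\ell}(\Omega_p)}$, which is \emph{sharper} than the stated lemma and in particular implies it. However, you have not followed your own approach through consistently: you still insert a spurious $\delta$ and speak of a "logarithmic divergence" that does not arise in the continuous argument, and the exponent in your convergence check $\int_0^{x_0}x^{-\delta-1+2(\ell-\ell')+n-1}\,dx$ is not the quantity that needs to be controlled in either approach (in the averaging there is no $x$-integral to check at all — the comparison of weights $-2\ell'\le -2\ell+n-1$ is pointwise — and in the paper's dyadic argument the relevant exponent is $2\alpha-n+1=2\delta$). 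I would also drop the remark about a "partition of unity subordinate to the cover by the $S_p$'s": that uncountable cover has no subordinate partition of unity, and you don't need one — the indicator $\mathbf 1_{q\in S_p}$ and Fubini suffice.
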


\begin{proof}
  Let us work locally near a point $p\in Y_+$; since $Y_+\cong\Sphere^{n-1}$ is compact, all constructions below can be made uniformly in $p$. The only possible issues are near the boundary $Y_+=\{x=0\}$, with $x$ a boundary defining function; hence, let us work in a product neighborhood $Y_+\times[0,2\eps)_x$, $\eps>0$, of $Y_+$, and let us assume $u$ is supported is $Y_+\times[0,\eps]$.
  
  We use coordinates $x,y_2,\ldots,y_n$ such that $y_j=0$ at $p$. Coordinates on $S_p$ are then $x,z_2,\ldots,z_n$ with $z_j=y_j/x$, i.e.\ $\beta_p(x,z)=(x,xz)$, with the restriction $\sum_{j=2}^n |z_j|^2\leq 1$. Therefore,
  \begin{align*}
    \|\beta_p^* f\|_{L^2_\bl}^2&\approx\int_{S_p} |\beta_p^* f(x,z)|^2\,\frac{dx}{x}\,dz =\int_{\beta_p(S_p)} |f(x,xz)|^2\,\frac{dx}{x}\,dz \\
	  &\leq \int |f(x,y)|^2\,\frac{dx}{x}\,\frac{dy}{x^{n-1}}\approx \|f\|_{L^2_0}^2.
  \end{align*}
  Adding weights to this estimate is straightforward. Next, we observe
  \begin{equation}
  \label{Eq0tob}
    \begin{split}
      x\pa_x(\beta_p^* f)(x,z)&=x\pa_x f(x,xz)+zx\pa_y f(x,xz) \\
	  \pa_z(\beta_p^* f)(x,z) &=x\pa_y f(x,xz),
	\end{split}
  \end{equation}
  and since $|z|\leq 1$, we conclude that $\beta_p^* f\in\Hb^1(S_p)$ is equivalent to $f,x\pa_x f,x\pa_y f\in L^2_0(\beta_p(S_p))$, which proves the second inequality in \eqref{EqH0Hb} in the case $k=1$; the general case is similar.

  For the first inequality in \eqref{EqH0Hb}, we first note that the additional weight comes from the number of static parts, i.e.\ interiors of backward light cones from points in $Y_+$, that one needs to cover any fixed half space $\{x\geq x_0\}$: Namely, for $0<x_0\leq\eps$, let $\sB(x_0)\subset Y_+$ be a set of points such that every point in $\{x\geq x_0\}$ lies in $S_p$ for some $p\in\sB(x_0)$; then we can choose $\sB(x_0)$ such that $|\sB(x_0)|\leq Cx_0^{-(n-1)}$, where $|\cdot|$ denotes the number of elements in a set. This follows from the observation that the area of the slice $x=x_0$ of $S_p$ within $Y_+\cong\Sphere^{n-1}$ (keeping in mind that we are working in a product neighborhood of $Y_+$) is bounded from below by $cx_0^{n-1}$ for some $p$-independent constant $c>0$. Indeed, note that null-geodesics of the 0-metric $g$ are, up to reparametrization, the same as null-geodesics of the conformally related metric $x^2g$, which is a non-degenerate Lorentzian metric up to $Y_+$. See also Figure~\ref{FigStaticToGlobal} below.
  
  Thus, putting $\alpha=(n-1)/2+\delta$, $\delta>0$, we estimate
  \begin{align*}
    \int_{x\leq\eps}&|x^\alpha f(x,y)|\,\frac{dx}{x}\,\frac{dy}{x^{n-1}}=\sum_{j=0}^\infty \int_{2^{-j-1}\eps<x\leq 2^{-j}\eps} |x^\alpha f(x,y)|^2\,\frac{dx}{x}\,\frac{dy}{x^{n-1}} \\
	  &\lesssim \sum_{j=0}^\infty 2^{-2\alpha j} \sum_{p\in\sB(2^{-j-1}\eps)}\|\beta_p^* f\|_{L^2_\bl}^2 \lesssim \sum_{j=0}^\infty 2^{-2\alpha j} (2^{-j-1}\eps)^{-n+1} \sup_{p\in Y_+}\|\beta_p^* f\|_{L^2_\bl}^2 \\
	  &\lesssim \sum_{j=0}^\infty 2^{-j(2\alpha-n+1)} \sup_{p\in Y_+}\|\beta_p^* f\|_{L^2_\bl}^2,
  \end{align*}
  with the sum converging since $2\alpha-n+1=2\delta>0$. Weights and higher order Sobolev spaces are handled similarly, using \eqref{Eq0tob}.
\end{proof}

In particular, this explains why in equation~\eqref{EqGS} we take $d={}^0d\colon H_0^{k,\ell}(X)\to H_0^{k-1,\ell}(X;{}^0T^*X)$, namely this is necessary in order to make the global equation interact well with the static patches.

Since we want to consider local problems to solve the global one, the non-linearity $q$ must be local in the sense that $q(u,{}^0du)(p)$ for $p\in X$ only depends on $p$ and its arguments evaluated at $p$; let us for simplicity assume that $q$ is in fact a polynomial as in \eqref{EqDSPolynomial}.

Using Corollary~\ref{cor:ndS}, we then obtain:

\begin{thm}
\label{ThmGSdSQu}
  Let $0\leq\eps<\eps_0$ with $\eps_0$ as in \S\ref{SubsecStaticDSSemi}, and $s>\max(3/2+\eps,n/2+1)$, $s\in\N$. Let
  \[
    q(u,{}^0du)=\sum_{2\leq j+|\alpha|\leq d} q_{j\alpha} u^j \prod_{k\leq|\alpha|}X_{\alpha,k}u,
  \]
  $q_{j,\alpha}\in\Cx+H_0^s(\overline X)$, $X_{\alpha,k}\in\Vf_0(M)$. Then there exists $C>0$ such that for all $f\in H_0^{s-1,\eps}(\Omega)^\bullet$ with norm $\leq C$, the equation
  \[
    (\Box_g-m^2)u=f+q(u,{}^0du)
  \]
  has a unique solution $u\in\bigcap_{\delta>0} H_0^{s,\eps-(n-1)/2-\delta}(\Omega)^\bullet$ that depends continuously on $f$. Here, we allow $m=0$ if every summand of $q$ contains at least one 0-derivative, and require $m>0$ if this is not the case, e.g.\ if $q=q(u)$ is simply the sum of (multiple of) powers of $u$.

  The analogous conclusion also holds for $\Box_g u=f+q({}^0du)$ provided $\eps>0$, with the solution $u$ being in $\bigcap_{\delta>0}H_0^{s,-(n-1)/2-\delta}(\Omega)^\bullet$. Moreover, for all $p\in Y_+$, the limit $u_\pa(p):=\lim_{p'\to p,p'\in X} u(p')$ exists, $u_\pa\in C^{0,\eps}(Y_+)$, and $u-u_\pa(\phi\circ\frakt_1)\in x^\eps C^0(\overline X)$, where $\phi\circ\frakt_1$ is identically $1$ near $Y_+$ and vanishes near the `artificial' boundary of $\Omega$.
\end{thm}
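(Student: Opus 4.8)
The plan is to solve \eqref{EqGS} on each backward light cone separately, using the static theory of Section~\ref{SecStaticDeSitter}, and then patch. For $p\in Y_+$ let $\beta_p\colon S_p\to\overline X$ be the blow-down and $\Omega_p=\beta_p^{-1}(\Omega)$ the associated static domain with corners; then $\beta_p^*(\Box_g-m^2)=\Box_{\beta_p^*g}-m^2$ is of the form \eqref{eq:cP-almost-Box} with $\beta_p^*g$ a Lorentzian b-metric near the front face, and by \eqref{Eq0tob} a $0$-vector field on $\overline X$ pulls back to a b-vector field on $S_p$, so $\beta_p^*\bigl(q(u,{}^0du)\bigr)$ is a polynomial non-linearity in $\bdiff(\beta_p^*u)$ of the form \eqref{EqDSPolynomial}, with coefficients in $\Cx+\Hb^s$ by the right-hand inequality of Lemma~\ref{LemmaHbH0} applied to the $q_{j\alpha}$, while $\beta_p^*\bigl(q({}^0du)\bigr)$ is of the form \eqref{EqDSPolynomial2}. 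I would first record that all the data of the static problem on $S_p$ — the metric $\beta_p^*g$, its normal operator (whose resonances are those in \eqref{eq:dS-poles}, hence have imaginary part $<-\eps_0$ by Lemma~\ref{LemmaPerturbDS}, in fact \emph{independently} of $p$), the artificial boundary, and $\Omega_p$ — depend continuously on $p$; since $Y_+\cong\Sphere^{n-1}$ is compact and $s>n/2+1$, the norm of the forward solution operator $S_\KG$ of Corollary~\ref{cor:ndS} on $S_p$, and hence the thresholds $R,C$ there, can be taken uniform in $p$.

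Given $f\in H_0^{s-1,\eps}(\Omega)^\bullet$ with small norm, the right-hand inequality of Lemma~\ref{LemmaHbH0} bounds $\sup_p\|\beta_p^*f\|_{\Hb^{s-1,\eps}(\Omega_p)^{\bullet,-}}$ by a constant times $\|f\|_{H_0^{s-1,\eps}(\Omega)^\bullet}$, so Corollary~\ref{cor:ndS} produces for every $p$ a unique $u_p\in\Hb^{s,\eps}(\Omega_p)^{\bullet,-}$ (respectively $u_p\in\Cx(\phi\circ\frakt_1)\oplus\Hb^{s,\eps}(\Omega_p)^{\bullet,-}$ in the $\Box_g$ case) with $\|u_p\|\le R$ uniformly, solving the pulled-back equation and depending continuously on $\beta_p^*f$. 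Regarding $u_p$ via $\beta_p$ as a function on the interior of the backward light cone of $p$, any two of these agree on overlaps: their difference solves a linear wave equation for $\Box_g-m^2$ — the non-linearity being a locally Lipschitz function of $u$ and ${}^0du$ since $s>n/2+1$ — and vanishes near the common artificial boundary, so forward uniqueness for $\Box_g-m^2$ in $X$ (energy estimates, or directly Lemma~\ref{lemma:unique} on a static patch) forces equality. Thus $u:=u_p$ on $S_p^\circ$ is a well-defined function on $X^\circ$ solving \eqref{EqGS}. Applying the left-hand inequality of Lemma~\ref{LemmaHbH0} to $u$ gives $\|u\|_{H_0^{s,\eps-(n-1)/2-\delta}(\Omega)^\bullet}\le C_\delta\sup_p\|u_p\|_{\Hb^{s,\eps}(\Omega_p)^{\bullet,-}}$ for each $\delta>0$ (in the $\Box_g$ case one only obtains $H_0$-weight $<-(n-1)/2$, since the constant term $c_p(\phi\circ\frakt_1)$ lies in $\Hb^{s,\ell}(\Omega_p)^{\bullet,-}$ only for $\ell<0$), so $u$ lies in the asserted space and depends continuously on $f$. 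Uniqueness of the global solution follows by the same mechanism: a weighted energy estimate for $\Box_g-m^2$ on $X$ of the type in Lemma~\ref{lemma:energy-est-back}/Corollary~\ref{cor:loc-WP-energy}, applied to the difference of two solutions — which satisfies a linear equation whose coefficients, while possibly growing towards $Y_+$, are bounded on compact subsets of $\overline X\setminus Y_+$ — together with the forward support condition at the artificial boundary forces the difference to vanish (Gr\"onwall with vanishing initial data).

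It remains to extract the boundary behavior of $u$ in the $\Box_g$ case, and this is the step requiring genuine care. On $S_p$ write $u_p=c_p(\phi\circ\frakt_1)+u_p'$ with $c_p\in\Cx$, $u_p'\in\Hb^{s,\eps}(\Omega_p)^{\bullet,-}$; since $s>n/2$, b-Sobolev embedding gives $|u_p'|\le C_0\tau^\eps$ uniformly in $p$, with $u_p'$ vanishing at the front face, on which $\tau\sim x$, on which $\phi\circ\frakt_1\equiv1$, and which $\beta_p$ collapses to $p$. Hence $u(p')\to c_p$ as $p'\to p$ within $X^\circ$, so $u_\pa(p):=\lim_{p'\to p}u(p')$ exists, equals $c_p$, and satisfies $|u_\pa(p)|\lesssim R$. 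For the H\"older estimate, given $p,p'\in Y_+$ with $d(p,p')$ small (here $d$ is a fixed distance on $Y_+$), I would choose $q\in X^\circ$ lying in the backward light cones of \emph{both} $p$ and $p'$ with $x(q)\sim d(p,p')$ — possible because, as in the proof of Lemma~\ref{LemmaHbH0}, the slice at height $x$ of the backward cone from $p$ has diameter $\sim x$ in $Y_+$ — and compare the two expansions at $q$:
\[
|c_p-c_{p'}|\le|u(q)-c_p|+|u(q)-c_{p'}|\le 2C_0\,x(q)^\eps\lesssim d(p,p')^\eps,
\]
uniformly; the case of $d(p,p')$ bounded below is trivial since $|c_p|\lesssim R$. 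This gives $u_\pa\in C^{0,\eps}(Y_+)$. Finally, fixing an extension $\tilde u_\pa\in C^{0,\eps}(\overline X)$ of $u_\pa$, on the backward cone of $p$ one has $u-\tilde u_\pa(\phi\circ\frakt_1)=u_p'+(c_p-\tilde u_\pa)(\phi\circ\frakt_1)$, the first term being $O(x^\eps)$ and the second being $O(x^\eps)$ as well, since a point of this cone at height $x$ projects to within $\sim x$ of $p$ and $\tilde u_\pa$ is $\eps$-H\"older; covering a collar of $Y_+$ by finitely many such cones yields $u-u_\pa(\phi\circ\frakt_1)\in x^\eps C^0(\overline X)$. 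The main obstacle is precisely this last block — organizing the constants $c_p$ attached to the uncountably many static patches into a single $\eps$-H\"older function on $Y_+$, and converting the static-patch decay measured in $\tau$ into a uniform decay statement in $x$ on $\overline X$; the two inequalities of Lemma~\ref{LemmaHbH0} and the geometry of the height-$x$ slices of backward cones are what make it work.
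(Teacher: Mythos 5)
Your proposal follows the paper's own strategy in all essentials: pull back to the family of static patches $S_p$ via the blow-downs $\beta_p$, observe via \eqref{Eq0tob} that $0$-vector fields become b-vector fields so that $q$ pulls back to a non-linearity of the form \eqref{EqDSPolynomial} (respectively \eqref{EqDSPolynomial2}), use the second inequality of Lemma~\ref{LemmaHbH0} to get a uniformly small family $\beta_p^*f$, invoke Corollary~\ref{cor:ndS} for existence and smallness of $u_p$, glue by forward uniqueness, and return to $\overline X$ by the first inequality of Lemma~\ref{LemmaHbH0} (picking up the $-(n-1)/2-\delta$ shift). The treatment of the $\Box_g$ case and the geometric argument producing the H\"older bound on $u_\pa$ from two overlapping backward cones intersected at height $x_0\sim d(p,p')$ is the same as the paper's, as is the deduction that $u-u_\pa(\phi\circ\frakt_1)\in x^\eps C^0(\overline X)$. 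You are somewhat more explicit than the paper on two points that the paper leaves implicit — that the $p$-independence of the static resonances (Lemma~\ref{LemmaPerturbDS}) together with compactness of $Y_+$ gives a $p$-uniform bound on $\|S_\KG\|$ and on the thresholds $R,C$, and that global uniqueness can be settled by a forward energy/Gr\"onwall argument on the linearized equation — but these elaborations do not change the route, they just spell out steps the paper asserts in passing.
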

\begin{proof}
  We start by proving the first part: If $f\in H_0^{s-1,\eps}(\Omega)^\bullet$, then $f_p=\beta_p^* f\in\Hb^{s-1,\eps}(S_p)$ is a uniformly bounded family in the respective norms by Lemma~\ref{LemmaHbH0}. We can then use Corollary~\ref{cor:ndS} to solve
  \[
    (\Box_g-m^2)u_p=f_p+q(u_p,\bdiff u_p)
  \]
  in the static part $S_p$, where we use that $q$ is a polynomial and the fact that $\Tb^*_{p'}S_p$ naturally injects into ${}^0T^*_{\beta_p(p')}\Omega$ for $p'\in S_p$ to make sense of the non-linearity; we thus obtain a uniformly bounded family $u_p=\wt u_p|_{S_p}\in\Hb^{s,\eps}(S_p)^{\bullet,-}$. By local uniqueness and since $f$ vanishes near $Y_-$, we see that the function $u$, defined by $u(\beta_p(p'))=u_p(p')$ for $p\in Y_+$, $p'\in S_p$, is well-defined, and by Lemma~\ref{LemmaHbH0}, we indeed have $u\in H_0^{s,\eps-(n-1)/2-\delta}(\Omega)^\bullet$ for all $\delta>0$.

  For the second part, we follow the same strategy, obtaining solutions $u_p=c_p(\phi\circ\frakt_1)+u'_p$ of
  \[
    \Box_g u_p=f_p+q(\bdiff u_p),
  \]
  where $c_p\in\C$ and $u'_p\in\Hb^{s,\eps}(S_p)^{\bullet,-}$ are uniformly bounded, thus $u_p$ is uniformly bounded in $\Hb^{s,-\delta}(\Omega)^\bullet$ for every fixed $\delta>0$, and therefore the existence of a unique solution $u$ follows as before. Put $u_\pa(p):=c_p$, then $u_\pa(p)=\lim_{p'\to p,p'\in S_p} u(p')$, since $u'_p\in x^\eps C^0(S_p)$ by the Sobolev embedding theorem. We first prove that $u_\pa$ so defined is $\eps$-H\"older continuous. Let us work in local coordinates $(x,y)$ near a point $(0,y_0)$ in $Y_+$. Now, $u'_p$ is uniformly bounded in $x^\eps C^0(S_p)$, and since for $x_0>0$ arbitrary, we have $c_{p_1}+u'_{p_1}(x_0,y_*)=c_{p_2}+u'_{p_2}(x_0,y_*)$ for all $p_1,p_2\in Y_+$, provided $|p_1-p_2|\leq cx_0$ for some constant $c>0$, which ensures that $S_{p_1}\cap S_{p_2}\cap\{x=x_0\}$ is non-empty and thus contains a point $(x_0,y_*)$ (see Figure~\ref{FigStaticToGlobal}), we obtain
  \[
    |c_{p_1}-c_{p_2}|=|u'_{p_1}(x_0,y_*)-u'_{p_2}(x_0,y_*)|\leq C x_0^\eps,\quad |p_1-p_2|\leq cx_0
  \]
  for all $x_0$, thus
  \[
    \frac{|u_\pa(p_1)-u_\pa(p_2)|}{|p_1-p_2|^\eps}\leq C,\quad p_1,p_2\in Y_+.
  \]
  \begin{figure}[!ht]
    \centering
	\includegraphics{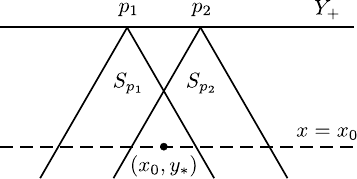}
	\caption{Setup for the proof of $u_\pa\in C^{0,\eps}(Y_+)$: Shown are the backward light cones from two nearby points $p_1,p_2\in Y_+$ that intersect within the slice $\{x=x_0\}$ at a point $(x_0,y_*)$.}
	\label{FigStaticToGlobal}
  \end{figure}

  This in particular implies that
  \begin{equation}
  \label{EqLimEstimate}
    \begin{split}
      |u(x,y)-u_\pa(0,y_0)|&\leq |u(x,y)-u_\pa(0,y)|+|u_\pa(0,y)-u_\pa(0,y_0)| \\
	    &\leq C(|y-y_0|^\eps+x^\eps) \xra{x\to 0, y\to y_0} 0,
	\end{split}
  \end{equation}
  hence we in fact have $u_\pa(p)=\lim_{p'\to p,p'\in X}u(p')$. Finally, putting $y=y_0$ in \eqref{EqLimEstimate} proves that $u-u_\pa(\phi\circ\frakt_1)\in x^\eps C^0(\overline{X})$.
\end{proof}

The major lossy part of the argument is the conversion from $f$ to the family $\beta_p^* f$: Even though the second inequality in Lemma~\ref{LemmaHbH0} is optimal (e.g., for functions which are supported in a single static patch), one loses $(n-1)/2$ orders of decay relative to the gluing estimate, i.e.\ the first inequality in Lemma~\ref{LemmaHbH0}, which is used to pass from the family $u_p$ to $u$.

Observe on the other hand that the decay properties of $u$, without regard to those of $f$, in the first part of the theorem are very natural, since the constant function $1$ is an element of $\bigcap_{\delta>0}H_0^{\infty,-(n-1)/2-\delta}(X)$, thus $u$ has an additional decay of $\eps$ relative to constants.

\begin{rmk}
  Notice that for the proof of Theorem~\ref{ThmGSdSQu} it is irrelevant whether certain 0-Sobolev spaces are algebras, since the main analysis, Corollary~\ref{cor:ndS}, is carried out on b-Sobolev spaces.
\end{rmk}

%%%%%%%%%%%%%%%%%%%%%%%%%%%%%%%%%%%%%%%%%%%%%%%%%%%%%%%%%%%%%%%%%%%%%
\section{Lorentzian scattering spaces}
\label{SecMinkowski}

%%%%%%%%%%%%%%%%%%%%%%%%%%%%%%%%%%%%%%%%%%%%%%%%%%
\subsection{The linear Fredholm framework}
\label{SecMinkowski-Fredholm}

We now consider $n$-dimensional non-trapping asymptotically Minkowski
spacetimes $(M,g)$, a notion which includes
the radial compactification of Minkowski spacetime. This notion
was briefly recalled in the introduction; here we restate this in the
notation of \cite[\S3]{Ba13} where this notion was introduced.

Thus, $M$ is compact with smooth boundary, with a boundary defining
function $\rho$ (we switch the notation from $\tau$ mainly to
emphasize that $\rho$
is not everywhere timelike), and {\em scattering vector
fields} $V\in\Vsc(M)$, introduced by Melrose \cite{Me94}, are smooth vector fields of the form $\rho V'$,
$V'\in\Vb(M)$. Hence, if the $z_j$ are local coordinates on $\pa M$
extended to a neighborhood in $M$, then a local basis of these vector
fields over $\CI(M)$ is
$\rho^2\pa_{\rho},\rho\pa_{z_j}$. Correspondingly, $\Vsc(M)$ is the
set of smooth sections of a vector bundle $\Tsc M$, which is therefore, roughly
speaking, $\rho\Tb M$. The vector field $\rho^2\pa_\rho$ is
  well-defined up to a positive factor at $\rho=0$, and is called the
  {\em scattering normal vector field} of $\pa M$. The dual bundle of
  $\Tsc M$, called the {\em scattering
cotangent bundle}, is denoted by $\Tsc^*M$. If $M$ is the radial
compactification of $\RR^n$, by gluing a sphere at infinity via the
reciprocal polar coordinate map $(r,\omega)\mapsto
(r^{-1},\omega)\in(0,1)_\rho\times\Sphere^{n-1}_\omega$, i.e.\ adding
$\rho=0$ to the right hand side (corresponding to `$r=\infty$'), then $\Vsc(M)$ is spanned by (the
lifts of) the translation invariant vector fields over $\CI(M)$.

A {\em Lorentzian scattering metric} $g$ is a Lorentzian signature, taken to be $(1,n-1)$, metric on
$\Tsc M$, i.e.\ a smooth symmetric section of $\Tsc^* M\otimes\Tsc^*
M$ with this signature with the following additional properties:
\begin{enumerate}
\item
There is a real $\CI$ function $v$ defined on $M$ with $dv$, $d\rho$ linearly
independent at `the light cone at infinity', $S=\{v=0,\ \rho=0\}$,
\item
$g(\rho^2\pa_\rho,\rho^2\pa_\rho)$ has the same sign as $v$ at
$\rho=0$, i.e.\  $\rho^2\pa_\rho$ is timelike in $v>0$, spacelike in $v<0$,
\item
near $S$,
$$
g=v\frac{d\rho^2}{\rho^4}-\Bigl(\frac{d\rho}{\rho^2}\otimes\frac{\alpha}{\rho}+\frac{\alpha}{\rho}\otimes\frac{d\rho}{\rho^2}\Bigr)-\frac{\wt h}{\rho^2},
$$
where $\alpha$ is a smooth one-form on $M$,
$$
\alpha=\frac{1}{2}\,dv+\cO(v)+\cO(\rho),
$$
$\wt h$ is a smooth
2-cotensor on $M$, which is positive definite on the (codimension two)
annihilator of
$d\rho$ and $dv$.
\end{enumerate}
A Lorentzian scattering metric is {\em non-trapping} if
\begin{enumerate}
\item
$S=S_+\cup S_-$ (each a disjoint union of connected components), in
$X=\pa M$ the open set
$\{v>0\}\cap X$ decomposes as $C_+\cup C_-$ (disjoint union), with
$\pa C_+=S_+$, $\pa C_-=S_-$; we write $C_0=\{v<0\}\cap X$,
\item
 the projections of all null-bicharacteristics in $\Tsc^*M\setminus o$ to $M$
 tend to $S_\pm$ as their parameter tends to $\pm\infty$ or vice
 versa.
\end{enumerate}
Since a conformal factor only reparameterizes bicharacteristics, this
means that with $\wh g=\rho^2 g$, which is a b-metric on $M$, the
projections of all
null-bicharacteristics of $\wh g$ in $\Tb^*M\setminus o$ tend to $S_\pm$.
As already pointed out in the introduction,
the difference between the de Sitter-type and Minkowski
settings is 
that at the spherical conormal bundle $\SNb^* S$ of $S$
the nature of the radial points is source/sink rather than a saddle
point of the flow at $L_\pm$ discussed in \S\ref{SecStaticDeSitter-Fredholm}.

We first state solvability properties, namely we show that
under the assumptions of
\cite[\S3]{Ba13}, the problem of finding a tempered
solution to $\Box_{g} w = f $ is a
Fredholm problem in suitable weighted Sobolev spaces.  In particular, there is only a finite dimensional
obstruction to existence. Then we strengthen the assumptions somewhat
and show actual solvability in the strong sense that in these spaces
the solution $w$ satisfies that if $f$ is
vanishing to infinite order near $\overline{C_-}$, then so does $w$.

Let
$$
L=\rho^{-(n-2)/2}\rho^{-2}\Box_g\rho^{(n-2)/2} \in\Diffb^2(M)
$$
be the `conjugated' b-wave operator (as in \cite[\S4]{Ba13}), which is formally self-adjoint with respect to the density of the Lorentzian b-metric $\wh g=\rho^2 g$, further $L=\Box_{\wh g}-\gamma$, where $\gamma\in\CI(M)$ is real valued.  Let
\begin{equation}\begin{split}\label{eq:m-decreasing}
&m\in \CI(\Sb^*M)\ \text{a
variable (Sobolev) order function, decreasing along}\\
&\text{the direction of the Hamilton flow oriented to the future, i.e.\ towards}\ S_+.
\end{split}\end{equation}

\begin{rmk}
\label{RmkWeightFromBase}
  In the actual application of asymptotically Minkowski spaces, one can take $m$ to be a function on $M$ rather than $\Sb^*M$ by making it take constant values near $\overline{C_+}$, resp.\ $\overline{C_-}$, corresponding to the requirements at $\cR_+$, resp.\ $\cR_-$ below, and transitioning in between using a time function as in the discussion preceding Theorem~\ref{thm:asymp-Mink-lin}, i.e.\ making $m$ of the form $F\circ\wt\frakt$ for appropriate $F$. Since this simplifies some arguments below, we assume this whenever it is convenient.
\end{rmk}

With
$$
\cR_+=\Sb N^*S_+,\ \text{resp.}\ \cR_-=\Sb N^*S_-,
$$
the future, resp.\ past, radial sets in $\Sb^*M$, see \cite[\S3.6]{Ba13}, and with
$$  
m+l<1/2\ \text{at}\ \cR_+,\ m+l>1/2\ \text{at}\ \cR_-,
$$
$m$ constant near $\cR_+\cup\cR_-$,
one has an estimate
\begin{equation}\label{eq:L-b-symb-est}
\|u\|_{\Hb^{m,l}}\leq C\|Lu\|_{\Hb^{m-1,l}}+C\|u\|_{\Hb^{m',l}},
\end{equation}
provided one assumes $m'<m$,
$$ 
m'+l>1/2\ \text{at}\ \cR_-,\ u\in \Hb^{m',l}.
$$ 
To see this, we recall and record a slight improvement of \cite[Proposition~4.4]{Ba13}:

\begin{prop}\label{prop:b-source-sink}
Suppose $L$ is as above.

If  $m+l<1/2$, and if $u\in\Hb^{-\infty,l}(M)$
then $\cR_\pm$ (and thus a neighborhood of $\cR_\pm$) is disjoint from
$\WFb^{m,l}(u)$ provided $\cR_\pm\cap\WFb^{m-1,l}(L u)=\emptyset$ and
a punctured neighborhood of $\cR_\pm$, with $\cR_\pm$ removed, in
$\Sigma\cap\Sb^*M$ is disjoint from $\WFb^{m,l}(u)$.

On the other hand, if $m'+l>1/2$, $m\geq m'$, $u\in\Hb^{-\infty,l}(M)$
and if $\WFb^{m',l}(u)\cap\cR_\pm=\emptyset$
then $\cR_\pm$ (and thus a neighborhood of $\cR_\pm$) is disjoint from
$\WFb^{m,l}(u)$ provided $\cR_\pm\cap\WFb^{m-1,l}(L u)=\emptyset$.
\end{prop}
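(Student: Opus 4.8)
\textbf{Proof plan for Proposition~\ref{prop:b-source-sink}.}

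The plan is to adapt the positive commutator argument from \cite[Proposition~4.4]{Ba13}, tracking the slight improvement that we allow a variable order $m$ (constant near $\cR_\pm$) rather than a fixed one; as in Proposition~\ref{prop:b-saddle} the variability of $m$ away from $\cR_\pm$ is harmless since the sign of the commutator there can be controlled by standard real principal type terms. First I would recall the structure near $\cR_\pm$: since $L=\Box_{\hat g}-\gamma$ with $\hat g$ a Lorentzian b-metric, at the source/sink radial set $\cR_\pm=\Sb N^*S_\pm$ the rescaled Hamilton vector field $V=\tilde\rho^{m-1}\sH_p$ has $\cR_\pm$ as a source or sink \emph{within} $\Sb^*M$, and moreover, since $\rho^2\pa_\rho$ is timelike (resp.\ spacelike) on the appropriate side of $S$, the weight variable $\rho$ (equivalently $\tau$ in the earlier notation) behaves compatibly with $\tilde\rho$ — this is precisely the distinction from the saddle-point case of Proposition~\ref{prop:b-saddle}, recorded in Footnote~\ref{FootWeightDiff}. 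The key computation is that the rescaled Hamilton derivative of the principal symbol of the commutant applied to the weight $\tilde\rho^{-m+(m-1)/2}\rho^{-l}$ (schematically) produces a factor whose sign is governed by the threshold quantity $m+l-1/2$ at $\cR_\pm$.

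The key steps, in order: (1) choose a commutant $C\in\Psib^{m-(1)/2,-l}(M)$ with principal symbol $c=\phi(\rho_0)\phi_0(\hat p)\tilde\rho^{-m+\ldots}\rho^{-l}$, where $\rho_0$ is a homogeneous degree zero quadratic defining function of $\cR_\pm$ within $\Sigma\cap\Sb^*M$ and $\phi,\phi_0$ are cutoffs supported near $0$ with $\phi'\le 0$ on $[0,\infty)$; (2) compute $\sigma(i[L,C^*C])$, isolating the `main term' coming from differentiating the weight $\tilde\rho^{-m+\ldots}\rho^{-l}$ — this term has a definite sign precisely when $m+l<1/2$ (first case) or $m'+l>1/2$ (second case), which is what forces the two regimes in the statement; (3) handle the term where $\phi(\rho_0)$ is differentiated using that $V$ is a source/sink transverse to $\cR_\pm$ (so $\mp\sH_p\rho_0$ has a sign near $\cR_\pm$, up to cubic terms, exactly as in \eqref{eq:rho_0-pos}), noting that here — unlike the saddle case — the contribution from differentiating $\phi(\rho_0)$ has the \emph{favorable} sign in the first regime, so no extra a priori assumption on a neighborhood is needed beyond the punctured-neighborhood hypothesis in $\Sigma\cap\Sb^*M$; (4) regularize with $S_\ep\in\Psib^{-\delta}(M)$ having symbol $(1+\ep\tilde\rho^{-1})^{-\delta}$, uniformly bounded in $\Psib^0$, converging to $\Id$, noting the regularizer error has a fixed sign that in the second regime ($m'+l>1/2$) costs the a priori assumption $\WFb^{m',l}(u)\cap\cR_\pm=\emptyset$; (5) pair with $u$, integrate by parts using $L=L^*$ modulo $\Psib^0$, and extract the desired wave front set conclusion from the resulting inequality together with the hypothesis $\cR_\pm\cap\WFb^{m-1,l}(Lu)=\emptyset$, propagating up from $m'$ to $m$ in the second case by a bootstrap; finally invoke openness of the complement of the wave front set to absorb the a priori microlocal control into genuine regularity near $\cR_\pm$.

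The main obstacle — really the only nonroutine point — is bookkeeping the sign of the weight-derivative term with a \emph{variable} order $m$: one must verify that $\tilde\rho^{m-1}\sH_p m$ contributes only lower-order (in the commutator) terms near $\cR_\pm$ because $m$ is assumed constant there, and away from $\cR_\pm$ that its sign is dominated by the real-principal-type part of the commutator (which one arranges by taking the Hamilton derivative of the base factor of $c$ large relative to $c$, exactly as in the complex-absorption discussion preceding \eqref{eq:P-b-symb-est}). Since $m$ is constant near $\cR_\pm\cup\cR_-$ by hypothesis, this reduces the computation at the radial sets to the fixed-order case of \cite[Proposition~4.4]{Ba13}, and the rest is a direct translation; I would simply cite \cite{Ba13} for the detailed positive commutator estimate and indicate the one-line modification for variable order, in parallel with how Proposition~\ref{prop:b-saddle} cites \cite[Proof of Propositions~2.3--2.4]{Va12}.
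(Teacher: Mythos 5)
Your overall strategy matches the paper's: the first case is a direct citation of \cite[Proposition~4.4]{Ba13}, and the second case is obtained from the same positive commutator argument by tracking the sign change in the weight contribution and noting that the regularizer then imposes the a priori assumption $\WFb^{m',l}(u)\cap\cR_\pm=\emptyset$. Your step~(4) in particular is exactly right. However, the sign bookkeeping in step~(3) is off: in the first regime $m+l<1/2$, the term where $\phi(\rho_0)$ is differentiated has the \emph{unfavorable} sign (opposite to the main weight term), which is precisely why the punctured-neighborhood hypothesis is needed there --- it controls $\supp\phi'$. It is in the second regime $m'+l>1/2$ that $\rho^{-2l}\tilde\rho^{-2m+1}$ yields a sign matching the $\phi'(\rho_0)$ contribution, so that \cite[Equation~(4.1)]{Ba13} holds without the $E$ term and no punctured-neighborhood hypothesis is required; the cost in that regime is the regularizer, as you say. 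Writing ``favorable'' for the first regime while simultaneously invoking the punctured-neighborhood hypothesis is internally inconsistent --- if the $\phi'(\rho_0)$ term were truly favorable, no a priori control on $\supp\phi'$ would be needed at all.

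Two smaller points. First, the emphasis on variable order $m$ is a distraction here: the Proposition is a microlocal statement at $\cR_\pm$, where $m$ has already been assumed constant, so the cited fixed-order argument applies verbatim; there is no extra bookkeeping to do and the paper does not engage with it. Second, the comparison to the saddle case is worth making but should be stated precisely: in the saddle setting (Proposition~\ref{prop:b-saddle}), the $\tau$-direction and the $\tilde\rho$-direction have \emph{opposite} stable/unstable character, producing two potentially bad boundary terms ($\phi_1'(\tau)$ and $\phi'(\rho_0)$) of which exactly one is unfavorable in each regime; in the source/sink setting the $\rho$- and $\tilde\rho$-directions have the \emph{same} character, collapsing the weights into the single threshold $m+l$, so the only boundary term in play is $\phi'(\rho_0)$, and its sign relative to the weight term flips between the two regimes.
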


\begin{proof}
The first statement is proved in \cite[Proposition~4.4]{Ba13}. The
second statement follows the same way, but in that case the product of
the required powers of the boundary
defining functions, $\rho^{-2l}\wt\rho^{-2m+1}$, with $\wt\rho$
the defining function of fiber infinity\footnote{This defining
  function is denoted by $\nu$ in \cite{Ba13}.} as in \S\ref{SecStaticDeSitter-Fredholm}, in the commutant of
\cite[Proposition~4.4]{Ba13} provides a favorable sign, thus
\cite[Equation~(4.1)]{Ba13} holds without the $E$ term. However, when
regularizing, the regularizer contributes a term with the opposite
sign, exactly as in \cite[Proof of Propositions~2.3-2.4]{Va12}; this
forces the requirement on the a priori regularity, namely
$\WFb^{m',l}(u)\cap\cR_\pm=\emptyset$, exactly as in the referred
results of \cite{Va12}; see also Proposition~\ref{prop:b-saddle} above.
\end{proof}

Indeed, due to the closed graph theorem, \eqref{eq:L-b-symb-est} follows immediately from
the b-radial point
regularity statements of Proposition~\ref{prop:b-source-sink}
for
sources/sinks,
and the propagation of
b-singularities for variable order Sobolev spaces, which is not proved
in \cite{Ba13}, but whose analogue in standard Sobolev spaces is
proved there in \cite[Proposition~A.1]{Ba13} (with additional
references given to related results in the literature), and as it is a
purely symbolic argument, the extension to the b-setting is
straightforward. (We refer to Proposition~\ref{prop:b-saddle} here
  and \cite[Proposition~4.4]{Ba13} extending the radial point results,
  Propositions~2.3-2.4, of \cite{Va12}, from the boundaryless setting
  to the b-setting.)

One also has a similar estimate for $L$ when one replaces $m$ by a
weight $\wt m$ which is
increasing along the direction of the Hamilton flow oriented towards
the past,
$$
\wt m+\wt l>1/2\ \text{at}\ \cR_+,\ \wt m+\wt l<1/2\ \text{at}\ \cR_-,
$$
provided one assumes $\wt m'<\wt m$,
$$ 
\wt m'+\wt l>1/2\ \text{at}\ \cR_+,\ u\in \Hb^{\wt m',\wt l}.
$$ 
Further $L$ can be
replaced by $L^*$. Thus,
\begin{equation}\label{eq:L*-b-symb-est}
\|u\|_{\Hb^{\wt m,\wt l}}\leq C\|L^*u\|_{\Hb^{\wt
    m-1,\wt l}}+C\|u\|_{\Hb^{\wt m',\wt l}}.
\end{equation}

Just as in the asymptotically de Sitter/Kerr-de Sitter settings,
one wants to improve these estimates so that the space $\Hb^{m,l}$, resp.\
$\Hb^{\wt m,\wt l}$, on the left hand side includes
compactly into the error term on
the right hand side. This argument is completely analogous to
\S\ref{SecStaticDeSitter-Fredholm} using the Mellin transformed
normal operator estimates obtained in \cite[\S5]{Ba13}.
We thus
further assume that there are no poles of the Mellin conjugate $\wh
L(\sigma)$ on the line $\Im\sigma=-l$. Then using the
Mellin transform and the estimates for $\wh L(\sigma)$ (including the
high energy estimates, which imply that for all but a discrete set of
$l$ the aforementioned lines do not contain such poles), as in
\S\ref{SecStaticDeSitter-Fredholm},
we obtain that on $\RR^+_\rho\times\pa M$
\begin{equation}\label{eq:N-L-est}
\|v\|_{\Hb^{\wh m,l}}\leq C\|N(L)v\|_{\Hb^{\wh m-1,l}}
\end{equation}
when $\wh m\in\CI(S^*\pa M)$ is a
variable order function decreasing along the direction of
the Hamilton flow oriented to the future, $\Lambda_+$, resp.\ $\Lambda_-$, the
future, resp.\ past, radial sets in $S^*\pa M$, and with
$$  
\wh m+l<1/2\ \text{at}\ \Lambda_+,\ \wh m+l>1/2\ \text{at}\ \Lambda_-.
$$  
One can take
$$
\wh m=m|_{T^*\pa M},
$$
for instance, under the identification of
$T^*\pa M$ as a subspace of $\Tb^*_{\pa M}M$, taking into account that
homogeneous degree zero functions on $T^*\pa M\setminus o$ are exactly
functions on $S^*\pa M$, and analogously on $\Tb^*_{\pa M}M$. However,
in the limit $\sigma\to\infty$, one should use norms depending on
$\sigma$ reflecting the dependence of the semiclassical norm on
$h$. We recall from Remark~\ref{RmkWeightFromBase} that
in the main case of interest one can take $m$ to be a pullback from
$M$, and thus the Mellin transformed operator norms are independent of
$\sigma$. In either case, we
simply write $m$ in place of $\wh m$.

Again, we have an analogous estimate for $N(L^*)$:
\begin{equation}\label{eq:N-L*-est}
\|v\|_{\Hb^{\wt m,\wt l}}\leq C\|N(L^*)v\|_{\Hb^{\wt
    m-1,\wt l}},
\end{equation}
provided $-\wt l$ is not the imaginary part of a pole of $\widehat
{L^*}$, and provided $\wt m$ satisfies the requirements above.
As $\widehat{L^*}(\sigma)=(\wh L)^*(\overline{\sigma})$, the
requirement on $-\wt l$ is the same as $\wt l$ not
being the imaginary part of a pole of $\wh L$.

At this point the argument of the paragraph of
\eqref{eq:P-normal-op-break-up} in
\S\ref{SecStaticDeSitter-Fredholm}  can be repeated verbatim to
yield that for $m$ with $m+l>3/2$ at $\cR_-$ (with the stronger restriction
coming from the requirements on $m'$ at $\cR_-$, $\wt m'$ at
$\cR_+$, and $m'<m-1$, $\wt m'<\wt m-1$; recall that one needs
to estimate the normal operator on these primed spaces), and $m+l<1/2$ at $\cR_+$, 
\begin{equation}\label{eq:L-Fred-est}
\|u\|_{\Hb^{m,l}}\leq C\|Lu\|_{\Hb^{m-1,l}}+C\|u\|_{\Hb^{m'+1,l-1}},
\end{equation}
where now the inclusion $\Hb^{m,l}\to \Hb^{m'+1,l-1}$ is
compact (as we choose $m'<m-1$); this argument required $m,l,m'$ satisfied the
requirements preceding \eqref{eq:L-b-symb-est}, and
that $-l$ is
not the imaginary part of any pole of $\wh L$.

Analogous estimates hold for $L^*$:
\begin{equation}\label{eq:L*-Fred-est}
\|u\|_{\Hb^{\wt m,\wt l}}\leq
C\|L^*u\|_{\Hb^{\wt m-1,\wt l}}+C\|u\|_{\Hb^{m'+1,\wt l-1}},
\end{equation}
provided $\wt m$, $\wt l$, $\wt m'$
satisfy the requirements stated before \eqref{eq:L*-b-symb-est},
$\wt m'<\wt m-1$, and provided
$-\wt l$ is not the imaginary part of a pole of $\widehat {L^*}$
(i.e.\ $\wt l$ of $\wh L$).

Via the same functional analytic argument as in \S\ref{SecStaticDeSitter-Fredholm} we thus obtain Fredholm
properties of $L$, in particular solvability, modulo a (possible) finite
dimensional obstruction, in $\Hb^{m,l}$ if
$$
m+l>3/2\ \text{at}\ \cR_-,\ m+l<-1/2\ \text{at}\ \cR_+.
$$
More precisely, we take $\wt
m=1-m$, $\wt l=-l$, so $m+l<-1/2$ at $\cR_+$ means $\wt m+\wt
l=1-(m+l)>3/2$, so the space on the left hand side of
\eqref{eq:L-Fred-est} is dual to that in the first term on the right
hand side of \eqref{eq:L*-Fred-est}, and the same for the equations
interchanged. Then the Fredholm statement is for
$$
L:\cX^{m,l}\to\cY^{m-1,l},
$$
with
$$
\cY^{s,r}=\Hb^{s,r},\ \cX^{s,r}=\{u\in\Hb^{s,r}:\ Lu\in\Hb^{s-1,r}\}.
$$
Note that, by propagation of singularities, i.e.\ most importantly
using Proposition~\ref{prop:b-source-sink}, with $\Ker
L\subset\Hb^{m,l}$, $\Ker L^*\subset\Hb^{1-m,-l}$ a priori,
\begin{equation}\begin{split}\label{eq:Mink-kernel}
&\Ker L\subset\Hb^{m^\flat,l},\ \Ker L^*\subset\Hb^{1-m^\flat,-l}
\ \text{if}\\
&m^\flat+l>1/2\ \text{at}\ \cR_-,\ m^\flat+l<1/2\ \text{at}\ \cR_+.
\end{split}\end{equation}

We can improve this further using the propagation of
singularities. Namely, suppose one merely has
\begin{equation}\label{eq:weaker-Mink-Freq-req}
m+l>3/2\ \text{at}\ \cR_-,\ m+l<1/2\ \text{at}\ \cR_+,
\end{equation}
so the requirement at $\cR_+$ is weakened. Then let $m^\sharp=m-1$
near $\cR_+$, $m^\sharp\leq m$ everywhere, but still satisfying the requirements for the order
function along the Hamilton flow,
so the Fredholm result is applicable with $m^\sharp$ in place of $m$.
Now, if $u\in\cX^{m^\sharp,l}$, $Lu=f$, $f\in\cY^{m-1,l}\subset\cY^{m^\sharp-1,l}$, then
Proposition~\ref{prop:b-source-sink} gives $u\in\cX^{m,l}$. Further,
if $\Ker L$ and $\Ker L^*$ are trivial, this gives that for $m,l$ as
in \eqref{eq:weaker-Mink-Freq-req}, satisfying also the conditions
along the Hamilton flow, $L:\cX^{m,l}\to\cY^{m-1,l}$ is invertible.

Now, as invertibility (the absence of kernel and cokernel) is
preserved under sufficiently small perturbations, it holds in
particular for perturbations of the Minkowski metric which are
Lorentzian scattering metrics in our sense, with closeness measured
in smooth sections of the second symmetric power of $\Tb^*M$. (Note
that non-trapping is also preserved under such perturbations.)

For more general asymptotically Minkowski metrics we note that, due to
Theorem~\ref{thm:b-main} (which does not have any requirements for the
timelike nature of the boundary defining function, and which works
locally near $\overline{C_-}$ either by working on (extendible)
function spaces or by using the localization given by wave propagation
as in \S3.3 of \cite{Va12} or \S\ref{SubsecGlobalDSLinear} here)
elements of $\Ker L$ on $\Hb^{m,l}$, with $m,l$ as above, lie in $\dCI(M)$ locally near
$\overline{C_-}$ provided all resonances, i.e.\ poles of $\wh
L(\sigma)$, in $\im\sigma<-l$ have polar parts (coefficients of the
Laurent series) that map into distributions supported on
$\overline{C_+}$. As shown in \cite[Remark~4.17]{Va13} when $\wh
L(\sigma)$ arises from a Lorentzian conic metric as in\footnote{In
  \cite{Va13}, the boundary defining function used to define the
  Mellin transform is replaced by its reciprocal, which effectively
  switches the sign of $\sigma$ in the operator, but also the backward propagator is
  considered (propagating toward the past light cone), which reverses
  the role of $\sigma$ and $-\sigma$ again, so in fact, the signs in
  \cite{Va13} and \cite{Ba13} agree for the formulae connecting the
  asymptotically hyperbolic resolvents and the global operator, $\wh L(\sigma)$.} 
\cite[Equation~(3.5)]{Va13}, but with the arguments applicable without
significant changes in our more general case, see also
\cite[\S7]{Ba13} for our general setting, and
\cite[Remark~4.6]{Va12} for a related discussion with complex
absorption, the resonances of $\wh L(\sigma)$ consist of the
resonances of the asymptotically hyperbolic resolvents on the caps,
namely $\cR_{C_+}(\sigma)$, $\cR_{C_-}(-\sigma)$, as well as possibly
imaginary integers, $\sigma\in i\ZZ\setminus\{0\}$, with resonant
states when $\im\sigma<0$ being differentiated delta distributions at
$S_+=\pa C_+$ while the dual states are differentiated delta
distributions at $S_-=\pa C_-$ when $\im\sigma>0$;
the latter arise, e.g.\ as poles on even dimensional Minkowski space.
More generally, when composed with extension of
$\CI_c(\overline{C_-}\cup C_0)$ by zero to $\CI(X)$ from the right and with
restriction to $\overline{C_-}\cup C_0$ from the left, the only poles
of $\wh L(\sigma)$ are those of $\cR_{C_-}(-\sigma)$ as well as the possible
$\sigma\in i\NN_+$. Thus, fixing $l>-1$, one can conclude that elements
of $\Ker L$ are in $\dCI(M)$ locally near $\overline{C_-}$ provided
$\cR_{C_-}(\wt\sigma)$ has no poles in $\im\wt\sigma>l$. (The only change for $l\leq -1$
  is that one needs to exclude the potential pure imaginary integer
  poles as well.) The
analogous statement for $\Ker L^*$ on $\Hb^{\wt m,\wt l}$ is that fixing $\wt l>-1$,
elements are in $\dCI(M)$ near $\overline{C_+}$ provided
$\cR_{C_+}(\wt\sigma)$ has no poles in $\im\wt\sigma>\wt
l$. As $\wt l=-l$ for our duality arguments, the weakest symmetric
assumption (in terms of strength at $C_+$ and $C_-$) is that
$\cR_{C_\pm}$ do not have any poles in the closed upper half plane;
here the closure is added to make sure $L$ is actually Fredholm on
$\Hb^{m,l}$ with $l=0$. In general, if one wants to use other values
of $l$, one needs to assume the absence of poles in $\im\sigma\geq
-|l|$ (if one wants to keep the hypotheses symmetric).

Note that assuming $\frac{d\rho}{\rho}$ is
timelike (with respect to $\wh g$) 
near $\overline{C_-}$, one {\em automatically} has the
absence of poles of $\cR_{C_-}$
in an upper half plane,
and the finiteness (with multiplicity) of the number of poles in any upper half plane,
by the semiclassical estimates of \cite{Va12}, see \S3.2 and 7.2
(one can ignore the complex absorption discussion there), so in this case the
issue is that of a possible finite number of resonances. There is an
analogous statement if $\frac{d\rho}{\rho}$ is timelike near
$\overline{C_+}$ for $\cR_{C_+}$.

Now, assuming still that $\frac{d\rho}{\rho}$ is timelike
at, hence
near $\overline{C_-}$, it is easy
to construct a function $\frakt$ which has a timelike differential near
$\overline{C_-}$, and appropriate sublevel sets are small
neighborhoods of $\overline{C_-}$. Once one has such a function
$\frakt$, energy estimates can be used to conclude that rapidly
vanishing, in such a neighborhood, solutions of $Lu=0$ actually vanish
in this neighborhood, so elements of $\Ker L$ have support disjoint
from $\overline{C_-}$; similarly elements of $\Ker L^*$ have support
disjoint from $\overline{C_+}$.

Concretely, with $\wh G$ the dual b-metric of $\wh g$, let $U_-$ be a neighborhood of $\overline{C_-}$, and let $0<\ep_0<\ep_1$, $\wt\ep>0$, $\delta>0$ be such that $\{\rho\leq\wt\ep,\ v\geq -\ep_1\}\cap U_-$ is a compact subset of $U_-$, and on $U_-$
\begin{gather*}
  \rho<\wt\ep,\ v>-\ep_1\Rightarrow \wh G\Bigl(\frac{d\rho}{\rho},\frac{d\rho}{\rho}\Bigr)>\delta, \\
  \rho<\wt\ep,\ -\ep_1<v<-\ep_0\Rightarrow \wh G\Bigl(\frac{d\rho}{\rho},dv\Bigr)<0,\ \wh G(dv,dv)>0.
\end{gather*}
Such $U_-$ and constants indeed exist.  First, there is $U_-$ and $\wt\eps'>0$, $\eps'_1>0$ such that $\{\rho\leq\wt\eps',\ v\geq -\eps'_1\}\cap U_-$ is a compact subset of $U_-$ since $\overline{C_-}$ is defined by $\{\rho=0,\ v\geq 0\}$ in a neighborhood of $\overline{C_-}$ with $d\rho\neq 0$ there and $dv\neq 0$ near $v=0$; we then consider $\wt\eps<\wt\eps'$, $\eps_1<\eps_1'$ below.  Next, since $\wh G(\frac{d\rho}{\rho},\frac{d\rho}{\rho})$ is positive on a neighborhood of $\overline{C_-}$ by assumption (thus for any sufficiently small $\eps_1,\wt\eps$ there is a desired $\delta$ so that the first inequality is satisfied) and $\wh G(\frac{d\rho}{\rho},dv)|_{S_-}=-2$, any sufficiently small $\eps_1$ and $\wt\eps$ give $\wh G(\frac{d\rho}{\rho},dv)<0$ in the desired region, and finally $\wh G(dv,dv)>0$ on $C_0$ near $S_-$ (as $\wh G(dv,dv)=-4v+\cO(v^2)$ there), so choosing $\eps_1$ sufficiently small, $\eps_0<\eps_1$, and then $\wt\eps$ sufficiently small satisfies all criteria.

Now let $\ep_-,\ep_+$ be such that
$0<\ep_-<\ep_+<\wt\ep$, and let $\phi\in\CI(\RR)$ have $\phi'\leq
0$, $\phi=0$ near $[-\ep_0,\infty)$, $\phi>\wt\ep$ near
$(-\infty,-\ep_1]$, $\phi'<0$ when $\phi$ takes values in
$[\ep_-,\ep_+]$. Then $\frakt=\rho+\phi(v)$ has the property that on $U_-$
$$
\frakt\leq\ep_+\Rightarrow\rho,\phi(v)\leq\ep_+\Rightarrow\rho<\wt\ep,v>-\ep_1,
$$
and
$$
v\geq -\ep_0\Rightarrow\frakt=\rho.
$$
Thus, on $U_-$ if $v\geq -\ep_0$ and $\frakt\leq\ep_+$ then $d\frakt$ is timelike as
$d\rho$ is such, while if $v<-\ep_0$, $\frakt\leq\ep_+$ then
$$
\wh G(d\frakt,d\frakt)=\rho^2
\wh G\Bigl(\frac{d\rho}{\rho},\frac{d\rho}{\rho}\Bigr)+2\phi'(v)\rho
\wh G\Bigl(\frac{d\rho}{\rho},dv\Bigr)+(\phi'(v))^2\wh G(dv,dv)
$$
and
all terms are $\geq 0$ in view of $-\ep_1<v<-\ep_0$,
$\rho\leq\wt\ep$, with the inequality being strict when
$\frakt\in[\ep_-,\ep_+]$ (as well as in
$M^\circ\cap\frakt^{-1}((-\infty,\ep_+])$). Thus, near
$\frakt^{-1}([\ep_-,\ep_+])\cap U_-$,
$\frakt$ is a timelike function; the same is true on
$M^\circ\cap\frakt^{-1}((-\infty,\ep_+])\cap U_-$. Let $\chi\in\CI(\RR)$ with $\chi'\leq 0$,
$\chi=1$ near $(-\infty,\ep_-]$, $\chi=0$ near $[\ep_+,\infty)$, and
let $\chi\circ\frakt$, defined by this formula in $U_-$, be extended
to $M$ as $0$ outside $U_-$; since $\frakt^{-1}((-\infty,\ep_+])\cap
U_-$ is a compact subset of $U_-$, this gives a $\CI$ function. Further, $\rho$ is also timelike,
with $\frac{d\rho}{\rho}$ and $d\frakt$ in the same component of the
timelike cone; see Figure~\ref{FigMinkLvl}. Correspondingly, one can apply energy
estimates using the timelike vector field $V=(\chi\circ\frakt)\rho^{-\ell}
\wh G(\frac{d\rho}{\rho},.)$, cf.\ \cite[\S3.3]{Va12} leading up to Equation~(3.24) and the
subsequent discussion, which in turn is based on
\cite[\S\S3-4]{Vasy:AdS}. Here one needs to make both $-\chi'$
large relative to $\chi$ and $\ell>0$ large (making the b-derivative of
$\rho^{-\ell}$ large relative to $\rho^{-\ell}$), as discussed in the
Mellin transformed setting in \cite[\S3.3]{Va12}, in
\cite[\S\S3-4]{Vasy:AdS}, as well as
\S\ref{SecStaticDeSitter-Fredholm} here (with $\tau$ in place of
$\rho$, but with the sign of $\ell$ reversed due to the difference
between b-saddle points and b-sinks/sources). Notice that taking $\ell$ large is
exactly where the rapid decay near $\overline{C_-}$ is used.

\begin{figure}[!ht]
  \centering
  \includegraphics{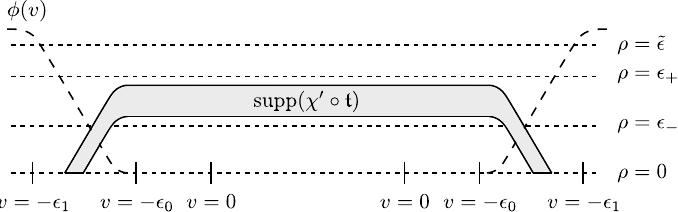}
  \caption{Setup for energy estimates near $\overline{C_-}$: The shaded region is the support of $\chi'\circ\frakt$, where $-\chi'$ is used to dominate $\chi$ to give positivity in the energy estimate; near $\rho=0$ and on $\supp(\chi\circ\frakt)$, i.e.\ in the region between $\rho=0$ and the shaded region, a sufficiently large weight $\rho^{-\ell}$ gives positivity.}
\label{FigMinkLvl}
\end{figure}

We have seen that the existence of appropriate timelike functions, such as
$\frakt$, in a neighborhood of $\overline{C_+}$ and $\overline{C_-}$
is automatic (in a slightly degenerate sense at $\overline{C_\pm}$
themselves) when $\frac{d\rho}{\rho}$ is timelike in these regions;
indeed these functions could be extended to a neighborhood of
$C_0$ if $v$ is appropriately chosen. In order to conclude that
elements of $\Ker L$ and $\Ker L^*$ vanish globally, however, we need to
control {\em all} of the interior of $M$. This can be accomplished by
showing global hyperbolicity\footnote{In Geroch's notation, our
  $M^\circ$ is $M$.} of $M^\circ$, which in turn can be seen by applying a
result due to Geroch \cite{Geroch:Domain}.  Namely, by
\cite[Theorem~11]{Geroch:Domain}
it suffices to show
that a suitable $\cS$ is a Cauchy surface, which by
\cite[Property~6]{Geroch:Domain} follows if we show that $\cS$ is
achronal, closed, and every null-geodesic intersects and then
re-emerges from $\cS$.
In order to define $\cS$, it is useful to define
$\wh\frakt=\psi\circ\frakt$ in $U_-$, where $\psi\in\CI(\RR)$,  $\psi'\geq 0$,
$\psi(t)=t$ near $t\leq\ep_-$, $\psi'(t)>0$ for $t<\ep_+$,
$\psi'(t)=0$ for $t\geq\ep_+$; let $T=\psi(\ep_+)>\ep_-$. Further,
extend $\wh\frakt$ to $M$ as $=T$ outside $U_-$; since
$U_-\cap\frakt^{-1}((-\infty,\ep_+])$ is compact, this gives a $\CI$
function on $M$. Thus,
$\wh\frakt\in\CI(M)$ is a globally weakly time-like function in that $\wh
G(d\wh\frakt,d\wh\frakt)\geq 0$, and it is strictly time-like in
$M^\circ\cap \frakt^{-1}((-\infty,\ep_+))$. In particular, it is
monotone along all null-geodesics. Further, $\wh\frakt=0$ at $S_-$
and $\wh\frakt=T>0$ at $S_+$, indeed near $S_+$. Then we claim that
$\cS=\wh\frakt^{-1}(\ep_-)\cap M^\circ$ is a Cauchy surface.

Now, $\cS$ is closed in $M^\circ$ since
$\overline{\cS}$ is closed in $M$; indeed it is a closed embedded
submanifold.
By our non-trapping assumption, every null-geodesic in $M^\circ$ tends to $S_+$ in one
direction and $S_-$ in the other direction, so on future oriented
null-geodesics (ones tending to $S_+$), $\wh\frakt$ is monotone
increasing, attaining all values in $(0,T]$. Since at the $\ep_-$
level set of $\frakt$, hence of $\wh\frakt$, $d\wh\frakt$ is
strictly time-like, the value $\ep_-$ is attained exactly once for
$\wh\frakt$ along null-geodesics. Thus, every
null-geodesic intersects $\cS$ and then re-emerges from it. Finally,
$\cS$ is achronal, i.e.\ there exist no time-like curves connecting
two points on $\cS$: any future oriented time-like curve (meaning with
tangent vector in the time-like cone whose boundary is the future light cone) in $M^\circ\cap \frakt^{-1}((-\infty,\ep_+))$ has
$\wh\frakt$ monotone increasing, with the increase being strict near
$\cS$, so again the value $\ep_-$ can be attained at most once on such
a curve. In summary, this proves that $M^\circ$ is globally
hyperbolic, so every solution of $Lu=0$ with vanishing Cauchy data on
$\cS$ vanishes identically, in particular by what we have observed,
$\Ker L$ and $\Ker L^*$ are trivial on the indicated spaces.

In summary:

\begin{thm}\label{thm:asymp-Mink-lin}
If $(M,g)$ is a non-trapping Lorentzian scattering metric
in the sense of \cite{Ba13}, $|l|<1$, and
\begin{enumerate}
\item
The induced asymptotically hyperbolic resolvents $\cR_{C_\pm}$ have no
poles in $\im\sigma\geq-|l|$,
\item
$\frac{d\rho}{\rho}$ is timelike near
$\overline{C_+}\cup\overline{C_-}$,
\end{enumerate}
then for order functions $m\in\CI(\Sb^*M)$ satisfying
\eqref{eq:m-decreasing} and \eqref{eq:weaker-Mink-Freq-req},
the forward problem for the conjugated wave operator $L$, i.e.\
with $L$ considered as a map
$$
L:\cX^{m,l}\to\cY^{m-1,l},
$$
is invertible.
\end{thm}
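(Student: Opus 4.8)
The plan is to assemble the pieces developed above into a Fredholm statement and then to show that both the kernel of $L$ and its cokernel vanish. For the Fredholm property, I would first combine the two estimates \eqref{eq:L-Fred-est} and \eqref{eq:L*-Fred-est}, in which the left-hand spaces include compactly into the error terms, with the standard functional-analytic argument (as in \cite[Proof of Theorem~26.1.7]{Ho83}), using the self-dual pairing $\tilde m=1-m$, $\tilde l=-l$; this yields that $L\colon\cX^{m,l}\to\cY^{m-1,l}$ is Fredholm whenever $m$ satisfies \eqref{eq:m-decreasing} together with the stronger frequency condition ($m+l>3/2$ at $\cR_-$, $m+l<-1/2$ at $\cR_+$). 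To weaken the condition at $\cR_+$ to \eqref{eq:weaker-Mink-Freq-req}, I would pick $m^\sharp\le m$ with $m^\sharp=m-1$ near $\cR_+$, apply the Fredholm statement with $m^\sharp$, and then invoke the source/sink radial-point propagation, Proposition~\ref{prop:b-source-sink}, to see that $u\in\cX^{m^\sharp,l}$ with $Lu\in\cY^{m-1,l}$ actually lies in $\cX^{m,l}$; since $\cY^{m-1,l}\subset\cY^{m^\sharp-1,l}$ this identifies the two mapping problems and gives Fredholmness under \eqref{eq:weaker-Mink-Freq-req}.

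Next I would show $\Ker L$ and $\Ker L^*$ are trivial. By \eqref{eq:Mink-kernel}, elements of $\Ker L$ lie in $\Hb^{m^\flat,l}$ with $m^\flat+l>1/2$ at $\cR_-$ and $<1/2$ at $\cR_+$ (and similarly for $\Ker L^*$). Working locally near $\overline{C_-}$ and applying Theorem~\ref{thm:b-main} (whose hypotheses do not require the boundary defining function to be timelike) together with the description of the resonances of $\hat L(\sigma)$ in terms of the cap resolvents $\cR_{C_\pm}$ from \cite{Va13} (see also \cite[Section~7]{Ba13}), hypothesis (1) --- no poles of $\cR_{C_-}(\tilde\sigma)$ in $\im\tilde\sigma>l$, and $|l|<1$ so the pure imaginary integer poles play no role --- forces every $u\in\Ker L$ to lie in $\dCI(M)$ on a neighborhood of $\overline{C_-}$; symmetrically, every element of $\Ker L^*$ is in $\dCI(M)$ near $\overline{C_+}$.

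Then I would use hypothesis (2) to build, as in the text, the timelike function $\frakt=\rho+\phi(v)$ and the cutoff $\chi\circ\frakt$ near $\overline{C_-}$, and run the weighted energy estimate --- with a large weight $\rho^{-\ell}$ and $-\chi'$ taken large, in the spirit of \cite[Section~3.3]{Va12} and \cite[Sections~3-4]{Vasy:AdS} --- to conclude that elements of $\Ker L$, which vanish to infinite order near $\overline{C_-}$, in fact vanish on a full neighborhood of $\overline{C_-}$; the same argument near $\overline{C_+}$ handles $\Ker L^*$. Finally I would invoke global hyperbolicity of $M^\circ$: by Geroch's criterion \cite[Theorem~11]{Geroch:Domain} it suffices that $\cS=\hat\frakt^{-1}(\ep_-)\cap M^\circ$, with $\hat\frakt=\psi\circ\frakt$, be a closed, achronal hypersurface that every null-geodesic meets exactly once and then re-emerges from --- the ``meets and re-emerges'' part being exactly the non-trapping hypothesis together with strict timelikeness of $d\hat\frakt$ at the level set. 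Global hyperbolicity then gives that a solution of $Lu=0$ with vanishing Cauchy data on $\cS$ vanishes identically, so $\Ker L=\Ker L^*=0$; a Fredholm operator with trivial kernel and cokernel is invertible, which is the claim. The forward support property (if $f$ vanishes to infinite order near $\overline{C_-}$ then so does the solution) follows by applying the same energy estimate to $u$ rather than to kernel elements.

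The step I expect to be the main obstacle is the triviality of the kernel, specifically verifying that the identification of the resonances of $\hat L(\sigma)$ with the poles of the asymptotically hyperbolic cap resolvents $\cR_{C_\pm}$ --- carried out in \cite{Va13} for Lorentzian conic metrics --- goes through in the more general non-trapping Lorentzian scattering setting of \cite{Ba13}, and keeping straight the sign conventions for $\sigma$ among \cite{Va13}, \cite{Ba13} and the present normalization, which is what pins down hypothesis (1) as the correct one. The energy-estimate and global-hyperbolicity arguments, while somewhat lengthy, are essentially self-contained given the constructions already in place above.
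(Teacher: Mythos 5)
Your proposal matches the paper's proof essentially step for step — Fredholm via \eqref{eq:L-Fred-est}, \eqref{eq:L*-Fred-est} and the duality $\tilde m=1-m$, $\tilde l=-l$, the $m^\sharp$-trick with Proposition~\ref{prop:b-source-sink} to weaken to \eqref{eq:weaker-Mink-Freq-req}, triviality of $\Ker L$ and $\Ker L^*$ via \eqref{eq:Mink-kernel}, Theorem~\ref{thm:b-main} together with the resonance identification of \cite{Va13}, the weighted energy estimate near $\overline{C_\pm}$ built from the timelike function $\frakt$, and finally Geroch's global hyperbolicity criterion. The obstacle you flag about carrying the resonance identification over from the Lorentzian conic case to the general non-trapping scattering setting is exactly what the paper addresses by invoking \cite[Section~7]{Ba13} alongside \cite{Va13}.
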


Extending the notation of \cite{Ba13}, especially \S4, we denote
by $\Hb^{m,l,k}(M)$, where $m,l\in\R,k\in\N_0$,
the space of all $u\in \Hb^{m,l}(M)$ (i.e.\ $u\in\rho^l H^m_\bl(M)$,
where $\rho$ is the boundary defining function of $M$) such that $\mc
M^j u\in \Hb^{m,l}(M)$ for all $0\leq j\leq k$. Here, $\mc
M\subset\Psi_\bl^1(M)$ is the $\Psi_\bl^0(M)$-module of
pseudodifferential operators with principal symbol vanishing on the
radial set $\mc R_+$ of the operator
$L=\rho^{-(n-2)/2}\rho^{-2}\Box_g\rho^{(n-2)/2}$; in the coordinates $\rho,v,y$ as in
\cite{Ba13} ($\rho$ being as above, $v$ a defining function of the light cone at infinity within $\partial M$, $y$
coordinates within in the light cone at infinity), $\mc M$ has local
generators $\rho\partial_\rho,\rho\partial_v,v\partial_v,\partial_y$.
Then the results of \cite{Ba13}, concretely Proposition~4.4, extend our theorem to the spaces with
module regularity.

Namely the reference, \cite[Proposition~4.4]{Ba13}, guarantees the module
regularity $u\in\Hb^{m,l,k}(M)$ of a solution $u$ of $Lu=f$ if $f$ has matching module
regularity $f\in\Hb^{m-1,l,k}(M)$ and if $u$ is in $\Hb^{m+k,l}(M)$ near
$\overline{C_-}$.
To be precise, this Proposition in \cite{Ba13} is stated making the stronger
  assumption, $f\in\Hb^{m-1+k,l}(M)$. However, the proof goes through
  for just $f\in\Hb^{m-1,l,k}(M)$ in a completely analogous manner to
  the result of Haber and Vasy \cite[Theorem~6.3]{Haber-Vasy:Radial}, where (in the
  boundaryless setting, for a Lagrangian radial set) the result is
  stated in this generality. 

If $f\in\Hb^{m-1,l,k}(M)$, then in particular $f$ is
locally in $\Hb^{m+k-1,l}$ near $\overline{C_-}$, thus, taking into
account that $m+l>1/2$ already there, $u$ is in $\Hb^{m+k,l}$ in that
region by Proposition~\ref{prop:b-source-sink} (by the first case there, i.e.\
in the high regularity
regime). Thus, an application of the closed graph theorem gives the
following boundedness result:

\begin{thm}
Under the assumptions of Theorem~\ref{thm:asymp-Mink-lin},
$L^{-1}$ has the property that it restricts to
$$
L^{-1}:\Hb^{m-1,l,k}\to\Hb^{m,l,k},\ k\geq 0,
$$
as a bounded map.
\end{thm}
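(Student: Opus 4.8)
The statement follows by combining the invertibility of $L$ established in Theorem~\ref{thm:asymp-Mink-lin} with the module regularity propagation result of \cite[Proposition~4.4]{Ba13} (in the strengthened form allowing $f\in\Hb^{m-1,l,k}(M)$ rather than $f\in\Hb^{m-1+k,l}(M)$, which works verbatim as in \cite[Theorem~6.3]{Haber-Vasy:Radial}) and the closed graph theorem. The plan is as follows.

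First, by Theorem~\ref{thm:asymp-Mink-lin}, for order functions $m$ satisfying \eqref{eq:m-decreasing} and \eqref{eq:weaker-Mink-Freq-req}, the map $L\colon\cX^{m,l}\to\cY^{m-1,l}$ is invertible, so $L^{-1}\colon\Hb^{m-1,l}(M)\to\Hb^{m,l}(M)$ is a well-defined bounded operator. Now fix $k\geq 0$ and take $f\in\Hb^{m-1,l,k}(M)\subset\Hb^{m-1,l}(M)$, and set $u=L^{-1}f\in\Hb^{m,l}(M)$. We must show $u\in\Hb^{m,l,k}(M)$. The key point is to upgrade the regularity of $u$ near $\overline{C_-}$ first: since $f\in\Hb^{m-1,l,k}(M)$, in particular $f$ is \emph{locally} in $\Hb^{m-1+k,l}(M)$ near $\overline{C_-}$; and since we are in the high-regularity regime there ($m+l>1/2$ at $\cR_-$ by \eqref{eq:weaker-Mink-Freq-req}), the second, high-regularity, case of Proposition~\ref{prop:b-source-sink} applied at $\cR_-$ propagates this into $u$ being in $\Hb^{m+k,l}(M)$ microlocally near $\cR_-$, hence locally near $\overline{C_-}$ since all null-bicharacteristics over a neighborhood of $\overline{C_-}$ flow to $\cR_-$. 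This verifies the hypothesis ``$u\in\Hb^{m+k,l}(M)$ near $\overline{C_-}$'' needed to invoke the module regularity propagation of \cite[Proposition~4.4]{Ba13}.

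With that hypothesis in hand, \cite[Proposition~4.4]{Ba13} (in the form indicated above, with the matching-module-regularity assumption on $f$) yields $u\in\Hb^{m,l,k}(M)$ together with a quantitative estimate
\[
\|u\|_{\Hb^{m,l,k}}\leq C\bigl(\|f\|_{\Hb^{m-1,l,k}}+\|u\|_{\Hb^{m,l}}+\|u\|_{\Hb^{m+k,l}\text{ near }\overline{C_-}}\bigr),
\]
and both of the latter two terms are controlled by $\|f\|_{\Hb^{m-1,l}}\leq\|f\|_{\Hb^{m-1,l,k}}$ using the boundedness of $L^{-1}$ from Theorem~\ref{thm:asymp-Mink-lin} and the estimate from Proposition~\ref{prop:b-source-sink}. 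Thus $L^{-1}$ maps $\Hb^{m-1,l,k}(M)$ into $\Hb^{m,l,k}(M)$ with a norm bound. Since $\Hb^{m-1,l,k}(M)$ and $\Hb^{m,l,k}(M)$ are both complete (module-regularity Sobolev spaces, normed by $\sum_{j\leq k}\|A_{j}\cdot\|_{\Hb^{\bullet,l}}$ over a finite generating set of $\mc M^k$), and $L^{-1}$ as a map between them has closed graph—its graph is closed because convergence in these spaces implies convergence in the weaker spaces $\Hb^{m-1,l}(M)$, $\Hb^{m,l}(M)$ on which $L^{-1}$ is already known continuous—the closed graph theorem gives the asserted boundedness of $L^{-1}\colon\Hb^{m-1,l,k}\to\Hb^{m,l,k}$, $k\geq 0$.

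\textbf{Main obstacle.} The only genuinely non-routine ingredient is the strengthening of \cite[Proposition~4.4]{Ba13} from requiring $f\in\Hb^{m-1+k,l}(M)$ to requiring only $f\in\Hb^{m-1,l,k}(M)$; as noted in the excerpt, this is handled exactly as in the proof of \cite[Theorem~6.3]{Haber-Vasy:Radial}—one commutes powers of module generators through $L$, observing that $[L,\mc M]\subset\mc M\,\Psi_\bl^1(M)+\Psi_\bl^1(M)$ so that the error terms arising from such commutators again have module regularity one order lower, and then runs the propagation/radial-point estimates iteratively in the module order. Everything else (verifying the near-$\overline{C_-}$ hypothesis via Proposition~\ref{prop:b-source-sink}, and the closed graph argument) is standard once the invertibility from Theorem~\ref{thm:asymp-Mink-lin} is granted.
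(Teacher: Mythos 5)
Your argument is essentially identical to the paper's: the same four ingredients appear in the same order---invertibility from Theorem~\ref{thm:asymp-Mink-lin}, upgrading $u$ to $\Hb^{m+k,l}$ near $\overline{C_-}$ via the high-regularity case of Proposition~\ref{prop:b-source-sink}, propagating module regularity using the strengthened \cite[Proposition~4.4]{Ba13}, and then the closed graph theorem. Note that your identification of the applicable statement of Proposition~\ref{prop:b-source-sink} as the \emph{second} case is the correct one (the paper's phrase ``the first case there, i.e.\ in the high regularity regime'' has the numbering slipped; it is the case requiring $m'+l>1/2$ that applies at $\cR_-$, where \eqref{eq:weaker-Mink-Freq-req} gives $m+l>3/2$).
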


In particular, letting $\Omega=\{\wt\frakt\geq 0\}$, where $\wt\frakt=\wh\frakt-\ep_-$ so that it attains the value $0$ within $M\setminus(\overline{C_+}\cup\overline{C_-})$, we have a forward solution operator $S$ of $L$ which maps $\Hb^{m-1,l,k}(\Omega)^\bullet$ into $\Hb^{m,l,k}(\Omega)^\bullet$, given that $m+l<1/2$; let us assume that $m$ is constant in $\Omega$. Here, $\Hb^{m,l,k}(\Omega)^\bullet$ consists of supported distributions at $\pa\Omega\cap C_0^\circ=\{\wt\frakt=0\}$.

\begin{rmk}
  Using the arguments leading to Theorem~\ref{thm:asymp-Mink-lin} in the current, forward problem, setting, but now also using standard energy estimates near the artificial boundary $\wt\frakt=0$ of $\Omega$, we see that if suffices to control the resonances of the asymptotically hyperbolic resolvent in the upper cap $C_+$ in order to ensure the invertibility of the forward problem.
\end{rmk}

%%%%%%%%%%%%%%%%%%%%%%%%%%%%%%%%%%%%%%%%%%%%%%%%%
\subsection{Algebra properties of $\Hb^{m,-\infty,k}$}
\label{SecMinkAlgebra}

In order to discuss non-linear wave equations on an asymptotically Minkowski space, we need to discuss the algebra properties of $\Hb^{m,-\infty,k}=\bigcup_{l\in\R} \Hb^{m,l,k}$. Even though we are only interested in the space $\Hb^{m,-\infty,k}(\Omega)^\bullet$, we consider $\Hb^{m,-\infty,k}(M)$, where $m$ is constant on $M$ for notational simplicity, and the results we prove below are valid for $\Hb^{m,-\infty,k}(\Omega)^\bullet$ by the same proofs.

We start with the following lemma:

\begin{lemma}
\label{LemmaHbAlgebra}
  Let $l_1,l_2\in\R$, $k>n/2$. Then $\Hb^{0,l_1,k}\cdot \Hb^{0,l_2,k}\subset \Hb^{0,l_1+l_2-1/2,k}$.
\end{lemma}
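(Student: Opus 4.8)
The statement $\Hb^{0,l_1,k}\cdot\Hb^{0,l_2,k}\subset\Hb^{0,l_1+l_2-1/2,k}$ is precisely a weighted, module-regularity analogue of Lemma~\ref{LemmaHbAlgebra}'s $L^2$-type statement, so the plan is to reduce it, via a logarithmic change of coordinates near $\pa M$ that turns b-objects into ordinary (translation-invariant-at-infinity) ones, to an estimate on $\R^n$ that fits the framework of Lemma~\ref{LemmaProductsWeights}. Concretely, I would localize: away from $\pa M$ the weight is irrelevant and $\Hb^{0,l_i,k}$ is just $H^k$, which is an algebra since $k>n/2$, so only a collar neighborhood $X\times[0,\eps)_\rho$ of $X=\pa M$ matters. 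There, writing $\rho=e^{-t}$, the module $\mc M$ is generated (up to $\Psib^0$) by $\pa_t$ and the vector fields $v\pa_v,\pa_y$ tangent to the light cone at infinity, but for the purpose of the \emph{algebra} statement (as opposed to the sharper Proposition~\ref{PropHskModule}-type statements) it is enough to bound $\mc M^k$-regularity by $\pa_t^a(v\pa_v)^{b}\pa_y^{c}$-regularity with $a+b+|c|\le k$; applying the Leibniz rule distributes at most $k$ such derivatives over the product, so it suffices to prove $\Hb^{0,l_1,a}\cdot\Hb^{0,l_2,b}\subset\Hb^{0,l_1+l_2-1/2,0}$ whenever $a+b\ge k$ — i.e.\ a statement with \emph{no} module regularity on the right, only the weighted $L^2$ space $\rho^{l_1+l_2-1/2}L^2_\bl$.

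\textbf{Key steps.} First, pass to the model $\R^n_+ = \R_t\times\R^{n-1}_z$ with $\Hb^{0,l,c}$ corresponding to $e^{-lt}$ times the space $\{u\in L^2 : \pa_t^{a}\pa_z^{\gamma}u\in L^2,\ a+|\gamma|\le c\}$; pulling the weight out, the claim becomes: if $u_i\in H^{(w_i)}(\R^n)$ with $w_1(\xi)=\la\xi\ra^{a}$, $w_2(\xi)=\la\xi\ra^{b}$ (where I write $\xi=(\sigma,\zeta)$ dual to $(t,z)$ and use $\la\xi\ra^{c}\sim$ the natural order-$c$ weight), then the product lies in $H^{(w)}$ with $w(\xi)=\la\sigma\ra^{1/2-\delta}$ for the single $t$-frequency, i.e.\ one gains a full power $\rho^{-1/2}$ in the weight at the cost of $1/2$ a derivative in the $t$-variable only. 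The mechanism is exactly the one-dimensional gain: on $\R_t$, $H^{s_1}\cdot H^{s_2}\subset H^{s_1+s_2-1/2}$ for $s_1+s_2>1/2$, $s_i\ge 0$ (equivalently $L^2\cdot H^{a}\subset H^{1/2-\delta}$ once $a\ge 1$, which is what the module regularity buys), combined with the fact that in the $z$-variables one has $\ge k>(n-1)/2$ derivatives on each factor, so Lemma~\ref{LemmaFracEst} controls the $\zeta$-integral. Thus I would verify the finiteness of $M_+$ (or $M_-$) from \eqref{EqWeightConditionGeneral} with $w_1=\la\xi\ra^{a}$, $w_2=\la\xi\ra^{b}$, $w(\xi)=\la\sigma\ra^{1/2-\delta}\la\zeta\ra^{0}$: split $\la\xi\ra\lesssim\la\eta\ra+\la\xi-\eta\ra$, split $\eta=(\sigma',\zeta')$, and estimate the $\sigma'$-integral using $a\ge 1$ (so $2a>1$) and the $\zeta'$-integral using $2a\ge 2k>n-1$ via Lemma~\ref{LemmaFracEst}; the weight $\la\sigma\ra^{1-2\delta}$ in the numerator is dominated by $\la\sigma'\ra^{2a}+\la\sigma-\sigma'\ra^{2a}$ since $a\ge 1>1/2-\delta$, and the same on the other factor. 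This yields $\Hb^{0,l_1,a}\cdot\Hb^{0,l_2,b}\subset\Hb^{0,l_1+l_2-1/2+\delta,0}$ for every $\delta>0$; since one can always give away an arbitrarily small amount of weight (the weight exponents in the target are not required to be sharp here beyond $l_1+l_2-1/2$, and in fact one should state it with the $-1/2$ as written, absorbing the $\delta$ by noting $l_1,l_2$ can be decreased infinitesimally, or simply proving the clean $-1/2$ directly by being slightly more careful with the one-dimensional Sobolev multiplication), the lemma follows. Finally, patch the collar estimate with the interior $H^k$-algebra estimate using a partition of unity, which is harmless since $\mc M\supset\Psib^0(M)$ and cutoffs are order-zero operators.

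\textbf{Main obstacle.} The only real subtlety is bookkeeping: keeping straight that the weight gain $\rho^{-1/2}$ comes \emph{entirely} from the normal ($t$-, i.e.\ $\rho\pa_\rho$-) direction, where $\Hb$ regularity in the normal variable is genuinely $H^s(\R_t)$ regularity, so that the one-dimensional Sobolev embedding/multiplication gives the $1/2$; in the tangential light-cone directions $v\pa_v,\pa_y$ there is no weight and one must instead spend the module regularity $k>(n-1)/2$ to make the remaining $\R^{n-1}$-integral converge via Lemma~\ref{LemmaFracEst}. In particular the appearance of the factor $\la v\ra$-type weights from $v\pa_v$ must be checked not to interfere — but since $v$ is a smooth function on $M$, multiplication by $v$ is an order-zero b-operator with symbol vanishing on $\SNb^*S$, hence maps $\Hb^{m,l,k}$ to itself, so generators of $\mc M$ of the form $v\pa_v$ are handled just like $\pa_y$ for the crude algebra count. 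A secondary, purely cosmetic, point is getting the clean exponent $l_1+l_2-1/2$ rather than $l_1+l_2-1/2+\delta$; this is standard and follows from the sharp endpoint of $H^{s_1}(\R)\cdot H^{s_2}(\R)\hookrightarrow H^{s_1+s_2-1/2}(\R)$, which holds (without loss) as long as $s_1+s_2-1/2\le\min(s_1,s_2)$ and $s_1+s_2>1/2$ — both satisfied here since $s_i\ge 1$ after the Leibniz reduction forces $a,b\ge 1$ in each factor that carries a normal derivative, and the remaining factors contribute $L^2$, for which $L^2\cdot H^{s}\subset H^{s-1/2}$ holds sharply once $s\ge 1/2$. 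I would record this one-dimensional input as a separate observation (or cite \cite[Chapter~13.3]{Tay}) and then the rest is the routine Fourier-space estimate above.
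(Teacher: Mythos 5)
There is a genuine gap, and it is not cosmetic: the whole proposal rests on a mis-identification of the space $\Hb^{0,l,k}$. The module $\mc M$ is generated (over $\Psib^0$) by $\rho\pa_\rho$, $\rho\pa_v$, $v\pa_v$, $\pa_y$, and these vector fields all degenerate at $S_+=\{\rho=0,\,v=0\}$. In your ``model'' step you replace this by the space of $u$ with $\pa_t^a\pa_z^\gamma u\in e^{-lt}L^2$, $a+|\gamma|\le k$, i.e.\ you treat $\mc M$ as though it were the full b-module $\Vb(M)$; in that case $\Hb^{0,l,k}$ would simply be $\Hb^{k,l}(M)$, and the product estimate would be the \emph{lossless} $\Hb^{k,l_1}\cdot\Hb^{k,l_2}\subset\Hb^{k,l_1+l_2}$ (for $k>n/2$). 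The factor $\rho^{-1/2}$ in the conclusion of the lemma exists precisely because $\mc M$ is strictly smaller than $\Vb$, degenerating at the corner $\rho=v=0$, and any argument that does not see that degeneration cannot possibly produce it for the right reason.

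This shows up concretely in the one-dimensional input you invoke. You assert that ``$L^2\cdot H^a\subset H^{1/2-\delta}$ once $a\ge 1$, which is what the module regularity buys.'' That inclusion is false: with $s_1=0$, $s_2=a$ the sharp Sobolev multiplication $H^{s_1}(\R)\cdot H^{s_2}(\R)\subset H^{s}(\R)$ requires $s\le\min(s_1,s_2)=0$, so the best conclusion is $L^2\cdot H^a\subset L^2$ for $a>1/2$ --- no gain of positive Sobolev order. Thus the proposed ``one-dimensional gain'' mechanism does not exist, and since a correct bookkeeping (treating $\mc M$ as $\Vb$) would give no $\rho^{-1/2}$ loss at all, the fact that you nevertheless ``derive'' a $\rho^{-1/2}$ shows the error and the loss are canceling each other in the presentation rather than in the mathematics. (There is also a smaller issue: even if the argument produced a $\delta$-lossy version $\Hb^{0,l_1,k}\cdot\Hb^{0,l_2,k}\subset\Hb^{0,l_1+l_2-1/2-\delta,k}$, one cannot upgrade it to the clean $-1/2$ by ``decreasing $l_1,l_2$ infinitesimally,'' since the input spaces are fixed.)

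The actual source of the $-1/2$ is geometric, not Fourier-analytic: blow up $S_+$ and pass to projective coordinates $\tilde\rho=\rho$, $s=v/\rho$ near the interior of the front face. There $\mc M$ becomes (up to $\Psib^0$) the honest b-module generated by $\tilde\rho\pa_{\tilde\rho},\pa_s,\pa_y$ on $[M;S_+]$, while the b-density on $M$ transforms as $\frac{d\rho}{\rho}\,dv\,dy=d\tilde\rho\,ds\,dy=\tilde\rho\cdot\frac{d\tilde\rho}{\tilde\rho}\,ds\,dy$, i.e.\ it is $\rho_\ff$ times the b-density on the blow-up. This identifies $\Hb^{0,l,k}$ with $\rho_\ff^{-1/2}(\rho_\ff\rho_b)^l\Hb^k([M;S_+])$; the $-1/2$ is just the half-power of $\rho_\ff$ coming from switching densities. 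Once this identification is made, one applies the ordinary b-Sobolev algebra property for $k>n/2$ on the blown-up manifold and localizes with cutoffs (e.g.\ $\chi(v/\rho)$ and $1-\chi(v/\rho)$) so that one works in each projective chart separately. If you want a Fourier-side argument, it must be carried out in the blow-up coordinates $(\tilde\rho,s,y)$ (or $(t,\tilde v)$ near the corner), not in $(\rho,v,y)$.
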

\begin{proof}
  The generators $\rho\partial_\rho,\rho\partial_v,v\partial_v,\partial_y$ of $\mc M$ take on a simpler form if we blow up the point $(\rho,v)=(0,0)$. It is most convenient to use projective coordinates on the blown-up space, namely:
  \begin{enumerate}[leftmargin=4.5ex]
  \item Near the interior of the front face, we use the coordinates $\wt\rho=\rho\geq 0$ and $s=v/\rho\in\R$. We compute $\rho\partial_\rho=\wt\rho\partial_{\wt\rho}-s\partial_s$, $v\partial_v=s\partial_s$, $\rho\partial_v=\partial_s$; and since $\frac{d\rho}{\rho}\,dv\,dy=d\wt\rho\,ds\,dy$ (this is the b-density from $\Hb^{0,l,k}$), the space $\Hb^{0,l,k}$ becomes
  \[
    A^{l,k}:=\{u\in\wt\rho^l L^2(d\wt\rho\,ds\,dy) \colon \mc A^j u\in\wt\rho^l L^2(d\wt\rho\,ds\,dy),0\leq j\leq k\},
  \]
  where $\mc A$ is the $C^\infty$-module of differential operators generated by $\partial_s,\wt\rho\partial_{\wt\rho},\partial_y$.

  Now, observe that $\wt\rho^l L^2(d\wt\rho\,ds\,dy)=\wt\rho^{l-1/2} L^2(\tfrac{d\wt\rho}{\rho}\,ds\,dy)$; therefore, we can rewrite
  \begin{align*}
    A^{l,k}&=\{u\in\wt\rho^{l-1/2} L^2(\tfrac{d\wt\rho}{\rho}\,ds\,dy) \colon \mc A^j u\in\wt\rho^{l-1/2} L^2(\tfrac{d\wt\rho}{\rho}\,ds\,dy),0\leq j\leq k\} \\
	  &= \wt\rho^{l-1/2}\Hb^k(\tfrac{d\wt\rho}{\rho}\,ds\,dy).
  \end{align*}
  In particular, by the Sobolev algebra property,
  Lemma~\ref{LemmaHsAlgebra}, and the locality of the multiplication, choosing $k>n/2$ ensures that $\wt\rho^{l_1-1/2}\Hb^k\cdot\wt\rho^{l_2-1/2}\Hb^k\subset\wt\rho^{l_1+l_2-1}\Hb^k$, which is to say $A^{l_1,k}\cdot A^{l_2,k}\subset A^{l_1+l_2-1/2,k}$.
  \item Near either corner of the blown-up space, we use $\wt v=v$ and $t=\rho/v$ (say, $\wt v\geq 0,t\geq 0$). We compute $\rho\partial_\rho=t\partial_t$, $v\partial_v=\wt v\partial_{\wt v}-t\partial_t$, $\rho\partial_v=t\wt v\partial_{\wt v}-t^2\partial_t$; and since $\frac{d\rho}{\rho}\,dv\,dy=\frac{dt}{t}\,d\wt v\,dy$, the space $\Hb^{0,l,k}$ becomes
  \[
    B^{l,k}:=\{u\in (t\wt v)^l L^2(\tfrac{dt}{t}\,d\wt v\,dy) \colon \mc B^j u\in(t\wt v)^l L^2(\tfrac{dt}{t}\,d\wt v\,dy),0\leq j\leq k\},
  \]
  where $\mc B$ is the $C^\infty$-module of differential operators generated by $t\partial t,\wt v\partial_{\wt v},\partial_y$. Again, we can rewrite this as
  \[
    B^{l,k}=t^l\wt v^{l-1/2} \Hb^k(\tfrac{dt}{t}\,\tfrac{d\wt v}{\wt v}\,dy),
  \]
  which implies that for $k>n/2$,
  \[
    B^{l_1,k}\cdot B^{l_2,k}\subset t^{l_1+l_2}v^{l_1+l_2-1} \Hb^k(\tfrac{dt}{t}\,\tfrac{d\wt v}{\wt v}\,dy)\subset B^{l_1+l_2-1/2,k}.
  \]
  \end{enumerate}
  To relate these two statements to the statement of the lemma, we use cutoff functions $\chi_A,\chi_B$ to localize within the two coordinate systems. More precisely, choose a cutoff function $\chi\in C^\infty_c(\R_s)$ such that $\chi(s)\equiv 1$ near $s=0$, $\chi(s)=0$ for $|s|\geq 2$, and $\chi^{1/2}\in C^\infty_c(\R_s)$. Then multiplication with $\chi_A(\rho,v):=\chi(v/\rho)$ is a continuous map $\Hb^{0,l,k}\to A^{l,k}$. Indeed, to check this, one simply observes that $\mc M^j\chi_A\in L^\infty$ for all $j\in\N_0$. Similarly, letting $\chi_B(\rho,v):=1-\chi_A(\rho,v)$, multiplication with $\chi_B$ is a continuous map $\Hb^{0,l,k}\to B^{l,k}$. Finally, note that we have $A^{l,k},B^{l,k}\subset \Hb^{0,l,k}$. 

  To put everything together, take $u_j\in \Hb^{0,l_j,k}$ ($j=1,2$), then
  \[
    u_1u_2=(\chi_A u_1)(\chi_A u_2)+(\chi_B u_1)(\chi_B u_2)+(\chi_A u_1)(\chi_B u_2)+(\chi_B u_1)(\chi_A u_2).
  \]
  The first two terms then lie in $\Hb^{0,l_1+l_2-1/2,k}$. To deal with the third term, write
  \[
    (\chi_A u_1)(\chi_B u_2)=(\chi_A^{1/2} u_1)(\chi_A^{1/2}\chi_B u_2)\in A^{l_1,k}\cdot A^{l_2,k}\subset \Hb^{0,l_1+l_2-1/2,k};
  \]
  likewise for the fourth term. Thus, $u_1u_2\in \Hb^{0,l_1+l_2-1/2,k}$, as claimed.
\end{proof}

\begin{rmk}
\label{RmkAlgFF}
  The proof actually shows more, namely that
  \begin{equation}
  \label{EqPreciseAlgStatement} \Hb^{0,l,k}\Hb^{0,l',k}\subset\rho_{\ff}^{-1/2}\Hb^{0,l+l',k},
  \end{equation}
  where $\rho_{\ff}$ is the defining function of the front face $\rho=v=0$, e.g.\ $\rho_{\ff}=(\rho^2+v^2)^{1/2}$. The reason for \eqref{EqPreciseAlgStatement} to be a natural statement is that module- and b-derivatives are the same away from $\rho=v=0$, hence regularity with respect to the module $\mc M$ is, up to a weight, which is a power of $\rho_{\ff}$, the same as b-regularity.

  More abstractly speaking, the above proof shows the following: If $\rho_b$ denotes a boundary defining function of the other boundary hypersurface of $[M;S_+]$, i.e.\ $\pa[M;S_+]\setminus\ff$, then
  \[
    \Hb^{0,l,k}\cong \rho_\ff^{-1/2} (\rho_\ff\rho_b)^l \Hb^k([M;S_+]).
  \]
  Note that one can also show this in one step, introducing the coordinates $\rho_\ff\geq 0$ and $s=v/(\rho+\rho_\ff)\in[-1,1]$ on $[M;S_+]$ in a neighborhood of $\ff$, and mimicking the above proof, which however is computationally less convenient.
\end{rmk}

\begin{rmk}
  We can extend the lemma to $\Hb^{m,l,k}\Hb^{m,l',k}\subset \Hb^{m,l+l'-1/2,k}$ for $m\in\N_0$ using the Leibniz rule to distribute the $m$ b-derivatives among the two factors, and then using the lemma for the case $m=0$.
\end{rmk}

The following corollary, which will play an important role in \S\ref{SecMinkNullform}, improves Lemma~\ref{LemmaHbAlgebra} if we have higher b-regularity.
\begin{cor}
\label{CorDeltaImprovement}
 Let $k>n/2$, $0\leq\delta<1/n$ and $l,l'\in\R$. Then
 \begin{enumerate}
   \item $\Hb^{1,l,k}\Hb^{0,l',k}\subset \Hb^{0,l+l'-1/2+\delta,k}$.
   \item $\Hb^{1,l,k}\Hb^{1,l',k}\subset \Hb^{1,l+l'-1/2+\delta,k}$.
 \end{enumerate}
\end{cor}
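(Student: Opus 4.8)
The plan is to deduce part (2) from part (1) by the Leibniz rule, and to prove part (1) by localizing: away from $S_+$ the statement is essentially free, so the whole content is concentrated near the front face $\ff$ of the blow--up $[M;S_+]$, and there the gain of $\delta$ comes from the structural fact that the \emph{extra} b-derivative distinguishing $\Hb^{1,l,k}$ from $\Hb^{0,l,k}$ is, after blow--up, a scattering--type vector field in the front--face variable, i.e.\ applying it gains one power of the front--face defining function $\rho_{\ff}$.

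For part (1), away from $S_+$ (hence away from $\ff$) the module $\mc M$ contains elliptic operators, so $\Hb^{1,l,k}$ and $\Hb^{0,l',k}$ are the ordinary weighted b-Sobolev spaces $\Hb^{k+1,l}$ and $\Hb^{k,l'}$; their product lies in $\Hb^{k,l+l'}$ by the b-Sobolev algebra property (Lemma~\ref{LemmaHsAlgebra}, localized and made coordinate invariant), and since $-1/2+\delta<0$ this is contained in $\Hb^{0,l+l'-1/2+\delta,k}$ with room to spare. Using a partition of unity it remains to work in a collar of $\ff$, and we use the projective coordinates from the proof of Lemma~\ref{LemmaHbAlgebra}: near the interior of $\ff$, $\tilde\rho=\rho$, $s=v/\rho$, and near the corners, $\tilde v=v$, $t=\rho/v$, the two cases being handled in the same way after the cut--offs $\chi_A,\chi_B$ of that proof. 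In the interior chart $\Hb^{0,l',k}\cong\tilde\rho^{l'-1/2}\Hb^k$ with respect to the density $\frac{d\tilde\rho}{\rho}\,ds\,dy$, while the b-vector field $\partial_v$ on $M$ lifts to $\tilde\rho^{-1}\partial_s$; hence $u\in\Hb^{1,l,k}$ satisfies, in addition to $u\in\tilde\rho^{l-1/2}\Hb^k$, the improved bound $\partial_s u\in\tilde\rho^{l+1/2}\Hb^{k-1}$ (and similarly after applying up to $k-1$ module derivatives, the module--order loss for the b-derivative being affordable since $k>n/2$). The plan is then to trade this $\tilde\rho$--gain against regularity in the $s$--variable by a Gagliardo--Nirenberg / interpolation step, obtaining for $\theta\in[0,1]$ membership of $u$ in $\tilde\rho^{l-1/2+\theta/2}$ times a space with full $\Hb^k$ regularity in $(\tilde\rho,y)$ and $(1-\theta)k$ derivatives in $s$, and to feed this, together with $v\in\tilde\rho^{l'-1/2}\Hb^k$, into the bilinear estimate of Lemma~\ref{LemmaProductsWeights}, checking finiteness of $M_+$ by the triangle inequality for $\langle\cdot\rangle^{2k}$ and Lemma~\ref{LemmaFracEst} exactly as in the proofs of Lemmas~\ref{LemmaHsAlgebra}--\ref{LemmaYDerivatives}. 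The requirement that the weak factor still carries more than $1/2$ derivative in $s$ forces $\theta<1-\tfrac{1}{2k}$, which together with $k>n/2$ admits $\theta=2\delta$ for any $\delta<1/n$ (in the relevant dimensions), and the resulting product then lies in $\tilde\rho^{l+l'-1+\delta}\Hb^k\cong\Hb^{0,l+l'-1/2+\delta,k}$, as required; the corner charts are treated identically with $t=\rho/v$ in place of $\tilde\rho$.

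For part (2), given $u\in\Hb^{1,l,k}$ and $v\in\Hb^{1,l',k}$ one must show that $uv$ and one b-derivative of $uv$, as well as their images under $\mc M^j$ for $j\le k$, lie in $\Hb^{0,l+l'-1/2+\delta}$; since $\Hb^{1,l',k}\subset\Hb^{0,l',k}$, part (1) gives $uv\in\Hb^{0,l+l'-1/2+\delta,k}$, and for a b-vector field $V$ the Leibniz rule writes $V\mc M^j(uv)$ as a sum of bilinear terms in which one factor lies in an $\Hb^{0,\cdot,k}$ space and the other in an $\Hb^{1,\cdot,k}$ space, each handled by part (1) applied with the factors in either order, exactly as in the remark following Lemma~\ref{LemmaHbAlgebra}. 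The step I expect to be the main obstacle is organizing the argument near $\ff$ so that no module regularity is lost: the extra b-derivative $\partial_v$ does not preserve $\mc M$--regularity, so the interpolation and the corner bookkeeping have to be carried out with care, and one must verify that the \emph{anisotropic} multiplicative estimate (split $(\tilde\rho,y)$ versus $s$) really holds in the form needed from Lemma~\ref{LemmaProductsWeights} --- it is precisely this dimensional balance that pins down the bound $\delta<1/n$ rather than a larger value.
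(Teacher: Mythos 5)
Your reduction of part (2) to part (1) via the Leibniz rule --- distributing the single extra b-derivative to either factor and applying part (1) with the roles of the factors swapped --- is exactly what the paper does. Your part (1), however, takes a genuinely different route, and as written it has real gaps.

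The paper's proof of (1) is short and never returns to blown-up coordinates: set $s=1/(2\delta)$, so that $\delta<1/n$ is \emph{precisely} the Sobolev multiplier threshold $s>n/2$. Then $\Hb^{s,l,k}\Hb^{0,l',k}\subset\Hb^{0,l+l',k}$ (Leibniz rule plus the fact that $\Hb^s$ multiplies $\Hb^0$ for $s>n/2$) is the loss-free endpoint, while Lemma~\ref{LemmaHbAlgebra} gives the lossy endpoint $\Hb^{0,l,k}\Hb^{0,l',k}\subset\rho^{-1/2}\Hb^{0,l+l',k}$. Interpolating in the \emph{background Sobolev order} of the first factor, from $s$ down to $1$, with the module order $k$ held fixed at both endpoints, produces the weight $\rho^{-1/2+\delta}$. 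The bound $\delta<1/n$ is visibly the $n$-dimensional multiplier threshold; your argument never invokes it, which is already a warning sign.

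In your blow-up argument you correctly identify that the one extra b-derivative over $\Hb^{0,l,k}$ manifests near the front face as $\partial_v=\tilde\rho^{-1}\partial_s$, i.e.\ a derivative that buys one power of $\rho_\ff$. But the interpolation step that places $u$ in $\tilde\rho^{l-1/2+\theta/2}$ times a space with only $(1-\theta)k$ derivatives in $s$ cannot feed the product $uv$ back into the target $\Hb^{0,l+l'-1/2+\delta,k}$, which carries the \emph{full} module order $k$: the $s$-regularity of $uv$ is in general only as good as that of the weaker factor, so you have lost $\theta k$ module derivatives in $s$ with no mechanism to recover them. The paper avoids this by keeping $k$ intact at both interpolation endpoints and moving only the background order $m$. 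Relatedly, the constraint you extract, $\theta<1-\tfrac{1}{2k}$, i.e.\ $\delta<\tfrac12-\tfrac{1}{4k}$, is strictly \emph{weaker} than $\delta<1/n$ for every admissible $k>n/2$ and $n\geq 3$, so if the argument were sound it would prove more than the corollary asserts --- another sign the dimensional accounting is off. If you want to argue in the blow-up picture, the robust way is to mimic the paper's structure there: establish both endpoints $\Hb^{s,l,k}\Hb^{0,l',k}\subset\Hb^{0,l+l',k}$ and $\Hb^{0,l,k}\Hb^{0,l',k}\subset\rho_\ff^{-1/2}\Hb^{0,l+l',k}$ in the front-face chart and interpolate the Sobolev order, rather than the number of $\partial_s$-derivatives.
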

\begin{proof}
  Take $s=1/(2\delta)>n/2$, then
  \begin{equation}
  \label{EqInterpolation1} \Hb^{s,l,k}\Hb^{0,l',k}\subset \Hb^{0,l+l',k};
  \end{equation}
  indeed, using the Leibniz rule to distribute the $k$ module derivatives among the two factors and cancelling the weights, this amounts to showing that $\Hb^{s,0,k_1}\Hb^{0,0,k_2}\subset \Hb^{0,0,0}$ for $k_1+k_2\geq k$; but this is true even for $k_1=k_2=0$, since $\Hb^s$ is a multiplier on $\Hb^0$ provided $s>n/2$.
  
  The lemma on the other hand gives
  \begin{equation}
  \label{EqInterpolation2} \Hb^{0,l,k}\Hb^{0,l',k}\subset\rho^{-1/2}\Hb^{0,l+l',k}.
  \end{equation}
  Interpolating in the first factor between \eqref{EqInterpolation1} and \eqref{EqInterpolation2} thus gives the first statement.

  For the second statement, use the Leibniz rule to distribute the one b-derivative to either factor; then, one has to show $\Hb^{1,l,k}\Hb^{0,l',k}\subset \Hb^{0,l+l'-1/2+\delta,k}$, and the same inclusion with $l$ and $l'$ switched, which is what we just proved.
\end{proof}

Lemma~\ref{LemmaHbAlgebra} and the remark following it imply that for $u\in \Hb^{m,l,k}$, $p\geq 1$, with $m\geq 0,k>n/2$, we have $u^p\in \Hb^{m,pl-(p-1)/2,k}$; in fact, $u^p\in\rho_\ff^{-(p-1)/2}\Hb^{m,pl,k}$, see Remark~\ref{RmkAlgFF}. Using Corollary~\ref{CorDeltaImprovement}, we can improve this to the statement $u\in \Hb^{m,l,k}$ $\Rightarrow$ $u^p\in \Hb^{m,pl-(p-1)/2+(p-1)\delta,k}$ for $m\geq 1$.

For non-linearities that only involve powers $u^p$, we can afford to lose differentiability, as at the end of \S\ref{SubsecDeSitterAlgebra}, and gain decay in return, as the following lemma shows.

\begin{lemma}
\label{LemmaMinkDecayForDer}
  Let $\alpha>1/2$, $l\in\R$, $k\in\N_0$. Then $\rho_\ff^{-\alpha}\Hb^{0,l,k}\subset\rho^{1/2-\alpha}\Hb^{-1,l,k}$, where $\rho_\ff=(\rho^2+v^2)^{1/2}$.
\end{lemma}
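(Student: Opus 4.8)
The weight $\rho^l$ is harmless. Since $\rho$ is a coordinate function, annihilated by the generators $\rho\pa_v,\ v\pa_v,\ \pa_y$ of $\mc M$ and multiplied by the constant $l$ by $\rho\pa_\rho$, multiplication by $\rho^l$ is an isomorphism $\Hb^{m,l',k}(M)\to\Hb^{m,l'+l,k}(M)$ for all $m,l',k$. Pulling $\rho^l$ out of both sides of the asserted inclusion, it suffices to treat $l=0$, i.e.\ to prove $\rho_\ff^{-\alpha}\Hb^{0,0,k}(M)\subset\rho^{1/2-\alpha}\Hb^{-1,0,k}(M)$.

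Next I would reduce to $k=0$. On the blow-up $[M;S_+]$ the function $\rho_\ff^{-\alpha}$ is conormal in a strong sense: the vector field $Z:=\rho\pa_\rho+v\pa_v\in\mc M$ satisfies $Z\rho_\ff^c=c\rho_\ff^c$, while each remaining generator $\rho\pa_\rho,\ \rho\pa_v,\ v\pa_v,\ \pa_y$ applied to $\rho_\ff^c$ produces $\rho_\ff^c$ times a function smooth on $[M;S_+]$ (the ratios $\rho^2/\rho_\ff^2$, $\rho v/\rho_\ff^2$, $v^2/\rho_\ff^2$ being smooth there). Hence $\mc M^j(\rho_\ff^{-\alpha})\subset\rho_\ff^{-\alpha}\CI([M;S_+])$, and the Leibniz rule expands $A_1\cdots A_j(\rho_\ff^{-\alpha}u)$, $A_i\in\mc M$, as a finite sum of terms $\rho_\ff^{-\alpha}b\,(A'u)$ with $b\in\CI([M;S_+])$ bounded and $A'u\in\Hb^{0,0}(M)$ a module derivative of $u$ of order $\le j$. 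Provided one checks that multiplication by bounded elements of $\CI([M;S_+])$ maps $\Hb^{0,0}(M)$ and $\Hb^{-1,0}(M)$ to themselves — again via the conormality computation, with some care needed at $\ff$ since such $b$ need not be smooth on $M$ — the case $k=0$ yields the general one.

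For $k=0$ I would argue locally. Away from the front face $\ff=\{\rho=v=0\}$ module and b-regularity coincide and $\rho_\ff$ is bounded above and below, so there $\rho_\ff^{-\alpha}\Hb^{0,0}(M)=\Hb^{0,0}(M)\subset\Hb^{-1,0}(M)\subset\rho^{1/2-\alpha}\Hb^{-1,0}(M)$, the last step using $1/2-\alpha<0$ so that $\rho^{1/2-\alpha}\Hb^{-1,0}$ carries the larger weight. Near $\ff$ I would use the two projective coordinate charts on $[M;S_+]$ from the proof of Lemma~\ref{LemmaHbAlgebra}: the interior chart $(\tilde\rho,s,y)=(\rho,v/\rho,y)$ and the corner chart $(t,\tilde v,y)=(\rho/v,v,y)$. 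In each, $\rho_\ff$ is comparable to the front-face defining function; $\Hb^{0,0}(M)$ becomes an explicit weighted $L^2$-space and $\Hb^{-1,0}(M)$ the corresponding negative-order space; and the lift of $\pa_v$ — which together with $\rho\pa_\rho$ and $\pa_y$ spans $\Vb(M)$ near $\ff$ — has the form $(\text{front-face defining function})^{-1}\cdot(\text{b-vector field on }[M;S_+])$. The inclusion then reduces to a one-variable weighted statement in the front-face variable $x$ (the remaining variables being parameters): $x^{-\alpha}$ times an element of the $L^2$-space lies in $x^{1/2-\alpha}$ times the $H^{-1}$-space; equivalently, after pairing against the predual, a Hardy-type inequality, in which the hypothesis $\alpha>1/2$ is precisely what makes the boundary term and singular integral that appear converge.

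The main obstacle is exactly this front-face estimate. Because the lift of $\Vb(M)$ to $[M;S_+]$ contains the rescaled field $(\text{front-face defining function})^{-1}\cdot(\text{b-vector field})$, the spaces $\Hb^{-1,0}(M)$ near $\ff$ are not ordinary weighted b-Sobolev spaces on $[M;S_+]$, so one must set up the duality and integration by parts while keeping precise track of every density and weight, in order that the sharp threshold come out to be $\alpha>1/2$ rather than some other value; essentially all the work lies here.
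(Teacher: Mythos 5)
Your reduction to $l=0$ and the Leibniz-rule reduction to $k=0$ via the conormality of $\rho_\ff^{-\alpha}$ on $[M;S_+]$ are both sound (and note you only need boundedness of the $b$'s acting on $\Hb^0$, not any mapping property on $\Hb^{-1}$, since you can first form $b\cdot A'u\in\Hb^0$ and then apply the $k=0$ case). But your plan for the core $k=0$ estimate diverges from the paper, and the paper's route is considerably more economical.

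The paper never passes to $[M;S_+]$. Instead, given $u\in\rho_\ff^{-\alpha}\Hb^0$ supported in $\{\rho<1,|v|<1\}$, it sets $\tilde u(\rho,v,y)=\int_{-\infty}^v u(\rho,w,y)\,dw$, so $\pa_v\tilde u=u$, and estimates $\tilde u$ directly by Cauchy--Schwarz in the $w$-integral:
\[
|\tilde u(\rho,v,y)|^2\leq\Bigl(\int_{-1}^1\rho_\ff^{2\alpha}|u|^2\,dw\Bigr)\Bigl(\int_{-1}^1\rho_\ff^{-2\alpha}\,dw\Bigr),
\]
and the second factor, after substituting $w=\rho z$, is $\lesssim\rho^{1-2\alpha}$ precisely when $\alpha>1/2$. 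Integrating then gives $\rho^{\alpha-1/2}\tilde u\in\Hb^0$, hence $u=\pa_v\tilde u\in\rho^{1/2-\alpha}\Hb^{-1}$ since $\pa_v:\Hb^0\to\Hb^{-1}$. So the threshold $\alpha>1/2$ drops out of a one-line convergence check, not from a Hardy duality. For $k>0$, the paper does not use your Leibniz reduction either: $\rho\pa_\rho$ and $\pa_y$ simply commute with the antiderivative, and any term involving $\pa_v$ replaces $\tilde u$ by $u$ itself, so no extra work is needed.

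Your plan is not wrong, but it leaves the entire analytic content — the weighted estimate on the blown-up space, in the negative-order space $\Hb^{-1}$ whose lift to $[M;S_+]$ involves the singularly rescaled vector field $(\rho_\ff)^{-1}\cdot(\text{b-vector field})$ you mention — as an unexecuted step, and you yourself flag it as the main obstacle. The paper's explicit-antiderivative trick sidesteps exactly that difficulty: it requires no duality argument, no blowup geometry, and no pairing against a predual; it is a pointwise Cauchy--Schwarz estimate in one variable. If you pursue your route you would need to verify, in each projective chart, the precise weighted $H^{-1}$-Hardy inequality and then glue; this is more work for the same result, and the bookkeeping around the negative-order space at $\ff$ is where one is most likely to make a sign or weight error.
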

\begin{proof}
We may assume $l=0$, and that $u$ is supported in $|v|<1$, $\rho<1$.
First, consider the case $k=0$. Let $u\in\rho_\ff^{-\alpha}\Hb^0$, and put
  \[
    \wt u(\rho,v,y)=\int_{-\infty}^v u(\rho,w,y)\,dw,
  \]
  so $\pa_v\wt u=u$. We have to prove $\chi\wt
  u\in\rho^{1/2-\alpha}\Hb^0$ if $\chi\equiv 1$ near $\supp u$, which
  implies $u\in\Hb^{-1}$, as $\pa_v:\Hb^0\to\Hb^{-1}$, and the
  b-Sobolev space are local spaces. But
\begin{equation}\label{eq:half-gain-CS}
    |\wt u(\rho,v,y)|^2\leq\left(\int_{-1}^1 \rho_\ff(\rho,w)^{2\alpha}|u(\rho,w,y)|^2\,dw\right) \int_{-1}^1 \rho_\ff(\rho,w)^{-2\alpha}\,dw;
\end{equation}
  now,
  \[
    \int_{-1}^1\rho_\ff^{-2\alpha}\,dw=\rho^{1-2\alpha}\int_{-1/\rho}^{1/\rho}\frac{dz}{(1+|z|^2)^\alpha}\lesssim\rho^{1-2\alpha}
  \]
  for $\alpha>1/2$, therefore, with the $v$ integral considered on a
  fixed interval, say $|v|<2$ (notice that the right hand side in \eqref{eq:half-gain-CS}
  is independent of $v$!),
  \[
    \iiint \rho^{2\alpha-1}|\wt u(\rho,v,y)|^2\,\frac{d\rho}{\rho}\,dv\,dy \lesssim \iiint \rho_\ff^{2\alpha}|u(\rho,w,y)|^2\,\frac{d\rho}{\rho}\,dw\,dy,
  \]
  proving the claim for $k=0$. Now, $\rho\pa_\rho$ and $\pa_y$ just commute with this calculation, so the corresponding derivatives are certainly well-behaved. On the other hand, $\pa_v\wt u=u$, so the estimates involving at least one $v$-derivative are just those for $u$ itself.
\end{proof}

\begin{cor}
\label{CorMinkPower}
  Let $k,p\in\N$ be such that $k>n/2$, $p\geq 2$. Let $l\in\R$, $u\in \Hb^{0,l,k}$. Then $u^p\in\Hb^{-1,lp-(p-1)/2+1/2-\delta,k}$ with $\delta=0$ if $p\geq 3$ and $\delta>0$ if $p=2$.
\end{cor}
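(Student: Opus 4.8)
The plan is to combine the algebra property of Lemma~\ref{LemmaHbAlgebra} (to handle the first $p-1$ factors) with the decay-for-differentiability trade of Lemma~\ref{LemmaMinkDecayForDer} (to absorb the last factor and gain the extra $1/2$ in decay at the cost of one b-derivative). First I would note that $u\in\Hb^{0,l,k}$ with $k>n/2$ implies, by iterating Lemma~\ref{LemmaHbAlgebra} and the remark following it (more precisely \eqref{EqPreciseAlgStatement}), that the partial power $u^{p-1}\in\rho_\ff^{-(p-2)/2}\Hb^{0,(p-1)l,k}$ for $p\geq 2$ (with the convention that for $p=2$ the exponent of $\rho_\ff$ is $0$, i.e.\ $u^{p-1}=u\in\Hb^{0,l,k}$). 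Multiplying by the remaining factor $u\in\Hb^{0,l,k}$ and applying \eqref{EqPreciseAlgStatement} once more gives
\[
  u^p\in\rho_\ff^{-(p-1)/2}\Hb^{0,pl,k}.
\]

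Next I would apply Lemma~\ref{LemmaMinkDecayForDer}. Writing $u^p=\rho_\ff^{-\alpha}\cdot(\rho_\ff^{\alpha}u^p)$ with $\alpha=(p-1)/2$, so that $\rho_\ff^{\alpha}u^p\in\Hb^{0,pl,k}$, Lemma~\ref{LemmaMinkDecayForDer} applies provided $\alpha=(p-1)/2>1/2$, i.e.\ $p\geq 3$ (modulo a harmless relabelling of the weight $l$ there), and yields
\[
  u^p\in\rho^{1/2-(p-1)/2}\Hb^{-1,pl,k}=\Hb^{-1,pl-(p-1)/2+1/2,k},
\]
which is the claimed statement with $\delta=0$. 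For the borderline case $p=2$, $\alpha=1/2$ is exactly excluded by the hypothesis $\alpha>1/2$ of Lemma~\ref{LemmaMinkDecayForDer}; here I would instead first gain a small amount of decay using Corollary~\ref{CorDeltaImprovement}. However, that corollary requires one of the factors to be in $\Hb^{1,\cdot,k}$, which we do not have. The cleaner route for $p=2$ is to observe that we only need $u^2=\rho_\ff^{-1/2}(\rho_\ff^{1/2}u^2)$ with $\rho_\ff^{1/2}u^2\in\Hb^{0,2l,k}$, and then apply Lemma~\ref{LemmaMinkDecayForDer} with $\alpha=1/2+\delta$ for any small $\delta>0$: since $\rho_\ff^{1/2}u^2\in\Hb^{0,2l,k}\subset\rho_\ff^{\delta}\Hb^{0,2l-\delta,k}$ near the support (using that $\rho_\ff$ is bounded, so $\rho_\ff^{-\delta}$ is bounded away from $\ff$ and handled by a weight shift), we get $u^2\in\rho^{1/2-(1/2+\delta)}\Hb^{-1,2l-?,k}$; tracking the weights carefully gives $u^2\in\Hb^{-1,2l-1/2+1/2-\delta,k}$ for all $\delta>0$, which is the $p=2$ assertion.

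The main obstacle is the bookkeeping of the two distinct weights --- the boundary weight $\rho^l$ appearing in $\Hb^{0,l,k}$ and the front-face weight $\rho_\ff$ from \eqref{EqPreciseAlgStatement} --- together with the borderline nature of $p=2$, where $\alpha=1/2$ sits exactly at the threshold of Lemma~\ref{LemmaMinkDecayForDer}. One must be careful that $\rho_\ff\sim\rho$ near $\overline{C_\pm}$ but $\rho_\ff\not\sim\rho$ near the interior of the front face, so converting a $\rho_\ff$-weight into the $\rho$-weight that appears in the statement requires either working on $[M;S_+]$ as in Remark~\ref{RmkAlgFF} or absorbing the discrepancy into the $\delta$ loss; this is precisely why a loss $\delta>0$ is unavoidable for $p=2$ but not for $p\geq 3$, where the strict inequality $\alpha>1/2$ gives room to spare. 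All of these are routine once the weight conventions are pinned down; no new analytic input beyond Lemmas~\ref{LemmaHbAlgebra} and \ref{LemmaMinkDecayForDer} is needed.
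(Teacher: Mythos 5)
Your proposal follows exactly the paper's one-line argument: iterate the algebra property/Remark~\ref{RmkAlgFF} to get $u^p\in\rho_\ff^{-(p-1)/2}\Hb^{0,pl,k}$, then trade a b-derivative for decay via Lemma~\ref{LemmaMinkDecayForDer}, with the threshold $\alpha>1/2$ giving $\delta=0$ for $p\geq 3$ and forcing $\delta>0$ for $p=2$.

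One slip in your handling of $p=2$: the inclusion you write, $\Hb^{0,2l,k}\subset\rho_\ff^{\delta}\Hb^{0,2l-\delta,k}$, goes the wrong way for your purpose. Starting from $u^2\in\rho_\ff^{-1/2}\Hb^{0,2l,k}$ and applying that inclusion gives $u^2\in\rho_\ff^{-(1/2-\delta)}\Hb^{0,2l-\delta,k}$, i.e.\ $\alpha=1/2-\delta<1/2$, so Lemma~\ref{LemmaMinkDecayForDer} still does not apply. What you actually need is the trivial inclusion in the opposite direction: since $\rho_\ff$ is bounded and $\rho_\ff^{\delta}$ has all module derivatives controlled, $\rho_\ff^{\delta}\Hb^{0,2l,k}\subset\Hb^{0,2l,k}$, hence $\rho_\ff^{-1/2}\Hb^{0,2l,k}\subset\rho_\ff^{-(1/2+\delta)}\Hb^{0,2l,k}$, and now the lemma applies with $\alpha=1/2+\delta>1/2$ at the original weight $2l$, yielding $u^2\in\rho^{-\delta}\Hb^{-1,2l,k}=\Hb^{-1,2l-\delta,k}$, as claimed. (Equivalently, the paper just records $u^p\in\rho_\ff^{-(p-1)/2-\delta}\Hb^{0,lp,k}$ for all $p\geq 2$ and applies the lemma uniformly.) Your final weight is nonetheless correct, and the remainder of the argument, including the $p\geq 3$ case and the observation that the threshold $\alpha>1/2$ is what distinguishes the two cases, is right.
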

\begin{proof}
  This follows from $u^p\in\rho_\ff^{-(p-1)/2-\delta}\Hb^{0,lp,k}$ and the previous lemma, using that $(p-1)/2+\delta>1/2$ with $\delta$ as stated.
\end{proof}
In other words, we gain the decay $\rho^{1/2-\delta}$ if we give up one derivative.

%%%%%%%%%%%%%%%%%%%%%%%%%%%%%%%%%%%%%%%%%%%%%%%%%
\subsection{A class of semilinear equations}

We are now set to discuss solutions to non-linear wave equations on an asymptotically Minkowski space. Under the assumptions of Theorem~\ref{thm:asymp-Mink-lin}, we obtain a forward solution operator $S\colon\Hb^{m-1,l,k}(\Omega)^\bullet\to\Hb^{m,l,k}(\Omega)^\bullet$ of $P=\rho^{-(n-2)/2}\rho^{-2}\Box_g\rho^{(n-2)/2}$ provided $|l|<1,m+l<1/2$ and $k\geq 0$.

Undoing the conjugation, we obtain a forward solution operator
\begin{align*}
  \wt S&=\rho^{(n-2)/2}S\rho^{-2}\rho^{-(n-2)/2}, \\
  \wt S&\colon \Hb^{m-1,l+(n-2)/2+2,k}(\Omega)^\bullet\to \Hb^{m,l+(n-2)/2,k}(\Omega)^\bullet
\end{align*}
of $\Box_g$.

Since $g$ is a Lorentzian scattering metric, the natural vector fields to appear in a non-linear equation are scattering vector fields; more generally, since the analysis is carried out on b-spaces, we indeed allow b-vector fields in the following statement:

\begin{thm}
\label{ThmMinkQu}
  Let
  \[
    q\colon \Hb^{m,l+(n-2)/2,k}(\Omega)^\bullet\times \Hb^{m-1,l+(n-2)/2,k}(\Omega;\Tb^*\Omega)^\bullet\to \Hb^{m-1,l+(n-2)/2+2,k}(\Omega)^\bullet
  \]
  be a continuous function with $q(0,0)=0$ such that there exists a continuous non-decreasing function $L\colon\R_{\geq 0}\to\R$ satisfying
  \[
    \|q(u,\bdiff u)-q(v,\bdiff v)\|\leq L(R)\|u-v\|,\quad \|u\|,\|v\|\leq R.
  \]
  Then there is a constant $C_L>0$ so that the following holds: If $L(0)<C_L$, then for small $R>0$, there exists $C>0$ such that for all $f\in \Hb^{m-1,l+(n-2)/2+2,k}(\Omega)^\bullet$ with norm $\leq C$, the equation
  \[
    \Box_g u=f+q(u,\bdiff u)
  \]
  has a unique solution $u\in \Hb^{m,l+(n-2)/2,k}(\Omega)^\bullet$, with norm $\leq R$, that depends continuously on $f$.
\end{thm}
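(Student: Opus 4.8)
The plan is to apply the Banach fixed point theorem, exactly as in the proof of Theorem~\ref{ThmDSQu}. Under the assumptions of Theorem~\ref{thm:asymp-Mink-lin}, which are in force throughout this subsection, together with the module regularity result of Section~\ref{SecMinkowski-Fredholm}, we have the bounded forward solution operator
\[
\tilde S\colon\Hb^{m-1,l+(n-2)/2+2,k}(\Omega)^\bullet\to\Hb^{m,l+(n-2)/2,k}(\Omega)^\bullet
\]
of $\Box_g$, obtained by conjugating the forward solution operator of $P=\rho^{-(n-2)/2}\rho^{-2}\Box_g\rho^{(n-2)/2}$ by $\rho^{(n-2)/2}$; here $|l|<1$, $m+l<1/2$, and $m$ is taken constant on $\Omega$. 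Write $\mc Z=\Hb^{m,l+(n-2)/2,k}(\Omega)^\bullet$ for the space in which the solution is sought, and set $T\colon\mc Z\to\mc Z$, $Tu=\tilde S(f+q(u,\bdiff u))$. This is well-defined: $\bdiff$ maps $\mc Z$ continuously into $\Hb^{m-1,l+(n-2)/2,k}(\Omega;\Tb^*\Omega)^\bullet$, so by hypothesis $q(u,\bdiff u)\in\Hb^{m-1,l+(n-2)/2+2,k}(\Omega)^\bullet$, and then $\tilde S$ brings us back to $\mc Z$.

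Taking $C_L=\|\tilde S\|^{-1}$, the Lipschitz estimate on $q$ gives, for $u,v\in\mc Z$ with $\|u\|,\|v\|\le R$,
\[
\|Tu-Tv\|_{\mc Z}\le\|\tilde S\|\,L(R)\,\|u-v\|_{\mc Z}=:C_0(R)\,\|u-v\|_{\mc Z},
\]
and since $L(0)<C_L$ and $L$ is continuous and non-decreasing, $C_0(R)<1$ for all sufficiently small $R>0$. Moreover $q(0,0)=0$ yields $\|q(u,\bdiff u)\|\le L(R)R$ when $\|u\|\le R$, so $T$ maps the $R$-ball of $\mc Z$ into itself once $\|\tilde S\|(\|f\|+L(R)R)\le R$, i.e.\ once $\|f\|\le C:=R(\|\tilde S\|^{-1}-L(R))$, which is positive for $R$ small. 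The Banach fixed point theorem then produces the unique fixed point $u\in\mc Z$ with $\|u\|\le R$, which is the desired solution. Continuous dependence on $f$ follows verbatim from the corresponding argument in Theorem~\ref{ThmDSQu}: subtracting the equations for two solutions $u_1,u_2$ with forcing terms $f_1,f_2$ and using the Lipschitz bound on $q$ gives $\|u_1-u_2\|_{\mc Z}\le(1-C_0(R))^{-1}\|f_1-f_2\|$.

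Thus the proof of the theorem as stated is a direct transcription of the fixed point argument already used above, and the only step needing care is the bookkeeping of weights: one must check that the $(n-2)/2$ shift coming from undoing the conjugation $\tilde S=\rho^{(n-2)/2}S\rho^{-2}\rho^{-(n-2)/2}$ is consistent on both sides of $T$ --- it is, since $u\in\mc Z$ implies $\bdiff u\in\Hb^{m-1,l+(n-2)/2,k}(\Omega)^\bullet$, hence $q(u,\bdiff u)\in\Hb^{m-1,l+(n-2)/2+2,k}(\Omega)^\bullet$, hence $\tilde Sq(u,\bdiff u)\in\mc Z$ --- and that the constraint $m+l<1/2$ needed for $\tilde S$ to exist as a forward solution operator is compatible with the regularity one wishes to impose; at the abstract level of Theorem~\ref{ThmMinkQu} the latter is simply part of the hypotheses, reflecting the delicate balance of weights and regularity discussed in the introduction. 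The genuinely substantive work has already been done elsewhere: the existence and boundedness of $\tilde S$ is Theorem~\ref{thm:asymp-Mink-lin} together with the module regularity result, and the verification that concrete nonlinearities --- polynomials in $u$ and $\bdiff u$ with scattering or b-vector fields applied to $u$ --- actually satisfy the abstract hypothesis on $q$ rests on the multiplicative estimates of Section~\ref{SecMinkAlgebra}, in particular Lemma~\ref{LemmaHbAlgebra}, Corollary~\ref{CorDeltaImprovement} and Corollary~\ref{CorMinkPower}.
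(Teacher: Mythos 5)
Your proof is correct and follows exactly the route the paper takes: the paper's own proof of Theorem~\ref{ThmMinkQu} simply says "Use the Banach fixed point theorem as in the proof of Theorem~\ref{ThmDSQu}," and you have spelled out that contraction-mapping argument carefully, including the weight bookkeeping coming from the conjugation $\tilde S=\rho^{(n-2)/2}S\rho^{-2}\rho^{-(n-2)/2}$ and the role of the module-regularity statement in making $\tilde S$ bounded between the stated spaces. Nothing is missing and no step would fail.
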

\begin{proof}
  Use the Banach fixed point theorem as in the proof of Theorem~\ref{ThmDSQu}.
\end{proof}

\begin{rmk}
  Here, just as in Theorem~\ref{ThmGlobalDSQu}, we can also allow $q$ to depend on $\Box_g u$ as well.
\end{rmk}

%%%%%%%%%%%%%%%%%%%%%%%%%%%%%%%%%%%%%%%%%%%%%%%%%
\subsection{Semilinear equations with polynomial non-linearity}

Next, we want to find a forward solution of the semilinear PDE
\[
  \Box_g u=f+cu^p X(u),
\]
where $c\in\CI(M)$, $p\in\N_0$, and $X(u)=\prod_{j=1}^q \rho V_j(u)$ is a $q$-fold product of derivatives of $u$ along scattering vector fields; here, $V_j$ are b-vector fields. Let us assume $p+q\geq 2$ in order for the equation to be genuinely non-linear. We rewrite the PDE as
\begin{align*}
  L(\rho^{-(n-2)/2}u)&=\rho^{-(n-2)/2-2}f+c\rho^{-2}\rho^{(p-1)(n-2)/2}(\rho^{-(n-2)/2}u)^p \\
  &\qquad\times\prod_{j=1}^q \rho V_j(\rho^{(n-2)/2}\rho^{-(n-2)/2}u).
\end{align*}
Introducing $\wt u=\rho^{-(n-2)/2}u$ and $\wt f=\rho^{-(n-2)/2-2}f$ yields the equation
\begin{align}
  L\wt u&=\wt f+c\rho^{(p-1)(n-2)/2-2}\wt u^p\prod_{j=1}^q\rho^{n/2}(f_j\wt u+V_j\wt u) \nonumber\\
\label{eq-diffeq} &=\wt f+c\rho^{(p-1)(n-2)/2+qn/2-2}\wt u^p\prod_{j=1}^q(f_j\wt u+V_j\wt u),
\end{align}
where the $f_j$ are smooth functions. Now suppose that $\wt u\in \Hb^{m,l,k}(\Omega)^\bullet$ with $m+l<1/2,m\geq 1,k>n/2$ (so that $\Hb^{m-1,-\infty,k}(\Omega)^\bullet$ is an algebra), then the second summand of the right hand side of \eqref{eq-diffeq} lies in $\Hb^{m-1,\ell,k}(\Omega)^\bullet$, where
\[
  \ell=(p-1)(n-2)/2+qn/2-2+pl-(p-1)/2+ql-(q-1)/2-1/2.
\]
For this space to lie in $\Hb^{m-1,l,k}(\Omega)^\bullet$ (which we
want in order to be able to apply the solution operator $S$ and land
in $\Hb^{m,l,k}(\Omega)^\bullet$ so that a fixed point argument as in \S\ref{SecStaticDeSitter} can be applied), we thus need $\ell\geq l$, which can be rewritten as
\begin{equation}
\label{eq-pq-condition}(p-1)(l+(n-3)/2)+q(l+(n-1)/2)\geq 2.
\end{equation}
For $m=1$ and $l<1/2-m$ less than, but close to $-1/2$, we thus get the condition
\[
  (p-1)(n-4)+q(n-2)>4.
\]
If there are only non-linearities involving derivatives of $u$, i.e.\ $p=0$, we get the condition $q>1+2/(n-2)$, i.e.\ quadratic non-linearities are fine for $n\geq 5$, cubic ones for $n\geq 4$.

Note that if $q=0$, we can actually choose $m=0$ and $l<1/2$ close to $1/2$, and we have Corollary~\ref{CorMinkPower} at hand. Thus we can improve \eqref{eq-pq-condition} to $(p-1)(1/2+(n-3)/2)>2-1/2$, i.e.\ $p>1+3/(n-2)$, hence quadratic non-linearities can be dealt with if $n\geq 6$, whereas cubic non-linearities are fine as long as $n\geq 4$. Observe that this condition on $p$ always implies $p>1$, which is a natural condition, since $p=1$ would amount to changing $\Box_g$ into $\Box_g-m^2$ (if one chooses the sign appropriately). But the Klein-Gordon operator naturally fits into a scattering framework, as mentioned in the Introduction, i.e.\ requires a different analysis; we will not pursue this further in this paper.

To summarize the general case, note that $\wt u\in \Hb^{m,l,k}(\Omega)^\bullet$ is equivalent to $u\in \Hb^{m,l+(n-2)/2,k}(\Omega)^\bullet$, and $\wt f\in \Hb^{m-1,l,k}(\Omega)^\bullet$ to $f\in \Hb^{m-1,l+(n-2)/2+2,k}(\Omega)^\bullet$; thus:

\begin{thm}
\label{ThmMink}
  Let $|l|<1,m+l<1/2,k>n/2$, and assume that $p,q\in\N_0$, $p+q\geq 2$, satisfy condition \eqref{eq-pq-condition} or the weaker conditions given above in the cases where $p=0$ or $q=0$; let $m\geq 0$ if $q=0$, otherwise let $m\geq 1$. Moreover, let $c\in\CI(M)$ and $X(u)=\prod_{j=1}^q X_ju$, where $X_j$ is a scattering vector field on $M$. Then for small enough $R>0$, there exists a constant $C>0$ such that for all $f\in \Hb^{m-1,l+(n-2)/2+2,k}(\Omega)^\bullet$ with norm $\leq C$, the equation
  \[
    \Box_g u=f+cu^p X(u)
  \]
  has a unique solution $u\in \Hb^{m,l+(n-2)/2,k}(\Omega)^\bullet$, with norm $\leq R$, that depends continuously on $f$.

  The same conclusion holds if the non-linearity is a finite sum of terms of the form $cu^p X(u)$, provided each such term separately satisfies \eqref{eq-pq-condition}.
\end{thm}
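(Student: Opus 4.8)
\textbf{Proof proposal for Theorem~\ref{ThmMink}.}

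The plan is to reduce the semilinear equation to a fixed point problem for the conjugated operator $L$ exactly as in Section~\ref{SecStaticDeSitter}, using the forward solution operator $S$ provided by Theorem~\ref{thm:asymp-Mink-lin} and the Banach fixed point theorem. First I would perform the conjugation carried out in the discussion preceding the theorem: setting $\tilde u=\rho^{-(n-2)/2}u$ and $\tilde f=\rho^{-(n-2)/2-2}f$, the PDE $\Box_g u=f+cu^pX(u)$ becomes \eqref{eq-diffeq}, namely $L\tilde u=\tilde f+c\rho^{\mu}\tilde u^p\prod_{j=1}^q(f_j\tilde u+V_j\tilde u)$ with $\mu=(p-1)(n-2)/2+qn/2-2$ and $f_j\in\CI(M)$. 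It then suffices to solve this equation for $\tilde u\in\Hb^{m,l,k}(\Omega)^\bullet$, since by the stated equivalences $\tilde u\in\Hb^{m,l,k}(\Omega)^\bullet\Leftrightarrow u\in\Hb^{m,l+(n-2)/2,k}(\Omega)^\bullet$, and similarly for $f$; continuous dependence transfers as well since conjugation by a fixed power of $\rho$ is an isomorphism of the relevant spaces.

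Next I would verify that the nonlinear map $N(\tilde u):=c\rho^\mu\tilde u^p\prod_{j=1}^q(f_j\tilde u+V_j\tilde u)$ sends $\Hb^{m,l,k}(\Omega)^\bullet$ into $\Hb^{m-1,l,k}(\Omega)^\bullet$ with small Lipschitz constant near $0$. Here $k>n/2$ ensures $\Hb^{m-1,-\infty,k}(\Omega)^\bullet$ is an algebra (Lemma~\ref{LemmaHbAlgebra} and the remarks after it, together with the fact that $m\geq 1$ when $q\geq 1$, so $m-1\geq 0$). Each factor $V_j\tilde u$ lies in $\Hb^{m-1,l,k}(\Omega)^\bullet$ since $V_j\in\Vb(M)$, and $f_j\tilde u\in\Hb^{m-1,l,k}(\Omega)^\bullet$ too. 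Multiplying the $p$ copies of $\tilde u$ and the $q$ factors of the form $f_j\tilde u+V_j\tilde u$, and tracking the weights through the algebra statement (each product of two $\Hb^{0,\cdot,k}$ factors costs a factor $\rho_\ff^{-1/2}$ by Remark~\ref{RmkAlgFF}, i.e.\ loses $1/2$ in the weight after absorbing $\rho_\ff^{-1/2}\leq C\rho^{-1/2}$ on $\Omega$ where $\rho$ and $\rho_\ff$ are comparable away from $S_+$, or more precisely one keeps track of $\rho_\ff$ itself), one finds the product lies in $\Hb^{m-1,\ell,k}(\Omega)^\bullet$ with $\ell$ as computed before the theorem. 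The weight $\rho^\mu$ is a smooth nonnegative power of $\rho$ on $\Omega$ (since $p+q\geq 2$ forces $\mu$ to be bounded below, and in any case multiplication by $\rho^\mu$ only improves the weight when $\mu\geq 0$, and when $\mu<0$ it is accounted for in $\ell$), and condition \eqref{eq-pq-condition} (resp.\ the weaker conditions when $p=0$ via $m=1$, $l$ near $-1/2$, or when $q=0$ via $m=0$, $l$ near $1/2$ and Corollary~\ref{CorMinkPower}) guarantees $\ell\geq l$, so $N$ maps into $\Hb^{m-1,l,k}(\Omega)^\bullet$. Since $N$ is polynomial in $\tilde u$ (and its $V_j$-derivatives) with lowest order $p+q\geq 2$, the estimate $\|N(\tilde u)-N(\tilde v)\|\leq L(R)\|\tilde u-\tilde v\|$ holds on the $R$-ball with $L(R)\to 0$ as $R\to 0$, by expanding the difference as a telescoping sum and using the algebra/multiplication bounds.

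Finally I would run the contraction argument: define $T\tilde u=S(\tilde f+N(\tilde u))$ where $S\colon\Hb^{m-1,l,k}(\Omega)^\bullet\to\Hb^{m,l,k}(\Omega)^\bullet$ is the forward solution operator of $L$ from Theorem~\ref{thm:asymp-Mink-lin} (valid since $|l|<1$, $m+l<1/2$, $k\geq 0$); then $\|T\tilde u-T\tilde v\|\leq\|S\|L(R)\|\tilde u-\tilde v\|$ and $\|T\tilde u\|\leq\|S\|(\|\tilde f\|+L(R)R)$, so for $R>0$ small enough that $\|S\|L(R)<1$ and $\|\tilde f\|\leq R(\|S\|^{-1}-L(R))=:C$, the map $T$ is a contraction of the $R$-ball into itself, yielding a unique fixed point $\tilde u$, hence a unique solution $u$; continuous dependence on $f$ follows by the standard estimate $\|\tilde u_1-\tilde u_2\|\leq\|\tilde f_1-\tilde f_2\|/(1-\|S\|L(R))$, exactly as in the proof of Theorem~\ref{ThmDSQu}. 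The case of a finite sum $\sum_i c_i u^{p_i}X_i(u)$ is handled identically, the nonlinear map being the sum of the individual terms, each mapping into $\Hb^{m-1,l,k}(\Omega)^\bullet$ by its own instance of \eqref{eq-pq-condition}. The main obstacle, and the only place requiring real care, is the bookkeeping of weights through the algebra property when mixing the $p$ pure powers with the $q$ derivative factors and the prefactor $\rho^\mu$—in particular making sure the $\rho_\ff$ versus $\rho$ distinction (Remark~\ref{RmkAlgFF}) and the parity of the "$-1/2$ per product" losses are tallied correctly so that condition \eqref{eq-pq-condition} comes out exactly as stated; everything else is a routine transcription of the de Sitter argument.
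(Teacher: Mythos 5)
Your proof is correct and follows essentially the same route as the paper: conjugate by $\rho^{(n-2)/2}$ to pass to $L$ and $\tilde u$, verify via Lemma~\ref{LemmaHbAlgebra} (and the weight bookkeeping established just before the theorem) that the nonlinearity maps $\Hb^{m,l,k}(\Omega)^\bullet\to\Hb^{m-1,l,k}(\Omega)^\bullet$ with small Lipschitz constant, and apply the Banach fixed point theorem to $\tilde u\mapsto S(\tilde f+N(\tilde u))$. Your write-up is more detailed than the paper's (which largely defers to the computation preceding the statement), but there is no substantive difference in method.
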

\begin{proof}
  Reformulating the PDE in terms of $\wt u$ and $\wt f$ as above, this follows from an application of the Banach fixed point theorem to the map
  \[
    \Hb^{m,l,k}(\Omega)^\bullet\ni\wt u\mapsto S\biggl(\wt f+c\rho^{(p-1)(n-2)/2+qn/2-2}\wt u^p\prod_{j=1}^q(f_j\wt u+V_j\wt u)\biggr)\in \Hb^{m,l,k}(\Omega)^\bullet
  \]
  with $m,l,k$ as in the statement of the theorem. Here, $p+q\geq 2$ and the smallness of $R$ ensure that this map is a contraction on the ball of radius $R$ in $\Hb^{m,l,k}(\Omega)^\bullet$.
\end{proof}

\begin{rmk}
 If the derivatives in the non-linearity only involve module derivatives, we get a slightly better result since we can work with $\wt u\in \Hb^{0,l,k}(\Omega)^\bullet$: Indeed, a module derivative falling on $\wt u$ gives an element of $\Hb^{0,l,k-1}(\Omega)^\bullet$, applied to which the forward solution operator produces an element of $\Hb^{1,l,k-1}(\Omega)^\bullet\subset \Hb^{0,l,k}(\Omega)^\bullet$.

 The numerology works out as follows: In condition \eqref{eq-pq-condition}, we now take $l<1/2$ close to $1/2$, thus obtaining
 \[
   (p-1)(n-2)+qn>4.
 \]
 Thus, in the case that there are only derivatives in the non-linearity, i.e.\ $p=0$, we get $q>1+2/n$, which allows for quadratic non-linearities provided $n\geq 3$.
\end{rmk}

\begin{rmk}
  Observe that we can improve \eqref{eq-pq-condition} in the case $p\geq 1$, $q\geq 1$, $m\geq 1$ by using the $\delta$-improvement from Corollary~\ref{CorDeltaImprovement}, namely, the right hand side of \eqref{eq-diffeq} actually lies in $\Hb^{m-1,\ell,k}(\Omega)^\bullet$, where now
\[
  \ell=(p-1)(n-2)/2+qn/2-2+pl-(p-1)/2+(p-1)\delta+ql-(q-1)/2-1/2+\delta,
\]
which satisfies $\ell\geq l$ if
\[
 (p-1)(l+(n-3)/2+\delta)+q(l+(n-1)/2)+\delta\geq 2,
\]
which for $l<-1/2$ close to $-1/2$ means: $(p-1)(n-4+2\delta)+q(n-2)+2\delta>4$, where $0<\delta<1/n$.
\end{rmk}

\begin{rmk}\label{rmk:Christodoulou}
 Let us compare the above result with Christodoulou's \cite{Ch86}. A special case of his theorem states that the Cauchy problem for the wave equation on Minkowski space with small initial data in\footnote{Note that $n$ is the dimension of Minkowski space here, whereas Christodoulou uses $n+1$.} $H_{k,k-1}(\R^{n-1})$ admits a global solution $u\in H^k_\loc(\R^n)$ with decay $|u(x)|\lesssim (1+(v/\rho)^2)^{-(n-2)/2}$; here, $k=n/2+2$, and $n$ is assumed to $\geq 4$ and even; in case $n=4$, the non-linearity is moreover assumed to satisfy the null condition. The only polynomial non-linearity that we cannot deal with using the above argument is thus the null-form non-linearity in $4$ dimensions.
 
 To make a further comparison possible, we express $H_{k,\delta}(\R^{n-1})$ as a b-Sobolev space on the radial compactification of $\R^{n-1}$: Note that $u\in H_{k,\delta}(\R^{n-1})$ is equivalent to $(\la x\ra D_x)^\alpha u\in\la x\ra^{-\delta}L^2(\R^{n-1})$, $|\alpha|\leq k$. In terms of the boundary defining function $\rho$ of $\partial\overline{\R^{n-1}}$ and the standard measure $d\omega$ on the unit sphere $\Sphere^{n-2}\subset\R^{n-1}$, we have $L^2(\R^{n-1})=L^2(\tfrac{d\rho}{\rho^2}\,\tfrac{dy}{\rho^{n-2}})=\rho^{(n-1)/2}L^2(\tfrac{d\rho}{\rho}\,dy)$, and thus $H_{k,\delta}(\R^{n-1})=\rho^{(n-1)/2+\delta}H^k_\bl(\wt\frakt=0)$. Therefore, converting the Cauchy problem into a forward problem, the forcing lies in $\Hb^{k,(n-1)/2+k-1,0}(\Omega)^\bullet=\Hb^{n/2+2,n+1/2,0}(\Omega)^\bullet$. Comparing this with the space $H^{0,l+(n-2)/2+2,n/2+1}_\bl$ (with $l<1/2$) needed for our argument, we see that Christodoulou's result applies to a regime of fast decay which is disjoint from our slow decay (or even mild growth) regime.
\end{rmk}

\begin{rmk}\label{rmk:Chrusciel}
  In the case of non-linearities $u^p$, the result of Christodoulou
  \cite{Ch86} implies the existence of global solutions to $\Box_g
  u=f+u^p$ if the spacetime dimension $n$ is \emph{even} and $n\geq 4$
  if $p\geq 3$; in even dimensions $n\geq 6$, $p\geq 2$ suffices; the above result extends this to all dimensions satisfying the respective inequalities. In a somewhat similar context, see the work of Chru{\'s}ciel and {\L}{\c{e}}ski \cite{Ch06}, it has been proved that $p\geq 2$ in fact works in all dimensions $n\geq 5$.
\end{rmk}

%%%%%%%%%%%%%%%%%%%%%%%%%%%%%%%%%%%%%%%%%%%%%%%%%

\subsection{Semilinear equations with null condition}
\label{SecMinkNullform}

With $g$ the Lorentzian scattering metric on an asymptotically Minkowski space satisfying the assumptions of Theorem~\ref{thm:asymp-Mink-lin} as before, define the null form $Q(\scdiff u,\scdiff v)=g^{jk}\partial_j u\partial_k v$, and write $Q(\scdiff u)$ for $Q(\scdiff u,\scdiff u)$. We are interested in solving the PDE
\[
  \Box_g u=Q(\scdiff u)+f.
\]
The previous discussion solves this for $n\geq 5$; thus, let us from now on assume $n=4$. To make the computations more transparent, we will keep the $n$ in the notation and only substitute $n=4$ when needed. Rewriting the PDE in terms of the operator $L=\rho^{-2}\rho^{-(n-2)/2}\Box_g\rho^{(n-2)/2}$ as above, we get
\[
  L\wt u=\wt f+\rho^{-(n-2)/2-2}Q(\scdiff(\rho^{(n-2)/2}\wt u)),
\]
where $\wt u=\rho^{-(n-2)/2}u$ and $\wt f=\rho^{-(n-2)/2-2}f$. We can write $Q(\scdiff u)=\tfrac{1}{2}\Box_g(u^2)-u\Box_g u$, thus the PDE becomes
\begin{align*}
  L\wt u&=\wt f+\rho^{-(n-2)/2-2}\bigl(\tfrac{1}{2}\Box_g(\rho^{n-2}\wt u^2)-\rho^{(n-2)/2}\wt u\Box_g(\rho^{(n-2)/2}\wt u)\bigr) \\
    &=\wt f+\tfrac{1}{2}L(\rho^{(n-2)/2}\wt u^2)-\rho^{(n-2)/2}\wt u L\wt u.
\end{align*}
Since the results of \S\ref{SecMinkAlgebra} give small improvements on the decay of products of $\Hb^{1,*,*}$ functions with $\Hb^{m,*,*}$ functions ($m\geq 0$), one wants to solve this PDE on a function space that keeps track of these small improvements.

\begin{definition}
  For $l\in\R,k\in\N_0$ and $\alpha\geq 0$, define the space $\cX^{l,k,\alpha}:=\{v\in \Hb^{1,l+\alpha,k}(\Omega)^\bullet\colon Lv\in \Hb^{0,l,k}(\Omega)^\bullet\}$ with norm
  \begin{equation}
   \label{EqNullformSpaceNorm} \|v\|_{\cX^{l,k,\alpha}}=\|v\|_{\Hb^{1,l+\alpha,k}(\Omega)^\bullet}+\|Lv\|_{\Hb^{0,l,k}(\Omega)^\bullet}.
  \end{equation}
\end{definition}

By an argument similar to the one used in the proof of Theorem~\ref{ThmDSQu}, we see that $\cX^{l,k,\alpha}$ is a Banach space.

On $\cX^{l,k,\alpha}$, which $\alpha>0$ chosen below, we want to run an iteration argument: Start by defining the operator $T\colon\cX^{l,k,\alpha}\to \Hb^{1,-\infty,k}(\Omega)^\bullet$ by
\[
  T\colon\wt u\mapsto S\bigl(\wt f-\rho^{(n-2)/2}\wt u L\wt u\bigr)+\tfrac{1}{2}\rho^{(n-2)/2}\wt u^2.
\]
Note that $\wt u\in\cX^{l,k,\alpha}$ implies, using Corollary~\ref{CorDeltaImprovement} with $\delta<1/n$,
\begin{align}
    \rho^{(n-2)/2}\wt u^2 & \in \rho^{(n-2)/2}\Hb^{1,2(l+\alpha)-1/2+\delta,k}(\Omega)^\bullet=\Hb^{1,2l+\alpha+(n-3)/2+\delta+\alpha,k}(\Omega)^\bullet, \nonumber\\
  \label{EqTermInclusions} \rho^{(n-2)/2}\wt u L\wt u & \in \Hb^{0,2l+\alpha+(n-3)/2+\delta,k}(\Omega)^\bullet, \\
    S(\rho^{(n-2)/2}\wt u L\wt u) & \in \Hb^{1,2l+\alpha+(n-3)/2+\delta,k}(\Omega)^\bullet, \nonumber
\end{align}
where in the last inclusion, we need to require $1+(2l+\alpha+(n-3)/2+\delta)<1/2$, which for $n=4$ means
\begin{equation}
\label{EqCond1} l<-1/2-(\alpha+\delta)/2;
\end{equation}
let us assume from now on that this condition holds. Furthermore, \eqref{EqTermInclusions} implies $T\wt u\in \Hb^{1,2l+\alpha+(n-3)/2+\delta,k}(\Omega)^\bullet$. Finally, we analyze
\[
  L(T\wt u)\in \Hb^{0,2l+\alpha+(n-3)/2+\delta,k}(\Omega)^\bullet+\frac{1}{2}L(\rho^{(n-2)/2}\wt u^2).
\]
Using that $L$ is a second-order b-differential operator, we have
\begin{align*}
  \rho^{(n-2)/2} & L(\wt u^2) \in 2\rho^{(n-2)/2}\wt u L\wt u+\rho^{(n-2)/2}\Hb^{0,l+\alpha,k}(\Omega)^\bullet \Hb^{0,l+\alpha,k}(\Omega)^\bullet \\
   &\subset \Hb^{0,2l+\alpha+(n-3)/2+\delta,k}(\Omega)^\bullet+\Hb^{0,2(l+\alpha)+(n-3)/2,k}(\Omega)^\bullet \\
   &=\Hb^{0,2l+\alpha+(n-3)/2+\min\{\alpha,\delta\},k}(\Omega)^\bullet,
\end{align*}
which gives
\begin{align*}
  L(&\rho^{(n-2)/2}\wt u^2)\in L(\rho^{(n-2)/2})\wt u^2+\rho^{(n-2)/2}L(\wt u^2) \\
    &\hspace{18ex}+\rho^{(n-2)/2}\Hb^{1,l+\alpha,k}(\Omega)^\bullet \Hb^{0,l+\alpha,k}(\Omega)^\bullet \\
    &\subset \Hb^{1,2l+\alpha+(n-3)/2+\delta+\alpha,k}(\Omega)^\bullet+\Hb^{0,2l+\alpha+(n-3)/2+\min\{\alpha,\delta\},k}(\Omega)^\bullet \\
	&\hspace{12ex}+\Hb^{0,2l+\alpha+(n-3)/2+\delta+\alpha}(\Omega)^\bullet \\
	&= \Hb^{0,2l+\alpha+(n-3)/2+\min\{\alpha,\delta\},k}(\Omega)^\bullet.
\end{align*}
Hence, putting everything together,
\[
  L(T\wt u)\in \Hb^{0,2l+\alpha+(n-3)/2+\min\{\alpha,\delta\},k}(\Omega)^\bullet.
\]
Therefore, we have $T\wt u\in\cX^{l,k,\alpha}$ provided
\begin{align*}
  2l&+\alpha+(n-3)/2+\delta \geq l+\alpha \\
  2l&+\alpha+(n-3)/2+\min\{\alpha,\delta\} \geq l,
\end{align*}
which for $0<\alpha<\delta$ and $n=4$ is equivalent to
\begin{equation}
\label{EqCond2}
  l\geq -1/2-\delta, l\geq -1/2-2\alpha.
\end{equation}
This is consistent with condition \eqref{EqCond1} if $-1/2-(\alpha+\delta)/2>-1/2-2\alpha$, i.e.\ if $\alpha>\delta/3$.

Finally, for the map $T$ to be well-defined, we need $S\wt f\in\cX^{l,k,\alpha}$, hence $\wt f\in\Ran_{\cX^{l,k,\alpha}}L$, which is in particular satisfied if $\wt f\in \Hb^{0,l+\alpha,k}(\Omega)^\bullet$. Indeed, since $1+l+\alpha<1-1/2-(\delta-\alpha)/2<1/2$ by condition \eqref{EqCond1}, the element $S\wt f\in \Hb^{1,l+\alpha,k}(\Omega)^\bullet$ is well-defined.

We have proved:

\begin{thm}
\label{ThmNullMink}
 Let $c\in\C$, $0<\delta<1/4$, $\delta/3<\alpha<\delta$, and let $-1/2-2\alpha\leq l<-1/2-(\alpha+\delta)/2$. Then for small enough $R>0$, there exists a constant $C>0$ such that for all $f\in \Hb^{0,l+3+\alpha,k}(\Omega)^\bullet$ with norm $\leq C$, the equation
 \[
   \Box_g u=f+cQ(\scdiff u)
 \]
 has a unique solution $u\in\cX^{l+1,k,\alpha}$, with norm $\leq R$, that depends continuously on $f$.
\end{thm}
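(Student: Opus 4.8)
The plan is to run the Banach fixed point theorem on the space $\cX^{l+1,k,\alpha}$ for the operator $T$ introduced just before the statement, namely
\[
  T\colon\tilde u\mapsto S\bigl(\tilde f-\rho^{(n-2)/2}\tilde u L\tilde u\bigr)+\tfrac{1}{2}\rho^{(n-2)/2}\tilde u^2,
\]
where $S$ is the forward solution operator of $L$ furnished by Theorem~\ref{thm:asymp-Mink-lin} (together with its module regularity refinement), $\tilde u=\rho^{-(n-2)/2}u$, and $\tilde f=\rho^{-(n-2)/2-2}f$. The computation preceding the statement already shows, via the algebra and $\delta$-improvement results of Section~\ref{SecMinkAlgebra} (Lemma~\ref{LemmaHbAlgebra}, Corollary~\ref{CorDeltaImprovement}) together with the mapping properties of $S$, that under the stated inequalities $\delta/3<\alpha<\delta<1/4$ and $-1/2-2\alpha\le l<-1/2-(\alpha+\delta)/2$ (with the identification that the weight in the statement corresponds to solving with $l$ shifted so that $\tilde u\in\cX^{l,k,\alpha}$), the map $T$ sends $\cX^{l,k,\alpha}$ into itself, and that $T$ is well-defined on all of $\cX^{l,k,\alpha}$ provided $\tilde f\in\Hb^{0,l+\alpha,k}(\Omega)^\bullet$, which in the original variables is exactly $f\in\Hb^{0,l+3+\alpha,k}(\Omega)^\bullet$ (using $n=4$, so $(n-2)/2+2=3$). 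So the first step is simply to record the self-mapping property on the ball of radius $R$, observing that every term in $T\tilde u-S\tilde f$ is at least quadratic in $\tilde u$, hence bounded by $C(R)R^2$ with $C(R)$ bounded for $R$ small, while $\|S\tilde f\|\le C\|f\|$; thus for $R$ small and $\|f\|\le C$ with $C=C(R)$ appropriately chosen, $T$ maps the $R$-ball into itself.

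The second step is the contraction estimate. Given $\tilde u_1,\tilde u_2$ in the $R$-ball of $\cX^{l,k,\alpha}$, one writes
\[
  T\tilde u_1-T\tilde u_2 = -S\bigl(\rho^{(n-2)/2}(\tilde u_1 L\tilde u_1-\tilde u_2 L\tilde u_2)\bigr)+\tfrac12\rho^{(n-2)/2}(\tilde u_1^2-\tilde u_2^2),
\]
and splits the differences as $\tilde u_1 L\tilde u_1-\tilde u_2 L\tilde u_2=(\tilde u_1-\tilde u_2)L\tilde u_1+\tilde u_2 L(\tilde u_1-\tilde u_2)$ and $\tilde u_1^2-\tilde u_2^2=(\tilde u_1-\tilde u_2)(\tilde u_1+\tilde u_2)$. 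Each summand is then estimated exactly as in the self-mapping computation: using Corollary~\ref{CorDeltaImprovement} and the boundedness of $S$ between the relevant $\Hb^{\cdot,\cdot,k}(\Omega)^\bullet$ spaces, one gets $\|T\tilde u_1-T\tilde u_2\|_{\cX^{l,k,\alpha}}\le C R\,\|\tilde u_1-\tilde u_2\|_{\cX^{l,k,\alpha}}$, which is a contraction for $R$ small. The Banach fixed point theorem then yields a unique $\tilde u\in\cX^{l,k,\alpha}$ with norm $\le R$ and $T\tilde u=\tilde u$; unwinding the algebraic identity $Q(\scdiff u)=\tfrac12\Box_g(u^2)-u\Box_g u$ shows $\Box_g u=f+cQ(\scdiff u)$ (absorbing the constant $c$ into the obvious places — more precisely one repeats the computation with $c$ carried along, which changes nothing). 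Continuous dependence on $f$ is obtained as in the proof of Theorem~\ref{ThmDSQu}: subtracting the fixed point equations for two forcing terms $f_1,f_2$ and using the contraction estimate gives $\|u_1-u_2\|\le (1-CR)^{-1}\|S(\tilde f_1-\tilde f_2)\|\le C'\|f_1-f_2\|$.

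The main obstacle — and really the only substantive point — is bookkeeping the weights so that the self-mapping and contraction estimates genuinely close, i.e. verifying that the exponents produced by the product estimates land back in the space $\cX^{l,k,\alpha}$ and not in a strictly weaker one. This is where the precise interplay of the three small parameters $\alpha,\delta,l$ matters: the term $S(\rho^{(n-2)/2}\tilde u L\tilde u)$ lands in $\Hb^{1,2l+\alpha+(n-3)/2+\delta,k}$, which must contain $\Hb^{1,l+\alpha,k}$, forcing $l\ge -1/2-\delta$ (for $n=4$), while the $L(T\tilde u)$ component lands with weight involving $\min\{\alpha,\delta\}=\alpha$, forcing $l\ge -1/2-2\alpha$; simultaneously the application of $S$ requires the target b-order-plus-weight to be below $1/2$, giving $l<-1/2-(\alpha+\delta)/2$, and these three are compatible precisely when $\alpha>\delta/3$ and $0<\alpha<\delta<1/4$. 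All of this has already been carried out in the paragraphs immediately preceding the theorem statement, so the proof is essentially the assembly of those estimates into a fixed point argument; no new analytic input beyond Section~\ref{SecMinkAlgebra} and Theorem~\ref{thm:asymp-Mink-lin} is needed. I would therefore keep the proof short: cite the preceding computation for the self-mapping property, spell out only the splitting used in the contraction step, and invoke the Banach fixed point theorem, closing with the continuous-dependence argument by reference to Theorem~\ref{ThmDSQu}.
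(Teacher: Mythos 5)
Your proposal is correct and matches the paper's approach: the paper states the theorem immediately after the phrase ``We have proved:'', so its proof is precisely the preceding computation that $T$ maps $\cX^{l,k,\alpha}$ to itself under the stated constraints on $\alpha,\delta,l$, combined with the (by then routine, cf.\ Theorem~\ref{ThmDSQu}) Banach fixed point argument on a small ball, which you reproduce faithfully including the weight shift from $\tilde u\in\cX^{l,k,\alpha}$ to $u\in\cX^{l+1,k,\alpha}$ and the contraction splitting $\tilde u_1 L\tilde u_1-\tilde u_2 L\tilde u_2=(\tilde u_1-\tilde u_2)L\tilde u_1+\tilde u_2 L(\tilde u_1-\tilde u_2)$.
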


\def\cprime{$'$}

\end{document}